\tikzstyle{block}=[draw opacity=0.7,line width=1.4cm]
\DeclareRobustCommand{\SkipTocEntry}[5]{}
\def\dom{\mathop{\mathrm{Dom}}\nolimits}
\def\age{\mathop{\mathrm{Age}}\nolimits}
\def\range{\mathop{\mathrm{Range}}\nolimits}
\def\Forb{\mathop{\mathrm{Forb_{he}}}\nolimits}
\def\Aut{\mathop{\mathrm{Aut}}\nolimits}
\def\cl{\mathop{\mathrm{Cl}}\nolimits}
\def\K{{\mathcal K}}
\def\str#1{\mathbf {#1}}
\def\arity#1{a(\rel{}{#1})}
\def\arityf#1{a(\func{}{#1})}
\def\nbrel#1#2{R\ifstrempty{#1}{}{_{#1}}\ifstrempty{#2}{}{^{#2}}}
\def\rel#1#2{\nbrel{\ifstrempty{#1}{}{\str{#1}}}{#2}}
\def\nbfunc#1#2{F\ifstrempty{#1}{}{_{#1}}\ifstrempty{#2}{}{^{#2}}}
\def\func#1#2{\nbfunc{\ifstrempty{#1}{}{\str{#1}}}{#2}}
\def\permnbrel#1#2#3{#1(R\ifstrempty{#2}{}{}\ifstrempty{#3}{}{^{#3}})_{#2}}
\def\permrel#1#2#3{#1(R\ifstrempty{#2}{}{}\ifstrempty{#3}{}{^{#3}})_{\mathbf{#2}}}
\def\permnbfunc#1#2#3{#1(F\ifstrempty{#2}{}{\ifstrempty{#3}{}{^{#3}})_{#2}}}
\def\permfunc#1#2#3{#1(F\ifstrempty{#2}{}{\ifstrempty{#3}{}{^{#3}})_{\mathbf{#2}}}}
\def\powerset#1{\mathscr{P}(#1)}
\def\support#1{\mathrm{support}(#1)}
\def\roots#1#2{\mathrm{roots}_{#1}(#2)}
\def\Sym{\mathop{\mathrm{Sym}}\nolimits}
\def\id{\mathrm{id}}
\def\Lconst{L_\mathcal F^0}
\def\nbrelE#1#2{E\ifstrempty{#1}{}{_{#1}}\ifstrempty{#2}{}{^{#2}}}
\def\relE#1#2{\nbrelE{\ifstrempty{#1}{}{\str{#1}}}{#2}}
\theoremstyle{plain}
\newtheorem{theorem}{Theorem}[section]
\newtheorem{corollary}[theorem]{Corollary}
\newtheorem{prop}[theorem]{Proposition}
\newtheorem{observation}[theorem]{Observation}
\newtheorem{lemma}[theorem]{Lemma} 
\newtheorem{conjecture}[theorem]{Conjecture}
\newtheorem{question}[theorem]{Question}
\newtheorem{fact}[theorem]{Fact}
\theoremstyle{definition}
\newtheorem{definition}[theorem]{Definition}
\newtheorem{example}[theorem]{Example}
\newtheorem{claim}[theorem]{Claim}
\theoremstyle{remark}
\newtheorem{remark}[theorem]{Remark}
\def\dom{\mathop{\mathrm{Dom}}\nolimits}
\def\str#1{\mathbf {#1}}
\def\Fraisse{Fra\"{\i}ss\' e}
\def\GammaL{\Gamma\!_L}
\def\GammaM{\Gamma\!_M}
\def\GammaN{\Gamma\!_N}
\def\GammaLstar{\Gamma\!_{L^\star}}
\DeclareMathOperator{\tr}{ctr}
\renewcommand{\restriction}{\mathord{\upharpoonright}}
\begin{document}
\bibliographystyle{alpha}

\title[All those EPPA classes]{All those EPPA classes\\
(Strengthenings of the Herwig--Lascar theorem)}
\authors{
\author[J. Hubi\v cka]{Jan Hubi\v cka}
\address{Charles University, Faculty of Mathematics and Physics\\Department of Applied Mathematics (KAM)\\
Prague, Czech Republic}
\email{hubicka@kam.mff.cuni.cz}
\author[M. Kone\v cn\'y]{Mat\v ej Kone\v cn\'y}
\address{Charles University, Faculty of Mathematics and Physics\\Department of Applied Mathematics (KAM)\\
Prague, Czech Republic}
\email{matej@kam.mff.cuni.cz}
\author[J. Ne\v set\v ril]{Jaroslav Ne\v set\v ril}
\address{Charles University, Faculty of Mathematics and Physics\\
Computer Science Institute of Charles University (IUUK)\\
Prague, Czech Republic}
\email{nesetril@iuuk.mff.cuni.cz}

\thanks{This paper is part of a project that has received funding from the European Research Council (ERC) under the European Union’s Horizon 2020 research and innovation programme (grant agreement No 810115). Jan Hubi\v cka and Mat\v ej Kone\v cn\'y are further supported by project 18-13685Y of the Czech Science Foundation (GA\v CR), Jan Hubi\v cka is also supported by Center for Foundations of Modern Computer Science (Charles University project UNCE/SCI/004) and by the PRIMUS/17/SCI/3 project of Charles University and Mat\v ej Kone\v cn\'y is also supported by the Charles University Grant Agency (GA UK), project 378119.}
}

\begin{abstract}
Let $\str A$ be a finite structure. We say that a finite structure $\str B$ is an EPPA-witness for $\str A$ if it contains $\str A$ as a substructure and every isomorphism of substructures of $\str A$ extends to an automorphism of $\str B$.  Class $\mathcal C$ of finite structures has the extension property for partial automorphisms (EPPA, also called the Hrushovski property) if it contains an EPPA-witness for every structure in $\mathcal C$.

We develop a systematic framework for combinatorial constructions of EPPA-witnesses satisfying additional local properties and thus for proving EPPA for a given class $\mathcal C$.
Our constructions are elementary, self-contained and lead to
a common strengthening of the Herwig--Lascar theorem on EPPA for relational classes defined by forbidden homomorphisms, the Hodkinson--Otto theorem on EPPA for relational free amalgamation classes, its strengthening for unary functions by Evans, Hubi\v cka and Ne\v set\v ril and their coherent variants by Siniora and Solecki.
We also prove an EPPA analogue of the main results of J.~Hubi\v cka and J.~Ne\v set\v ril: \emph{All those Ramsey classes (Ramsey classes with closures and forbidden homomorphisms)}, thereby establishing a common framework for proving EPPA and the Ramsey property.

There are numerous applications of our results, we include a solution of a problem related to
a class constructed by the Hrushovski predimension construction. We also characterize free amalgamation classes of finite $\GammaL$-structures with relations and unary functions which have EPPA.
\end{abstract}
\maketitle

\newpage

\tableofcontents

\newpage

\section{Introduction}
Let $\str A$ and $\str B$ be finite structures (e.g. graphs, hypergraphs or metric spaces) such that $\str A$ is a substructure of $\str B$. We say that $\str B$ is an \emph{EPPA-witness} for $\str A$ if every isomorphism of substructures of $\str A$ (a \emph{partial automorphism of $\str A$}) extends to an automorphism of $\str B$. We say that a class $\mathcal C$ of finite structures has the \emph{extension property for partial automorphisms} (\emph{EPPA}, also called the \emph{Hrushovski property}) if for every $\str A\in \mathcal C$ there is $\str B\in \mathcal C$ which is an EPPA-witness for $\str A$.

In 1992, Hrushovski~\cite{hrushovski1992} established that the class of all finite graphs has EPPA. This result was used by Hodges, Hodkinson, Lascar, and Shelah to show the small index property for the random graph~\cite{hodges1993b}. After this, the quest of identifying new classes of structures with EPPA continued with a series of papers including~\cite{Herwig1995,herwig1998,herwig2000,hodkinson2003,solecki2005,vershik2008,Conant2015,otto2017,Aranda2017,Hubicka2018metricEPPA,Konecny2018b,Hubicka2017sauer,eppatwographs}.

In particular, Herwig and Lascar~\cite{herwig2000} proved EPPA for certain relational classes with forbidden homomorphisms. Solecki~\cite{solecki2005} used this result to prove EPPA for the class of all
finite metric spaces. This was independently obtained by
Vershik~\cite{vershik2008}, see also~\cite{Pestov2008,rosendal2011,rosendal2011b,sabok2017automatic,Hubicka2018metricEPPA} for other proofs. Some of these proofs are combinatorial~\cite{Hubicka2018metricEPPA}, others are using the profinite topology on free groups and the {R}ibes--{Z}alesskii~\cite{Ribes1993} and Marshall Hall~\cite{hall1949} theorems. Solecki's argument was refined by Conant~\cite{Conant2015} for
certain classes of generalised metric spaces and metric spaces with
(some) forbidden subspaces. In~\cite{Aranda2017}, these techniques were carried
further and a layer was added on top of the Herwig--Lascar theorem to show EPPA for many classes of metrically homogeneous graphs from
Cherlin's catalogue~\cite{Cherlin2013} (see also exposition in~\cite{Konecny2018bc}).

There are known EPPA classes for which the Herwig--Lascar theorem is not well suited. In particular, EPPA for free amalgamation classes of relational structures was shown by Siniora and Solecki~\cite{Siniora} using results of Hodkinson and Otto~\cite{hodkinson2003}.
It was noticed by Ivanov~\cite{Ivanov2015} that a lemma on permomorphisms from Herwig's paper~\cite[Lemma~1]{herwig1998} can be used to show EPPA for structures with
definable equivalences on $n$-tuples with infinitely many equivalence classes. Evans, Hubi\v cka, and Ne\v set\v
ril~\cite{Evans3} strengthened the aforementioned construction of Hodkinson and Otto and established EPPA for free
amalgamation classes in languages with relations and unary functions (e.g. the class of $k$-orientations arising from a Hrushovski construction~\cite{Evans2} or the class of all finite bowtie-free graphs~\cite{Evans2}). 

We give a combinatorial, elementary, and fully self-contained proof of a strengthening of all the aforementioned results~\cite{herwig1998,herwig2000,hodkinson2003,Evans3} and their coherent variants by Siniora and Solecki~\cite{Siniora, Siniora2}.
This has a number of applications which we list in Section~\ref{subsec:applications}. In particular, in Section~\ref{sec:orientations} we present a solution of a 
problem related to a class constructed by the Hrushovski predimension construction.

\subsection{$\GammaL$-structures}
Before presenting the summary of our results, let us introduce the structures we are dealing with.
With applications in mind, we generalise the standard notion of model-theoretic $L$-structures in two directions. We consider functions which go to subsets of the vertex set and we also equip the languages with a permutation group $\GammaL$. Our morphisms will consist of a map between vertices together with a permutation of the language: The standard notions of homomorphism, embedding etc. are generalised naturally, see Section~\ref{sec:background} for formal definitions.
If $\GammaL$ consists of the identity only and the ranges of all functions consist of singletons, one gets back the standard model-theoretic $L$-structures together with the standard mappings, the standard definition of EPPA etc.


\subsection{The main results}
We now state the principal results of this paper together with a short discussion.

A structure is \emph{irreducible} if it is not a free amalgamation of its proper substructures. If $\str B$ is an EPPA-witness for $\str A$, we say that it is \emph{irreducible structure faithful} if whenever $\str C$ is an irreducible substructure of $\str B$, then there is an automorphism $g\in \Aut(\str B)$ such that $g(C)\subseteq A$. A class has \emph{irreducible structure faithful EPPA} if it has EPPA and all EPPA-witnesses can be chosen to be irreducible structure faithful. (This is a natural generalization of the clique faithful EPPA introduced by Hodkinson and Otto~\cite{hodkinson2003} to structures with functions.)
Coherent EPPA is a ``functorial'' strengthening of EPPA introduced by Siniora and Solecki~\cite{Siniora, Siniora2} and is defined in Section~\ref{sec:coherence}. 

In this paper we prove two main theorems. The ``base'' unrestricted theorem, formulated as Theorem~\ref{thm:nreppa}, gives irreducible structure faithful coherent EPPA for the class of all finite $\GammaL$-structures, strengthening results of Herwig~\cite[Lemma 1]{herwig1998}, Hodkinson and Otto~\cite{hodkinson2003}, its coherent variant of Siniora and Solecki~\cite{Siniora}, and Evans, Hubi\v cka, and Ne\v set\v ril~\cite{Evans3}.
\begin{theorem}[Construction of an unrestricted EPPA-witness]
\label{thm:nreppa}
Let $L$ be a language consisting of relations and unary functions equipped with a permutation group $\GammaL$ and let $\str A$ be a finite $\GammaL$-structure. If $\str A$ lies in a finite orbit of the action of $\GammaL$ by relabelling, then there is a finite $\GammaL$-structure $\str B$, which is an irreducible structure faithful coherent EPPA-witness for $\str A$.

Consequently, the class of all finite $\GammaL$-structures has irreducible structure faithful coherent EPPA if every finite $\GammaL$-structure lies in a finite orbit of the action of $\GammaL$ by relabelling.
\end{theorem}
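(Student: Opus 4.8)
We first dispose of the ``Consequently'' clause: if every finite $\GammaL$-structure lies in a finite $\GammaL$-orbit, then the first part of the theorem, applied to each member of the class of all finite $\GammaL$-structures, produces for it an irreducible structure faithful coherent EPPA-witness, which is precisely the assertion that this class has irreducible structure faithful coherent EPPA. So the whole task is to construct, for a single finite $\GammaL$-structure $\str A$ lying in a finite $\GammaL$-orbit, a finite irreducible structure faithful coherent EPPA-witness $\str B$. Our plan is to do this by an explicit combinatorial construction, in the spirit of the ``valuation'' arguments of Hrushovski, Herwig and Hodkinson--Otto and of the functional refinement of Evans, Hubi\v cka and Ne\v set\v ril, merging the ingredients that make such witnesses simultaneously coherent (Siniora--Solecki) and irreducible structure faithful (Hodkinson--Otto).

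For the construction, let $\mathcal P$ be the set of partial automorphisms of $\str A$ --- isomorphisms between substructures of $\str A$, each carrying an element of $\GammaL$. We attach to $\str A$ a finite group $K$, built from $\mathcal P$ and from the finitely many isomorphism types of irreducible substructures of $\str A$ and carrying a compatible $\GammaL$-action, and take the vertex set of $\str B$ to be essentially $A\times K$, with $\str A$ identified with the fibre $A\times\{1_K\}$. Each relation and each unary function of $\str B$ is then defined \emph{locally}: a tuple is placed into $R^{\str B}$ only if its projection to $A$ lies in $R^{\str A}$ and its $K$-coordinates meet a consistency condition indexed by the irreducible substructure the tuple spans, and the value of a unary function at a vertex is the least set compatible with $\str A$ and with those $K$-coordinates; subset-valued functions we would handle by first replacing each unary function symbol $F$ by the binary relation ``$y\in F(x)$'', performing the relational construction, and then reading $F^{\str B}$ back off. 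Finiteness of $\str B$ comes precisely from the finite-orbit hypothesis, which bounds how many relabelled versions of the relations of $\str A$ ever enter; the entire construction must be carried out $\GammaL$-equivariantly.

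Four things then need checking. First, $\str A$ embeds into $\str B$ via $a\mapsto(a,1_K)$ --- here one verifies that the local conditions and the function values were taken rich enough. Second, every $p\in\mathcal P$ extends to $\widehat p\in\Aut(\str B)$: we let $\widehat p$ act on the first coordinate by $p$ (on the substructure where $p$ is defined) composed with a translation of the $K$-coordinate chosen to carry the local data of $p$; the point of building $K$ out of $\mathcal P$ is that such a translation exists and, being a $K$-translation, preserves the consistency conditions and hence $\str B$. Third, coherence: one checks $\widehat{qp}=\widehat q\,\widehat p$ whenever $p,q,qp\in\mathcal P$, which works out provided $K$ is presented so that the translations attached to the $p$'s multiply the way the $p$'s compose --- this is why one uses a Siniora--Solecki-style group rather than a na\"ive $\mathbb Z_2$-power. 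Fourth, irreducible structure faithfulness: if $\str C$ is an irreducible substructure of $\str B$, then locality forces its $K$-coordinates to be consistent in the strong sense, and this yields a $K$-translation $g\in\Aut(\str B)$ with $g(C)\subseteq A$.

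The main obstacle, we expect, is making all four of these compatible in one structure. The subset-valued unary functions are the delicate point: in the twisted product the function value $F^{\str B}(x)$ must be large enough for $\str A$ to embed and for partial automorphisms to extend, yet small and local enough for every irreducible substructure of $\str B$ to collapse into $A$, and all of this while keeping the extension map $p\mapsto\widehat p$ coherent --- the coherent choice of $K$ and the faithful choice of $K$ pull in somewhat different directions and must be reconciled, and the $\GammaL$-action has to be threaded through every step. We would therefore organise the argument as: (i) a purely relational, $\GammaL$-equivariant version of the construction using the coherent group $K$; (ii) a reduction of the functional case to it via the ``$y\in F(x)$'' coding plus a minimality argument for recovering functions; (iii) the faithfulness and coherence verifications; (iv) the finiteness argument from the finite-orbit hypothesis; (v) the one-line deduction of the ``Consequently'' clause.
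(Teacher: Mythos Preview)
What you have written is a programme, not a proof, and the paper's argument is organised quite differently. The paper does not build a single group $K$ and take $A\times K$; instead it proceeds in layers. First (Sections~\ref{sec:relstructures} and~\ref{sec:infinite_languages}) it constructs a coherent relational EPPA-witness using \emph{valuation functions} $\chi\colon U^A_n(x)\to\{0,1\}$ with a parity condition, where coherence comes not from group translations but from the elementary order-preserving-extension trick (Proposition~\ref{prop:setcoherence}). Then (Section~\ref{sec:functions}) it adds unary functions by replacing valuation functions with \emph{valuation structures}: copies of $\cl_{\str A}(y)$ sitting inside the relational witness, so that functions are defined directly on $\str B$ rather than coded relationally. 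Finally (Section~\ref{sec:faithful}) irreducible structure faithfulness is obtained by a further cover whose valuation functions assign to each bad irreducible substructure a value in $\{1,\ldots,|I|-1\}$. Theorem~\ref{thm:nreppa} is then just: apply Section~\ref{sec:functions} to get a coherent witness $\str B_0$, then apply Proposition~\ref{prop:faithful} to $\str B_0$.

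The concrete gap in your plan is step~(ii), the ``replace $F$ by the relation $y\in F(x)$ and read back'' manoeuvre. This does not interact well with substructures or with irreducibility. In the functional language, a substructure must be closed under $F$, and the closure $\cl_{\str A}(x)$ of a single vertex is always irreducible; once you pass to the relational coding, arbitrary subsets become substructures and those closures can become reducible (nothing forces the ``$y\in F(x)$'' edges to lie in a single factor of a free amalgamation). Hence an irreducible structure faithful EPPA-witness for the relational coding need not yield one for the original functional structure: an irreducible $\str C\subseteq\str B$ in the functional sense may project to something reducible relationally, and your $K$-translation argument gives you nothing. Nor does the ``minimality argument for recovering functions'' come for free: you must guarantee that in the relational witness every vertex has a unique candidate for $F^{\str B}(x)$ of the right shape, which is exactly what the paper's valuation-structure layer is designed to enforce. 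If you want to repair your approach you will need something that carries the closure data explicitly, at which point you are essentially reinventing the paper's valuation structures; the group-product picture with a single $K$ does not supply this on its own.
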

Here, the action of $\GammaL$ by relabelling is defined such that $g\in \GammaL$ sends a $\GammaL$-structure $\str A$ to a $\GammaL$-structure $\str A'$ on the same vertex set where the relations and functions are relabelled according to $g$ (see Definition~\ref{defn:relabelling}). Note that in particular if $\GammaL$ is finite (e.g. $\GammaL=\{\id_L\}$ or $L$ is finite), then every $\GammaL$-structure lies in a finite orbit of the action of $\GammaL$ by relabelling.

\medskip

After that, we provide a theorem which, given a finite irreducible structure faithful (coherent) EPPA-witness $\str B_0$ for $\str A$, produces a finite irreducible structure faithful (coherent) EPPA-witness $\str B$ for $\str A$ while providing extra control over the local structure of $\str B$:

\begin{theorem}[Construction of a restricted EPPA-witness]
\label{thm:maintree}
Let $L$ be a language consisting of relations and unary functions equipped with a permutation group $\GammaL$, let $\str A$ be a finite irreducible $\GammaL$-structure, let $\str{B}_0$ be a finite EPPA-witness for $\str{A}$ and let $n\geq 1$ be an integer. There is
a finite $\GammaL$-structure $\str{B}$ satisfying the following.
\begin{enumerate}
\item $\str B$ is an irreducible structure faithful EPPA-witness for $\str A$.
\item There is a homomorphism-embedding $\str B\to \str B_0$.
\item For every substructure $\str{C}$ of $\str{B}$ on at most
$n$ vertices there is a tree amalgamation $\str D$ of copies of $\str A$ and a homomorphism-embedding $f\colon \str C\to\str D$.
\item If $\str B_0$ is coherent then so is $\str B$.
\end{enumerate}
\end{theorem}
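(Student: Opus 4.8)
The plan is to build $\str B$ from $\str B_0$ by a \emph{tree-amalgamation/blow-up} construction, the standard device for adding local sparseness to an EPPA-witness without destroying the extension property. Concretely, I would first fix the EPPA-witness $\str B_0$ and consider a large but finite index set of ``coordinates'' built from the automorphisms of $\str B_0$ together with a bounded-width branching parameter governed by $n$; vertices of $\str B$ will be chosen as certain tuples (or equivalence classes of walks) recording a vertex of $\str B_0$ together with enough coordinate data that any two vertices lying in a common small substructure agree on a coordinate witnessing their ``closeness''. The relations and unary functions on $\str B$ are pulled back from $\str B_0$ along the obvious projection $\pi\colon\str B\to\str B_0$, which immediately gives the homomorphism-embedding of item~(2). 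The copy of $\str A$ inside $\str B$ is obtained from the copy of $\str A$ inside $\str B_0$ by making a canonical coordinate choice; here we use that $\str A$ is irreducible, so that the preimage structure over the copy of $\str A$ in $\str B_0$ genuinely contains an isomorphic copy of $\str A$ with no extra collapsing.

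Next I would verify EPPA. Every partial automorphism $\varphi$ of $\str A$ extends, by hypothesis on $\str B_0$, to some $g_0\in\Aut(\str B_0)$; the task is to lift $g_0$ to an automorphism $g$ of $\str B$. The coordinate system is designed precisely so that $\Aut(\str B_0)$ acts on the coordinate labels and $g$ is defined by letting it act diagonally on the $\str B_0$-component and on the coordinate data. One checks that this $g$ is a well-defined bijection (the combinatorial heart: the equivalence relation defining vertices of $\str B$ must be $\Aut(\str B_0)$-invariant) and that it preserves the pulled-back relations and functions because $\pi$ intertwines $g$ and $g_0$. The same diagonal recipe, applied to a coherent system of extensions in $\str B_0$, yields a coherent system of extensions in $\str B$, giving item~(4). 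Irreducible structure faithfulness (item~(1)) follows because any irreducible substructure $\str C$ of $\str B$ projects, via item~(3) and the tree structure, into a single copy of $\str A$: an irreducible $\str C$ cannot be spread across a nontrivial tree amalgamation, so it lives in one leaf, and an automorphism of $\str B$ (again built diagonally) can be arranged to push that leaf onto the distinguished copy of $\str A$.

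The main content is item~(3): controlling all substructures of $\str B$ on at most $n$ vertices. For this I would argue that the coordinate data forces any such $\str C$ to have a \emph{treelike} structure over $\str B_0$. Taking a minimal connected decomposition of $\str C$ into irreducible pieces (``bags'') glued along shared vertices, each bag maps by $\pi$ into a copy of $\str A$ in $\str B_0$; because the branching parameter was chosen $\geq$ the relevant function of $n$ and the number of orbits, distinct bags either coincide or are glued along an independent set, so the whole of $\str C$ is covered by a tree amalgamation $\str D$ of copies of $\str A$, with the gluing map a homomorphism-embedding $f\colon\str C\to\str D$. The delicate point — and the step I expect to be the main obstacle — is showing that two bags of $\str C$ whose images in $\str B_0$ overlap in more than an ``allowed'' intersection are in fact forced by the coordinate bookkeeping to be \emph{equal} as subsets of $\str B$; this is the no-short-cycles / acyclicity argument that makes the blow-up genuinely sparse and it requires choosing the coordinate index set with enough room (a valuation or ``distance'' function on coordinates) that a nontrivial cycle among $\leq n$ vertices would need more coordinate slots than exist. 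Once that is in place, assembling $\str D$ and $f$ is routine, and the four items follow as above.
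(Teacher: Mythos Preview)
Your proposal has the right high-level shape (a cover of $\str B_0$ with extra ``coordinate'' data, projection giving the homomorphism-embedding, diagonal lifting of automorphisms), but two concrete points are genuine gaps rather than just vagueness.

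First, your derivation of item~(1) from item~(3) is circular. Item~(3) only controls substructures on at most $n$ vertices, and $n$ is a given parameter, not something you may enlarge. A priori an irreducible substructure of $\str B$ could have more than $n$ vertices, so you cannot invoke the tree decomposition to place it inside a single copy of $\str A$. In the paper, irreducible structure faithfulness is obtained \emph{first} and \emph{separately}: one passes from $\str B_0$ to an irreducible structure faithful witness $\str B_1$ by a valuation-function construction indexed by the bad irreducible substructures of $\str B_0$ (Proposition~\ref{prop:faithful}), and this property is then preserved through the later steps. Only after faithfulness is secured does one know that every irreducible substructure of the final $\str B$ has size at most $\lvert A\rvert$.

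Second, the mechanism you sketch for item~(3) --- choosing a single coordinate set ``with enough room'' so that cycles among $\leq n$ vertices are impossible --- is not what the paper does and it is not clear it can be made to work in one shot. The paper's argument is \emph{iterative}. One proves a lemma (Lemma~\ref{lem:sparsen}) which, via valuation functions on bad induced cycle sequences in an auxiliary edge relation $E$, produces a cover $\str B'\to\str B$ with the following trichotomy for every subset $C$ of $\str B'$: either $C$ has no induced $E$-cycle of length $\geq 4$, or $\pi\restriction_C$ is non-injective, or $\pi(C)$ has strictly more $E$-edges than $C$. Applying this lemma $N=(n-1)\binom{n}{2}+1$ times and tracking the image of a fixed $n$-vertex substructure $\str C$ down the tower, a counting argument shows that at some level the image must be cycle-free (the other two alternatives can occur only boundedly many times). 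A graph-theoretic lemma (Lemma~\ref{lem:cuts}) then converts ``no long induced cycles and all irreducibles embed in $\str A$'' into ``is a substructure of a tree amalgamation of copies of $\str A$''. The added relation $E$ (put as a clique on $\str A$) is a technical device that makes the cycle argument available even when $L$ has no binary relation; you do not mention anything of this kind.

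In short: separate the proof of faithfulness from the tree-structure argument, and replace the single blow-up by the iterated cycle-unwinding with the counting trichotomy.
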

Here, a \emph{tree amalgamation of copies of $\str A$} is any structure which can be created by a series of free amalgamations of copies of $\str A$ over its substructures (see Definition~\ref{defn:tree-amalgamation}).

\medskip

Theorem~\ref{thm:nreppa} contains the condition that $\str A$ needs to lie in a finite orbit of the action of $\GammaL$ by relabelling. We prove that this is in fact necessary:
\begin{theorem}\label{thm:negative}
Let $L$ be a language equipped with a permutation group $\GammaL$ and let $\str A$ be a finite $\GammaL$-structure. If $\str A$ lies in an infinite orbit of the action of $\GammaL$ by relabelling then there is no finite $\GammaL$-structure $\str B$ which is an EPPA-witness for $\str A$.
\end{theorem}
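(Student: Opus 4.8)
The plan is to argue by contradiction: assume $\str B$ is a finite $\GammaL$-structure that is an EPPA-witness for $\str A$, and derive from it a bound on the orbit of $\str A$ under relabelling, contradicting infiniteness. The key observation is that $\Aut(\str B)$ is a finite group whose elements are pairs $(g,\sigma)$ consisting of a vertex permutation $g$ of $B$ and a language permutation $\sigma\in\GammaL$; the projection to the second coordinate is a homomorphism onto a finite subgroup $H\le\GammaL$. First I would note that since every partial automorphism of $\str A$ extends to a full automorphism of $\str B$, in particular the identity on $A$ (which is certainly a partial automorphism) extends, but more importantly every automorphism of $\str A$ itself (using a vertex permutation paired with some $\sigma\in\GammaL$) is realized inside $\Aut(\str B)$; however the crucial point is subtler and concerns which \emph{relabellings} of $\str A$ appear as substructures of $\str B$.

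The heart of the argument is this. Because $\str A$ embeds in $\str B$, every image $h(\str A)$ for $h=(g,\sigma)\in\Aut(\str B)$ is again a substructure of $\str B$ isomorphic to $\str A$ \emph{as a $\GammaL$-structure}, but as a labelled structure on the vertex set $g(A)$ it is the relabelling of (an isomorphic copy of) $\str A$ by $\sigma$. Since $B$ is finite, there are only finitely many subsets of $B$ of size $|A|$, and on each such subset only finitely many $\GammaL$-structures in the given isomorphism type — wait, that last clause is exactly what is at stake, so instead I would argue directly on the level of isomorphism types: the set of $\GammaL$-structures on the fixed vertex set $A$ that are \emph{abstractly isomorphic} to $\str A$ and also \emph{embed into $\str B$} is finite, simply because $\str B$ has finitely many substructures altogether and each carries only the finitely many labelled structures one actually sees inside $\str B$. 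Thus if we can show that the relabelling action of all of $\GammaL$ on $\str A$ stays within this finite set, we are done. This is where the extension property is used in full strength: given any $\sigma\in\GammaL$, the relabelled structure $\sigma(\str A)$ (on vertex set $A$) is abstractly isomorphic to $\str A$; I claim the abstract isomorphism $\str A\to\sigma(\str A)$, viewed as a partial automorphism of $\str A$ — more precisely, one exhibits it as a partial automorphism by taking the appropriate pair with language part $\sigma$ — extends to an automorphism of $\str B$, forcing $\sigma(\str A)$ to embed in $\str B$.

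The main obstacle, and the step requiring care, is the bookkeeping of morphisms as pairs (vertex map, language permutation): I must check that a relabelling of $\str A$ by $\sigma\in\GammaL$ genuinely gives rise to an isomorphism of substructures of $\str A$ in the generalized sense of Section~\ref{sec:background} (so that EPPA applies to it), that extending this to $\Aut(\str B)$ indeed produces an automorphism whose language part is $\sigma$ — using that $\str A$ is a substructure of $\str B$ so the language part of any extending automorphism is determined by its action on the relations and functions witnessed on $A$ — and hence that the orbit map $\sigma\mapsto \sigma(\str A)$ factors through the (finite) projection $\Aut(\str B)\to\GammaL$. Once this factorization is established, the orbit of $\str A$ under $\GammaL$-relabelling has size at most $|\Aut(\str B)|$, which is finite, contradicting the hypothesis. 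I would then conclude that no finite EPPA-witness $\str B$ can exist.
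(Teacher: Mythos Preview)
Your overall strategy is right and matches the paper's: show that every relabelling $\sigma\str A$ must embed (with trivial language part) as a substructure of $\str B$, and then bound the number of such embeddings by a finite quantity depending only on $|A|$ and $|B|$. However, the step you yourself flag as ``the main obstacle'' is a genuine gap, and it cannot be resolved in the way you suggest. You want the isomorphism $(\sigma,\id_A)\colon \str A\to\sigma\str A$ to serve as a partial automorphism of $\str A$, but a partial automorphism must be an isomorphism between two \emph{substructures} of $\str A$. The domain $\str A$ is fine; the codomain $\sigma\str A$, however, is a different structure on the same vertex set and is typically \emph{not} a substructure of $\str A$---indeed, if it always were, the relabelling orbit would be trivial. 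So $(\sigma,\id_A)$ is not a partial automorphism of $\str A$ in general, and there is nothing for EPPA to extend.

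The paper's fix is much simpler than the bookkeeping you anticipate: use the partial automorphism $(\sigma,\emptyset)$, i.e.\ the empty vertex map together with language part $\sigma$. The empty structure is a substructure of $\str A$, and $(\sigma,\emptyset)$ is trivially an isomorphism of it to itself, so EPPA gives an automorphism $(\sigma,\theta)$ of $\str B$ with that language part. Restricting to the copy of $\str A$ yields an embedding $(\sigma,\theta|_A)\colon\str A\to\str B$, which factors as $(\id_L,\theta|_A)\circ(\sigma,\id_A)$; hence $(\id_L,\theta|_A)$ is an embedding $\sigma\str A\to\str B$ with trivial language part. Distinct relabellings $\sigma\str A$ then force distinct tuples in $B^{|A|}$, and that set is finite.

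One side remark: your opening claim that $\Aut(\str B)$ is finite is false when $L$ is infinite (take $\str B$ with all relations empty and $\GammaL$ infinite), so the bound ``at most $|\Aut(\str B)|$'' in your last paragraph would not give a contradiction. Fortunately you do not actually need it; the embedding-counting argument suffices.
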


We can combine Theorems~\ref{thm:nreppa} and~\ref{thm:negative} with an easy observation used earlier~\cite{hodkinson2003, Evans3, Siniora2} and characterize free amalgamation classes of finite $\GammaL$-structures which have (irreducible structure faithful coherent) EPPA, provided that all functions in the language are unary. The following theorem strengthens results of Hodkinson and Otto~\cite{hodkinson2003}, Evans, Hubi\v cka, and Ne\v set\v ril~\cite{Evans3}, and Siniora~\cite{Siniora2}, and in particular implies irreducible structure faithful coherent EPPA for the class of all graphs, $K_n$-free graphs or $k$-regular hypergraphs.

\begin{corollary}\label{cor:free}
Let $L$ be a language consisting of relations and unary functions equipped with a permutation group $\GammaL$ and let $\mathcal K$ be a free amalgamation class of finite $\GammaL$-structures. Then $\mathcal K$ has EPPA if and only if 
every $\str A\in \mathcal K$ lies in a finite orbit of the action of $\GammaL$ on $\GammaL$-structures by relabelling.
Moreover, if $\mathcal K$ has EPPA, then it has irreducible structure faithful coherent EPPA.
\end{corollary}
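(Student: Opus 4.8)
The plan is to obtain the corollary by combining Theorems~\ref{thm:nreppa} and~\ref{thm:negative} with the standard observation that membership in a free amalgamation class is detected by irreducible substructures; concretely, I would first establish that \emph{a finite $\GammaL$-structure $\str C$ all of whose irreducible substructures lie in $\mathcal K$ is itself in $\mathcal K$}. This follows by induction on $|C|$: if $\str C$ is irreducible there is nothing to prove, and otherwise $\str C$ is the free amalgamation of two proper substructures $\str C_1, \str C_2$ over $\str C_0 = \str C_1 \cap \str C_2$; since irreducibility is an intrinsic property of a structure, every irreducible substructure of $\str C_i$ is also an irreducible substructure of $\str C$ and hence lies in $\mathcal K$, so the induction hypothesis gives $\str C_1, \str C_2 \in \mathcal K$ and closure of $\mathcal K$ under free amalgamation gives $\str C \in \mathcal K$ (the overlap $\str C_0$ is automatically in $\mathcal K$, being a substructure of $\str C_1$).

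For the ``only if'' direction I would argue by contraposition using Theorem~\ref{thm:negative}: if some $\str A \in \mathcal K$ lies in an infinite orbit of the relabelling action, then no finite $\GammaL$-structure is an EPPA-witness for $\str A$, in particular no member of $\mathcal K$ is, so $\mathcal K$ does not have EPPA.

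For the ``if'' direction and the ``moreover'' clause at once, I would fix $\str A \in \mathcal K$. By assumption $\str A$ lies in a finite orbit of the relabelling action, so Theorem~\ref{thm:nreppa} supplies a finite $\GammaL$-structure $\str B$ that is an irreducible structure faithful coherent EPPA-witness for $\str A$, and it only remains to check $\str B \in \mathcal K$. Given an irreducible substructure $\str C$ of $\str B$, irreducible structure faithfulness yields $g \in \Aut(\str B)$ with $g(C) \subseteq A$; hence $\str C$ is isomorphic to the substructure of $\str A$ induced on $g(C)$ and so lies in $\mathcal K$. By the observation from the first paragraph, $\str B \in \mathcal K$, so $\mathcal K$ has irreducible structure faithful coherent EPPA.

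The one step I expect to need genuine care --- and thus the main obstacle --- is setting up the ``detected by irreducible substructures'' observation in the full generality of $\GammaL$-structures with relations and unary functions: checking that an irreducible substructure of a substructure remains irreducible and that a free amalgamation of members of $\mathcal K$ with common substructure stays in $\mathcal K$ once the relevant notions are unwound from Section~\ref{sec:background}. Everything else is a direct application of the two theorems; in particular, Theorem~\ref{thm:maintree} is not needed for this corollary.
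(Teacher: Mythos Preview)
Your proposal is correct and follows essentially the same approach as the paper. The paper isolates your first-paragraph observation as Observation~\ref{obs:free} and proves it by taking a minimal substructure not in $\mathcal K$ rather than by induction on $|C|$, but this is the same argument in contrapositive form; the ``only if'' and ``if/moreover'' directions are then derived exactly as you describe, from Theorems~\ref{thm:negative} and~\ref{thm:nreppa} respectively, with no use of Theorem~\ref{thm:maintree}.
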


\medskip

We also provide two corollaries of Theorem~\ref{thm:maintree}, which might be easier to apply in some cases. The first corollary is a direct strengthening of the Herwig--Lascar theorem~\cite[Theorem~3.2]{herwig2000} and its coherent variant of Solecki and Siniora~\cite[Theorem~1.10]{Siniora}.
For a set $\mathcal F$ of $\GammaL$-structures, we denote by $\Forb(\mathcal F)$ the set of all finite and countable $\GammaL$ structures $\str A$ such that there is no $\str F\in\mathcal F$ with a homomorphism-embedding $\str F\to \str A$.

\begin{theorem}
\label{thm:main}
Let $L$ be a language consisting of relations and unary functions equipped with a permutation group $\GammaL$.
Let $\mathcal F$ be a finite family of finite $\GammaL$-structures and let $\str{A}\in \Forb(\mathcal F)$ be a finite $\GammaL$-structure which lies in a finite orbit of the action of $\GammaL$ by relabelling.  If there exists a (not necessarily finite) structure $\str{M}\in \Forb(\mathcal F)$ containing $\str{A}$ as a substructure such that each partial automorphism of $\str{A}$ extends to an automorphism of $\str{M}$, then there exists a finite structure $\str{B}\in \Forb(\mathcal F)$ which is an irreducible structure faithful coherent EPPA-witness for $\str{A}$.
\end{theorem}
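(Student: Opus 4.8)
The plan is to derive Theorem~\ref{thm:main} as a corollary of Theorem~\ref{thm:maintree} by reducing to the case where $\str A$ is irreducible and then using the forbidden structures $\mathcal F$ to control the local structure of the output. First I would reduce to the irreducible case: if $\str A$ is not irreducible, decompose it as a free amalgamation of its irreducible substructures; since $\mathcal F$ is a finite family and a homomorphism-embedding $\str F\to\str A$ would have to factor through one of the pieces only if $\str F$ is itself "connected enough", one has to be a little careful here, but the standard trick (used in~\cite{herwig2000, hodkinson2003, Evans3}) is to first pass to an EPPA-witness for the irreducible pieces and then observe that a free-amalgamation-style combination glues them back. Concretely, I would feed each irreducible substructure of $\str A$ into the argument and then reconstruct an EPPA-witness for $\str A$ itself. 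Alternatively, and more cleanly, I would note that $\str M\in\Forb(\mathcal F)$ restricted appropriately already witnesses that partial automorphisms of each irreducible piece extend, so WLOG $\str A$ is irreducible and we have a (not necessarily finite) EPPA-witness $\str M\in\Forb(\mathcal F)$.

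Next, the main step: I want a \emph{finite} EPPA-witness $\str B_0$ for $\str A$ to plug into Theorem~\ref{thm:maintree}. This is exactly what Theorem~\ref{thm:nreppa} provides — since $\str A$ lies in a finite orbit of the $\GammaL$-relabelling action, Theorem~\ref{thm:nreppa} gives a finite irreducible structure faithful coherent EPPA-witness $\str B_0$ for $\str A$ in the class of \emph{all} finite $\GammaL$-structures. (We do not yet know $\str B_0\in\Forb(\mathcal F)$; that is the whole point of the next step.) Now apply Theorem~\ref{thm:maintree} with this $\str B_0$ and with $n=\max_{\str F\in\mathcal F}|F|$ to obtain a finite $\GammaL$-structure $\str B$ that is an irreducible structure faithful (and, since $\str B_0$ is coherent, coherent) EPPA-witness for $\str A$, admits a homomorphism-embedding $\str B\to\str B_0$, and has the property that every substructure $\str C$ of $\str B$ on at most $n$ vertices admits a homomorphism-embedding into a tree amalgamation $\str D$ of copies of $\str A$.

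The crux is then to show $\str B\in\Forb(\mathcal F)$, i.e.\ that no $\str F\in\mathcal F$ has a homomorphism-embedding into $\str B$. Suppose $f\colon\str F\to\str B$ is a homomorphism-embedding; since $f$ is injective on vertices, its image $\str C=f(\str F)$ is a substructure of $\str B$ on at most $n$ vertices. By property (3) of Theorem~\ref{thm:maintree}, there is a homomorphism-embedding $g\colon\str C\to\str D$ where $\str D$ is a tree amalgamation of copies of $\str A$. Composing, $g\circ f\colon \str F\to\str D$ is a homomorphism-embedding into $\str D$. But $\str D$ is built by free amalgamation from copies of $\str A\in\Forb(\mathcal F)$, and $\Forb(\mathcal F)$ is closed under free amalgamation (a homomorphism-embedding from a structure into a free amalgamation, restricted to the connected/irreducible components, lands in one of the two factors up to the standard argument — this is exactly where irreducibility of $\str A$ and the definition of tree amalgamation are used, and is the analogue of the "no forbidden homomorphism appears in a tree amalgamation" lemma of~\cite{herwig2000}), so $\str D\in\Forb(\mathcal F)$, contradicting the existence of $g\circ f$. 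Hence no such $\str F$ exists and $\str B\in\Forb(\mathcal F)$, completing the proof.

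I expect the main obstacle to be the closure of $\Forb(\mathcal F)$ under tree amalgamation — more precisely, verifying that a homomorphism-embedding of an $\str F\in\mathcal F$ into a tree amalgamation of copies of $\str A$ forces $\str F$ (or rather its image) to be accommodated within a single copy of $\str A$, hence giving a homomorphism-embedding $\str F\to\str A$ and contradicting $\str A\in\Forb(\mathcal F)$. This requires the right structural lemma about tree amalgamations (that an irreducible substructure of a tree amalgamation lives in one of the amalgamated copies, and that a general homomorphism-embedding decomposes along the tree structure), which should have been established alongside Definition~\ref{defn:tree-amalgamation}; modulo that lemma, everything else is bookkeeping. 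The reduction from general $\str A$ to irreducible $\str A$ is a second, milder obstacle, handled by the standard decomposition-and-reassembly argument.
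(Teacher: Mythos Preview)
Your overall plan (obtain $\str B_0$ from Theorem~\ref{thm:nreppa}, apply Theorem~\ref{thm:maintree} with $n=\max_{\str F\in\mathcal F}|F|$, then argue $\str B\in\Forb(\mathcal F)$) matches the paper's. However, the crux step has a genuine gap.

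You claim that $\Forb(\mathcal F)$ is closed under tree amalgamation, so that a homomorphism-embedding $\str F\to\str D$ (with $\str D$ a tree amalgamation of copies of $\str A$) forces a homomorphism-embedding $\str F\to\str A$. This is false when $\str F$ is reducible. For a concrete counterexample, take $L=\{R\}$ binary, $\str F$ the directed path $a\to b\to c$, and $\str A$ a single directed edge $x\to y$. Then $\str A\in\Forb(\{\str F\})$, yet the tree amalgamation of two copies of $\str A$ identifying $y$ with $x$ is exactly $\str F$. Your argument never touches the hypothesis on $\str M$, and would therefore prove the theorem without that hypothesis; but in the example just given, no such $\str M$ exists (any automorphism of $\str M$ sending $x\mapsto y$ would create a directed path of length $2$ in $\str M$). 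So the $\str M$ hypothesis is not decorative.

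What the paper actually does at this point is Lemma~\ref{lem:infinitecopies}: one builds, by induction along the tree, a homomorphism-embedding $\str D^-\to\str M$, using at each amalgamation step an automorphism of $\str M$ (extending a partial automorphism of $\str A$) to align the two overlapping copies of $\str A$. Then $\str F\to\str D^-\to\str M$ contradicts $\str M\in\Forb(\mathcal F)$. This is where $\str M$ enters, and it cannot be replaced by the purely combinatorial closure statement you propose.

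A secondary point: your reduction to irreducible $\str A$ is both vague and unnecessary. The paper does not decompose $\str A$; it instead expands the language by a fresh binary symmetric relation $E$ fixed by $\GammaL$, sets $E_{\str A^+}$ to be the complete graph on $A$, and works with $\str A^+$ (which is trivially irreducible and still lies in a finite $\GammaL$-orbit). After running Theorem~\ref{thm:maintree} on $\str A^+$, one takes the $L$-reduct $\str B^-$; irreducible structure faithfulness and coherence survive, and the $\Forb(\mathcal F)$ check is done on $\str B^-$ via the lemma above. Finally, note that a homomorphism-embedding need not be injective on vertices (only on irreducible substructures), so your ``since $f$ is injective on vertices'' is inaccurate, though harmless for the size bound.
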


Hubi\v cka and Ne\v set\v ril~\cite{Hubicka2016} gave a structural condition for a class to be Ramsey. It turns out that, in papers studying Ramsey expansions of various classes using their theorem, EPPA is sometimes an easy corollary of one of the intermediate steps, see e.g.~\cite{Aranda2017, Aranda2017a, Aranda2017c, Konecny2018b}. In this paper, we make this link explicit by proving a theorem on EPPA whose statement is very similar to~\cite[Theorem~2.18]{Hubicka2016}. For the definition of a locally finite automorphism-preserving subclass, see Section~\ref{sec:ramsey}.

\begin{theorem}
\label{thm:mainstrong}
Let $L$ be a language consisting of relations and unary functions equipped with a permutation group $\GammaL$ and let $\mathcal E$ be a class of finite $\GammaL$-structures which has EPPA.
Let $\K$ be a hereditary 
locally finite automorphism-preserving subclass of $\mathcal E$ with the strong amalgamation property which consists of irreducible structures. Then $\K$ has EPPA.  

Moreover, if EPPA-witnesses in $\mathcal E$ can be chosen to be coherent then EPPA-witnesses in $\K$ can be chosen to be coherent, too.
\end{theorem}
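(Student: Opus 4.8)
The plan is to derive Theorem~\ref{thm:mainstrong} from Theorem~\ref{thm:maintree}, using the EPPA-witness supplied by $\mathcal E$ as the target structure $\str B_0$ of that theorem and the local finiteness of $\K$ in $\mathcal E$ to bound the size of substructures one must examine. Fix $\str A\in\K$. Since $\K$ consists of irreducible structures, $\str A$ is irreducible, so Theorem~\ref{thm:maintree} applies to it. First I would use that $\mathcal E$ has EPPA to obtain a finite $\str B_0\in\mathcal E$ which is an EPPA-witness for $\str A$ (chosen coherent whenever $\mathcal E$ admits coherent EPPA-witnesses). Next I would read off from the definition of a locally finite automorphism-preserving subclass of $\mathcal E$ (Section~\ref{sec:ramsey}), applied to $\str B_0$, an integer $n\geq 1$ such that every $\GammaL$-structure admitting a homomorphism-embedding to $\str B_0$ all of whose substructures on at most $n$ vertices have a completion to $\K$ itself has a completion to $\K$, and moreover this completion can be chosen so that every automorphism of the original structure extends to an automorphism of the completion. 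Applying Theorem~\ref{thm:maintree} to $\str A$, $\str B_0$ and this $n$ then yields a finite $\GammaL$-structure $\str B'$ which is an irreducible structure faithful EPPA-witness for $\str A$, admits a homomorphism-embedding $\str B'\to\str B_0$, is coherent whenever $\str B_0$ is, and each of whose substructures on at most $n$ vertices admits a homomorphism-embedding into some tree amalgamation of copies of $\str A$.

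The crux is the following claim: every tree amalgamation $\str D$ of copies of $\str A$ admits a homomorphism-embedding into a member of $\K$, and hence has a completion to $\K$. Indeed, $\str D$ is obtained by iterated free amalgamations of copies of $\str A$ over substructures; each amalgamated substructure lies in $\K$ by heredity, so the amalgamation property lets us replace each free amalgamation step by an amalgam that lies in $\K$ and, after passing to the substructure on the pushout vertex set, is defined on the same vertices as the free amalgam. Since an irreducible substructure never crosses a free amalgamation, every irreducible substructure of $\str D$ is contained in a single copy of $\str A$, on which $\str D$ and the $\K$-amalgam agree; therefore the identity map is a homomorphism-embedding of $\str D$ into the $\K$-structure, and iterating through the tree proves the claim. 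Combining the claim with clause~(3) of Theorem~\ref{thm:maintree}, every substructure of $\str B'$ on at most $n$ vertices has a completion to $\K$; as $\str B'\to\str B_0$ is a homomorphism-embedding, the choice of $n$ supplies a structure $\str B\in\K$ which is a completion of $\str B'$ and in which every automorphism of $\str B'$ extends to an automorphism of $\str B$.

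It remains to verify that $\str B$ is the required witness. As $\str A$ is irreducible, the homomorphism-embedding $\str B'\to\str B$ restricts to an embedding of $\str A$, so (identifying $\str A$ with its image) $\str B$ contains $\str A$ as a substructure. Every partial automorphism of $\str A$ extends to an automorphism of $\str B'$ because $\str B'$ is an EPPA-witness for $\str A$, and it then further extends to an automorphism of $\str B$; hence $\str B\in\K$ is an EPPA-witness for $\str A$, and if the extensions for $\str B'$ were chosen coherently the induced family witnesses coherence for $\str B$. Irreducible structure faithfulness of $\str B$ follows from that of $\str B'$ together with the facts that $\K$ is hereditary, consists of irreducible structures, and that $\str B'\to\str B$ is a homomorphism-embedding compatible with the extensions of automorphisms. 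The step I expect to be the main obstacle is the absorption of tree amalgamations of the irreducible structure $\str A$ into $\K$ using only the ordinary (not free) amalgamation property, and dovetailing this with the right reading of ``locally finite automorphism-preserving'' so that the combinatorial parameter $n$ of Theorem~\ref{thm:maintree} is exactly the local-finiteness bound attached to $\str B_0$; the remaining bookkeeping (preservation of the EPPA-witness property, of the embedded copy of $\str A$, of coherence, and of irreducible structure faithfulness under the completion) is then routine but must be checked carefully.
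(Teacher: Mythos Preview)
Your proposal is correct and follows essentially the same route as the paper: obtain $\str B_0\in\mathcal E$, invoke Theorem~\ref{thm:maintree} with the local-finiteness parameter $n=n(\str A,\str B_0)$, show that tree amalgamations of copies of $\str A$ admit completions in $\K$ (the paper isolates this step as Observation~\ref{obs:tree-amalgamation_completion}), and then apply the automorphism-preserving completion to the resulting witness. One minor remark: your closing claim about irreducible structure faithfulness of the completed witness $\str B$ is not part of the theorem statement and is not obvious, since irreducible substructures of $\str B$ need not arise from irreducible substructures of $\str B'$; you can safely drop it.
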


\subsection{Applications of our results}\label{subsec:applications}
When proving EPPA for (some of) the antipodal classes of metrically homogeneous graphs in~\cite{Aranda2017}, an additional ad hoc layer was added on top of an application of the Herwig--Lascar theorem to ensure that edges of length $\delta$ form a matching~\cite[Theorem 7.7]{Aranda2017}. Another ad hoc layer was needed in the same paper for the bipartite classes~\cite[Theorem 6.13]{Aranda2017}. These ad hoc constructions can be avoided using the main theorem of this paper, adding unary functions to represent edges of length $\delta$ for the antipodal classes, or adding two unary predicates which can be swapped by $\GammaL$ to describe the partition for the bipartite classes.

When proving EPPA for the antipodal classes of odd diameter and the bipartite antipodal classes of even diameter of metrically homogeneous graphs in~\cite{Konecny2019a}, the full strength of Theorem~\ref{thm:mainstrong} (from an early draft of this paper) was used: One needed unary functions to represent that edges of length $\delta$ form a matching, control over substructures to ensure that no non-metric cycles are created, and language permutations to generalise a construction from~\cite{eppatwographs}.

Section~\ref{sec:applications} of this paper is devoted to applications. In particular, we outline how to further strengthen our results to languages with constants or certain non-unary functions (see Theorems~\ref{thm:constants} and~\ref{thm:nonunary}), and we prove that a class connected to Hrushovski's predimension construction has EPPA (see Theorem~\ref{thm:DF}). The two latter proofs use a general method for dealing with higher-arity functions by chaining several applications of Theorem~\ref{thm:maintree} on top of each other using language permutations.

We are confident that there are many more applications of the main theorems of this paper to be discovered.

\subsection{EPPA and Ramsey}
The results and techniques of this paper are motivated by recent developments
of the structural Ramsey theory, particularly the efforts to characterise Ramsey classes
of finite structures. As this paper demonstrates, many techniques and proof strategies from structural Ramsey theory may serve
as a motivation for results about EPPA classes.  
We were inspired by the scheme of proofs of corresponding Ramsey results in~\cite{Hubicka2016},
by the construction of clique faithful EPPA-witnesses for relational structures given by Hodkinson and Otto~\cite{hodkinson2003}, by the treatment of unary function in~\cite{Evans3}, and by the recent proofs of EPPA for metric spaces~\cite{Hubicka2018metricEPPA} and for two-graphs~\cite{eppatwographs}.

In each section, we fix a $\GammaL$-structure $\str A$ and give an explicit construction of a $\GammaL$-structure $\str B$ and an embedding $\str A\to \str B$. Then, given a partial automorphism of $\str A$, we show how to construct an automorphism of $\str B$ extending it, that is, we prove that $\str B$ is an EPPA-witness for $\str A$. Finally, we prove that $\str B$ has the given special properties (e.g. irreducible structure faithfulness, or control over small substructures) and that the extension is coherent. Usually, the constructions are the interesting part and the proofs are just verification that a function is an automorphism and that it composes correctly.

While all this may be surprising on the first glance
and it is one of the novelties of this paper, we want to stress at this point some of the main differences between EPPA and Ramsey.
(Further open problems will be in Section~\ref{sec:conclusion}.)

Both EPPA and the Ramsey property imply the amalgamation property (see Observation~\ref{obs:eppaamalgamation}
and~\cite{Nevsetril2005}) and have strong consequences for the \Fraisse{} limits. Nonetheless,
not every amalgamation class has EPPA or the Ramsey property. While there is a meaningful
conjecture motivating the classification program of Ramsey classes (see \cite{Hubicka2016,Bodirsky2011a}),
for the classification of EPPA classes this is not yet the case.

The classification programme for EPPA classes was initiated in~\cite{Evans2,Evans3} by giving examples of classes with a non-trivial EPPA expansion. (See also the survey by the first author~\cite{hubika2020structural}.)
There exist many classes which have a non-trivial Ramsey expansion but fail to have a non-trivial EPPA expansion. Examples include the class of all finite linear orders or the class of all finite finite partially ordered sets. On the other hand, to the author's best knowledge, whenever a Ramsey expansion of an EPPA class is known, the expansion only adds a ``small amount of information'' (compared to what is promised for $\omega$-categorical structures by~\cite[Theorem 4.5]{Kechris2012}).

The correspondence between the structural conditions for EPPA and Ramsey classes then motivates the following conjecture.
\begin{conjecture}
Every strong amalgamation class with EPPA has a precompact Ramsey expansion.
\end{conjecture}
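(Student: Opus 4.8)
This is a conjecture, so what follows is a proposed programme rather than a proof; the natural plan is to transport to the EPPA side the strategy that has been used to locate Ramsey expansions via the structural theorem of Hubi\v{c}ka and Ne\v{s}et\v{r}il, exploiting the ``EPPA $\leftrightarrow$ Ramsey'' dictionary made explicit by Theorem~\ref{thm:mainstrong} (to be compared with~\cite[Theorem~2.18]{Hubicka2016}). Let $\mathcal C$ be a class with EPPA; since EPPA implies the amalgamation property (Observation~\ref{obs:eppaamalgamation}), we may assume $\mathcal C$ is the age of a \Fraisse{} limit $\str K$. \textbf{Step 1 (ambient class).} The first step is to embed $\mathcal C$ into a class $\mathcal E$ of finite $\GammaL$-structures which both has EPPA and admits a \emph{precompact} Ramsey expansion $\mathcal E^+$. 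For the first property, take $\GammaL=\{\id_L\}$ and let $\mathcal E$ be the class of \emph{all} finite $\GammaL$-structures in a language coding $\str K$ together with symbols for its algebraic closure (possible in the present setting, where functions may have set-valued ranges): this has irreducible structure faithful coherent EPPA by Theorem~\ref{thm:nreppa}. For the second, adjoin a generic linear order; the appropriate class of finite ordered relational structures is Ramsey by the Ne\v{s}et\v{r}il--R\"{o}dl theorem, and adding finitely many order symbols per structure keeps the expansion precompact.

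\textbf{Step 2 (transfer).} After closing $\mathcal C$ under algebraic closure and adjoining the order, the aim is to present the resulting class as a hereditary, automorphism-preserving, locally finite subclass $\K^+$ of $\mathcal E^+$ consisting of irreducible structures. The Ramsey transfer theorem~\cite[Theorem~2.18]{Hubicka2016} then yields that $\K^+$ is Ramsey, while Theorem~\ref{thm:mainstrong} simultaneously confirms that $\K^+$ stays inside the EPPA world. Finally $\K^+$ is a precompact expansion of $\mathcal C$: a finite member of $\mathcal C$ has only finitely many linear orders, and, by the boundedness of closures forced by EPPA --- morally the content of irreducible structure faithfulness and of Theorem~\ref{thm:negative} --- only finitely many compatible closure-expansions. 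Essentially equivalently, by the Kechris--Pestov--Todorcevic correspondence, Step~2 amounts to showing that EPPA of $\mathcal C$ forces the universal minimal flow of $\Aut(\str K)$ to be metrizable.

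\textbf{The obstacle.} The difficulty is concentrated entirely in Step~2: there is no known reason why an arbitrary EPPA class must sit inside a Ramsey-expandable ambient class as a locally finite automorphism-preserving subclass. Verifying the ``automorphism-preserving'' and ``locally finite'' conditions requires a good handle on the algebraic closure operator of $\str K$; EPPA does force closures to be finite and, in a uniform sense, bounded (Theorem~\ref{thm:negative} is a symptom of this), but converting this into the precise combinatorial hypotheses of~\cite[Theorem~2.18]{Hubicka2016} --- in particular, excluding the phenomena that force some amalgamation classes to admit only non-precompact Ramsey expansions --- is exactly what is missing. A realistic first target is therefore to establish the conjecture under the additional hypothesis that $\mathcal C$ has coherent EPPA, or that $\str K$ is a reduct of a structure homogeneous in a finite relational language, where the programme above can plausibly be carried through.
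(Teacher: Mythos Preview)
The paper does not prove this statement; it is stated as an open conjecture, and the paper immediately lists instances where it is unresolved (the class of all finite groups and the class of all finite skew-symmetric bilinear forms). There is therefore no paper proof to compare your proposal against, and you are right to present a programme rather than a proof and to locate the genuine difficulty in Step~2.

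A few corrections to the programme itself. First, your invocation of Theorem~\ref{thm:negative} is off: that theorem concerns finiteness of the orbit of $\str A$ under the relabelling action of $\GammaL$, which is a condition on the language permutation group, not on algebraic closures; it does not say anything about closures being bounded, and in the standard case $\GammaL=\{\id_L\}$ it is vacuous. More generally, the assertion that ``EPPA forces closures to be finite and, in a uniform sense, bounded'' is not established anywhere in the paper and is not obviously true; EPPA gives a finite witness for each $\str A$, but no a priori uniform control on $\lvert\cl_\str{B}(x)\rvert$ across all $\str A$. Second, the Ne\v set\v ril--R\"odl input in Step~1 covers relational languages; once you add closure operators or set-valued functions to code algebraic closure, you are outside that theorem's scope and need~\cite{Hubicka2016} already at the level of $\mathcal E^+$, so Step~1 is not as automatic as you suggest. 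Finally, any plausible programme should be tested against the open cases the paper names: for finite groups the ambient class $\mathcal E$ you propose (all structures in the group language) is not obviously Ramsey-expandable precompactly, and the locally-finite-subclass presentation of the class of finite groups inside it is precisely what nobody knows how to do. Your identification of the obstacle is correct, but the surrounding scaffolding overstates what the results of this paper deliver.
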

(See Section~\ref{sec:amalg} for a definition of strong amalgamation and for example~\cite{NVT14} for a definition of a precompact expansion.) Note that this conjecture implies every $\omega$-categorical structure with EPPA having a precompact Ramsey expansion.
Classes known to have EPPA where it is not known if they have a precompact Ramsey expansion
include the class of all finite groups~\cite{Siniora2,paolini2018automorphism} and the class of all finite skew-symmetric bilinear forms\footnote{David M. Evans, personal communication. See also~\cite{Cherlin2003}.}. More open problems are listed in Section~\ref{sec:conclusion}.

It is worth to mention a result of Jahel and Tsankov~\cite{jahel2020invariant} who prove that
for a large number of classes, EPPA implies the ordering property (which is closely related to the Ramsey property, see~\cite{Kechris2005}). In particular, this implies that while for Ramsey classes, there exists an ordering of \Fraisse{} limit which is compatible with the group of automorphisms, for EPPA classes satisfying the conditions of~\cite{jahel2020invariant} such a global
ordering cannot be definable. This in fact may be one of the main dividing lines.

Based on all this information and an analogous scheme in the Ramsey context~\cite{Hubicka2016}, this may be schematically depicted as follows.
\begin{center}
\begin{tikzpicture}[auto,
    box/.style ={rectangle, draw=black, thick, fill=white,
      text width=9em, text centered,
      minimum height=2em}]
     \tikzstyle{line} = [draw, thick, -latex',shorten >=2pt];
    \matrix [column sep=5mm,row sep=3mm] {
      \node [box] (Ramsey) {Ramsey\\ classes};
      &&\node [box] (amalg) {amalgamation classes};
      &&\node [box] (EPPA) {EPPA\\ classes};
      \\
\\
      \node [box] (lift) {more\\ special structures};
      &&\node [box] (lim) {ultrahomogeneous\\ structures};
      &&\node [box] (lift2) {special\\ structures};
      \\
    };
    \begin{scope}[every path/.style=line]
      \path (Ramsey)   -- (amalg);
      \path (EPPA)   -- (amalg);
      \path (amalg)   -- (lim);
      \path (lim)   -- (lift2);
      \path (lim)   -- (lift);
      \path (lift)   -- (Ramsey);
      \path (lift2)   -- (EPPA);
      \draw[->] (lift2.south west) to [bend left=20]  (lift.south east);
    \end{scope}
  \end{tikzpicture}
\end{center}


\medskip

This paper is organised as follows:
In Section~\ref{sec:background}, we give all the necessary notions and definitions. In Section~\ref{sec:graphs}, which is supposed to serve as a warm-up, we give a new proof of (a coherent strengthening of) Hrushovski's theorem~\cite{hrushovski1992}. Then, in Sections~\ref{sec:relstructures} and~\ref{sec:infinite_languages}, we show that this new construction generalises naturally to relational $\GammaL$-structures. In Section~\ref{sec:functions}, we add a new layer which allows the language to also contain unary functions. In Section~\ref{sec:faithful}, we combine this with techniques introduced earlier~\cite{hodkinson2003,Evans3} to obtain irreducible structure faithfulness, and in Section~\ref{sec:cycles}, we once again use a similar construction to deal with forbidden homomorphic images, which allows us to prove the main theorems of this paper in Sections~\ref{sec:maintree},~\ref{sec:main} and~\ref{sec:ramsey}. Finally, in Section~\ref{sec:applications}, we apply our results and prove EPPA for the class of $k$-orientations with $d$-closures, thereby confirming the first part of~\cite[Conjecture~7.5]{Evans2}. We also prove Corollary~\ref{cor:free}, illustrate the usage of Theorems~\ref{thm:nreppa} and~\ref{thm:mainstrong} on the example of integer-valued metric spaces with no large subspaces, where all vertices are in distance 1 from each other and prove EPPA for languages with constants or certain classes with non-unary functions.

\section{Background and notation}
\label{sec:background}

We find it convenient to work with model-theoretic structures generalised in two ways: We equip the language with a permutation group (giving a more systematic treatment to the concept of \emph{permomorphisms} introduced by Herwig~\cite{herwig1998}) and consider functions to the powerset (a further generalisation of~\cite{Evans3}). This is motivated by applications, see Section~\ref{sec:orientations}.

Let $L=L_\mathcal R\cup L_\mathcal F$ be a language consisting of relation symbols $\rel{}{}\in L_\mathcal R$ and function symbols $F\in L_\mathcal F$ each having its {\em arity} denoted by $\arity{}\geq 1$ for relations and $\arityf{}\geq 0$ for functions.

Let $\GammaL$ be a permutation group on $L$ which preserves types and arities of all symbols. We say that $L$ is a \emph{language equipped with a permutation group $\GammaL$}. 
Observe that when $\GammaL$ is trivial and the ranges of all functions consist of singletons, one obtains the usual notion of model-theoretic language (and structures). All results and
constructions in this paper presented on $\GammaL$-structures thus hold also for standard $L$-structures. By this we mean that given a class of standard $L$-structures, one can treat them as $\GammaL$-structures with $\GammaL$ trivial, use the results of this paper and then, perhaps after some straightforward adjustments, obtain the same results for the original class (see Observations~\ref{obs:relabelling} and~\ref{obs:translate_to_standard}).

Denote by $\powerset{A}$ the set of all subsets of $A$. A \emph{$\GammaL$-structure} $\str{A}$ is a structure with {\em vertex set} $A$, functions $\func{A}{}\colon A^{\arityf{}}\to \powerset{A}$ for every $\func{}{}\in L_\mathcal F$ and relations $\rel{A}{}\subseteq A^{\arity{}}$ for every $\rel{}{}\in L_\mathcal R$.
Notice that the domain of a function is a tuple while the range is a set, the reason for this is that it allows to explicitly represent algebraic closures by functions. If the set $A$ is finite, we call $\str A$ a \emph{finite structure}. We consider only structures with finitely or countably infinitely many vertices. 
If $L_\mathcal F = \emptyset$, we call $L$ a {\em relational language} and say that a $\GammaL$-structure is  a {\em relational $\GammaL$-structure}.
A function $\func{}{}$ such that $\arityf{}=1$ is a {\em unary function}.

In this paper, the language and its permutation group are often fixed and understood from the context (and they are in most cases denoted by $L$ and $\GammaL$ respectively), we also only consider unary functions.

\subsection{Maps between $\GammaL$-structures}
A \emph{homomorphism} $f\colon \str{A}\to \str{B}$ is a pair $f=(f_L,f_A)$ where $f_L\in \GammaL$ and $f_A$ is a mapping $A\to B$
 such that  for every $\rel{}{}\in L_\mathcal R$ and $\func{}{}\in L_\mathcal F$ we have:
\begin{enumerate}
\item[(a)] $(x_1,\ldots, x_{\arity{}})\in \rel{A}{}\implies (f_A(x_1),\ldots,f_A(x_{\arity{}}))\in \permrel{f_L}{B}{}$, and
\item[(b)] $f_A(\func{A}{}(x_1,\allowbreak \ldots, x_{\arityf{}}))\subseteq\permfunc{f_L}{B}{}(f_A(x_1),\ldots,f_A(x_{\arityf{}}))$.
\end{enumerate}
If $f = (f_L, f_A)\colon \str A\to \str B$ and $g=(g_L, g_B)\colon \str B\to \str C$ are homomorphisms, we denote by $gf = g\circ f = (g_L\circ f_L, g_B\circ f_A)$ the homomorphism $\str A\to \str C$ obtained by their composition. (It is straightforward to check that the composition is indeed a homomorphism $\str A\to\str C$.)

If $f_A$ is injective then $f$ is called a \emph{monomorphism}. A monomorphism $f=(f_L,f_A)$ is an \emph{embedding} if for every $\rel{}{}\in L_\mathcal R$ and $\func{}{}\in L_\mathcal F$:
\begin{enumerate}
\item[(a)] $(x_1,\ldots, x_{\arity{}})\in \rel{A}{}\iff (f_A(x_1),\ldots,f_A(x_{\arity{}}))\in \permrel{f_L}{B}{}$, and
\item[(b)] $f_A(\func{A}{}(x_1,\allowbreak \ldots, x_{\arityf{}}))=\permfunc{f_L}{B}{}(f_A(x_1),\ldots,f_A(x_{\arityf{}}))$.
\end{enumerate}
If $f$ is an embedding where $f_A$ is one-to-one then $f$ is an \emph{isomorphism}. An isomorphism from a structure to itself is called an \emph{automorphism}. If $f_A$ is an inclusion and $f_L$ is the identity then $\str{A}$ is a \emph{substructure} of $\str{B}$ and we may write $\str A\subseteq \str B$ to denote this fact.

Given a $\GammaL$-structure $\str{B}$ and $A\subseteq B$, the {\em closure of $A$ in $\str{B}$}, denoted by $\cl_\str{B}(A)$, is the smallest substructure of $\str{B}$ containing $A$.
For $x\in B$, we will also write $\cl_\str{B}(x)$ for $\cl_\str{B}(\{x\})$ and for a tuple $\bar{x} = (x_1,\ldots,x_n)\in B^n$ we will write $\cl_\str{B}(\bar{x})$ for $\cl_\str{B}(\{x_1,\ldots,x_n\})$.

Let $\str A$, $\str B$, $\str C$ and $\str C'$ be $\GammaL$-structures such that $\str C\subseteq \str A$ and $\str C'\subseteq \str B$. If $f\colon \str C\to \str C'$ is an isomorphism, we may also call it a \emph{partial isomorphism} between $\str A$ and $\str B$ (note that $f$ also includes a permutation $f_L\in\GammaL$).

Let $f = (f_L, f_A)\colon \str A\to \str B$ be a homomorphism. For brevity, we may write $f(x)$ for $f_A(x)$ in the context where $x\in A$, and $f(S)$ for $f_L(S)$ where $S\in L$. By $\dom(f)$ and $\range(f)$ we will always mean $\dom(f_A)$ and $\range(f_A)$ respectively. If $\bar{x} = (x_1,\ldots,x_n) \in A^n$ then by $f(\bar{x}) = f_A(\bar{x})$ we mean the tuple $(f_A(x_1),\ldots,f_A(x_n))$, and if $X\subseteq A^n$ then we put $f(X) = f_A(X) = \{f(\bar{x}) : \bar{x}\in X\}$.

Note that $f(\rel{A}{}) = f_A(\rel{A}{})$ is the image of a set of tuples, while $f(\rel{}{})_\str B = f_L(\rel{}{})_\str B$ is the realisation of the relation $f_L(\rel{}{})$ in $\str B$. These sets need not be equal in general (they will, however, be equal whenever $f$ is an embedding).

\medskip

If $f_L\in \GammaL$ and $f_A$ is a function from $A$ to some set $X$, we denote by $f(\str{A})$ the \emph{homomorphic image} of structure $\str{A}$, that is, the $\GammaL$-structure with vertex set $f_A(A)$ such that for every $\rel{}{}\in L_\mathcal R$ and $\func{}{}\in L_\mathcal F$ we have:
\begin{enumerate}
\item[(a)] $\nbrel{f(\str A)}{} = f_A(\permrel{f_L^{-1}}{A}{})$, and
\item[(b)] for every $\bar{x} \in f_A(A)^{\arityf{}}$ it holds that $$\nbfunc{f(\str A)}{}(\bar x) = \bigcup_{\bar{y}\in A^{\arityf{}}, \bar{x} = f(\bar{y})} f_A(\permfunc{f_L^{-1}}{A}{}(\bar{y})).$$
\end{enumerate}
Note that $f$ is a homomorphism $\str A\to f(\str A)$ and moreover all relations and functions of $f(\str A)$ are minimal possible for $f$ to be a homomorphism. Also observe that if $f_A$ is injective, then $f$ is an isomorphism $\str A\to f(\str A)$.

\medskip

\begin{definition}\label{defn:relabelling}
Let $L$ be a language equipped with a permutation group $\GammaL$. We define the \emph{action of $\GammaL$ on $\GammaL$-structures by relabelling}, such that for a $\GammaL$-structure $\str A$ and $g\in \GammaL$, we define $g\str A$ as $(g,\id_A)(\str A)$.
\end{definition}

\begin{observation}\label{obs:relabelling}
Let $L$ be a language equipped with a permutation group $\GammaL$. If $L$ is finite or $\GammaL=\{\id_L\}$ then every finite $\GammaL$-structure lies in a finite orbit of the action of $\GammaL$ by relabelling.
\end{observation}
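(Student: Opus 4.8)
The plan is to prove the observation by a direct counting argument showing that each of the two hypotheses — $L$ finite, or $\GammaL=\{\id_L\}$ — forces the orbit of an arbitrary finite $\GammaL$-structure $\str A$ under relabelling to be finite. Recall that by Definition~\ref{defn:relabelling}, the orbit of $\str A$ is $\{g\str A : g\in\GammaL\}$, where $g\str A=(g,\id_A)(\str A)$ is the homomorphic image under the map which fixes the vertex set pointwise and permutes the symbols by $g$. Unwinding the definition of $f(\str A)$ with $f_A=\id_A$ and $f_L=g$, the structure $g\str A$ has vertex set $A$ and, for each relation symbol $\rel{}{}$, the relation $\nbrel{g\str A}{}=(g^{-1}(R))^{\str A}$, and analogously for functions; so $g\str A$ is completely determined by the vertex set $A$ together with, for every symbol $S\in L$, the interpretation of $S$ in $g\str A$.

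First I would handle the case $\GammaL=\{\id_L\}$: here the group is trivial, so the orbit of $\str A$ is the single structure $\{\str A\}$ itself, which is finite. (This case is essentially vacuous but worth stating, since it is one of the two alternatives in the hypothesis.) Second, I would handle the case where $L$ is finite. Here I would observe that since $A$ is finite, there are only finitely many possible interpretations of a given relation symbol $\rel{}{}$ on $A$, namely subsets of $A^{\arity{}}$, of which there are $2^{|A|^{\arity{}}}$; and similarly only finitely many possible interpretations of a given unary function symbol $\func{}{}$, namely functions $A\to\powerset{A}$, of which there are $(2^{|A|})^{|A|}$. Since $L=L_{\mathcal R}\cup L_{\mathcal F}$ is finite, the number of $\GammaL$-structures on the fixed vertex set $A$ is at most the product of these finitely many finite quantities, hence finite. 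As every member $g\str A$ of the orbit of $\str A$ is a $\GammaL$-structure on the same vertex set $A$, the orbit is a subset of this finite set and is therefore finite.

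There is essentially no obstacle here: the statement is a bookkeeping consequence of the definitions, and the only thing to be careful about is to note that relabelling by $g$ does not change the underlying vertex set (this is explicit in Definition~\ref{defn:relabelling}, since $g\str A=(g,\id_A)(\str A)$ has vertex set $\id_A(A)=A$), so that all structures in the orbit live inside the same finite ambient collection of $\GammaL$-structures on $A$. I would write the proof in two or three sentences, one sentence per case, with a brief parenthetical reminder of why the number of structures on a fixed finite vertex set in a finite language is finite.
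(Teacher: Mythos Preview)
Your proposal is correct and follows essentially the same approach as the paper's own proof: for $\GammaL=\{\id_L\}$ the orbit is a singleton, and for finite $L$ one observes that there are only finitely many $\GammaL$-structures on the fixed finite vertex set $A$, which contains the orbit. Your version is slightly more detailed (you explicitly count interpretations), but the underlying argument is identical.
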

\begin{proof}
If $L$ is finite, then there are only finitely many $\GammaL$-structures on any given finite set $A$ and the action of $\GammaL$ by relabelling preserves the vertex set. If $\GammaL=\{\id_L\}$, then the action is trivial and every orbit is a singleton.
\end{proof}

\subsection{$\GammaL$-structures as standard model-theoretic structures}\label{sec:gamma_to_nongamma}
Whenever there exists a structure $\str L$ such that $\Aut(\str L) = \GammaL$ (e.g. when $L$ is finite or more generally when $\GammaL$ is a closed subgroup of $\Sym(\mathbb N)$), there is a functorial correspondence between $\GammaL$-structures and structures in a bigger language with no permutation group. This makes it possible to extend many theorems about classical structures to (certain) $\GammaL$-structures without having to re-prove them.

\begin{definition}\label{defn:gamma_to_nongamma}
Let $L$ be a language equipped with a permutation group $\GammaL$. Let $X$ be the set of all symbols of $L$ which are not fixed by $\GammaL$ and assume that there is a language $L_0$ disjoint from $L$ and an $L_0$-structure $\str X$ such that $\Aut(\str X)$ is precisely the action of $\GammaL$ on $X$. Let $L^\circ$ be the language defined as follows:
\begin{enumerate}
	\item For every symbol from $L\setminus X$, we put the same symbol with the same arity into $L^\circ$.
	\item For every symbol from $L_0$, we put the same symbol with the same arity into $L^\circ$.
	\item For every $n$ such that $X$ contains a relation symbol of arity $n$, we put an $(n+1)$-ary relation symbol $\rel{}{n}$ into $L^\circ$ (without loss of generality $\rel{}{n}\notin L\cup L_0$).
	\item For every $n$ such that $X$ contains a function symbol of arity $n$, we put an $(n+1)$-ary function symbol $\func{}{n}$ into $L^\circ$ (without loss of generality $\func{}{n}\notin L\cup L_0$).
	\item There is a constant symbol $c\in L^\circ$ (without loss of generality $c\notin L\cup L_0$).
\end{enumerate}
Given a $\GammaL$-structure $\str A$, we define an $L^\circ$-structure $\str A^\circ$ as follows:
\begin{enumerate}
	\item The vertex set of $\str A^\circ$ is the disjoint union $A\cup X$ (without loss of generality we can assume that $A\cap X = \emptyset$).
	\item $c_{\str A^\circ} = X$.
	\item The substructure of $\str A^\circ$ induced on $X$ is isomorphic to $\str X$ (in particular, there are no relations or functions from $L$).
	\item For every symbol $S\in L\setminus X$ we have that $S_{\str A^\circ} = S_{\str A}$.
	\item For every $n$-ary relation symbol $S\in X$ and every tuple $(x_1,\ldots,x_n)\in A^n$ it holds that $(x_1,\ldots,x_n,S) \in \nbrel{\str A^\circ}{n}$ if and only if $(x_1,\ldots,x_n)\in S_\str{A}$.
	\item For every $n$-ary function symbol $S\in X$ and every tuple $(x_1,\ldots,x_n)\in A^n$ it holds that $(x_1,\ldots,x_n,S) \in \dom(\nbfunc{\str A^\circ}{n})$ if and only if $(x_1,\ldots,x_n)\in \dom(S_\str A)$, and in that case $\nbfunc{\str A^\circ}{n}(x_1,\ldots,x_n,S) = S_\str A(x_1,\ldots,x_n)$.
\end{enumerate}
\end{definition}

\begin{fact}
In the setting of Definition~\ref{defn:gamma_to_nongamma}, $f=(f_L,f_A)$ is an embedding of $\GammaL$-structures $\str A\to\str B$ if and only if $f_L\restriction_X \cup f_A$ is an embedding of $L^\circ$ structures $\str A^\circ \to \str B^\circ$.
\end{fact}
This implies that the map $\str A\mapsto \str A^\circ$ is an isomorphism of categories. Note that whenever $\GammaL$ is finite, we have that $\str A$ is finite if and only if $\str A^\circ$ is. This construction still gives structures where the images of functions need not consist of singletons. In order to deal with this, one can replace functions by relations and consider only algebraically closed substructures as is standard in the area, see for example~\cite{EvansBonn}.


\subsection{Amalgamation classes}\label{sec:amalg}
\begin{figure}
\centering
\includegraphics{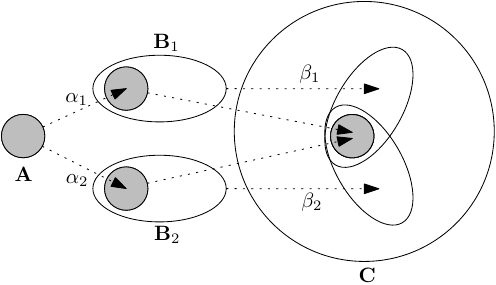}
\caption{An amalgamation of $\str{B}_1$ and $\str{B}_2$ over $\str{A}$.}
\label{amalgamfig}
\end{figure}
Let $\str{A}$, $\str{B}_1$, and $\str{B}_2$ be $\GammaL$-structures, and let $\alpha_1\colon\str{A}\to\str{B}_1$, $\alpha_2\colon\str{A}\to\str{B}_2$ be embeddings. A structure $\str{C}$
 with embeddings $\beta_1\colon\str{B}_1 \to \str{C}$ and
$\beta_2\colon\str{B}_2\to\str{C}$ such that $\beta_1\circ\alpha_1 =
\beta_2\circ\alpha_2$ (remember that this must also hold for the language part of $\alpha_i$'s and $\beta_i$'s) is called an \emph{amalgamation} of $\str{B}_1$ and $\str{B}_2$ over $\str{A}$ with respect to $\alpha_1$ and $\alpha_2$, see Figure~\ref{amalgamfig}.
We will often call $\str{C}$ simply an \emph{amalgamation} of $\str{B}_1$ and $\str{B}_2$ over $\str{A}$
(in most cases $\alpha_1$ and $\alpha_2$ can be chosen to be inclusion embeddings).

We say that the amalgamation is \emph{strong} if it holds that $\beta_1(x_1)=\beta_2(x_2)$ if and only if $x_1\in \alpha_1(A)$ and $x_2\in \alpha_2(A)$.
Strong amalgamation is \emph{free} if $C=\beta_1(B_1)\cup \beta_2(B_2)$, and whenever a tuple $\bar{x}$ of vertices of $\str{C}$ contains vertices of both
$\beta_1(B_1\setminus \alpha_1(A))$ and $\beta_2(B_2\setminus \alpha_2(A))$, then $\bar{x}$ is in no relation of $\str C$
and also for every function $\func{}{}\in L$ with $\arityf{} = |\bar{x}|$ it holds that $\func{C}{}(\bar{x})=\emptyset$.

\begin{definition}
\label{defn:amalg}
An \emph{amalgamation class} is a class $\K$ of finite $\GammaL$-structures which is closed for isomorphisms and satisfies the following three conditions:
\begin{enumerate}
\item {\em Hereditary property:} For every $\str{B}\in \K$ and every structure $\str{A}$ with an embedding $f\colon \str A \to \str{B}$ we have $\str{A}\in \K$;
\item {\em Joint embedding property:} For every $\str{A}, \str{B}\in \K$ there exists $\str{C}\in \K$ with an embeddings $f\colon \str A\to \str C$ and $g\colon \str B\to \str C$;
\item {\em Amalgamation property:} 
For $\str{A},\str{B}_1,\str{B}_2\in \K$ and embeddings $\alpha_1\colon\str{A}\to\str{B}_1$, $\alpha_2\colon\str{A}\to\str{B}_2$, there is $\str{C}\in \K$ which is an amalgamation of $\str{B}_1$ and $\str{B}_2$ over $\str{A}$ with respect to $\alpha_1$ and $\alpha_2$.
\end{enumerate}
If the $\str{C}$ in the amalgamation property can always be chosen to be a strong amalgamation then $\K$ is a {\em strong amalgamation class}, if it can always be chosen to be the free amalgamation then $\K$ is a {\em free amalgamation class}.
\end{definition}

By the \Fraisse{} theorem~\cite{Fraisse1953}, relational amalgamation classes in a countable language with trivial $\GammaL$ containing only countably many members up to isomorphism correspond to countable homogeneous structures. By Section~\ref{sec:gamma_to_nongamma}, one can extend this to various languages equipped with a permutation group using a variant of the \Fraisse{} theorem for languages with functions or for strong substructures, see for example~\cite{EvansBonn}.

\medskip

Generalising the notion of a graph clique, we say that a structure is
\emph{irreducible} if it is not a free amalgamation of its proper
substructures. A homomorphism $f\colon\str{A}\to \str{B}$ is
a \emph{homomorphism-embedding} if the restriction $f\restriction_{\str C}$ is an embedding whenever $\str C$ is an irreducible
substructure of $\str{A}$. Given a family $\mathcal F$ of $\GammaL$-structures, we denote by
$\Forb(\mathcal F)$ the class of all finite or countably infinite $\GammaL$-structures $\str{A}$
such that there is no $\str{F}\in \mathcal F$ with a homomorphism-embedding $\str{F}\to\str{A}$.

\subsection{EPPA for $\GammaL$-structures}\label{sec:gammaleppa}
A {\em partial automorphism} of a $\GammaL$-structure $\str{A}$ is a partial isomorphism between $\str A$ and $\str A$.
Let $\str A$ and $\str B$ be finite $\GammaL$-structures. We say that $\str B$
is an \emph{EPPA-witness for $\str A$} if there is an embedding $\psi\colon \str A\to \str B$
and every partial automorphism of $\psi(\str A)$ \emph{extends} to an automorphism of $\str B$, that is, for every partial automorphism $\varphi$ of $\psi(\str A)$ there is an automorphism $\widetilde{\varphi}\colon \str B\to \str B$ such that $\varphi\subseteq \widetilde{\varphi}$.

We say that a class of finite $\GammaL$-structures $\K$  has the {\em
extension property for partial automorphisms}  (shortly {\em EPPA}, sometimes called
the {\em Hrushovski property}) if for every $\str{A} \in \K$
there is $\str{B} \in \K$ which is an EPPA-witness for $\str A$.
Such a structure $\str{B}$ is \emph{irreducible structure faithful} (with respect to $\psi(\str A)$) if it has the property
that for every irreducible substructure $\str{C}$ of $\str{B}$ there exists an 
automorphism $g$ of $\str{B}$ such that $g(C)\subseteq \psi(A)$.

Note that the classes which we are interested in are closed under taking isomorphisms, and hence if there is an EPPA-witness
$\str B$ for $\str A$ in $\K$, then there is also an EPPA-witness $\str B'\in \K$ such that $\psi$ is just the inclusion $\str A\subseteq \str B'$. To simplify the arguments, we will often ignore this subtle technicality.

Homomorphism-embeddings were introduced in~\cite{Hubicka2016} and irreducible structure faithfulness was introduced in~\cite{Evans3} as a generalisation of clique faithfulness of Hodkinson and Otto~\cite{hodkinson2003}.
The following observation provides a link to the study of homogeneous structures.

\begin{observation}[\cite{herwig1998}]\label{obs:eppaamalgamation}
Every hereditary isomorphism-closed class of finite $\GammaL$-structures which has EPPA and the joint embedding property (see Definition~\ref{defn:amalg}) is an amalgamation class. 
\end{observation}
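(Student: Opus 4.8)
The plan is to verify the amalgamation property; the remaining three defining conditions of an amalgamation class --- closure under isomorphism, heredity and the joint embedding property --- are hypotheses, so establishing the amalgamation property will finish the proof by Definition~\ref{defn:amalg}. Fix structures $\str A,\str B_1,\str B_2\in\K$ together with embeddings $\alpha_1\colon\str A\to\str B_1$ and $\alpha_2\colon\str A\to\str B_2$; I want to produce $\str C\in\K$ with embeddings $\beta_i\colon\str B_i\to\str C$ satisfying $\beta_1\circ\alpha_1=\beta_2\circ\alpha_2$ (as $\GammaL$-morphisms, i.e.\ matching on the language coordinate as well as on vertices).

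First I would apply the joint embedding property to obtain $\str D\in\K$ and embeddings $e_1\colon\str B_1\to\str D$, $e_2\colon\str B_2\to\str D$. Inside $\str D$ the images $e_1(\alpha_1(\str A))$ and $e_2(\alpha_2(\str A))$ are substructures, each isomorphic to $\str A$, and $(e_2\circ\alpha_2)\circ(e_1\circ\alpha_1)^{-1}$, suitably corestricted, is an isomorphism between them; thus it is a partial automorphism $p$ of $\str D$ with $p(e_1(\alpha_1(a)))=e_2(\alpha_2(a))$ for every $a\in A$ and language part $(e_2)_L(\alpha_2)_L(\alpha_1)_L^{-1}(e_1)_L^{-1}$. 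Since $\K$ has EPPA, I then pick an EPPA-witness $\str C\in\K$ for $\str D$ with embedding $\psi\colon\str D\to\str C$. Conjugating $p$ by $\psi$ gives a partial automorphism of the substructure $\psi(\str D)$ of $\str C$, which by EPPA extends to an automorphism $g$ of $\str C$; in particular, on the substructure $\psi(e_1(\alpha_1(\str A)))$ the map $g$ agrees with $\psi\circ p\circ\psi^{-1}$ both on vertices and on the language coordinate.

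Finally I would set $\beta_1=g\circ\psi\circ e_1\colon\str B_1\to\str C$ and $\beta_2=\psi\circ e_2\colon\str B_2\to\str C$; both are embeddings, being composites of embeddings with the automorphism $g$. For every $a\in A$,
\[
\beta_1(\alpha_1(a))=g\bigl(\psi(e_1(\alpha_1(a)))\bigr)=\psi\bigl(p(e_1(\alpha_1(a)))\bigr)=\psi(e_2(\alpha_2(a)))=\beta_2(\alpha_2(a)),
\]
and substituting $g_L=\psi_L\,p_L\,\psi_L^{-1}$ together with the above formula for $p_L$ shows $(\beta_1)_L(\alpha_1)_L=(\beta_2)_L(\alpha_2)_L$ as well. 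Hence $\beta_1\circ\alpha_1=\beta_2\circ\alpha_2$, so $\str C\in\K$ is an amalgamation of $\str B_1$ and $\str B_2$ over $\str A$ with respect to $\alpha_1$ and $\alpha_2$, and $\K$ has the amalgamation property.

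I do not expect a genuine obstacle: this is the familiar ``joint embedding plus EPPA yields amalgamation'' argument (the same phenomenon as for Ramsey classes, cf.\ \cite{Nevsetril2005}). The only point that needs care is carrying the $\GammaL$-coordinate of every morphism along for the ride --- checking that $p$ really is a $\GammaL$-partial automorphism (language part included), that conjugation by $\psi$ and the passage to the extending automorphism $g$ preserve its language part, and that the commuting square holds on the language coordinate and not merely on vertices --- but this is routine bookkeeping, identical in spirit to the purely relational case.
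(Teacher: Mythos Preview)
Your proposal is correct and follows essentially the same route as the paper: joint embedding to place $\str B_1$ and $\str B_2$ side by side, then EPPA to extend the partial automorphism aligning the two copies of $\str A$, and finally composing with this automorphism to obtain the amalgamation. You are somewhat more careful than the paper in tracking the $\GammaL$-coordinate and in not silently identifying $\str D$ with a substructure of $\str C$, but the argument is the same.
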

\begin{proof}
Let $\K$ be such a class and let $\str{A},\str{B}_1,\str{B}_2\in \K$, $\alpha_1\colon \str A \to \str B_1$, $\alpha_2\colon \str A\to \str B_2$ be as in Definition~\ref{defn:amalg}. Let $\str B$ be the joint embedding of $\str B_1$ and $\str B_2$ (that is, we have embeddings $\beta_1'\colon \str B_1\to \str B$ and $\beta_2'\colon \str B_2\to \str B$) and let $\str C$ be an EPPA-witness for $\str B$. Without loss of generality, we can assume that $\str B\subseteq \str C$.

Let $\varphi$ be a partial automorphism of $\str B$ sending $\beta_1'(\alpha_1(\str A))$ to $\beta_2'(\alpha_2(\str A))$ and let $\theta$ be its extension to an automorphism of $\str C$. Finally, put $\beta_1 = \theta\circ\beta_1'$ and $\beta_2 = \beta_2'$. It is easy to check that $\beta_1$ and $\beta_2$ certify that $\str C$ is an amalgamation of $\str B_1$ and $\str B_2$ over $\str A$ with respect to $\alpha_1$ and $\alpha_2$.
\end{proof}

We believe that the results of this paper may often be used in a more specialised setting such as for standard model-theoretic $L$-structures etc. In order to facilitate that, we state the following simple observation which allows translating the main theorems into this more specialised setting.

\begin{observation}\label{obs:translate_to_standard}
Let $\str A$ and $\str B$ be finite $\GammaL$-structures such that $\str B$ is an irreducible structure faithful EPPA-witness for $\str A$. If, in $\str A$, the range of every function consists of singletons then this also holds in $\str B$.
\end{observation}
\begin{proof}
Suppose for a contradiction that there is a function $\func{}{}\in L$ and a tuple $\bar{x}\in B^{\arityf{}}$ such that $\func{B}{}(\bar{x}) = X$ and $\lvert X\rvert > 1$. Since $\cl_\str B(\bar{x})$ is irreducible, by irreducible structure faithfulness there is an automorphism $f\colon \str B\to \str B$ sending $\cl_\str B(\bar{x})$ to $A$. In particular, $f(\bar{x})\subseteq A$ and $f(X)\subseteq A$. As $f$ is an automorphism, we know that $\permfunc{f_L}{A}{}(f_B(\bar{x})) = f_B(X)$ implying that $\lvert f_B(X)\rvert \leq 1$ which is a contradiction.
\end{proof}

\subsection{EPPA and automorphism groups}
As was mentioned in the introduction, the significance of EPPA comes from the fact that, while being a property of a class of finite structures, it is closely connected with topological properties of the automorphism group of an infinite structure, namely the \Fraisse{} limit of the class. We do not aim for this section to be self-contained nor complete (and refer the reader for example to~\cite{Siniora2}), we only outline some of these connections and discuss how they extend to $\GammaL$-structures. In contrast to the rest of this paper, in this section we are mostly going to be interested in (countably) infinite structures. For simplicity, we will assume that $\GammaL$ is finite (so that Section~\ref{sec:gamma_to_nongamma} can be fully applied), the case of infinite $\GammaL$ can be more complicated and deserves a study on its own.

Let $\str M$ be a countably infinite $\GammaL$-structure. Assume without loss of generality that the vertex set of $\str M$ are the natural numbers and put $G = \Aut(\str M)$. Remember that members of $G$ are pairs $f = (f_L, f_M)$ with $f_L\in \GammaL$. This means that $G$ can be understood as a subgroup of $\GammaL\times\Sym(\mathbb N)$ and as such inherits the topology of this product, where $\Sym(\mathbb N)$ is equipped with the pointwise-convergence topology and $\GammaL$, being finite, is equipped with the discrete topology. Note that this precisely corresponds to what one gets by using Section~\ref{sec:gamma_to_nongamma} and recalling the standard definitions.

\medskip

Let $\str M$ be a homogeneous $\GammaL$-structure. We say that $\str M$ is \emph{locally finite} if for every finite $X\subseteq M$ it holds that $\cl_\str M(X)$ is also finite. By $\age(\str M)$ we denote the class of all finite $\GammaL$-structures which embed into $\str M$. The following theorem has been proved by Kechris and Rosendal~\cite{Kechris2007} for classical structures and by Section~\ref{sec:gamma_to_nongamma} extends naturally to $\GammaL$-structures:

\begin{theorem}[\cite{Kechris2007}]
Let $L$ be a language equipped with a finite permutation group $\GammaL$ and let $\str M$ be a countable locally finite homogeneous $\GammaL$-structure. Then $\age(\str M)$ has EPPA if and only if $\Aut(\str M)$ can be written as the closure of a chain of compact subgroups. Moreover, if $\age(\str M)$ has EPPA, then $\Aut(\str M)$ is amenable.
\end{theorem}

\begin{definition}[\cite{hodges1993b,Kechris2007}]
Let $L$ be a language equipped with a finite permutation group $\GammaL$, let $\str M$ be a countable locally finite homogeneous $\GammaL$-structure and let $n\geq 1$ be an integer. We say that $\str M$ has \emph{$n$-generic automorphisms} if $G$ has a comeagre orbit on $G^n$ in its action by diagonal conjugation. We say that $\str M$ has \emph{ample generics} if it has $n$-generic automorphisms for every $n\geq 1$.
\end{definition}
Here, the action by diagonal conjugation is defined by $$g\cdot (h_1,\ldots,h_n) = (gh_1g^{-1},\ldots,gh_ng^{-1}).$$ The existence of ample generics has many consequences for the automorphism group such as the small index property. From the point of view of this paper, ample generics are relevant, because EPPA is very often a key ingredient in proving them. We will outline this connection in the rest of this section.

\begin{definition}
Let $L$ be a language equipped with a permutation group $\GammaL$, let $\mathcal C$ be a class of finite $\GammaL$-structures and let $n\geq 1$ be an integer. An \emph{$n$-system over $\mathcal C$} is a tuple $(\str A, p_1,\ldots,p_n)$, where $\str A\in \mathcal C$ and $p_1,\ldots,p_n$ are partial automorphisms of $\str A$. We denote by $\mathcal C^n$ the class of all $n$-systems over $\mathcal C$.

If $P=(\str A,p_1,\ldots,p_n)$ and $Q=(\str B,q_1,\ldots,q_n)$ are both $n$-systems over $\mathcal C$ and $f\colon \str A\to\str B$ is an embedding of $\GammaL$-structures, we say that $f$ is an \emph{embedding of $n$-systems $P\to Q$} if for every $1\leq i \leq n$ it holds that $f\circ p_i \subseteq q_i\circ f$ (in particular, $f(\dom(p_i))\subseteq \dom(q_i)$ and $f(\range(p_i))\subseteq \range(q_i)$).
\end{definition}

\begin{definition}
Let $L$ be a language equipped with a permutation group $\GammaL$, let $\mathcal C$ be a class of finite $\GammaL$-structures and let $n\geq 1$ be an integer. We say that $\mathcal C^n$ has the \emph{joint embedding property} if for every $P,Q\in\mathcal C^n$ there exists $S\in \mathcal C^n$ with embeddings of $n$-systems $f\colon P\to S$ and $g\colon Q\to S$. We say that $\mathcal C^n$ has the \emph{weak amalgamation property} if for every $T\in \mathcal C^n$ there exists $\hat{T}\in \mathcal C^n$ and an embedding of $n$-systems $\iota\colon T\to\hat{T}$ such that for every pair of $n$-systems $P,Q\in \mathcal C^n$ and embeddings of $n$-systems $\alpha_1\colon \hat{T}\to P$ and $\alpha_2\colon \hat{T}\to Q$ there exists $S\in \mathcal C^n$ with embeddings on $n$-systems $\beta_1\colon P\to S$ and $\beta_2\colon Q\to S$ such that $\beta_1\alpha_1\iota = \beta_2\alpha_2\iota$.
\end{definition}

We only state the following theorem for $\GammaL=\{\id\}$, as $\mathcal C^n$ does not have the joint embedding property for any $n$ if $\mathcal C$ is a class of finite $\GammaL$-structures with $2\leq \lvert\GammaL\rvert < \infty$ (see Example~\ref{ex:non_jep}).
\begin{theorem}[\cite{Kechris2007}]\label{thm:ample}
Let $L$ be a language, let $\str M$ be a countable locally finite homogeneous $L$-structure, put $\mathcal C = \age(\str M)$ and fix $n\geq 1$. Then $\str M$ has $n$-generic automorphisms if and only if $\mathcal C^n$ has the joint embedding property and the weak amalgamation property.
\end{theorem}

In order to explain the connection between EPPA and ample generics, we need one more definition

\begin{definition}
Let $L$ be a language equipped with a permutation group $\GammaL$ and let $\mathcal C$ be a class of finite $\GammaL$-structures. We say that $\mathcal C$ has the \emph{amalgamation property with automorphisms} (abbreviated as \emph{APA}) if for every $\str{A},\str{B}_1,\str{B}_2\in \mathcal C$ and embeddings $\alpha_1\colon\str{A}\to\str{B}_1$, $\alpha_2\colon\str{A}\to\str{B}_2$ there exists $\str{C}\in \mathcal C$ with embeddings $\beta_1\colon\str{B}_1 \to \str{C}$ and $\beta_2\colon\str{B}_2\to\str{C}$ such that $\beta_1\circ\alpha_1 = \beta_2\circ\alpha_2$ (i.e. $\str C$ is an amalgamation of $\str B_1$ and $\str B_2$ over $\str A$ with respect to $\alpha_1$ and $\alpha_2$) and moreover whenever we have $f\in \Aut(\str B_1)$ and $g\in \Aut(\str B_2)$ such that $f(\alpha_1(A)) = \alpha_1(A)$, $g(\alpha_2(A)) = \alpha_2(A)$ and for every $a\in A$ it holds that $\alpha_1^{-1}(f(\alpha_1(a))) = \alpha_2^{-1}(g(\alpha_2(a)))$ (that is, $f$ and $g$ agree on the copy of $\str A$ we are amalgamating over), then there is $h\in \Aut(\str C)$ which extends $\beta_1 f \beta_1^{-1} \cup\beta_2 g \beta_2^{-1}$. We call such $\str C$ with embeddings $\beta_1$ and $\beta_2$ an \emph{APA-witness} for $\str B_1$ and $\str B_2$ over $\str A$ with respect to $\alpha_1$ and $\alpha_2$
\end{definition}

\begin{prop}[\cite{Kechris2007}]\label{prop:eppa_apa_weak}
Let $L$ be a language equipped with a permutation group $\GammaL$ and let $\mathcal C$ be a class of finite $\GammaL$-structures. If $\mathcal C$ has EPPA and APA then $\mathcal C^n$ has the weak amalgamation property for every $n\geq 1$.
\end{prop}
\begin{proof}
Fix $n\geq 1$. If $S = (\str S,s_1,\ldots,s_n) \in \mathcal C^n$ is an $n$-system, we denote by $\hat{S} = (\hat{\str S}, \hat{s}_1,\ldots,\hat{s}_n)\in \mathcal C^n$ the $n$-system where $\hat{\str S}$ is an EPPA-witness for $\str S$ (with respect to the inclusion embedding) and for every $1\leq i\leq n$ it holds that $\hat{s}_i$ is an automorphism of $\hat{\str S}$ extending $s_i$.

We now prove that $\mathcal C^n$ has the weak amalgamation property. Towards that, fix some $T = (\str T, t_1,\ldots,t_n)\in \mathcal C^n$. Let $P=(\str P,p_1,\ldots,p_n),Q=(\str Q,q_1,\ldots,q_n)\in \mathcal C^n$ be arbitrary $n$-systems with embeddings $\alpha_1\colon \hat{T}\to P$ and $\alpha_2\colon \hat{T}\to Q$.

Use APA for $\mathcal C$ to get $\str S\in \mathcal C$ and embeddings $\beta_1\colon\hat{\str P} \to \str S$ and $\beta_2\colon\hat{\str Q}\to\str S$ such that $\str S$ with $\beta_1$ and $\beta_2$ form an APA-witness for $\hat{\str P}$ and $\hat{\str Q}$ over $\hat{\str{T}}$ with respect to $\alpha_1$ and $\alpha_2$. Let $S=(\str S,s_1,\ldots,s_n)\in\mathcal C^n$ be some $n$-system such that for every $1\leq i \leq n$ we have that $s_i$ extends $\beta_1 \hat{p}_i \beta_1^{-1} \cup\beta_2 \hat{q}_i \beta_2^{-1}$. It is straightforward to verify that $S$ is the desired $n$-system witnessing the weak amalgamation property for $P$, $Q$ and $T$.
\end{proof}

\begin{example}
Consider the class $\mathcal C$ of all finite graphs. By a theorem of Hrushov\-ski~\cite{hrushovski1992} (or by Section~\ref{sec:graphs}) we know that $\mathcal C$ has EPPA. APA for $\mathcal C$ is an easy exercise (in general, APA for free amalgamation classes is always true). Hence, by Proposition~\ref{prop:eppa_apa_weak}, $\mathcal C^n$ has the weak amalgamation property for every $n\geq 1$. To prove ample generics for the countable random graph it thus remains to prove the joint embedding property for $\mathcal C^n$. However, it is again an easy exercise (simply take the disjoint union of the graphs and the partial automorphisms).
\end{example}

\begin{example}\label{ex:non_jep}
Let $L$ be a language consisting of two unary relations $U$ and $V$, put $\GammaL=\Sym(L)$ and let $\mathcal C$ be the class of all finite $\GammaL$-structures where every vertex is in precisely one of the two unary relations. Clearly, $\mathcal C$ can equivalently be seen as the class of all finite structures with one equivalence relation with two equivalence classes. Since $\mathcal C$ is a free amalgamation class, Corollary~\ref{cor:free} gives us EPPA for $\mathcal C$, APA for $\mathcal C$ is straightforward. Hence, by Proposition~\ref{prop:eppa_apa_weak}, $\mathcal C^n$ has the weak amalgamation property for every $n\geq 1$.

However, $\mathcal C^n$ fails to have the joint embedding property for any $n\geq 1$ and this is already visible on $n$-systems with the empty structure. Let $\str E$ be the $\GammaL$-structure with no vertices and assume that $\GammaL$ is enumerated as $\{\id, t\}$ where $\id$ is the identity and $t$ is the transposition $U\leftrightarrow V$. Put $P = (\str E, (\id, \emptyset))$ and $Q = (\str E, (t,\emptyset))$. Clearly, there is no 1-system which embeds both $P$ and $Q$.

However, this kind of obstacle is the only reason why $\mathcal C^n$ does not have the joint embedding property (in general for free amalgamation classes): One can define an equivalence relation $\sim_n$ on $\mathcal C^n$ for every $n$ and for every pair of $n$-systems $P = (\str P, (p^1_L, p^1_P),\ldots,(p^n_L, p^n_P))\in \mathcal C^n$ and $Q = (\str Q, (q^1_L, q^1_Q),\ldots,(q^n_L, q^n_Q))\in \mathcal C^n$ by putting $P\sim_n Q$ if and only if there is $f\in \GammaL$ such that for every $1\leq i\leq n$ we have that $f\circ p^i_L = q^i_L\circ f$. Then $P,Q\in \mathcal C^n$ have a joint embedding if and only if $P\sim_n Q$. In fact, it is then possible to prove a relativised version of Theorem~\ref{thm:ample} and obtain generic automorphisms for every equivalence class of $\sim_n$. For example, if $\mathcal C$ is the class from this example, $n=1$ and the language part is the identity, then the generic automorphism is just a pair of permutations of vertices of each unary such that both of them have no infinite cycles and infinitely many $k$-cycles for every finite $k\geq 1$.
\end{example}

\subsection{Coherence of EPPA-witnesses}\label{sec:coherence}
Siniora and Solecki~\cite{solecki2009,Siniora} strengthened the notion of EPPA in order to get a dense locally finite subgroup of the automorphism group of the corresponding \Fraisse{} limit.
In order to state their definitions, we need to define how partial maps compose. Let $L$ be a language equipped with a permutation group $\GammaL$, let $\str A$, $\str B$ and $\str C$ be $\GammaL$-structures, let $f$ be a partial isomorphism between $\str A$ and $\str B$ and let $g$ be a partial isomorphism between $\str B$ and $\str C$ such that $\dom(g_B) = \range(f_A)$. We define their composition $gf$ (also denoted by $g\circ f$) to be the partial isomorphism between $\str A$ and $\str C$ such that $(gf)_L = g_Lf_L$, $(gf)_A(x)$ is defined if and only if $x\in \dom(f_A)$ and $f_A(x)\in \dom(g_B)$, and in this case we put $(gf)_A(x)=g_A(f_B(x))$.

\begin{definition}[Coherent maps]
Let $L$ be a language equipped with a permutation group $\GammaL$, let $\str A$ be a $\GammaL$-structure and let $\mathcal P$ be a family of partial automorphisms of $\str A$. A triple $f,g,h\in \mathcal P$ is called a \emph{coherent triple} if $\range(f_A)=\dom(g_A)$ and $h=gf$. A pair $f,g\in \mathcal P$ is called a \emph{coherent pair} if there is $h\in\mathcal P$ such that $f,g,h$ is a coherent triple.

Let $\str A$ and $\str B$ be $\GammaL$-structures, and let $\mathcal P$ and $\mathcal Q$ be families of partial automorphisms of $\str A$ and $\str B$, respectively. A function $\varphi\colon \mathcal P \to \mathcal Q$ is said to be a
{\em coherent map} if for each coherent triple $(f, g, h)$ from $\mathcal P$, its image ($\varphi(f), \varphi(g), \varphi(h)$) in $\mathcal Q$ is also coherent.
\end{definition}
\begin{definition}[Coherent EPPA]
\label{defn:coherent}
A class $\K$ of finite $\GammaL$-structures is said to have {\em coherent EPPA} if $\K$ has EPPA and moreover the extension of partial automorphisms
is coherent. That is, for every $\str{A} \in \K$, there exists $\str{B} \in \K$ and an embedding $\psi\colon \str A\to \str B$ such that every
partial automorphism $f$ of $\psi(\str{A})$ extends to some $\hat{f} \in \Aut(\str{B})$ with the property that the map $f\mapsto \hat{f}$ from partial automorphisms of $\psi(\str{A})$ to $\Aut(\str{B})$ is coherent. We say that $\str B$ is a \emph{coherent EPPA-witness} for $\str A$.
\end{definition}

The following easy proposition will be used several times. We include its proof to make this paper self-contained.
\begin{prop}[Lemma~2.1 in~\cite{Siniora}]\label{prop:setcoherence}
Every finite set is a coherent EPPA-witness for itself. Consequently, the class of all finite sets has coherent EPPA.

Explicitly, for every finite set $A$ there is a map assigning to every partial injective function $\varphi\colon A\to A$ a permutation $\widehat{\varphi}$ of $A$ such that $\varphi\subseteq \widehat{\varphi}$ and moreover for every coherent pair $\varphi_1,\varphi_2\colon A\to A$ it holds that $\widehat{\varphi_2}\widehat{\varphi_1}=\widehat{\varphi_2\varphi_1}$.
\end{prop}
\begin{proof}
Fix a set $A$. Without loss of generality we can assume that $A=\{1,\ldots, n\}$. Let $\varphi$ be a partial automorphism of $A$, in other words, a partial injective function $A\to A$. We construct a permutation $\widehat{\varphi}\colon A\to A$ extending $\varphi$ in the following way: 

Put $X = A\setminus \dom(\varphi)$ and $Y=A\setminus \range(\varphi)$ and enumerate $X=\{x_1,\ldots, x_k\}$ and $Y=\{y_1,\ldots,y_k\}$ such that $x_1<\cdots<x_k$ and $y_1<\cdots<y_k$. Define $\widehat{\varphi}$ by
$$\widehat{\varphi}(x) = 
\begin{cases}
\varphi(x) & \text{if }x\in \dom(\varphi)\\
y_i & \text{if }x = x_i.
\end{cases}$$
It is obvious that $\widehat{\varphi}$ is a permutation of $A$ which extends $\varphi$. Thus it only remains to prove coherence.

Consider $x\in A$. If $x\in \dom(\varphi_1)$, then we have $\varphi_1(x)\in \dom(\varphi_2)$ and hence $\widehat{\varphi_2\varphi_1}(x) = \widehat{\varphi_2}(\widehat{\varphi_1}(x))$. Put $X = A\setminus\dom(\varphi_1)$, $Y=A\setminus\range(\varphi_1)$ ($=A\setminus\dom(\varphi_2)$) and $Z=A\setminus\range(\varphi_2)$ and again enumerate them in an ascending order. If $x=x_i$, we have $\widehat{\varphi_1}(x_i)=y_i$, $\widehat{\varphi_2}(y_i)=z_i$ and $\widehat{\varphi_2\varphi_1}(x_i)=z_i$, therefore indeed $\widehat{\varphi_2\varphi_1}(x_i) =\widehat{\varphi_2}(\widehat{\varphi_1}(x_i))$.
\end{proof}
When using this result, we will often simply say that we extend a partial permutation in an \emph{order-preserving way} or \emph{coherently}.

\section{Warm-up: new proof of EPPA for graphs}
\label{sec:graphs}
We start with a simple proof of the theorem of Hrushovski~\cite{hrushovski1992}. (Our proof is different from another simple proof given
by Herwig and Lascar~\cite[Section 4.1]{herwig2000}.)
This is the simplest case where the construction of coherent EPPA-witnesses is non-trivial and we encourage the reader to spend enough time on this section, as it can provide a very useful intuition for the subsequent sections.
We consider graphs to be (relational) structures in a language with a single
binary relation $E$ which is symmetric and irreflexive.

Fix a graph $\str{A}$ with vertex set $A=\{1,\ldots, n\}$.

\addtocontents{toc}{\SkipTocEntry}
\subsection*{Witness construction}
We give a construction of a coherent EPPA-witness $\str{B}$. It will be constructed as follows:

\begin{enumerate}
  \item The vertices of $\str{B}$ are all pairs $(x,\chi)$ where $x\in A$ and
    $\chi$ is a function from $A\setminus \{x\}$ to $\{0,1\}$ (called a
    \emph{valuation function for $x$}).
  \item Vertices $(x,\chi)$ and $(x',\chi')$ form an edge of $\str B$ if and only if
    $x\neq x'$ and $\chi(x')\neq \chi'(x)$.
\end{enumerate}
We now introduce
a \emph{generic copy} $\str{A}'$ of $\str{A}$ in $\str{B}$ using an embedding $\psi\colon \str{A}\to \str{B}$ defined by $\psi(x)=(x,\chi_x)$, where $\chi_x(y)=1$ if $x > y$ and $\{x,y\}\in E_\str{A}$ and $\chi_x(y)=0$ otherwise (remember that we enumerated $A=\{1,\ldots, n\}$).
We put $\str{A}'$ to be the graph induced by $\str{B}$ on $\psi(A)$.
It follows directly that $\psi$ is indeed an embedding of $\str{A}$ into $\str{B}$.

\begin{remark}
Note that the functions $\chi_x$ from the definition of $\psi$ are in fact the rows of an asymmetric variant of the adjacency matrix of $\str A$.
\end{remark}

Let $\pi\colon B \to A$ be the \emph{projection} mapping $(x,\chi) \mapsto x$. Note that $\pi(\psi(x)) = x$ for every $x\in \str A$. This means that $\str A'$ is \emph{transversal}, that is, $\pi$ is injective on $A'$.

\addtocontents{toc}{\SkipTocEntry}
\subsection*{Constructing the extension}
\begin{figure}
\centering
\includegraphics{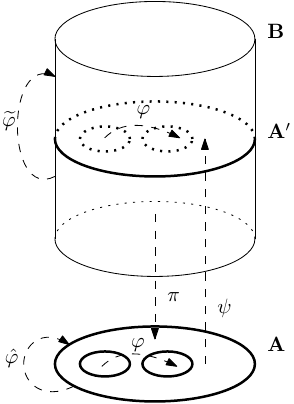}
\caption{Scheme of the construction of $\widetilde{\varphi}$.}
\label{prooffig}
\end{figure}
The construction from the following paragraphs is schematically depicted in Figure~\ref{prooffig}.

Let $\varphi$ be a partial automorphism of $\str{A}'$. Using $\pi$ we get a partial permutation of (the set) $A$ and we denote by $\hat\varphi$ its order-preserving extension to a permutation of $A$ (cf. Proposition~\ref{prop:setcoherence}). 

We now construct a set $F\subseteq {A\choose 2}$ of \emph{flipped pairs} by putting $\{x,y\}\in F$ if $x\neq y$, $(x,\chi_x)\in \dom(\varphi)$ and $\chi_x(y)\neq \chi'(\hat\varphi(y))$, where $\varphi ((x,\chi_x))=(\hat\varphi(x),\chi')$. Note that if we also have that $(y,\chi_y)\in\dom(\varphi)$, then $\{x,y\}\in F$ if and only if $\chi_y(x)\neq \chi''(\hat\varphi(x))$, where $\varphi((y,\chi_y)) = (\hat\varphi(y),\chi'')$. This follows from the fact that $\varphi$ is a partial automorphism.

For every $x\in A$ we define a function $f_x$ on valuation functions for $x$ putting
$$f_x(\chi)(\hat\varphi(y)) = \begin{cases}
  \chi(y) &  \text{if }\{x,y\}\notin F \\
  1-\chi(y) & \text{if }\{x,y\}\in F.
\end{cases}$$
Finally, we define a function $\widetilde{\varphi}\colon B\to B$ by putting $\widetilde{\varphi}((x,\chi))=(\hat\varphi(x),f_x(\chi))$. This function will be the coherent extension of $\varphi$.

\addtocontents{toc}{\SkipTocEntry}
\subsection*{Proofs}
Both proofs in this section are only an explicit verification that our constructions work as expected.
\begin{lemma}\label{lem:graphs:auto}
$\widetilde{\varphi}$ is an automorphism of $\str B$ extending $\varphi$. In other words, $\str B$ is an EPPA-witness for $\str A$.
\end{lemma}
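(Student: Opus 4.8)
The plan is to verify directly that $\theta$ is a well-defined bijection $B\to B$, that it preserves and reflects edges, and that it extends $\varphi$. First I would check well-definedness: given $(x,\chi)\in B$, the candidate image $\theta((x,\chi))=(\hat\varphi(x),\chi')$ has first coordinate $\hat\varphi(x)\in A$ (since $\hat\varphi$ is a permutation of $A$ by Proposition~\ref{prop:setcoherence}), and $\chi'$ is defined on $A\setminus\{\hat\varphi(x)\}$ via the bijection $y\mapsto\hat\varphi(y)$ from $A\setminus\{x\}$ to $A\setminus\{\hat\varphi(x)\}$, so $\chi'$ is a valuation function for $\hat\varphi(x)$ and $\theta((x,\chi))\in B$. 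Bijectivity then follows because $\theta$ has an explicit two-sided inverse: apply the analogous formula with $\hat\varphi^{-1}$ in place of $\hat\varphi$ and the same flipped-pair set $F$ (flipping is an involution on the $\{0,1\}$-value, and $F$ does not depend on the direction), which visibly composes with $\theta$ to the identity in both orders. Alternatively one observes $\theta$ is injective (if $\theta((x,\chi))=\theta((x',\chi'))$ then $\hat\varphi(x)=\hat\varphi(x')$ forces $x=x'$, and then $\chi=\chi'$ by undoing the flips) on the finite set $B$, hence bijective.

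Next I would verify the edge condition: $(x,\chi)$ and $(x',\chi')$ with $x\neq x'$ form an edge iff $\chi(x')\neq\chi'(x)$, and I must show this holds iff $\theta((x,\chi))$ and $\theta((x',\chi'))$ form an edge, i.e. iff $\chi_1(\hat\varphi(x'))\neq\chi_1'(\hat\varphi(x))$ where $\chi_1,\chi_1'$ are the new valuation functions. By the definition of $\theta$, $\chi_1(\hat\varphi(x'))$ equals $\chi(x')$ possibly flipped according to whether $\{x,x'\}\in F$, and $\chi_1'(\hat\varphi(x))$ equals $\chi'(x)$ possibly flipped according to whether $\{x',x\}\in F$ — the \emph{same} pair $\{x,x'\}$ in both cases. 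So if $\{x,x'\}\notin F$, neither value changes and the inequality is preserved; if $\{x,x'\}\in F$, both values get flipped (replaced by $1-\cdot$), and $a\neq b \iff (1-a)\neq(1-b)$, so the inequality is again preserved both ways. Hence $\theta$ is an automorphism. Here $\hat\varphi(x)\neq\hat\varphi(x')$ since $x\neq x'$, so the endpoints genuinely differ and the edge rule applies.

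Finally I would check that $\theta$ extends $\varphi$, i.e. that $\theta((x,\chi))=\varphi((x,\chi))$ whenever $(x,\chi)\in\dom(\varphi)$. The first coordinate is correct because $\hat\varphi$ extends the partial permutation of $A$ induced by $\varphi$ via $\pi$, so $\hat\varphi(x)=\pi(\varphi((x,\chi)))$. For the second coordinate, writing $\varphi((x,\chi))=(\hat\varphi(x),\chi')$, the definition of the flipped-pair set $F$ says precisely that $\{x,y\}\in F$ iff $\chi(y)\neq\chi'(\hat\varphi(y))$ (using this particular $(x,\chi)\in\dom(\varphi)$) — here I should note the small point that this is consistent across all choices of witnessing pair, which holds because $\varphi$ is an isomorphism of the induced substructures and $\str A'$ is transversal, so at most one valuation function is paired with each $x$ inside $\dom(\varphi)$. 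Then the formula for $\theta$ flips $\chi(y)$ exactly on the pairs in $F$, which by construction produces exactly $\chi'(\hat\varphi(y))$; hence $\theta((x,\chi))=(\hat\varphi(x),\chi')=\varphi((x,\chi))$.

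The main obstacle is the bookkeeping around the flipped-pair set $F$: making sure that $F$ is well-defined (the transversality of $\str A'$ is what guarantees each $x\in\pi(\dom\varphi)$ has a unique associated valuation function, so ``there is a valuation function $\chi$ such that $(x,\chi)\in\dom(\varphi)$'' is unambiguous), that it is symmetric as a subset of $\binom{A}{2}$, and that the edge-preservation argument only ever consults $F$ at the single unordered pair $\{x,x'\}$ — it is this symmetry of $F$ in its two arguments that makes the ``both flipped or neither flipped'' dichotomy work and hence makes $\theta$ an automorphism. Once $F$ is understood, everything else is the routine verification indicated above.
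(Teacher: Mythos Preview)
Your proof plan is correct and follows essentially the same approach as the paper: explicit inverse for bijectivity, edge preservation via the ``both flipped or neither flipped'' dichotomy on the pair $\{x,x'\}$, and extension via the definition of $F$. The paper makes one point a bit more explicit than you do: the consistency of $F$ you flag is not only about transversality (one valuation per vertex in $\dom(\varphi)$) but crucially about the case where \emph{both} $x$ and $y$ lie in $\pi(\dom(\varphi))$, where the two possible witnesses for $\{x,y\}\in F$ agree precisely because $\varphi$ preserves edges and non-edges --- you mention that $\varphi$ is an isomorphism, which is the right ingredient, but it is worth spelling out that this is where it is used.
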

\begin{proof}
Clearly, $\hat\varphi$ has an inverse $\hat\varphi^{-1}$. Observe also that the function $f^{-1}_x$ defined as
$$
f^{-1}_x(\chi)(\hat\varphi^{-1}(y))=
\begin{cases}
  \chi(y) &  \text{if }\{\hat\varphi^{-1}(x),\hat\varphi^{-1}(y)\}\notin F \\
  1-\chi(y) & \text{if }\{\hat\varphi^{-1}(x),\hat\varphi^{-1}(y)\}\in F
\end{cases}
$$
is an inverse of $f_x$. It follows that $\widetilde{\varphi}$ is a bijection $B\to B$.


Let $(x,\chi)$ and $(y,\xi)$ be vertices of $\str B$.
If $x=y$, then by the definition of $\str B$ neither of $(x,\chi),(y,\xi)$ and $\widetilde{\varphi}((x,\chi)),\widetilde{\varphi}((y,\xi))$
form an edge. If $x\neq y$, then we have $f_x(\chi)(\hat\varphi(y)) \neq f_y(\xi)(\hat\varphi(x))$ if and only if $\chi(y)\neq \xi(x)$ (by the definition of $f_x$ and $f_y$), hence $\widetilde{\varphi}$ preserves both edges and non-edges, that is, it is an automorphism of $\str B$.

Let $(x,\chi_x)\in \dom(\varphi)$ with $\varphi((x,\chi_x))=(z,\chi_z)$. We have for every $y\in A$ that $\{x,y\}\in F$ if and only if $\chi_x(y) \neq \chi_z(\hat\varphi(y))$. By the definition of $f_x$, it follows that $\chi_x(y) \neq f_x(\chi)(\hat\varphi(y))$ if and only if $\{x,y\}\in F$, therefore $\chi_z = f_x(\chi)$. This means that $\widetilde{\varphi}$ indeed extends $\varphi$.
\end{proof}

\begin{lemma}\label{lem:graphs:coherence}
Let $\varphi_1$, $\varphi_2$ and $\varphi$ be partial automorphisms of $\str A$ such that $\varphi = \varphi_2\circ\varphi_1$ and $\widetilde{\varphi_1}$, $\widetilde{\varphi_2}$ and $\widetilde{\varphi}$ their corresponding extensions as above. Then $\widetilde{\varphi} = \widetilde{\varphi_2}\circ \widetilde{\varphi_1}$.
\end{lemma}
\begin{proof}
Denote by $\hat\varphi_1$, $\hat\varphi_2$ and $\hat\varphi$ the
corresponding permutations of $A$ constructed above, and by $F_1$, $F_2$ and $F$ the corresponding sets of flipped pairs.

By Proposition~\ref{prop:setcoherence} we get that $\hat\varphi=\hat\varphi_2\circ\hat\varphi_1$. To see
that $\widetilde{\varphi}$ is a composition of $\widetilde{\varphi_1}$ and $\widetilde{\varphi_2}$ it remains to
verify that pairs flipped by $\widetilde{\varphi}$ are precisely
those pairs that are flipped by the composition of $\widetilde{\varphi_1}$ and $\widetilde{\varphi_2}$.

This follows from the construction of $F$. 
Only pairs with at least one vertex in the domain of $\varphi_1$ are put into sets $F$ and $F_1$
and again only pairs with at least one vertex in the domain of $\varphi_2$ (which is the same
as the value range of $\varphi_1$) are put into $F_2$.

Consider $\{x,y\}\in F$. This means that at least one of them (without loss of generality $x$) is in $\pi(\dom(\varphi)) = \pi(\dom(\varphi_1))$. Furthermore we know that $f_x(\chi_x)(\hat\varphi(y))\neq \chi_x(y)$. Because $\varphi = \varphi_2\circ\varphi_1$, we get that either $\{x,y\}\in F_1$, or $\{\hat\varphi_1(x), \hat\varphi_1(y)\}\in F_2$ (and precisely one of these happens). And this means that both $\widetilde{\varphi}$ and $\widetilde{\varphi_2}\circ\widetilde{\varphi_1}$ flip $\{x,y\}$.

On the other hand, if $\{x,y\}\notin F$, then either $\{x,y\}$ is in both $F_1$ and $F_2$ or in neither of them and then, again, neither $\widetilde{\varphi}$ nor $\widetilde{\varphi_2}\circ\widetilde{\varphi_1}$ flip $\{x,y\}$. This implies that indeed $\widetilde{\varphi} = \widetilde{\varphi_2}\circ \widetilde{\varphi_1}$.
\end{proof}

The previous lemmas immediately imply the following proposition.
\begin{prop}
\label{prop:graphs}
The graph $\str{B}$ is a coherent EPPA-witness for $\str{A}'$.
\end{prop}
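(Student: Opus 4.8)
The statement in question, Proposition~\ref{prop:graphs}, asserts that $\str B$ is a coherent EPPA-witness for $\str A'$, and it is explicitly flagged in the text as an immediate consequence of the two preceding lemmas. So the proof is a short assembly argument, not a new construction. The plan is to simply invoke Lemma~\ref{lem:graphs:auto} and Lemma~\ref{lem:graphs:coherence} and check that together they match Definition~\ref{defn:coherent} of a coherent EPPA-witness.

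\textbf{Step 1: EPPA-witness.} First I would recall that $\psi\colon\str A\to\str B$ is an embedding and $\str A'=\psi(\str A)$, so to witness EPPA for $\str A'$ it suffices to take the identity embedding $\str A'\hookrightarrow\str B$ and show every partial automorphism of $\str A'$ extends to an automorphism of $\str B$. Given a partial automorphism $\varphi$ of $\str A'$, the construction preceding the lemmas produces $\theta\colon B\to B$, and Lemma~\ref{lem:graphs:auto} says precisely that $\theta\in\Aut(\str B)$ and $\varphi\subseteq\theta$. Hence $\str B$ is an EPPA-witness for $\str A'$.

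\textbf{Step 2: Coherence.} Next I would define the map $\Phi$ from partial automorphisms of $\str A'$ to $\Aut(\str B)$ by $\Phi(\varphi)=\theta$, where $\theta$ is the extension constructed above (this is well-defined since the construction of $\hat\varphi$ via Proposition~\ref{prop:setcoherence} and of the flip set $F$ are deterministic given $\varphi$). To check $\Phi$ is a coherent map, take any coherent triple $(\varphi_1,\varphi_2,\varphi)$ of partial automorphisms of $\str A'$, so in particular $\varphi=\varphi_2\circ\varphi_1$ with the domain/range matching conditions. Then Lemma~\ref{lem:graphs:coherence} gives $\theta=\theta_2\circ\theta_1$, i.e.\ $\Phi(\varphi)=\Phi(\varphi_2)\circ\Phi(\varphi_1)$; combined with the fact that each $\theta_i$ extends the corresponding $\varphi_i$ (so domains/ranges of the partial maps sit correctly inside the $\theta_i$), the image triple $(\Phi(\varphi_1),\Phi(\varphi_2),\Phi(\varphi))$ is coherent. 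This is exactly the condition in Definition~\ref{defn:coherent}, so $\str B$ is a coherent EPPA-witness for $\str A'$.

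\textbf{Obstacle.} There is essentially no obstacle here: the proposition is a packaging statement and the content lives entirely in Lemmas~\ref{lem:graphs:auto} and~\ref{lem:graphs:coherence}. The only minor point to be careful about is that Definition~\ref{defn:coherent} demands a single embedding $\psi$ working for all partial automorphisms simultaneously and a globally coherent assignment $\varphi\mapsto\hat f$; both are satisfied because $\psi$ and the rule $\varphi\mapsto\theta$ are fixed once and for all, independent of any particular $\varphi$. So the write-up is two or three sentences: cite Lemma~\ref{lem:graphs:auto} for the witness property, cite Lemma~\ref{lem:graphs:coherence} for coherence of the fixed assignment, and note these are precisely the requirements of Definition~\ref{defn:coherent}.
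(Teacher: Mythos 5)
Your proposal is correct and matches the paper exactly: the paper's entire proof is the observation that Lemmas~\ref{lem:graphs:auto} and~\ref{lem:graphs:coherence} immediately yield the proposition, which is precisely your assembly of the witness property and coherence against Definition~\ref{defn:coherent}. Your extra care about the assignment $\varphi\mapsto\theta$ being fixed and deterministic is a reasonable (if implicit in the paper) point and introduces no discrepancy.
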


\begin{remark}
Note that $\str B$ only depends on the number of vertices of $\str A$ and, as such, is a coherent EPPA-witness for all graphs with at most $|A|$ vertices. 
\end{remark}

\begin{remark}
There is a simple generalisation of the ideas of this section which gives coherent EPPA for (not only) $k$-uniform hypergraphs directly, producing EPPA-witnesses on fewer vertices than Corollary~\ref{cor:free} (see the next paragraph). For example, when $k=3$, vertices of $\str B$ are pairs $(x,\chi)$, where $\chi$ is a function from ${A\setminus\{x\} \choose 2}$ (the set of all (unordered) pairs of vertices of $\str A$ different from $x$) to $\{0,1\}$ and we say that $(x,\chi),(x',\chi'),(x'',\chi'')$ form a hyperedge of $\str B$ if and only if $x$, $x'$ and $x''$ are distinct and $\chi(\{x',x''\})+\chi'(\{x,x''\})+\chi''(\{x,x'\})$ is odd. The rest of the construction is generalised in the same way, see also Section~\ref{sec:relstructures}

Corollary~\ref{cor:free} also implies coherent EPPA for the class of all $k$-uniform hypergraphs and other such classes. However, its proof takes a detour by first constructing EPPA-witnesses where the $k$-ary relation is not symmetric and contains tuples with repeated occurrences of the same vertices (a generalisation of loops), and then relying on a construction of irreducible structure faithful EPPA-witnesses to get a $k$-uniform hypergraph.
\end{remark}

\begin{remark}
A minor change to the construction makes it possible to prove the extension property for partial switching automorphisms (which is a strengthening of standard EPPA), and hence also EPPA for two-graphs and antipodal metric spaces. This was done by Evans and the authors in~\cite{eppatwographs}.
\end{remark}

\begin{remark}
Hrushovski's construction gives EPPA-witnesses on at most $(2n2^n)!$ vertices (where $\vert A\vert=n$) and he asks if this can be improved~\cite[Section~3]{hrushovski1992}.\footnote{We would like to thank H.~Andr\'eka and I.~N\'emeti for bringing this question to our attention.} The combinatorial construction of Herwig and Lascar~\cite{herwig2000} provides EPPA-witnesses on roughly $(kn)^k$ vertices, where $\vert A\vert=n$ and $k$ is the maximum degree of a vertex of $\str A$. Our construction gives EPPA-witnesses on $n2^{n-1}$ vertices, thereby providing the best uniform bound over all graphs on $n$ vertices. The same construction was found independently by Andr\'eka and N\'emeti~\cite{Andreka2019}.

Hrushovski also proves a lower bound $\vert B\vert\geq 2^m+m$ for $\vert A\vert = 2m$. It remains open to improve either of the bounds. Some partial progress on obtaining EPPA-witnesses of small size for some special classes of graphs has been made by Bradley-Williams and Cameron~\cite{BradleyEPPA}. We believe that studying bounds on the number of vertices of EPPA-witnesses is an interesting and meaningful project which can deepen our understanding of symmetries of graphs.
\end{remark}

\medskip

What now follows is a series of strengthenings of the main ideas from this section.
Each of the constructions will proceed in several steps:
\begin{enumerate}
 \item Define a structure $\str{B}$ using a suitable variant of valuations.
 \item Give a construction of a generic copy $\str{A}'$ of $\str{A}$ in $\str B$.
 \item For a partial automorphism $\varphi$ of $\str A'$, give a construction of its coherent extension $\widetilde{\varphi}\colon \str B\to \str B$.
 \item Prove that $\widetilde{\varphi}$ is indeed a coherent extension of $\varphi$ and that $\str B$ and $\widetilde{\varphi}$ have the extra properties required in the respective section.
\end{enumerate}
We believe that the constructions are what is interesting. However, the proofs often contain some steps which are conceptually straightforward but slightly technical due to the nature of the constructions. We decided to state these technicalities as Claims and prove them at the very end of each section. We believe that this helps separate the key parts of the arguments from technical verifications.

\section{Coherent EPPA for relational structures}
\label{sec:relstructures}

In this section we generalise the construction from the previous section to prove the following proposition:
\begin{prop}
\label{prop:relstructures}
Let $L$ be a finite relational language equipped with a permutation group $\GammaL$ and let $\str A$ be a finite $\GammaL$-structure. There exists a finite $\GammaL$-structure $\str B$ which is a coherent EPPA-witness for $\str A$.
\end{prop}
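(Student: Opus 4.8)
The plan is to mimic the graph construction of Section~\ref{sec:graphs}, replacing the $\{0,1\}$-valued valuation functions by functions recording, for each vertex $x$, the full ``local picture'' of all relations in which $x$ participates together with all the other vertices involved. Concretely, I would first fix an enumeration $A=\{1,\dots,n\}$ of the vertex set of $\str A$. A vertex of $\str B$ will be a pair $(x,\chi)$ where $x\in A$ and $\chi$ is a ``valuation function for $x$'': for each relation symbol $\rel{}{}\in L_\mathcal R$ of arity $k=\arity{}$ and each tuple $\bar u=(u_1,\dots,u_{k-1})$ of distinct elements of $A\setminus\{x\}$, $\chi$ records a subset of $\{1,\dots,k\}$ (the positions into which $x$ could be inserted) telling, for each way of interleaving $x$ among the $u_i$, whether the resulting $k$-tuple should be in $\rel{B}{}$. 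Since $L$ and $\str A$ are finite, there are finitely many such valuation functions, so $\str B$ is finite. The relations of $\str B$ are then defined so that a $k$-tuple $\big((x_1,\chi_1),\dots,(x_k,\chi_k)\big)$ of distinct-first-coordinate vertices lies in $\rel{B}{}$ if and only if \emph{all} the $\chi_i$ agree that it should (a consistency condition — exactly the generalization of $\chi(x')\neq\chi'(x)$ for graphs, which there was phrased asymmetrically using parity but here is cleanest phrased as unanimity). As in the graph case one also defines the generic copy $\str A'$ via $\psi(x)=(x,\chi_x)$ where $\chi_x$ reads off the relations of $\str A$, using the enumeration to break symmetry, and checks $\psi$ is an embedding; the projection $\pi\colon(x,\chi)\mapsto x$ makes $\str A'$ transversal.

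Next, given a partial automorphism $\varphi$ of $\str A'$, I would extend its underlying partial permutation of $A$ (via $\pi$) to a full permutation $\hat\varphi$ of $A$ coherently, using Proposition~\ref{prop:setcoherence}, and \emph{also} record the permutation $\varphi_L\in\GammaL$ of the language that $\varphi$ carries. Then, in analogy with the set $F$ of flipped pairs, I would build a ``correction datum'': for each relation symbol $\rel{}{}$ and each set of $k$ vertices, a bit (or small piece of data) describing how $\varphi$ twists the relation on the part of $\str A'$ it sees. The automorphism $\theta\colon\str B\to\str B$ is then defined by $\theta((x,\chi))=(\hat\varphi(x),\chi')$ where $\chi'$ is obtained from $\chi$ by relabelling vertices according to $\hat\varphi$, relabelling symbols according to $\varphi_L$, and applying the correction datum. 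One then verifies: $\theta$ is a bijection (exhibit an explicit inverse using $\hat\varphi^{-1}$ and $\varphi_L^{-1}$); $\theta$ preserves and reflects each relation $\rel{B}{}$ (because the defining ``unanimity'' condition is invariant under applying the same relabelling to every coordinate, and the correction datum is applied consistently); and $\theta$ extends $\varphi$ (this uses, just as in Lemma~\ref{lem:graphs:auto}, that $\varphi$ being an isomorphism of substructures forces the correction datum to be well defined — the value obtained by looking from vertex $x$ agrees with the value obtained by looking from any other vertex $y$ of the same tuple that also lies in $\dom(\varphi)$). Coherence is then checked exactly as in Lemma~\ref{lem:graphs:coherence}: $\hat\varphi=\hat\varphi_2\circ\hat\varphi_1$ and $\varphi_L=(\varphi_2)_L\circ(\varphi_1)_L$ by Proposition~\ref{prop:setcoherence} and the definition of composition of morphisms, and the correction data compose correctly because a relation-instance is ``corrected'' by $\theta$ precisely when it is corrected an odd number of times along $\theta_2\circ\theta_1$.

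The main obstacle I expect is bookkeeping the interaction between the language permutation group $\GammaL$ and the vertex relabelling in the definition of $\theta$, and getting the ``consistency/unanimity'' formulation of $\rel{B}{}$ exactly right so that it is simultaneously (i) satisfied by the generic copy $\str A'$, (ii) invariant under the relabelling maps $\theta$, and (iii) strong enough that the correction datum is forced to be well defined by $\varphi$ being a partial isomorphism. In the graph case all of this collapsed to a single parity condition $\chi(x')\neq\chi'(x)$; for higher arity one has to decide whether to use parity of a sum over all coordinates (cf.\ the $k$-uniform hypergraph remark) or unanimity among coordinates, and to handle non-symmetric relations and tuples with repeated vertices. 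I would handle repeated-vertex tuples separately (a tuple with a repeated vertex is ``$\pi$-collapsed'' and one can define its membership directly from one of the valuations, or forbid such configurations), keeping the core argument on the tuples that are transversal under $\pi$. Once the definitions are pinned down, steps (4) and (5) are, as the paper promises, ``just verification'' along the template already displayed after Proposition~\ref{prop:graphs}.
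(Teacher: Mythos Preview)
Your overall scaffolding matches the paper's, but the design choice you lean towards---defining $\rel{B}{}$ by \emph{unanimity} of the $\chi_i$---will not support the extension step, and this is a genuine gap rather than bookkeeping. The graph condition $\chi(x')\neq\chi'(x)$ is not unanimity in disguise; it is parity (XOR), and parity is essential. For $\theta$ to be an automorphism, the correction you apply to each valuation must depend only on the underlying tuple $\bar x$ and must be applied to every coordinate of $\bar x$ by the same rule. Under parity (tuple in $\rel{B}{}$ iff $\sum_i \chi_i(\rel{}{})(\bar x)$ is odd), flipping the bits at any even-sized set of distinct coordinates preserves membership, so one always has a free parity bit to absorb what $\varphi$ forces on $\dom(\varphi)$. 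Under unanimity (all $\chi_i$ say $1$), no nontrivial coordinate-wise flip preserves the relation: flipping every coordinate sends $(1,\dots,1)\in\rel{B}{}$ to $(0,\dots,0)\notin\rel{B}{}$. Hence you cannot simultaneously make $\theta$ extend $\varphi$ (which typically requires changing some bits) and make $\theta$ an automorphism.

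The paper accordingly uses parity throughout, and two further points you hand-wave do matter. First, tuples with repeated vertices cannot simply be ``forbidden'' (if $\str A$ has such a tuple in a relation, so must $\str B$): the paper lets $\chi(\rel{}{})$ be a $\{0,1\}$-function on \emph{all} $\arity{}$-tuples of $A$ containing $x$ (repetitions allowed), requires $x_i=x_j\Rightarrow\chi_i=\chi_j$ for a tuple to enter $\rel{B}{}$, and takes the parity over the \emph{set} $\{\chi_i:1\le i\le\arity{}\}$ so that repeated coordinates count once. Second, the correction $F_{\rel{}{}}(\bar x)\in\{0,1\}^{\arity{}}$ must itself have even parity (in the distinct-vertex sense) for $\theta$ to be an automorphism; when $\bar x$ lies entirely in $\pi(\dom(\varphi))$ this is forced by $\varphi$ being a partial isomorphism, but for mixed tuples one must use a free coordinate $x_m\notin\pi(\dom(\varphi))$ to restore even parity. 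This case split is where the actual content of both the automorphism verification and the coherence argument lives, and your ``correction datum as a single bit per tuple'' does not capture it.
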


Fix a finite relational language $L$ equipped with a permutation group $\GammaL$ and a finite $\GammaL$-structure $\str{A}$
with $A=\{1,\ldots, k\}$. We will construct a $\GammaL$-structure $\str B$ and give an embedding $\psi\colon\str A\to \str B$ such that $\str B$ is a coherent EPPA-witness for $\str A$ (with respect to $\psi$). Proposition~\ref{prop:relstructures} then immediately follows.

\addtocontents{toc}{\SkipTocEntry}
\subsection*{Witness construction}
Given a vertex $x\in A$ and an integer $n$, we denote by $U^A_n(x)$ the set of all $n$-tuples (i.e. $n$-element sequences) of elements of $A$ containing $x$. Note that $U^A_n(x)$ also includes $n$-tuples with repeated occurrences of vertices.

Given a relation $R\in L$ of arity $n$ and a vertex $x\in A$, we say that a function $\xi\colon U^A_n(x)\to \{0,1\}$ is an \emph{$R$-valuation function} for $x$. An \emph{$L$-valuation function} for a vertex $x\in A$ is a function $\chi$
assigning to every $R\in L$ an $R$-valuation function $\chi(R)$ for $x$.

\medskip

Now we are ready to give the definition of $\str{B}$:

\begin{enumerate}
  \item The vertices of $\str{B}$ are all pairs $(x,\chi)$, where $x\in A$ and
    $\chi$ is an $L$-valuation function for $x$.
  \item For every relation symbol $R$ of arity $n$ we put $$((x_1,\chi_1),\allowbreak \ldots, (x_n,\chi_n))\in \rel{B}{}$$ if and only if for every $1\leq i<j\leq n$ such that $x_i=x_j$ it also holds that $\chi_i = \chi_j$ and furthermore
$$\sum_{\chi\in \{\chi_i:1\leq i\leq n\}} \chi(R)((x_1,\ldots, x_n))\text{ is odd}$$ (summing over $\chi\in \{\chi_i:1\leq i\leq n\}$ ensures that possible multiple occurrences of $(x_i,\chi_i)$ are only counted once).
\end{enumerate}

Next we give an embedding $\psi\colon \str{A}\to \str{B}$ by putting $\psi_L$ to be the identity, and
$$\psi_A(x)=(x,\chi_x),$$ where $\chi_x$ is an $L$-valuation function for $x$ such that for every $R\in L$ we have
$$\chi_x(R)((y_1,\ldots, y_n))=\begin{cases}
1&\text{ if }(y_1,\ldots, y_n)\in \rel{A}{}\text{ and }x=y_1,\\
0&\text{ otherwise}.
\end{cases}$$
The following claim follows from the construction:
\begin{claim}\label{c:relstructures:emb}
$\psi$ is an embedding $\str A\to \str B$.
\end{claim}
\begin{proof}
Fix an $n$-ary relation $\rel{}{}\in L$. Recall that for $x\in A$, we have $\psi(x) = (x,\chi_x)$ with $\chi_x(R)(\bar{y})=1$ if and only if $\bar{y}_1 = x$ and $\bar{y}\in \rel{A}{}$. In particular, if $\bar{y}\notin\rel{A}{}$, then $\psi(\bar{y})\notin\rel{B}{}$, as for every $i$ we have that $\chi_{\bar{y}_i}(R)(\bar{y}) = 0$.

Suppose now that $\bar{y}\in\rel{A}{}$. For every $i$ we have $\chi_{\bar{y}_i}(R)(\bar{y}) = 1$ if and only if $\bar{y}_i = \bar{y}_1$. Hence
$$\sum_{\chi\in \{\chi_{\bar{y}_i} : 1\leq i\leq n\}} \chi(R)(\bar{y}) = 1,$$
so it is odd and thus $\psi(\bar{y})\in\rel{B}{}$.
\end{proof}

Put $\str{A}' = \psi(\str{A})$. This is the copy whose partial automorphisms we will later extend. Let $\pi\colon B\to A$ defined as $\pi((x,\chi))=x$ be the \emph{projection}.

\addtocontents{toc}{\SkipTocEntry}
\subsection*{Constructing the extension}
As in Section~\ref{sec:graphs}, we fix a partial automorphism $\varphi\colon \str{A}'\to\str{A}'$ and extend the projection of $\varphi$ to a permutation $\hat\varphi$ of $A$ in an order-preserving way. Note that $\varphi$ already contains a permutation of the language, therefore we will focus on extending the structural part.

For every relation symbol $R\in L$ of arity $n$, we construct a function $F_R\colon A^n\to \{0,1\}^n$.
These functions will play a similar role as the set $F$ in Section~\ref{sec:graphs} 
(i.e., they will control the \emph{flips}) and are constructed as follows:

For an $n$-tuple $\bar{x}=(x_1,\ldots, x_n)$ and $1\leq i\leq n$, we put $F_R(\bar{x})_i=1$ if and only if one of the following two cases is true:
\begin{enumerate}
\item\label{case:1} $x_i \in \pi(\dom(\varphi))$ and $\chi_{x_i}(R)(\bar{x})\neq \chi_{x_j}(\varphi(R))(\hat\varphi(\bar{x}))$, where $\varphi((x_i, \chi_{x_i})) = (x_j,\chi_{x_j})$.
\item\label{case:2} $\left\lvert\{x_j : x_j\in \pi(\dom(\varphi))\text{ and }F_R(\bar{x})_j=1\}\right\rvert$ is odd and $x_i = x_m$, where $1\leq m \leq n$ is the smallest index such that $x_m\notin \pi(\dom(\varphi))$ (note that $m$ might not exist, but then all entries are covered by case~\ref{case:1}).
\end{enumerate}
All the other entries of $F_R(\bar{x})$ are equal to 0. Note that case~\ref{case:2} ensures that the there is an even number of distinct vertices of $\bar{x}$ whose corresponding entry in $F_R(\bar{x})$ is equal to 1. 


For every $x\in A$ we define a function $f_x$ on $L$-valuation functions for $x$, putting
$$f_x(\chi)(\varphi(R))(\hat\varphi(\bar{y})) = \begin{cases}
 \chi(R)(\bar{y}) &  \text{if } F_R(\bar{y}) \text{ has 0 on entry corresponding to $x$} \\
  1-\chi(R)(\bar{y}) & \text{if } F_R(\bar{y}) \text{ has 1 on entry corresponding to $x$}.
\end{cases}$$

Finally, we define $\widetilde{\varphi}\colon \str B\to \str B$ by putting $\widetilde{\varphi}_L = \varphi_L$ and $\widetilde{\varphi}_B((x,\chi))=(\hat\varphi(x),f_x(\chi))$.

\addtocontents{toc}{\SkipTocEntry}
\subsection*{Proofs}
In the rest we proceed analogously to Section~\ref{sec:graphs}.

\begin{lemma}
$\widetilde{\varphi}$ is an automorphism of $\str B$ extending $\varphi$.
\end{lemma}
\begin{proof}
In the same way as in Lemma~\ref{lem:graphs:auto} one can see that $\widetilde{\varphi}$ is a bijection. Observe that by the construction we get that for every $\rel{}{}\in L$ of arity $n$ and every $n$-tuple $\bar{x}=(x_1,\ldots,x_n)\in A^n$ we have that $F_R(\bar{x})_i=F_R(\bar{x})_j$ whenever $x_i=x_j$ and that 
$$\left\lvert\{x_i : F_R(\bar{x})_i = 1\}\right\rvert\text{ is even}$$
(where taking the size of the set means that each distinct vertex is counted only once even if it has repeated occurrences in $\bar x$): Indeed, if $\bar{x}$ contains vertices from $A\setminus\pi(\dom(\varphi))$, this follows directly. Otherwise all vertices of $\bar{x}$ are from $\pi(\dom(\varphi))$, but then (as $\varphi$ is a partial automorphism), we get that $$((x_1,\chi_{x_1}),\ldots,(x_n,\chi_{x_n})) \in \rel{B}{}\iff (\varphi((x_1,\chi_{x_1})),\ldots,\varphi((x_n,\chi_{x_n})))\in\permrel{\varphi}{B}{},$$
and using the definition of relations in $\str B$ we see that an even number of distinct vertices from $\psi(\bar{x})$ changed how they valuate $\bar{x}$ with respect to $\rel{}{}$ and $\hat{\varphi}(\bar{x})$ with respect to $\permrel{\varphi}{}{}$ respectively.

Pick $\vec{x} = ((x_1,\chi_1),\ldots,(x_n,\chi_n))\in B^n$ and put $\bar{x}=(x_1,\ldots,x_n)$. Recall that $\vec{x}\in\rel{B}{}$ if and only if for every $1\leq i<j\leq n$ such that $x_i=x_j$ it also holds that $\chi_i = \chi_j$ and furthermore
$$\sum_{\chi\in \{\chi_i:1\leq i\leq n\}} \chi(R)(\bar{x})\text{ is odd}.$$
Note that if $x_i = x_j$ then $f_{x_i}(\chi_i) = f_{x_j}(\chi_j)$ if and only if $\chi_i = \chi_j$.
To get that $\widetilde{\varphi}$ is an automorphism, it remains to show that 
$$\sum_{\chi\in \{f_{x_i}(\chi_i):1\leq i\leq n\}} \chi(\varphi(R))(\hat\varphi(\bar{x}))\text{ is odd}$$
if and only if
$$\sum_{\chi\in \{\chi_i:1\leq i\leq n\}} \chi(R)(\bar{x})\text{ is odd}.$$

By the construction of $f_x$, we have for every $i$ that $$f_{x_i}(\chi_i)(\varphi(R))(\hat\varphi(\bar{x})) = \chi_i(R)(\bar{x})$$ if and only if $F_R(\bar{x})_i = 0$. This means that
$$\lvert\{x_i : f_{x_i}(\chi_i)(\varphi(R))(\hat\varphi(\bar{x})) \neq \chi_i(R)(\bar{x})\}\rvert$$
is equal to $\lvert\{x_i : F_R(\bar{x})_i = 1\}\rvert$, which is an even number. This concludes the proof that $\widetilde{\varphi}$ is an automorphism of $\str B$.

To see that $\widetilde{\varphi}$ extends $\varphi$, pick a tuple $\bar{x} = (x_1,\ldots,x_n)\in A^n$, an index $1\leq i\leq n$ such that $(x_i,\chi_{x_i})\in \dom(\varphi)$, and an arbitrary $\rel{}{}\in L$. Recall that $F_R(\bar{x})_i=1$ if and only if $\chi_{x_i}(R)(\bar{x})\neq \chi_{x_j}(\varphi(R))(\hat\varphi(\bar{x}))$, where $\varphi((x_i, \chi_{x_i})) = (x_j,\chi_{x_j})$. Since $f_{x_i}(\chi_{x_i})(\varphi(R))(\hat\varphi(\bar{x})) \neq \chi_{x_i}(R)(\bar{x})$ if and only if $F_R(\bar{x})_i=1$, we get that $\chi_{x_j} = f_{x_i}(\chi_{x_i})$ and hence $\varphi\subseteq \widetilde{\varphi}$.
\end{proof}

\begin{lemma}\label{lem:rel:coherence}
Let $\varphi_1$, $\varphi_2$ and $\varphi$ be partial automorphisms of $\str A$ such that $\varphi = \varphi_2\circ\varphi_1$ and let $\widetilde{\varphi_1}$, $\widetilde{\varphi_2}$ and $\widetilde{\varphi}$ be their corresponding extensions as above. Then $\widetilde{\varphi} = \widetilde{\varphi_2}\circ \widetilde{\varphi_1}$.
\end{lemma}
\begin{proof}
Let $\hat\varphi_1$, $\hat\varphi_2$ and $\hat\varphi$ be the permutations of $A$ constructed in the previous section for $\varphi_1$, $\varphi_2$ and $\varphi$ respectively, similarly define $F_R^1$, $F_R^2$ and $F_R$ for every $\rel{}{}\in L$. Since $\hat\varphi_1$, $\hat\varphi_2$ and $\hat\varphi$ were chosen in an order-preserving way, by Proposition~\ref{prop:setcoherence} we get that $\hat\varphi = \hat\varphi_2\circ\hat\varphi_1$. 

Hence, by an argument analogous to the proof of Lemma~\ref{lem:graphs:coherence}, we can see that it suffices to show that for every $\rel{}{}\in L$, for every $\bar{x}\in A^{\arity{}}$ and for every $1\leq i\leq \arity{}$, it holds that $F_R(\bar{x})_i=F_R^1(\bar{x})_i+F_{\varphi_1(R)}^2(\hat\varphi_1(\bar{x}))_i\mod 2$.

Fix such $\rel{}{}$, $\bar{x}$ and $i$. Put $y=\bar{x}_i$ and $(y,\chi) = \psi(y)$. First suppose that $(y,\chi)\in \dom(\varphi)=\dom(\varphi_1)$ and denote $(y',\chi')=\varphi_1((y,\chi))$ and $(y'',\chi'')=\varphi((y,\chi))=\varphi_2((y',\chi'))$. By the construction, we have the following:
\begin{align*}
F_R^1(\bar{x})_i = 1 &\iff \chi(\bar{x})\neq \chi'(\hat\varphi_1(\bar{x})),\\
F_{\varphi_1(R)}^2(\hat\varphi_1(\bar{x}))_i = 1 &\iff \chi'(\hat\varphi_1(\bar{x}))\neq \chi''(\hat\varphi(\bar{x})),\\
F_R(\bar{x})_i = 1 &\iff \chi(\bar{x})\neq \chi''(\hat\varphi(\bar{x})).
\end{align*}
It immediately follows that $F_R(\bar{x})_i = 1$ if and only if exactly one of $F_R^1(\bar{x})_i$ and $F_{\varphi_1(R)}^2(\hat\varphi_1(\bar{x}))_i$ is equal to one and we are done.

Otherwise $(y,\chi)\notin\dom(\varphi)$. Let $m$, $m_1$ and $m_2$ be the indices from case~\ref{case:2} of the definition of $F_R$ for $\bar{x}$, $F^1_R$ for $\bar{x}$, and $F^2_{\varphi_1(\rel{}{})}$ for $\hat\varphi_1(\bar{x})$, respectively. Since $(\varphi_1,\varphi_2,\varphi)$ is a coherent triple, we get that $m=m_1=m_2$.

If $y=\bar{x}_i \neq \bar{x}_m$ is not in $\pi(\dom(\varphi))$, it follows that $F_R^1(\bar{x})_i = F_{\varphi_1(R)}^2(\hat\varphi_1(\bar{x}))_i = F_R(\bar{x})_i=0$. Now we will assume that $\bar{x}_i=\bar{x}_m$. Define 
\begin{align*}
I &= \{1\leq j\leq n : \bar{x}_j\in\pi(\dom(\varphi))\text{ and }F_R(\bar{x})_j=1\},\\
I_1 &= \{1\leq j\leq n : \bar{x}_j\in\pi(\dom(\varphi_1))\text{ and }F_R^1(\bar{x})_j=1\},\text{ and}\\
I_2 &= \{1\leq j\leq n : \hat\varphi_1(\bar{x}))_j\in\pi(\dom(\varphi_2))\text{ and }F_{\varphi_1(R)}^2(\hat\varphi_1(\bar{x}))_j=1\}.
\end{align*}
Observe that by the previous paragraphs we have that $I$ is the symmetric difference of $I_1$ and $I_2$, so in particular
$$\vert I\vert = \vert I_1\vert + \vert I_2\vert - 2\vert I_1\cap I_2\vert.$$
Also note that $\bar{x}_j=\bar{x}_k$ if and only if $\hat\varphi_1(\bar{x})_j = \hat\varphi_1(\bar x)_k$. It follows that
$$\vert \{x_j : j\in I\}\vert = \vert \{x_j : j\in I_1\}\vert + \vert \{x_j : j\in I_2\}\vert - 2\vert \{x_j : j\in I_1\cap I_2\}\vert,$$
where $\vert \{x_j : j\in I\}\vert$ is the number of distinct vertices of $\str A$ such that their corresponding entry in $F_R(\bar{x})$ is equal to one. Looking at this equation modulo 2, we get that $\vert \{x_j : j\in I\}\vert$ is odd if and only if precisely one of $\vert \{x_j : j\in I_1\}\vert$ and $\vert \{x_j : j\in I_2\}$ is odd.

This implies (comparing with case~\ref{case:2} of the definitions of $F_R$, $F^1_R$ and $F^2_{\varphi_1(\rel{}{})}$) that even for $\bar{x}_i = \bar{x}_m$, we have that $F_R(\bar{x})_i=F_R^1(\bar{x})_i+F_{\varphi_1(R)}^2(\hat\varphi_1(\bar{x}))_i\mod 2$, which finishes the proof.
\end{proof}
This finishes the proof of Proposition~\ref{prop:relstructures}.

\begin{remark}\label{rem:rel_bound}
The EPPA-witness $\str B$ constructed in this section has at most
$$\mathcal O\left(\lvert A\rvert2^{\lvert L \rvert\lvert A\rvert^m}\right)$$
vertices, where $m$ is the largest arity of a relation in $L$. Consequently, the size of a coherent EPPA-witness for $\str A$ only depends on the language and on the number of vertices of $\str A$.
\end{remark}

\section{Infinite languages}\label{sec:infinite_languages}
When a non-trivial permutation group is present it is not true that for every finite structure there is a finite EPPA-witness. Consider, for example, the language $L$ consisting of infinitely many unary relations, where $\GammaL$ is the symmetric group. Let $\str A$ be a structure with a single vertex which is in exactly one relation. Then every EPPA-witness for $\str A$ needs to, in particular, extend all partial automorphisms of $\str A$ of type $(g,\emptyset)$, where $g\in \GammaL$ and $\emptyset$ is the empty map. This implies that every EPPA-witness for $\str A$ must contain a vertex in precisely one unary relation $U$ for every $U\in L$, hence infinitely many vertices.

First, we generalise this argument and prove Theorem~\ref{thm:negative}:
\begin{proof}[Proof of Theorem~\ref{thm:negative}]
$\str A$ being in an infinite orbit means that there is an infinite sequence $g_1, g_2,\ldots \in \GammaL$ such that the sequence $(g_1,\id_A)(\str A), (g_2,\id_A)(\str A), \ldots$ consists of pairwise distinct structures. For a contradiction, assume that there is a (finite) EPPA-witness $\str B$ for $\str A$.

In particular, $\str B$ needs to extend all partial automorphisms $(g_i,\emptyset)$, $i \geq 1$, which means that for every $i\geq 1$, there is an embedding of $(g_i,\id_A)(\str A)$ into $\str B$. In other words, for every $i\geq 1$ we get a tuple $\bar x_i \in B^{\lvert A\rvert}$, and by the assumption, all these tuples are pairwise distinct. This implies that the set $B^{\lvert A\rvert}$ is infinite, and since $\lvert A\rvert$ is finite, it follows that $B$ is infinite, a contradiction.
\end{proof}

\medskip

On the positive side, we prove the following proposition, thereby characterising relational languages $L$ equipped with a permutation group $\GammaL$ for which the class of all finite $\GammaL$-structures has EPPA.
\begin{prop}\label{prop:infinite_languages}
Let $L$ be a relational language equipped with a permutation group $\GammaL$ and let $\str A$ be a finite $\GammaL$-structure such that $\str A$ lies in a finite orbit of the action of $\GammaL$ by relabelling. There is a finite $\GammaL$-structure $\str B$ which is a coherent EPPA-witness for $\str A$.
\end{prop}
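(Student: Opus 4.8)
The plan is to reduce Proposition~\ref{prop:infinite_languages} to the finite-language case already handled by Proposition~\ref{prop:relstructures}. The key observation is that although $L$ may be infinite, only finitely many relation symbols are ``visible'' from $\str A$ and its partial automorphisms once we account for the fact that $\str A$ lies in a finite orbit. Concretely, let $O = \{g\str A : g \in \GammaL\}$ be the (finite) orbit of $\str A$ under relabelling, and let $L_0 \subseteq L$ be the set of all relation symbols $R$ such that $R$ appears in some $\str A' \in O$, meaning $R^{\str A'} \neq \emptyset$ for some $\str A' \in O$. Since $O$ is finite and each $\str A'$ is a finite structure, $L_0$ is finite. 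Moreover $L_0$ is $\GammaL$-invariant: if $R \in L_0$ is witnessed by $\str A' = g\str A$, then for any $h \in \GammaL$ the symbol $h(R)$ appears in $hg\str A \in O$.

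First I would let $\GammaL$ act on $L_0$; write $\Gamma_0$ for the image of $\GammaL$ in $\Sym(L_0)$ under the restriction action, so $(L_0, \Gamma_0)$ is a finite language equipped with a permutation group. Let $\str A_0$ be the $(\Gamma_0)$-structure obtained by restricting $\str A$ to the symbols of $L_0$ (this loses nothing since $\str A$ has no tuples in any relation outside $L_0$). Apply Proposition~\ref{prop:relstructures} to obtain a finite $(\Gamma_0)$-structure $\str B_0$ that is a coherent EPPA-witness for $\str A_0$, with embedding $\psi_0 \colon \str A_0 \to \str B_0$.

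Next I would lift $\str B_0$ back to a genuine $\GammaL$-structure $\str B$ on the same vertex set: for $R \in L_0$ put $R^{\str B} = R^{\str B_0}$, and for $R \in L \setminus L_0$ put $R^{\str B} = \emptyset$. The embedding $\psi_0$ becomes an embedding $\str A \to \str B$ (with identity language part). The main point to check is that every partial automorphism of $\psi_0(\str A)$ extends to an automorphism of $\str B$. A partial automorphism is a pair $(f_L, f)$ where $f_L \in \GammaL$ and $f$ is a partial injective map on vertices. The crucial claim is that $f_L$ must preserve $L_0$ setwise: indeed $(f_L, f)$ is an isomorphism between substructures of $\psi_0(\str A)$, and since $\psi_0(\str A)$ (being a copy of $\str A$) has all its nonempty relations among $L_0$, the map $(f_L, f)$ identifies $\str A$-substructures and $f_L \str A$-substructures, both lying in $O$, so $f_L$ sends $L_0$ to $L_0$. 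Hence $(f_L\!\restriction\! L_0, f)$ is a partial automorphism of $\psi_0(\str A_0)$ in the finite language $(L_0,\Gamma_0)$, and by the EPPA property of $\str B_0$ it extends to an automorphism $\theta_0 \in \Aut(\str B_0)$ with language part some $\overline{f_L} \in \Gamma_0$. Any lift of $\overline{f_L}$ to $\GammaL$ agreeing with $f_L$ on $L_0$ — and in particular $f_L$ itself — together with the vertex map of $\theta_0$ gives an automorphism $\theta$ of $\str B$: relations in $L_0$ are preserved because $\theta_0$ is an automorphism of $\str B_0$, and relations in $L \setminus L_0$ are preserved because they are all empty in $\str B$ and $f_L$ permutes $L \setminus L_0$ among itself. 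Coherence transfers directly: the map $\varphi \colon (f_L,f) \mapsto \theta$ is coherent because its composition with restriction to $L_0$ is exactly the coherent map provided by Proposition~\ref{prop:relstructures}, and the language parts on $L \setminus L_0$ compose correctly since they are simply the given $f_L$'s, whose composition is determined.

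I expect the main obstacle to be the verification that a partial automorphism's language part necessarily preserves $L_0$ and, more delicately, the bookkeeping around how the language part of the extending automorphism is chosen: Proposition~\ref{prop:relstructures} only controls the $\Gamma_0$-part, so one must argue that the original $f_L \in \GammaL$ (rather than some other lift) can be used consistently and coherently. This is where the hypothesis that $\str A$ lies in a finite orbit is used essentially — it is what makes $L_0$ finite and $\GammaL$-invariant — and without it the reduction collapses, consistent with Theorem~\ref{thm:negative}.
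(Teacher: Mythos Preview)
Your approach is correct and is in fact more elementary than the paper's. Both proofs reduce to Proposition~\ref{prop:relstructures} via a finite sublanguage, but the paper takes a detour through an auxiliary language $M$: it encodes the full relational type of each repetition-free tuple $\bar x$ as a single relation $R^{\sigma(\bar x,\str A)}$ in $M$, shows that this encoding is functorial (Lemma~\ref{lem:functors}), and then invokes Observation~\ref{obs:redundant_groups} with $N\subseteq M$ the finite set of type-relations arising from the orbit of $\str A$. You bypass the encoding entirely by taking $L_0\subseteq L$ to be the relations that are nonempty somewhere in the orbit, which is already finite and $\GammaL$-invariant, and applying essentially the same reduction directly. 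What the paper's extra machinery buys is a cleanly functorial translation (so the verification that partial automorphisms transfer is a one-line appeal to Lemma~\ref{lem:functors}); what your approach buys is brevity and the avoidance of a somewhat heavy auxiliary language.

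One small presentational point: your justification that ``$f_L$ must preserve $L_0$ setwise'' via the partial automorphism is unnecessary and slightly muddled. You already established that $L_0$ is $\GammaL$-invariant, so \emph{every} $f_L\in\GammaL$ preserves $L_0$ setwise; there is nothing special to check for the language part of a partial automorphism. Once that is clear, your lift ``use $f_L$ itself as the language part, together with the vertex map from the $\str B_0$-extension'' is exactly right, and coherence follows because the language parts compose by hypothesis while the vertex parts compose by coherence of $\str B_0$.
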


In order to prove Proposition~\ref{prop:infinite_languages}, we will need the following lemma.

\begin{lemma}\label{lem:redundant_groups}
Let $M$ be a relational language equipped with a permutation group $\GammaM$ and let $\mathcal C$ be a class of finite $\GammaM$-structures. Suppose that there is a set $N\subseteq M$ such that for every $\str A\in \mathcal C$ and every $\rel{}{}\in M\setminus N$, it holds that $\rel{A}{}=\emptyset$ and for every $\pi\in \GammaM$ we have $\pi(N)=N$ (i.e. $\pi$ fixes $N$ setwise). Put
$$\GammaN = \{\pi\restriction_N : \pi\in \GammaM\},$$
where for a permutation $\pi\colon M\to M$, $\pi\restriction_N$ is its restriction to $N$.

Then $\GammaN$ is a permutation group on $N$ and $\mathcal C$ has coherent EPPA if and only if $\mathcal D$ does, where $\mathcal D$ is the class consisting of the same structures as $\mathcal C$, but understood as $\GammaN$-structures.
\end{lemma}
\begin{proof}
Since every $\pi\in \GammaM$ fixes $N$ setwise, we immediately get that $\GammaN$ is a permutation group on $N$. If $\mathcal C$ has coherent EPPA, then clearly $\mathcal D$ does, too, because for each $\pi'\in \GammaN$, we can simply pick an arbitrary $\pi\in \GammaM$ such that $\pi' = \pi\restriction_N$ and use coherent EPPA for $\mathcal C$. It thus remains to prove the other direction.

In the following, for $\str A\in \mathcal C$, we denote by $\str A\!^N$ its corresponding $\GammaN$-structure from $\mathcal D$.

Fix $\str A\in \mathcal C$. By the assumption that $\mathcal D$ has coherent EPPA, we get $\str B\in \mathcal C$ such that $\str B\!^N$ is a coherent EPPA-witness for $\str A\!^N$. Let $f = (f_L, f_A)$ be a partial automorphism of $\str A$. Then $(f_L\restriction_N, f_A)$ is a partial automorphism of $\str A\!^N$ and it extends to an automorphism $(f_L\restriction_N, \theta)$ of $\str B\!^N$. It is straightforward to check that $(f_L, \theta)$ is an automorphism of $\str B$ extending $f$ (it clearly extends $f$, and it is an automorphism of $\str B$, because $\str B$ contains no relations from $M\setminus N$ and $f_L(N)=N$). Coherence follows by coherence in $\mathcal D$.
\end{proof}

\addtocontents{toc}{\SkipTocEntry}
\subsection{Proof of Proposition~\ref{prop:infinite_languages}}
First we will define some auxiliary notions. Given an $n$-tuple $\bar x=(x_1,\ldots,x_n)$ and a function $\omega\colon \{1,\ldots,m\}\to \{1,\ldots,n\}$, we define an $m$-tuple
$$\bar x \circ \omega = (x_{\omega(1)},\ldots,x_{\omega(m)}).$$

For a $\GammaL$-structure $\str B$ and an $n$-tuple $\bar x \in B$ containing no repeated vertices (i.e. if $\bar x_i = \bar x_j$, then $i=j$), we define
$\sigma(\bar x, \str B)$ to be the set of all pairs $(\rel{}{},\omega)$, where $\rel{}{}\in L$ is an $m$-ary relation and $\omega\colon \{1,\ldots,m\}\to \{1,\ldots,n\}$ is a surjective function, such that $\bar x \circ \omega \in \rel{B}{}$.

Next, we define sets $M_1, M_2, \ldots$, such that $M_n$ consists of all pairs $(\rel{}{},\omega)$, where $\rel{}{}\in L$ is an $m$-ary relation and $\omega$ is a surjection $\{1,\ldots,m\}\to \{1,\ldots,n\}$. For every $n$ and for every $X\subseteq M_n$ we assume that $L$ does not contain the symbol $\rel{}{X}$ and we let $\rel{}{X}$ be an $n$-ary relation. We define a language $M'$ to consist of all these relations $\rel{}{X}$.

We put $M = M' \cup L$ (remember that $M'\cap L = \emptyset$ by our assumption). For every $g\in \GammaL$, we define a permutation $\pi_g$ on $M$ such that
$$\pi_g(R) = \begin{cases}
g(R) &\text{ if }R\in L,\\
\rel{}{Y} &\text{ if }R = \rel{}{X}\in M',
\end{cases}$$
where $Y = \{(g(S),\omega) : (S,\omega)\in X\}$.

Finally, we put $\GammaM = \{\pi_g : g\in \GammaL\}$. It is easy to verify that the map $g\mapsto \pi_g$ is a group isomorphism $\GammaL\to\GammaM$ (this is the only place in the proof where we use that $L\subseteq M$).

\medskip

Given a $\GammaL$-structure $\str B$, we define a $\GammaM$-structure $T(\str B)$ such that the vertex set of $T(\str B)$ is $B$ and for every $\bar x\in B^n$ containing no repeated vertices, we put $\bar x\in\nbrel{T(\str B)}{\sigma(\bar x, \str B)}$. There are no other tuples in any relations of $T(\str B)$.

In the other direction, given a $\GammaM$-structure $\str B$ such that $\rel{B}{} = \emptyset$ for every $\rel{}{}\in L$, we define a $\GammaL$-structure $U(\str B)$ such that the vertex set of $U(\str B)$ is $B$, and whenever $\bar x\in \rel{B}{X}$, we put $\bar x\circ \omega \in \nbrel{U(\str B)}{S}$ for every $(S,\omega)\in X$. There are no other tuples in any relations of $U(\str B)$. It is easy to verify that $T$ and $U$ are mutually inverse, that is $UT(\str B) = \str B$ for every $\GammaL$-structure $\str B$, and $TU(\str B) = \str B$ for every $\GammaM$-structure $\str B$ such that $\rel{B}{} = \emptyset$ for every $\rel{}{}\in L$.

In fact, these maps are functorial in the sense of the following lemma.

\begin{lemma}\label{lem:functors}
Let $\str B,\str C$ be $\GammaL$-structures. Let $(g,f)$ be an embedding $\str B\to \str C$ ($g\in \GammaL$, $f\colon B\to C$). Then $(\pi_g,f)$ is an embedding $T(\str B)\to T(\str C)$.

Let $\str B,\str C$ be $\GammaM$-structures such that $\rel{B}{} = \rel{C}{} = \emptyset$ for every $\rel{}{}\in L$. Let $(\pi_g,f)$ be an embedding $\str B\to \str C$ ($\pi_g\in \GammaM$, $f\colon B\to C$). Then $(g,f)$ is an embedding $U(\str B)\to U(\str C)$.
\end{lemma}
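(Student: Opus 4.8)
The plan is to verify the two halves of the lemma by unwinding the definitions of $T$, $U$, $\sigma$, and the permutations $\pi_g$, checking in each case that the map of vertices together with the induced permutation of the language preserves the relations in both directions. Since the statement is about embeddings and the constructions are purely combinatorial, the proof should be a direct verification; the only genuinely delicate point is keeping track of how a surjection $\omega$ interacts with the injective vertex map $f$, as discussed below.

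For the first part, let $(g,f)\colon \str B\to \str C$ be an embedding. First I would note that $f$ is injective, so for any $n$-tuple $\bar x\in B^n$ with no repeated vertices, $f(\bar x)$ also has no repeated vertices, and for any surjection $\omega\colon\{1,\dots,m\}\to\{1,\dots,n\}$ we have $f(\bar x)\circ\omega = f(\bar x\circ\omega)$. Hence, unwinding the definition of $\sigma$, a pair $(S,\omega)$ lies in $\sigma(\bar x,\str B)$ iff $\bar x\circ\omega\in\nbrel{B}{S}$, which by the embedding property of $(g,f)$ happens iff $f(\bar x\circ\omega) = f(\bar x)\circ\omega \in \permrel{g}{C}{S}=\nbrel{C}{g(S)}$, i.e. iff $(g(S),\omega)\in\sigma(f(\bar x),\str C)$. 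This shows $\sigma(f(\bar x),\str C) = \{(g(S),\omega): (S,\omega)\in\sigma(\bar x,\str B)\}$, which is exactly $\pi_g$ applied (as a set map) to $\sigma(\bar x,\str B)$. By the definition of $T$, the unique relation of $T(\str B)$ containing $\bar x$ is $\rel{}{\sigma(\bar x,\str B)}$ and the unique relation of $T(\str C)$ containing $f(\bar x)$ is $\rel{}{\sigma(f(\bar x),\str C)}$; by the computation just made and the definition of $\pi_g$ on $M'$ these correspond under $\pi_g$. Since $T(\str B)$ and $T(\str C)$ have no relations other than these (and $f$ is injective, so membership and non-membership in every relation is controlled exactly by the value of $\sigma$), $(\pi_g,f)$ is an embedding $T(\str B)\to T(\str C)$.

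For the second part, one argues essentially in reverse. Let $(\pi_g,f)\colon\str B\to\str C$ be an embedding of $\GammaM$-structures with all $L$-relations empty. Given a tuple $\bar y\circ\omega$ in a relation $\nbrel{U(\str B)}{S}$, it arose from some $\bar y\in\rel{B}{X}$ with $(S,\omega)\in X$, where $\bar y$ has no repeated vertices; then $f(\bar y)\in\permrel{\pi_g}{C}{X_{}}=\nbrel{C}{\pi_g(\rel{}{X})}$, and $\pi_g(\rel{}{X})=\rel{}{Y}$ with $Y=\{(g(T),\tau):(T,\tau)\in X\}$, so $(g(S),\omega)\in Y$ and hence $f(\bar y)\circ\omega = f(\bar y\circ\omega)\in\nbrel{U(\str C)}{g(S)} = \permrel{g}{C}{S}$. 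The converse inclusion (a tuple of $U(\str C)$ of the form $f(\bar z)\circ\omega$ lying in $\nbrel{U(\str C)}{g(S)}$ forces $\bar z\circ\omega\in\nbrel{U(\str B)}{S}$) follows the same way using that $(\pi_g,f)$ is an embedding and $f$ injective; one should also check that every tuple relevant to testing the embedding property of $U$ is of this ``pulled-back'' form, which is where one uses that $T$ and $U$ are mutually inverse and that $U(\str B)$, $U(\str C)$ have no relations beyond those prescribed. I would also remark that $g\in\GammaL$ because $\pi_g\in\GammaM$ and $g\mapsto\pi_g$ is a bijection $\GammaL\to\GammaM$.

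The main obstacle, such as it is, is bookkeeping rather than mathematics: one must be careful that $f(\bar x\circ\omega)=f(\bar x)\circ\omega$ (immediate) and, more importantly, that $\bar x\circ\omega$ has \emph{no} repeated vertices is never required — indeed $\omega$ is a surjection, so $\bar x\circ\omega$ typically does repeat vertices, and this is fine because $U$ produces relations indexed by the surjections $\omega$ precisely to record such ``degenerate'' tuples. The symmetric subtlety in the $T$ direction is that $\sigma(\bar x,\str B)$ is only defined for $\bar x$ without repeats, so when verifying the embedding property of $T$ one only needs to consider such tuples, and $f$ injective guarantees $f(\bar x)$ is again of this kind. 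Once these points are isolated, both directions are one-line unwindings of the definitions.
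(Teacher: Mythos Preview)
Your proof is correct and follows essentially the same approach as the paper's own proof: a direct verification that $\sigma(f(\bar x),\str C) = \{(g(S),\omega):(S,\omega)\in\sigma(\bar x,\str B)\}$ for the first part, with the second part handled analogously. The paper's argument is terser (it dismisses the second part as ``a complete analogy''), so your treatment of the converse direction in the $U$ part is in fact more thorough; one small remark is that the reason every relevant tuple in $U(\str C)$ can be pulled back is precisely that $\omega$ is a surjection (so every entry of $\bar y$ in $f(\bar z)=\bar y\circ\omega$ lies in $f(B)$), rather than the mutual-inverse relationship between $T$ and $U$ that you cite.
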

\begin{proof}
We only need to verify the definition of an embedding. For the first part, we know that for every $S\in L$, every $n$-tuple $\bar x\in B$ containing no repeated vertices and for every surjection $\omega\colon \{1,\ldots,m\}\to \{1,\ldots,n\}$, we have $\bar x\circ \omega \in S_\str B$ if and only if $f(\bar x)\circ \omega \in g(S)_\str B$, which implies that
$$\sigma(f(\bar x), (g,f)(\str B)) = \left\{(g(S),\omega) : (S,\omega)\in \sigma(\bar x, \str B)\right\},$$
from which the claim follows. The second part can be proved in a complete analogy.
\end{proof}

\medskip

Continuing with the proof of Proposition~\ref{prop:infinite_languages} we define $N$ to be the subset of $M'$ consisting of all $\pi_g(\rel{}{\sigma(\bar x,\str A)})$, where $\pi_g\in \GammaM$ and $\bar x$ is a tuple of vertices of $\str A$ containing no repeated ones (remember that $\str A$ is the $\GammaL$-structure fixed in the statement of Proposition~\ref{prop:infinite_languages}).

We claim that $N$ is finite: Whenever $\rel{}{X}\in N$, then there is $\bar x\in A$ and $\pi_g\in \GammaM$ such that $\rel{}{X} = \pi_g(\rel{}{\sigma(\bar x,\str A)})$, however, this is equivalent to saying that $X = \sigma(\bar x,(g,\id_A)(\str A))$. In other words, every $n$-ary relation $\rel{}{X}\in N$ corresponds to at least one pair $(\bar x, (g,\id_A)(\str A))$, where $\bar x$ is an $n$-tuple of vertices of $\str A$ with no repeated occurrences and $g\in \GammaL$. Since $\str A$ lies in a finite orbit of the action of $\GammaL$ by relabelling, it follows that there are only finitely many different choices for $(g,\id_a)(\str A)$. By definition, all relations in $N$ have arity at most $\lvert A\rvert$, hence there are finitely many choices for $\bar x$ and thus $N$ is indeed finite. Observe also that for every $\pi_g\in \GammaM$ we have $\pi_g(N) = N$.

Let $\mathcal C$ be the class consisting of all $\GammaM$-structures $\str B$ such that whenever $\rel{}{}\in M\setminus N$, then $\rel{B}{}=\emptyset$. Observe that $T(\str A)\in \mathcal C$ and $U(\str B)$ is defined for every $\str B\in \mathcal C$.

\medskip

We have verified that $\mathcal C$ satisfies the conditions of Lemma~\ref{lem:redundant_groups}. Hence, we get a permutation group $\GammaN$ on $N$ and a class $\mathcal D$, which in this case is simply the class of all finite $\GammaN$-structures and hence has coherent EPPA by Proposition~\ref{prop:relstructures}. By Lemma~\ref{lem:redundant_groups} we then get that $\mathcal C$ also has coherent EPPA.

In particular, we get $\str C\in \mathcal C$ which is a coherent EPPA-witness for $T(\str A)$. Putting $\str B = U(\str C)$, we have a $\GammaL$-structure $\str B$ such that $T(\str B)$ is a coherent EPPA-witness for $T(\str A)$. In the last paragraph, we shall prove that $\str B$ is a coherent EPPA-witness for $\str A$.

\medskip

Let $(g,f)$ be a partial automorphism of $\str A$. From the construction it follows that $(\pi_g, f)$ is a partial automorphism of $T(\str A)$ (equivalently, it follows from Lemma~\ref{lem:functors} and the observation that a partial automorphism of $\str A$ can be understood as a pair of embeddings of the same structure into $\str A$), which extends to an automorphism $(\pi_g,\widetilde{f})$ of $T(\str B)$ (that is, $f\subseteq \widetilde{f}$). By Lemma~\ref{lem:functors} again, we get that $(g,\widetilde{f})$ is an automorphism of $\str B$, and since $f\subseteq \widetilde{f}$, it extends $(g,f)$. Coherence follows from coherence of $T(\str B)$, because $\pi_{gf} = \pi_g\pi_f$. This finishes the proof of Proposition~\ref{prop:infinite_languages}.

\section{EPPA for structures with unary functions}\label{sec:functions}
We are now ready to introduce unary functions into the language. In order to do so, we will use valuation structures instead of valuation functions, which was first done in~\cite{Evans3}. Otherwise we follow the general scheme as above and prove the following proposition.

\begin{prop}
\label{prop:eppafunctions}
Let $L$ be a language consisting of relation and unary function symbols equipped with a permutation group $\GammaL$ and let $\str A$ be a finite $\GammaL$-structure which lies in a finite orbit of the action of $\GammaL$ by relabelling. Then there is a coherent EPPA-witness for $\str A$.
\end{prop}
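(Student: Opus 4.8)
The strategy is to follow the template already established for relational structures in Section~\ref{sec:relstructures}, but to replace valuation \emph{functions} (assigning $0/1$ to tuples) with valuation \emph{structures}, following the idea of~\cite{Evans3}. Since every finite $\GammaL$-structure $\str A$ lies in a finite orbit under relabelling, we may first handle the case of a finite language exactly as in Proposition~\ref{prop:relstructures} and then reduce the general case to it via a variant of the functoriality trick (Observation~\ref{obs:redundant_groups} and Lemma~\ref{lem:functors}) used in Section~\ref{sec:infinite_languages}; so the core of the proof is the finite-language case. Fix $\str A$ with $A=\{1,\ldots,n\}$. As before, I would take the vertices of $\str B$ to be pairs $(x,\chi)$ where $x\in A$ and $\chi$ is now an \emph{$L$-valuation structure for $x$}: a structure on a vertex set of the form $\{x\}\cup(\text{finitely many fresh "witness" vertices})$, recording not only which tuples through $x$ lie in which relations, but also a \emph{choice of a set of representatives} for the values $\func{A}{}(\bar y)$ of each unary function on tuples through $x$. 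The projection $\pi\colon(x,\chi)\mapsto x$ and the generic copy $\str A'=\psi(\str A)$ are defined just as before, with $\chi_x$ recording the true relational and functional data of $\str A$ "as seen from $x$."

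\textbf{Relations and functions of $\str B$.} Relations are defined by the same parity (XOR) rule as in Section~\ref{sec:relstructures}: a tuple of vertices of $\str B$ is in $\nbrel{B}{}$ iff the valuation structures are consistent on shared vertices and the $\mathbb F_2$-sum of their recorded values on the projected tuple is odd. For the unary functions, the subtle point (and this is where the valuation-structure formalism from~\cite{Evans3} earns its keep) is that $\func{B}{}$ must be defined so that (i) $\psi$ is an embedding, (ii) $\func{B}{}((x,\chi))$ only depends on $(x,\chi)$ in a way that interacts correctly with the flips below, and (iii) $\str B$ genuinely lands in the class of finite $\GammaL$-structures. Concretely I would let $\func{B}{}((x,\chi))$ be the set of all vertices $(z,\xi)$ such that $z$ lies in the representative set recorded by $\chi$ for $\func{A}{}(x)$ and $\xi$ agrees with $\chi$ on the overlap (and symmetrically $\chi$ agrees with $\xi$); one has to check this set is nonempty whenever $\func{A}{}(x)\neq\emptyset$, which is arranged by including enough witness vertices in each valuation structure and taking the valuation structures to range over \emph{all} possibilities consistent with a true structure of size at most $n$.

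\textbf{The extension $\theta$, and the main obstacle.} Given a partial automorphism $\varphi$ of $\str A'$, extend its projection to $\hat\varphi\in\Sym(A)$ order-preservingly, build flip-data $F_R$ for each relation $R$ exactly as in Section~\ref{sec:relstructures} (with the same parity-evenness bookkeeping on tuples hitting $A\setminus\pi(\dom\varphi)$), and additionally build, for each unary function $F$, a "flip" that adjusts the \emph{choice of representative set} recorded in each valuation structure, compatibly with $\hat\varphi$. Then define $\theta((x,\chi))=(\hat\varphi(x),\chi')$ where $\chi'$ is $\chi$ with the relational values flipped according to $F_R$ and the function-representative data transported by $\hat\varphi$ and corrected by the functional flip. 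The routine parts are: $\theta$ is a bijection (exhibit $\theta^{-1}$), $\theta$ preserves relations (the parity argument of Section~\ref{sec:relstructures} verbatim), $\theta$ extends $\varphi$ (the flip was defined to make this so), and coherence (the $\mathbb F_2$-linearity of the relational flips plus Proposition~\ref{prop:setcoherence} plus coherence of the order-preserving extension, exactly as in Lemma~\ref{lem:graphs:coherence} and its relational analogue). \emph{The hard part} is checking that $\theta$ preserves the \emph{functions}: one must show $\theta(\func{B}{}((x,\chi)))=\permfunc{\hat\varphi}{B}{}(\theta((x,\chi)))$, i.e.\ that the flipping of valuation structures is set up so that "agreement on overlaps" is preserved under $\theta$ and the recorded representative sets transform correctly — this is the step that genuinely uses that valuations are \emph{structures} and not just bit-vectors, and where the care in defining $\func{B}{}$ (items (i)--(iii) above) gets paid off. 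Finally, the reduction from finite to arbitrary (finite-orbit) $\GammaL$ is handled by the same encode/decode functor pair as in Section~\ref{sec:infinite_languages}, now carrying the function symbols along, together with Observation~\ref{obs:redundant_groups} to discard the infinitely many symbols that are never used in structures of size $\le n$.
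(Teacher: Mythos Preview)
Your approach diverges from the paper's in a structural way, and the divergence hides a genuine gap.

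The paper does \emph{not} redo the parity construction with extra data bolted on. Instead it works in two layers. First it passes to the relational reduct $\str A^-$ (forgetting all function symbols) and invokes Proposition~\ref{prop:infinite_languages} to obtain a finite coherent EPPA-witness $\str B_0$ for $\str A^-$. Then the vertices of $\str B$ are pairs $(x,\str V)$ with $x\in B_0$ (not $x\in A$), where $\str V$ is a \emph{valuation structure for $x$}: a concrete $\GammaL$-structure with $x\in V$ such that $\str V^-$ is a substructure of $\str B_0$ and $\str V$ is isomorphic (as a $\GammaL$-structure, including functions) to $\cl_{\str A}(y)$ for some $y\in A$, via an isomorphism sending $x\mapsto y$. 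Relations on $\str B$ are simply pulled back from $\str B_0$ along the projection $\pi\colon (x,\str V)\mapsto x$; no parity argument appears at this stage. Functions are read off the valuation structure itself: $\func{B}{}((x,\str V))=\{(y,\cl_{\str V}(y)):y\in\nbfunc{\str V}{}(x)\}$. The extension $\theta$ is built from the coherent automorphism $\hat\varphi$ of $\str B_0$ extending the projected partial map, acting on both coordinates: $\theta((x,\str V))=(\hat\varphi(x),\str U)$ where $\str U$ is the image of $\str V$ under $\hat\varphi$ together with $\varphi_L$ on function symbols. Finiteness of $\str B$ is checked directly from the finite-orbit hypothesis (no separate reduction to finite $L$ is needed).

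Your one-step construction, with $x\in A$ and $\chi$ carrying both XOR bits and ``representative sets'' for function values, runs into trouble precisely at the point you flag as hard. The definitions of the valuation structure (``fresh witness vertices''), of $\func{B}{}$ (``agrees on the overlap''), and of the functional ``flip'' are not pinned down enough to verify that $\theta$ preserves functions, and it is not clear how a $\{0,1\}$-style flip could coherently adjust a choice of subset. What makes the paper's version go through is exactly that $\str V$ lives as a genuine substructure of the already-built $\str B_0$, so that (a) closures $\cl_{\str V}(y)$ are automatically valuation structures for $y$, making functions on $\str B$ well-defined and nested correctly, and (b) the automorphism $\hat\varphi$ of $\str B_0$ transports $\str V$ to another valuation structure with no bookkeeping beyond relabelling function symbols by $\varphi_L$. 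That layering is the missing idea in your plan.
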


Fix a language $L$ consisting of relation and unary function symbols equipped with a permutation group $\GammaL$, and a finite $\GammaL$-structure  $\str A$ which lies in a finite orbit of the action of $\GammaL$ by relabelling.

Denote by $L_\mathcal R\subseteq L$ the language consisting of all relation
symbols of $L$ and let $\Gamma\!_{L_\mathcal R}$ be the group obtained by restricting permutations from $\GammaL$ to $L_\mathcal R$.
For a $\GammaL$-structure $\str{D}$, we will denote by $\str{D}^{-}$ the $\Gamma\!_{L_\mathcal R}$-reduct
of $\str{D}$ (that is, the $\Gamma\!_{L_\mathcal R}$-structure on the same vertex set as $\str{D}$ with
$\nbrel{\str{D}^-}{}=\rel{D}{}$ for every $R\in L_\mathcal R$)

\addtocontents{toc}{\SkipTocEntry}
\subsection*{Witness construction}
Let $\str{B}_0$ be a finite $\Gamma\!_{L_\mathcal R}$-structure which is a coherent EPPA-witness for $\str{A}^-$ ($\str B_0$ exists by
Proposition~\ref{prop:infinite_languages}). We furthermore, for convenience, assume that $\str A^-\subseteq \str B_0$. Let $x\in B_0$ be a vertex of $\str B_0$ and let $\str V$ be a $\GammaL$-structure. We say that $\str{V}$ is 
a \emph{valuation structure for $x$} if the following hold:
\begin{enumerate} 
\item $x\in V$,
\item there exists $y\in A$ and an isomorphism $\iota\colon\str{V}\to \cl_\str{A}(y)$ satisfying $\iota(x)=y$ (note that $\iota$ can permute the language), 
\item $\str{V}^{-}$ is a substructure of $\str{B}_0$.
\end{enumerate}
Note that if $L$ contains no functions, then there is exactly one valuation structure for every $x\in \str B_0$, namely the substructure of $\str B_0$ induced on $\{x\}$. In this case, the rest of this construction simply describes the identity.

We construct $\str{B}$ as follows:
\begin{enumerate}
  \item The vertices of $\str{B}$ are all pairs $(x,\str V)$ where $x\in B_0$ and $\str V$ is a valuation structure for $x$,
  \item for every relation symbol $\rel{}{}\in L$ of arity $n$, we put $((x_1, \str V_{1}), \ldots,\allowbreak (x_n, \str V_{n}))\in \rel{B}{}$ if and only if $(x_1,\ldots,x_n)\in \nbrel{\str{B}^-_0}{}$,
  \item for every (unary) function symbol $\func{}{}\in L$ we put 
    $$\func{B}{}((x,\str{V}))=\{(y,\cl_{\str{V}}(y)) : \allowbreak y\in \nbfunc{\str V}{}(x)\}.$$
    Since $\str V$ is a valuation structure for $x$, it follows that $\cl_{\str{V}}(y)$ is a valuation structure for $y$.
\end{enumerate}

Next we define an embedding $\psi\colon \str A\to \str B$, putting $\psi_A(x) = (x, \cl_\str A(x))$ and $\psi_L=\id_L$. Note that $\cl_\str A(x)$ (a substructure of $\str A$) is indeed a valuation structure for $\iota$ being the identity, because we assumed that $\str A^-\subseteq \str B_0$. We put $\str A'=\psi(\str A)$ to be the copy of $\str A$ in $\str B$ whose partial automorphisms we will extend.

\begin{claim}\label{c:func:emb}
$\psi$ is an embedding $\str A\to \str B$.
\end{claim}
\begin{proof}
It follows directly from the construction that $\psi$ is injective and that for every relation $\rel{}{}\in L$ we have $\bar{x}\in \rel{A}{}$ if and only if $\psi(\bar{x})\in\rel{B}{}$. It remains to verify that for every $x\in \str A$ and for every function $\func{}{}\in L$ we have $\psi(\func{A}{}(x)) = \func{B}{}(\psi(x))$.

By the construction of $\str B$ we have
$$\func{B}{}(\psi(x))=\func{B}{}((x,\cl_\str A(x)))=\{(y,\cl_{\cl_\str A(x)}(y)) : \allowbreak y\in \nbfunc{\cl_\str A(x)}{}(x)\}.$$
Since $\cl_{\cl_\str A(x)}(y) = \cl_{\str A}(y)$ and $\nbfunc{\cl_\str A(x)}{}(x) = \func{A}{}(x)$, we have
$$\func{B}{}((x,\cl_\str A(x)))=\{(y,\cl_\str A(y)) : \allowbreak y\in \func{A}{}(x)\},$$
which is exactly $\psi(\func{A}{}(x))$.
\end{proof}

Observe that the vertex set of $\str B$ is finite: Assume for a contradiction that it is infinite. Since there are finitely many vertices in $B_0$, this implies that there is $x\in B_0$ for which there are infinitely many valuation structures. Moreover, by the definition of a valuation structure, this implies that in fact there is a vertex $y\in A$, a structure $\str W\subseteq \str B_0$, an injection $\iota_W\colon W\to \cl_\str{A}(y)$, a sequence of permutations $g_1,g_2,\ldots$ and a sequence of structures $\str V_1,\str V_2,\ldots$, such that the following hold:
\begin{enumerate}
\item $\str V_i^- = \str W$ for every $i\geq 1$ (so, in particular, they have the same vertex set),
\item the structures $\str V_1,\str V_2,\ldots$ are pairwise distinct, and
\item $(g_i,\iota_W)$ is an embedding $\str V_i\to \cl_\str{A}(y)$ for every $i\geq 1$.
\end{enumerate}
Taking the inverse, we get that there is a substructure $(g_1,\iota_W)(\str V_1) = \str X\subseteq \cl_\str{A}(y)$ such that the structures $(g_i^{-1},\iota_W^{-1})(\str X)=\str V_i$, $i\geq 1$ are pairwise distinct, which gives a contradiction with $\str A$ lying in a finite orbit of the action of $\GammaL$ by relabelling.

\addtocontents{toc}{\SkipTocEntry}
\subsection*{Constructing the extension}
Let $\pi\colon B\to B_0$, defined by $\pi((x,\str V)) = x$, be the projection. Note that $\pi(\str A')=\str A^-$. Fix a partial automorphism $\varphi$ of $\str{A}'$. It induces (by $\pi$ and restriction to $\Gamma\!_{L_\mathcal R}$) a partial automorphism $\varphi_0$ of $\str{A}^-$. Denote by $\hat\varphi$ the
extension of $\varphi_0$ to an automorphism of $\str{B}_0$. Put $\widetilde{\varphi}_L = \varphi_L$ and
$$\widetilde{\varphi}_B((x, \str V)) = (\hat\varphi(x), (\varphi_L,\hat{\varphi})(\str V)).$$

\addtocontents{toc}{\SkipTocEntry}
\subsection*{Proofs}
We again proceed analogously to Section~\ref{sec:graphs}.
\begin{lemma}
$\widetilde{\varphi}$ is an automorphism of $\str B$ extending $\varphi$.
\end{lemma}
\begin{proof}
Since $\hat\varphi$ is a bijection $B_0\to B_0$, it follows that for every $x\in B_0$ the function $(\varphi_L,\hat\varphi)$ is a bijection of valuation structures for $x$. Hence $\widetilde{\varphi}_B$ is a bijection $B\to B$. The relations on $\str B$ only depend on the projection, and since $\hat\varphi$ is an automorphism, we get that $\widetilde{\varphi}$ respects the relations. It remains to prove that for every $\func{}{}\in \GammaL$ and every $(x,\str V)\in B$ we have that $\widetilde{\varphi}(\func{B}{}((x,\str V))) = \permfunc{\widetilde{\varphi}}{B}{}(\widetilde{\varphi}((x,\str V)))$. To make the notation more readable, define function $h$ mapping every valuation structure $\str V$ to $(\varphi_L,\hat\varphi)(\str{V})$.

By the definition of $\str B$ we know that
$$\func{B}{}((x,\str V)) = \{(y,\cl_{\str{V}}(y)) : \allowbreak y\in \nbfunc{\str V}{}(x)\},$$
hence
$$\widetilde{\varphi}(\func{B}{}((x,\str V))) = \{(\hat\varphi(y),h(\cl_{\str{V}}(y))) : \allowbreak y\in \nbfunc{\str V}{}(x)\}.$$
Denote $X = \permfunc{\widetilde{\varphi}}{B}{}(\widetilde{\varphi}((x,\str V)))$. By the definition of $\str B$, we have
$$X = \{(y,\cl_{h(\str V)}(y)) : \allowbreak y\in \permnbfunc{\widetilde{\varphi}}{h(\str V)}{}(\hat\varphi(x))\}.$$
Since $\hat\varphi$ is a bijection $B\to B$, we can write
$$X = \{(\hat\varphi(y),\cl_{h(\str V)}(\hat\varphi(y))) : \allowbreak \hat\varphi(y)\in \permnbfunc{\widetilde{\varphi}}{h(\str V)}{}(\hat\varphi(x))\}.$$
Note that $\permnbfunc{\widetilde{\varphi}}{h(\str V)}{}(\hat\varphi(x)) = \hat\varphi(\nbfunc{\str V}{}(x))$, hence 
$\hat\varphi(y)\in \permnbfunc{\widetilde{\varphi}}{h(\str V)}{}(\hat\varphi(x))$ if and only if $y\in \nbfunc{\str V}{}(x)$, and so we can write
$$X = \{(\hat\varphi(y),\cl_{h(\str V)}(\hat\varphi(y))) : \allowbreak y\in \nbfunc{\str V}{}(x)\}.$$
Finally, $\cl_{h(\str V)}(\hat\varphi(y)) = h(\cl_{\str{V}}(y))$, hence indeed $$X = \widetilde{\varphi}(\func{B}{}((x,\str V))).$$
\end{proof}

\begin{lemma}
Assume that $\str{B}_0$ is a coherent EPPA-witness for $\str{A}^-$ and thus $\hat\varphi$ can be chosen to be coherent. Let $\varphi_1$, $\varphi_2$ and $\varphi$ be partial automorphisms of $\str A$ such that $\varphi = \varphi_2\circ\varphi_1$, and let $\widetilde{\varphi_1}$, $\widetilde{\varphi_2}$ and $\widetilde{\varphi}$ be their corresponding extensions as above. Then $\widetilde{\varphi} = \widetilde{\varphi_2}\circ \widetilde{\varphi_1}$.
\end{lemma}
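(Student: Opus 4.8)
The plan is to reduce coherence of the extension $\theta$ to coherence of the extension $\hat\varphi$ in $\str B_0$, using the fact that $\theta$ is essentially determined by $\hat\varphi$ together with a purely bookkeeping rule on the valuation structures. First I would fix $x \in B$, write $x = (x_0, \str V)$, and compute both $\theta((x_0,\str V))$ and $\theta_2(\theta_1((x_0,\str V)))$ explicitly. Since $\str{B}_0$ is a coherent EPPA-witness for $\str A^-$, Proposition~\ref{prop:setcoherence}-style reasoning (or rather the coherence hypothesis on $\str B_0$ directly) gives $\hat\varphi = \hat\varphi_2 \circ \hat\varphi_1$, so the first coordinates of $\theta((x_0,\str V))$ and $\theta_2(\theta_1((x_0,\str V)))$ agree: both equal $\hat\varphi(x_0) = \hat\varphi_2(\hat\varphi_1(x_0))$.

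It then remains to check the second coordinates, i.e. the valuation structures. By construction, $\theta_1((x_0,\str V)) = (\hat\varphi_1(x_0), \str U_1)$ where $(\hat\varphi_1\restriction_{\str V}, (\varphi_1)_L)$ is an isomorphism $\str V \to \str U_1$; applying $\theta_2$ gives $\theta_2((\hat\varphi_1(x_0),\str U_1)) = (\hat\varphi_2(\hat\varphi_1(x_0)), \str U_{12})$, where $(\hat\varphi_2\restriction_{\str U_1}, (\varphi_2)_L)$ is an isomorphism $\str U_1 \to \str U_{12}$. Composing, $(\hat\varphi_2\hat\varphi_1\restriction_{\str V}, (\varphi_2)_L(\varphi_1)_L)$ is an isomorphism $\str V \to \str U_{12}$. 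On the other hand $\theta((x_0,\str V)) = (\hat\varphi(x_0), \str U)$ where $(\hat\varphi\restriction_{\str V}, \varphi_L)$ is an isomorphism $\str V \to \str U$. Because $\varphi = \varphi_2\circ\varphi_1$ we have $\varphi_L = (\varphi_2)_L(\varphi_1)_L$ and $\hat\varphi\restriction_{\str V} = \hat\varphi_2\hat\varphi_1\restriction_{\str V}$, so $\str U$ and $\str U_{12}$ are the images of $\str V$ under the \emph{same} isomorphism with the \emph{same} language permutation. Since the relations of $\str U$ (resp. $\str U_{12}$) are determined by $\hat\varphi(\str V^-)$ (resp. $\hat\varphi_2(\hat\varphi_1(\str V^-))$), which coincide by $\hat\varphi = \hat\varphi_2\hat\varphi_1$, and the functions are determined coordinate-wise by $\hat\varphi$ and $\varphi_L$ applied to $\nbfunc{\str V}{}(y)$, the two structures $\str U$ and $\str U_{12}$ are literally equal. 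Hence $\theta((x_0,\str V)) = \theta_2(\theta_1((x_0,\str V)))$ for all $x \in B$, which is the claim.

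The one point requiring a little care — and the main (though modest) obstacle — is making sure the valuation structure $\str U$ produced by $\theta$ really is well-defined and really does agree with the one obtained by composition. This hinges on two things: that $\hat\varphi$, $\hat\varphi_1$, $\hat\varphi_2$ all agree on the vertex set $V$ of $\str V$ in the required way (which follows from coherence in $\str B_0$ together with $V \subseteq B_0$), and that the language-permutation parts multiply correctly (which is immediate from $\varphi = \varphi_2\circ\varphi_1$ as a morphism of $\GammaL$-structures, since this includes the language coordinate). Once these are noted, the verification is the routine ``function is an automorphism and composes correctly'' check anticipated in the paper's outline, and no new combinatorial device (like the flip sets $F_R$ of the previous section) is needed — the whole subtlety has been pushed into $\str B_0$ and absorbed by its coherence.
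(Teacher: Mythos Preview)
Your proof is correct and takes essentially the same approach as the paper's own proof, which is extremely terse: the paper simply says coherence on the first coordinate follows from the assumption on $\str B_0$, and that the extension to the second coordinate is ``canonical and hence preserves coherence.'' You have unpacked exactly what ``canonical'' means here --- that $\str U$ is determined by applying the pair $(\hat\varphi\restriction_V,\varphi_L)$ to $\str V$, and since both components compose correctly this forces $\str U = \str U_{12}$ --- which is precisely the intended argument.
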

\begin{proof}
Pick an arbitrary $(x,\str V)\in B$. We know that
$$\widetilde{\varphi}((x,\str V)) = (\hat\varphi(x), (\varphi_L,\hat\varphi)(\str V)).$$
Similarly,
$$\widetilde{\varphi_2}\circ \widetilde{\varphi_1}((x,\str V)) = (\hat\varphi_2(\hat\varphi_1(x)), (\varphi_L,\id_V)((\id_L,\hat\varphi_2)((\id_L,\hat\varphi_1)(\str V))))$$
(as by the assumption, the language parts of $\varphi_1$ and $\varphi_2$ compose to $\varphi_L$ and moreover the language permutation commutes with applying $\hat\varphi_i$ on the vertex set of $\str V$). By coherence of $\str B_0$ we know that $\hat\varphi_2(\hat\varphi_1(x)) = \hat\varphi(x)$, and so $$(\varphi_L,\id_V)((\id_L,\hat\varphi_2)((\id_L,\hat\varphi_1)(\str V)) = (\varphi_L,\hat\varphi)(\str V),$$ hence indeed $\widetilde{\varphi} = \widetilde{\varphi_2}\circ \widetilde{\varphi_1}$.
\end{proof}
This proves Proposition~\ref{prop:eppafunctions}.

\begin{observation}\label{obs:functions_bound}
The number of vertices of the EPPA-witness $\str B$ constructed in this section can be bounded from above by a function which depends only on the number of vertices of $\str B_0$, the number of vertices of $\str A$ and the size of the orbit of the action of $\GammaL$ by relabelling in which $\str A$ lies.
\end{observation}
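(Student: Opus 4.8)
The plan is to count the vertices of $\str B$ directly. Each vertex of $\str B$ is a pair $(x,\str V)$ with $x\in B_0$ and $\str V$ a valuation structure for $x$, so $\lvert B\rvert$ is at most $\lvert B_0\rvert$ times the maximum over $x\in B_0$ of the number of valuation structures for $x$; it thus suffices to bound this latter quantity by a function of $\lvert A\rvert$ and the size $o$ of the relabelling orbit of $\str A$. (The argument is really just a quantitative repackaging of the finiteness argument given just before the definition of $\theta$ in this section.)

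Fix $x\in B_0$. By condition (3) in the definition, the vertex set $V$ of a valuation structure $\str V$ for $x$ is a subset of $B_0$, and by condition (2) it satisfies $\lvert V\rvert=\lvert\cl_\str A(y)\rvert\le\lvert A\rvert$ for some $y\in A$; hence there are at most $\sum_{k\le\lvert A\rvert}\binom{\lvert B_0\rvert}{k}$ choices for $V$, a bound depending only on $\lvert B_0\rvert$ and $\lvert A\rvert$. Next I would fix such a $V$ and bound the number of $\GammaL$-structures on $V$ that can occur as valuation structures. The relational reduct $\str V^{-}$ is forced: by condition (3) it is the substructure of $\str B_0$ induced on $V$. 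For the functional part I would use condition (2): $\str V$ is isomorphic via some language-permuting isomorphism $\iota=(\iota_L,\iota_A)$ to $\cl_\str A(y)$ for some $y\in A$, so $\str V=(\iota_L^{-1},\iota_A^{-1})(\cl_\str A(y))$, which I write as the image under the pure vertex bijection $\iota_A^{-1}$ of the relabelled structure $\iota_L^{-1}\cdot\cl_\str A(y)$. Thus, for fixed $V$, the number of possible $\str V$ is at most (the number of $y\in A$, hence $\le\lvert A\rvert$) times (the number of relabelled copies $g\cdot\cl_\str A(y)$, $g\in\GammaL$) times (the number of bijections from the vertex set of $\cl_\str A(y)$ onto $V$, $\le\lvert A\rvert!$).

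The only genuinely non-trivial step — and the one where the finite-orbit hypothesis enters — is bounding the number of relabelled copies of $\cl_\str A(y)$. Here I would observe that relabelling commutes with taking the substructure on a fixed vertex set: since $\cl_\str A(y)$ is closed under the functions of $\str A$, its vertex set remains closed in $g\cdot\str A$, and $g\cdot\cl_\str A(y)$ equals the restriction of $g\cdot\str A$ to that vertex set. Consequently $g\cdot\str A\mapsto g\cdot\cl_\str A(y)$ is well defined, so $\cl_\str A(y)$ has at most $o$ distinct relabelled copies. Combining the three bounds, the number of valuation structures for a fixed $x$ is at most $\bigl(\sum_{k\le\lvert A\rvert}\binom{\lvert B_0\rvert}{k}\bigr)\cdot\lvert A\rvert\cdot\lvert A\rvert!\cdot o$, and therefore $\lvert B\rvert$ is bounded by a function of $\lvert B_0\rvert$, $\lvert A\rvert$, and $o$ alone, as claimed. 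I expect no real obstacle beyond being careful that substructures here must be closure-closed, so that the relabelling-commutes-with-restriction identity is stated correctly.
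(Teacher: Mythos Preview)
Your proposal is correct and follows essentially the same approach as the paper: count vertex sets $V\subseteq B_0$, then for fixed $V$ bound the number of $\GammaL$-structures by decomposing the isomorphism $\iota$ into a language part (controlled by the orbit size $o$) and a vertex-map part (controlled by $\lvert A\rvert$). The only cosmetic difference is that the paper normalises the embedding $\iota\colon\str V\to\str A$ directly, showing one may replace $\iota_L$ by one of the fixed $g_i^{-1}$, whereas you pass through the observation that relabelling commutes with restriction to a closed subset, bounding the orbit of $\cl_\str A(y)$ by that of $\str A$; these are two phrasings of the same fact.
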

\begin{proof}
We know that the vertex set of $\str B$ consists of pairs $(x,\str V)$, where $x\in B_0$ and $\str V$ is a valuation structure for $x$. Thus, it is enough to bound the number of valuation structures for any vertex of $\str B_0$. The vertex set of any valuation structure is a subset of $B_0$, hence there are at most $2^{\lvert B_0\rvert}$ different vertex sets of valuation structures. Hence it remains to bound the number of different valuation structures on a given subset $V\subseteq B_0$.

Let $\str A_1,\ldots,\str A_o$ be an enumeration of the orbit of the action of $\GammaL$ by relabeling in which $\str A$ lies and let $g_1,\ldots,g_o\in \GammaL$ such that $\str A_i = (g_i,\id_A)(\str A)$. Note that for every $\GammaL$-structure $\str U$ and every embedding $(f_L,f_U)\colon \str U\to \str A$, there is $i$ such that $(f_L,f_U)(\str U) = (g_i^{-1},f_U)(\str U)$. Indeed, by the way embeddings compose, we have that $(f_L,f_U) = (f_L,\id_A)\circ (\id_L,f_U)$. Composing with $(f_L^{-1},\id_A)$ on the left, we get that $(\id_L,f_U)$ is an embedding $\str U\to (f_L^{-1},\id_A)(\str A)$. This means that there is $i$ such that $(f_L^{-1},\id_A)(\str A) = \str A_i = (g_i,\id_A)(\str A)$ and hence indeed $(f_L,f_U)(\str U) = (g_i^{-1},f_U)(\str U)$.

Fix $V\subseteq B_0$. For every valuation structure $\str V$ on this vertex set, there is an isomorphism $\iota=(\iota_L,\iota_V)\colon V\to \cl_\str A(y)$, where $y\in A$. Since $\iota$ is in particular an embedding $\str V\to \str A$, by the previous paragraph we get $1\leq i\leq o$ such that $(\iota_L,\iota_V)(\str V) = (g_i^{-1},\iota_V)(\str V)$. Hence there are at most as many different structures $\str V$ as there are pairs $(g_i^{-1},\iota_V)$. Since there are at most $o$ choices for $g_i^{-1}$ and at most $\lvert A\rvert^{\lvert V\rvert}\leq \lvert A\rvert^{\lvert A\rvert}$ choices for $\iota_V$, the claim is proved.
\end{proof}

\medskip

From now on, our structures may contain unary functions. To some extent, the unary functions do not interfere too much with the properties which we are going to ensure and thus it is possible to treat them ``separately''. Namely, we will always first introduce a notion of a valuation function (in order to get the desired property) and then wrap the valuation functions in a variant of the valuation structures.

\section{Irreducible structure faithful EPPA}
\label{sec:faithful}
In this section we prove the following proposition, which is a strengthening of~\cite[Theorem
1.7]{Evans3}, which in turn extends~\cite[Theorem 9]{hodkinson2003}.
\begin{prop}
\label{prop:faithful}
Let $L$ be a language consisting of relation and unary function symbols equipped with a permutation group $\GammaL$ and let $\str A$ be a finite $\GammaL$-structure. Let $\str B_0$ be a finite $\GammaL$-structure which is an EPPA-witness for $\str A$. Then there is a finite $\GammaL$-structure $\str B$ which is an irreducible structure faithful EPPA-witness for $\str A$, and a homomorphism-embedding $\str B\to \str B_0$.

Moreover, if $\str{B}_0$ is coherent then $\str{B}$ is coherent, too.
\end{prop}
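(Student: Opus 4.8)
The plan is to follow the same five-step scheme used throughout the paper: build $\str B$ from $\str B_0$ by attaching to each vertex a \emph{valuation function} designed specifically to destroy unwanted irreducible substructures, wrap these valuation functions into valuation structures so that unary functions are handled as in Section~\ref{sec:functions}, and then verify the extension property, irreducible structure faithfulness, and coherence. The governing idea, borrowed from Hodkinson--Otto~\cite{hodkinson2003} and Evans--Hubi\v cka--Ne\v set\v ril~\cite{Evans3}, is to make the vertices of $\str B$ be pairs $(x,\chi)$ where $x$ is a vertex of $\str B_0$ and $\chi$ is an auxiliary colouring (a ``valuation'') whose role is to certify, for every irreducible substructure $\str C$ of $\str B$, a canonical way of mapping it onto (a copy of) $\str A$; the relations and functions of $\str B$ are then defined so that a tuple spans a relation only when the $\chi$-components agree in a way that forces the tuple to live inside one such certified copy. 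Concretely I expect $\chi$ to be something like an assignment, to each small subset $S\ni x$ of $B_0$, of a partial map witnessing how $\cl_{\str B_0}(S)$ embeds into $\str A$ (or a flag/colour from a bounded palette with this information); the projection $\pi\colon(x,\chi)\mapsto x$ is then a homomorphism-embedding $\str B\to\str B_0$, giving point (2) for free.

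\textbf{Key steps in order.} First I would fix $\str B_0$ (WLOG with $\str A\subseteq\str B_0$) and define the valuation functions precisely, then define $\str B$: its vertex set is the set of valuated vertices, its relations are inherited from $\str B_0$ but restricted to tuples whose valuations are ``coherent'' with each other, and — once functions enter — I would wrap the valuation functions inside valuation structures exactly as in Section~\ref{sec:functions}, so the function part of the construction is literally the one already proved there and I only need to argue about the relational/valuation layer. Second, I would exhibit the generic copy $\str A'=\psi(\str A)$ by equipping each $a\in A$ with the ``tautological'' valuation recording the actual embeddings of closures of subsets of $A$ into $\str A$, and check $\psi$ is an embedding. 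Third, given a partial automorphism $\varphi$ of $\str A'$, I push it through $\pi$ to a partial automorphism of $\str A^-$ (or of $\str A$), extend it to $\hat\varphi\in\Aut(\str B_0)$ using the hypothesis that $\str B_0$ is an EPPA-witness, and then lift $\hat\varphi$ to $\theta\in\Aut(\str B)$ by transporting valuations along $\hat\varphi$ (this is where the valuations must be defined robustly enough that $\hat\varphi$ acts on them); one checks $\theta$ is a bijection, respects relations (because relations only depend on the $\str B_0$-projection together with the valuation-coherence condition, both preserved by $\theta$) and functions (inherited from Section~\ref{sec:functions}), and extends $\varphi$. Fourth, and this is the real content, I prove irreducible structure faithfulness: if $\str C\subseteq\str B$ is irreducible, the valuation-coherence condition forces all vertices of $\str C$ to share a common ``certificate'', i.e. there is a single subset $S$ of $B_0$ and a single witnessing map showing $\cl_{\str B_0}(\pi(C))$ embeds into $\str A$; composing that map with an automorphism of $\str B$ realigning it with $\str A'$ produces $g\in\Aut(\str B)$ with $g(C)\subseteq A'$. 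Fifth, coherence: $\hat\varphi$ may be chosen coherently by hypothesis, and the lift to valuations is canonical (functorial in $\hat\varphi$), so $\theta_2\circ\theta_1=\theta$ follows from $\hat\varphi_2\circ\hat\varphi_1=\hat\varphi$ together with $\pi_{gf}$-type compatibility, exactly as in the function case.

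\textbf{Main obstacle.} The hard part will be designing the valuation functions and the valuation-coherence condition so that two competing requirements are simultaneously met: on one hand the condition must be strong enough that any irreducible substructure of $\str B$ is ``pinned'' to a single copy of $\str A$ (irreducible structure faithfulness), which needs the somewhat subtle combinatorial fact that the closures in $\str A$ of overlapping small sets fit together consistently — here one uses that $\str A$ is a fixed finite structure and that irreducibility prevents $\str C$ from being split, so its closure-data cannot branch; on the other hand the condition must be weak enough (invariant under the $\str B_0$-automorphism $\hat\varphi$ acting on valuations) that $\theta$ really is an automorphism and the copy $\str A'$ really embeds, and that the number of valuations per vertex stays finite. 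Getting the bookkeeping of closures-of-closures right, and in particular checking that a tuple meeting several ``certified copies'' is correctly handled (typically: such a tuple must be declared to span no relation, mirroring free amalgamation), is where I expect to spend the most care; the coherence bookkeeping and the reduction of the function layer to Section~\ref{sec:functions} should be routine by comparison.
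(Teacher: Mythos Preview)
Your architectural plan is right --- the five-step scheme, the wrapping of valuation functions in valuation structures to handle unary functions, the lift of $\hat\varphi\in\Aut(\str B_0)$ to $\theta\in\Aut(\str B)$ by transporting valuations, and the coherence argument all match the paper. The projection $\pi$ being a homomorphism-embedding is also exactly how the paper gets the map $\str B\to\str B_0$.

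The gap is in your proposed design of the valuation functions and, correspondingly, in your faithfulness argument. You suggest that $\chi$ should assign to each small $S\ni x$ a partial map witnessing how $\cl_{\str B_0}(S)$ embeds into $\str A$, and that faithfulness will follow because an irreducible $\str C$ forces a ``shared certificate'' which \emph{is} the desired embedding. But the obstruction to faithfulness is precisely the \emph{bad} irreducible substructures $\str I\subseteq\str B_0$, i.e.\ those for which no automorphism of $\str B_0$ sends $I$ into $A$; for these there is no witnessing embedding at all, so your valuation carries no information on exactly the substructures you need to kill, and the ``shared certificate'' mechanism has nothing to share.

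The paper's trick is different and worth internalising. For each bad $\str I$ containing $x$, the valuation $\chi(\str I)$ is simply a colour in $\{1,\ldots,|I|-1\}$; two pairs $(x,\chi),(y,\chi')$ are declared \emph{generic} if for every bad $\str I$ through both, $\chi(\str I)\neq\chi'(\str I)$; relations in $\str B$ are only placed on generic tuples. The point is pigeonhole: a generic set can never project onto all of a bad $\str I$, since that would require $|I|$ pairwise distinct values in a palette of size $|I|-1$. Faithfulness is then argued in two steps: first, if $\str D\subseteq\str B$ is irreducible then $D$ is generic (if not, one exhibits two vertices of $\str D$ whose closures witness non-genericity and uses them to split $\str D$ as a nontrivial free amalgam, contradicting irreducibility); second, since $D$ is generic, $\pi(\str D)$ is irreducible in $\str B_0$ and \emph{not bad}, so some $\hat\varphi\in\Aut(\str B_0)$ sends $\pi(D)$ into $A$, and one lifts $\hat\varphi$ to $\Aut(\str B)$ exactly as in your step three. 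The embedding $\psi\colon\str A\to\str B$ uses that $|I\cap A|\leq |I|-1$ for every bad $\str I$ (by definition of bad), so an injection $f_{\str I}\colon I\cap A\to\{1,\ldots,|I|-1\}$ exists and furnishes the valuations for vertices of $A$.
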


\begin{remark}
Note that up to this point, the permutation group $\GammaL$ was not very relevant. In Section~\ref{sec:relstructures}, the constructed EPPA-witnesses worked for $\GammaL$ being the symmetric group, and in Section~\ref{sec:functions}, it didn't play an important role either.

However, in this section $\GammaL$ plays a central role because it restricts which irreducible substructures can be sent to $\str A$ by an automorphism. For example, let $L$ be the language consisting of $n$ unary relation $\rel{}{1},\ldots,\rel{}{n}$ and let $\str A$ be an $L$-structure consisting of one vertex which is in $\rel{A}{1}$ and in no other relations.

For this $\str A$, Section~\ref{sec:relstructures} will produce an EPPA-witness $\str B_0$, where $B_0 = \{v_S : S\subseteq \{1,\ldots,n\}\}$ such that $v_S\in \nbrel{\str B_0}{i}$ if and only if $i\in S$. Fix now a permutation group $\GammaL$ on the language $L$ and consider $\str A$ as a $\GammaL$-structure. An irreducible structure faithful EPPA-witness $\str B$ for $\str A$ will contain only vertices, which are in precisely one unary relation $\rel{B}{i}$ such that moreover $\rel{}{1}$ and $\rel{}{i}$ are in the same orbit of $\GammaL$. In particular, if $\GammaL = \{\id_L\}$, then $\str A$ is an irreducible structure faithful EPPA-witness for itself. If $\GammaL = \Sym(L)$, then a possible irreducible structure faithful EPPA-witness for $\str A$ has $n$ vertices $v_1,\ldots,v_n$ such that $v_i$ has precisely one unary mark $\rel{}{i}$.
\end{remark}

Fix a language $L$ consisting of relation symbols and unary function symbols equipped with a permutation group $\GammaL$. Fix also a finite $\GammaL$-structure $\str{A}$
and its EPPA-witness $\str{B}_0$. Without loss of generality, assume that $\str A\subseteq \str B_0$.
We now present a construction of an irreducible structure faithful EPPA-witness $\str{B}$ with
a homomorphism-embedding (projection) to $\str{B}_0$, such that every
extension of a partial automorphism in $\str{B}$ is induced by the extension of its projection to $\str{B}_0$.

\addtocontents{toc}{\SkipTocEntry}
\subsection*{Witness construction}
Let $\str I$ be an irreducible substructure of $\str{B}_0$. We say that $\str I$ is \emph{bad} if there is no
automorphism $f\colon \str{B}_0\to\str{B}_0$ such that $f(I)\subseteq A$. 
 Given a vertex $x\in B_0$,
we denote by $U(x)$ the set of all bad irreducible substructures of $\str B_0$
containing $x$.

For a vertex $x\in B_0$, we say that a function assigning to every $\str{I}\in U(x)$ a value from $\{1,\ldots, |I|-1\}$ is a \emph{valuation function for $x$}.  
Given vertices $x,y\in B_0$ and their valuation functions $\chi$ and $\chi'$ respectively, we say
that the pairs $(x,\chi)$ and $(y,\chi')$ are {\em generic}, if either
$(x,\chi)=(y,\chi')$, or $x\neq y$ and for every $\str{I}\in U(x)\cap U(y)$ it holds that $\chi(\str{I})\neq
\chi'(\str{I})$. We say that a set $S$ is \emph{generic} if it consists of pairs $(x,\chi)$ where $x\in B_0$ and $\chi$ is a valuation function for $x$, and every pair $(x,\chi),(y,\chi')\in S$ is generic. In particular, the projection to the first coordinate is injective on every generic set.

A \emph{valuation structure} for a vertex $x\in B_0$ is a $\GammaL$-structure $\str{V}$
such that:
\begin{enumerate}
\item The vertex set of $\str V$ is a generic set of pairs $(y,\chi)$ with $y\in \cl_{\str{B}_0}(x)$ and $\chi$ being a valuation function for $y$, and
\item the pair $\iota=(\id_L, \iota_V)$, where $\iota_V((y,\chi))=y$, is an isomorphism of $\str{V}$ and $\cl_{\str{B}_0}(x)$.
\end{enumerate}
For a pair $(x, \str V)$, where $x\in B_0$ and $\str V$ is a valuation structure for $x$, 
we denote by $\chi(x, \str{V})$ the (unique) valuation function for $x$ such that $(x,\chi(x, \str{V}))\in V$ and
we put $\pi(x, \str V)=x$ ($\pi$ is again the \emph{projection} from $\str{B}$ to $\str{B}_0$). We say that a set $S$ of pairs $(x, \str V)$, such that $x\in B_0$ and $\str V$ is a valuation structure for $x$, is \emph{generic}, if the union $\bigcup_{(x,\str V)\in S} V$ is generic. Note that this implies that in particular $\{(x, \chi(x, \str V)) : (x,\str V)\in S\}$ is generic and thus $\pi$ is injective on every generic set. 

Observe that if $L$ contains no functions then every valuation structure $\str V$ for $x\in B_0$ contains exactly one vertex $(x,\chi(x,\str V))$, and conversely, for every valuation function $\chi$ for $x$ there is exactly one valuation structure $\str V$ for $x$ such that $\chi(x,\str V) = \chi$.

\medskip

Now we construct a $\GammaL$-structure $\str{B}$: 
\begin{enumerate}
\item The vertices of $\str{B}$ are all pairs $(x,\str{V})$, where $x\in B_0$ and $\str V$ is a valuation structure for $x$.

\item For every relation symbol $\rel{}{}\in L_\mathcal R$, we put $$((x_1,\str{V}_{1}),\ldots,\allowbreak (x_{\arity{}{}},\str{V}_{\arity{}{}}))\in \rel{B}{},$$ if and only if $(x_1,\ldots, x_{\arity{}{}})\in \nbrel{\str{B}_0}{}$, and $\{(x_1,\str{V}_{1}),\ldots,\allowbreak (x_{\arity{}{}},\str{V}_{\arity{}{}})\}$ is generic.

\item For every (unary) function symbol $\func{}{}\in L_\mathcal F$, we put
$$\func{B}{}((x,\str{V}))=\left\{(y,\cl_{\str{V}}((y,\chi))) : (y,\chi)\in \nbfunc{\str{V}}{}((x,\chi(x,\str V)))\right\}.$$
\end{enumerate}

Note that in the definition of $\func{B}{}((x,\str{V}))$ it holds that $\cl_{\str{V}}((y,\chi))$ is isomorphic to $\cl_{\str B_0}(y)$, so it is indeed a valuation structure for $y$. Also observe that $\str B$ is finite, because $B_0$ is finite, $U(x)$ is finite for every $x\in B_0$ (hence there are only finitely many candidate vertex sets for valuations structures), and there is at most one valuation structure on any candidate vertex set.

The following claim (whose proof is quite technical and will be given at the end of this section) justifies our definition of genericity and the construction of $\str B$.
\begin{claim}\label{c:faithful:irreducible}
Let $\str D\subseteq \str B$ be irreducible. Then $D$ is generic.
\end{claim}

We also have this complementary fact to Claim~\ref{c:faithful:irreducible}, which will be useful several times in this section.
\begin{claim}\label{c:faithful:generic}
Let $\str D\subseteq \str B$ be such that $D$ is a generic set. Then the restriction of $(\id_L,\pi)$ to $\str D$ is an embedding $\str D\to\str B_0$.
\end{claim}

Next we define an embedding $\psi\colon\str{A}\to\str{B}$ with $\psi_L=\id$. For every bad irreducible $\str{I}\subseteq \str B_0$, we fix an arbitrary injective function $u_\str{I}\colon I\cap A\to \{1,2,\ldots, \vert I\vert - 1\}$. Such a function exists, because $A\cap I$ is a proper subset of $I$ (otherwise $\str I$ would not be bad). For every $x\in A$ we define a valuation function $\chi_x$ for $x$ such that $\chi_x(\str I) = u_\str I(x)$.

Given $x\in A$, we also define a valuation structure $\str V_x$ for $x$ such that $V_x = \{(y, \chi_y):y\in \cl_\str{A}(x)\}$, and the structure on $V_x$ is chosen such that the pair $(\id_L,(y,\chi_y)\mapsto y)$ is an isomorphism $\str V_x\to\cl_\str{A}(x)$. We put $\psi(x) = (x, \str V_x)$.

\begin{claim}\label{c:faithful:emb}
$\psi$ is an embedding $\str A \to \str B$ and $\str A'=\psi(\str A)$ is generic.
\end{claim}

\addtocontents{toc}{\SkipTocEntry}
\subsection*{Constructing the extension}
At some point, we will also need to prove irreducible structure faithfulness. And in that proof, we are going to need to construct some automorphisms of $\str B$ based on some automorphisms of $\str B_0$ and partial automorphisms of $\str B$. Because of it, we will prove a more general statement

\begin{lemma}\label{lem:faithful-extension}
Let $\varphi$ be a partial automorphism of $\str B$ satisfying the following conditions:
\begin{enumerate}
\item Both the domain and the range of $\varphi$ are generic, and
\item\label{cond2} there is an automorphism $\hat\varphi$ of $\str B_0$ which extends the projection of $\varphi$ via $\pi$.
\end{enumerate}
Then there is an automorphism $\widetilde\varphi$ of $\str B$ extending $\varphi$.
\end{lemma}

Note that if $\varphi$ is a partial automorphism of $\str A'$, then it satisfies both conditions (as $\str A'$ is generic) and therefore it can be extended to an automorphism of $\str B$. Note also that in condition~\ref{cond2}, the projection of $\varphi$ via $\pi$ is a partial automorphism of $\str B_0$ by Claim~\ref{c:faithful:generic}.



\begin{proof}[Proof of Lemma~\ref{lem:faithful-extension}]
Put $$D=\{(x,\chi(x,\str V)):(x,\str V)\in \dom(\varphi)\}$$ and $$R=\{(x,\chi(x,\str V)):(x,\str V)\in \range(\varphi)\}.$$ Because both $\dom(\varphi)$ and $\range(\varphi)$ are generic, we get that $|D|=|R|=|\pi(D)|=|\pi(R)|$, so in particular no $x\in B_0$ appears in $D$ or $R$ with more than one valuation structure. Therefore, $\varphi$ defines a bijection $q\colon D\to R$.

For a bad irreducible substructure $\str I\subseteq \str B_0$, we can define a partial permutation $\tau_{\str I}^{\varphi}$ of $\{1,\ldots, |I|-1\}$, such that for every $(y,\chi)\in D$ with $y\in I$ and for $q((y, \chi))=(\hat\varphi(y), \chi')$, we put
$$\tau_{\str I}^{\varphi}(\chi(\str I)) = \chi'(\hat\varphi(\str I)).$$
This is indeed a partial permutation of $\{1,\ldots, |I|-1\}$, because both $D$ and $R$ are generic. Let $\hat\tau_{\str I}^\varphi$ be the order-preserving extension of $\tau_{\str I}^{\varphi}$.

Put $$\mathcal V = \bigcup_{(x,\str V)\in \str B} V.$$
Having $\hat\tau_{\str I}^\varphi$ for every bad $\str I$, we can define $\hat q\colon \mathcal V \to \mathcal V$ as
$$\hat q((x, \chi)) = (\hat\varphi(x), \chi'),$$
where $\chi'(\hat\varphi(\str I)) = \hat\tau_{\str I}^\varphi(\chi(\str I))$. Since $\hat\varphi$ is an automorphism of $\str B_0$ and each $\hat\tau_{\str I}^\varphi$ is a permutation of $\{1,\ldots, |I|-1\}$, it follows that $\hat q$ is a permutation of $\mathcal V$. It is easy to check that $\hat q$ extends $q$.

Finally, we define $\widetilde\varphi\colon B\to B$ by putting $\widetilde\varphi_L = \varphi_L$ and
$$\widetilde{\varphi}((x, \str V)) = (\hat\varphi(x), (\varphi_L,\hat{q})(\str V)).$$

The proof of the following claim, which will be given at the end of this section, is simply a mechanical verification that our constructions are well-defined.
\begin{claim}\label{c:faithful:auto}
$\widetilde\varphi$ is an automorphism of $\str B$ extending $\varphi$.
\end{claim}
\end{proof}

\addtocontents{toc}{\SkipTocEntry}
\subsection*{Proofs}
We again proceed analogously to Section~\ref{sec:graphs}.
\begin{lemma}
$\str B$ is an EPPA-witness for $\str A$. Moreover, if $\str B_0$ is coherent then so is $\str B$.
If $\str B_0$ is a coherent EPPA-witness for $\str A$, then $\str B$ is a coherent EPPA-witness for $\str A'$.
\end{lemma}
\begin{proof}
Lemma~\ref{lem:faithful-extension} implies that $\str B$ is indeed an EPPA-witness for $\str A$, because $A'$ is generic. We thus focus on proving coherence. Let $\varphi_1$, $\varphi_2$ and $\varphi$ be a coherent triple of partial automorphisms of $\str A'$ and let $\hat\varphi_1$, $\hat\varphi_2$, $\hat\varphi$ be automorphisms of $\str B_0$ which are the coherent extensions of the projections of $\varphi_1$, $\varphi_2$ and $\varphi$ by $\pi$.

Denote by $\hat q_1$, $\hat q_2$, $\hat q$ and  $\tau_{\str I}^\varphi$, $\tau_{\str I}^{\varphi_1}$ and $\tau_{\str I}^{\varphi_2}$ for every $\str I$ the corresponding functions from proof of Lemma~\ref{lem:faithful-extension}. Coherence on the first coordinate follows from coherence of $\hat\varphi_1$, $\hat\varphi_2$, $\hat\varphi$, to get coherence on the second coordinate, we need to prove that $\hat q = \hat q_2 \circ \hat q_1$. To see that, one only needs to prove that $\tau_{\str I}^\varphi = \tau_{\str I}^{\varphi_2}\circ \tau_{\str I}^{\varphi_1}$, which is true as all of them are extended in an order-preserving way.
\end{proof}

\begin{proof}[Proof of Proposition~\ref{prop:faithful}]
First we prove that $(\id_L,\pi)$ is a homomorphism-embedding from $\str B$ to $\str B_0$. From the construction it directly follows that it is a homomorphism which preserves functions. Let $\str I$ be an irreducible substructure of $\str B$. By Claim~\ref{c:faithful:irreducible} we get that $\str I$ is generic and hence by Claim~\ref{c:faithful:generic} we get that $(\id_L,\pi)$ is an embedding on $\str I$.

It only remains to prove irreducible structure faithfulness of $\str B$. Let $\str D$ be an irreducible substructure of $\str B$. By Claim~\ref{c:faithful:irreducible} we get that $\str D$ is generic and hence $\pi(\str{D})$ is not a bad substructure of $\str{B}_0$. It is, however, irreducible, because $\pi$ is a homomorphism-embedding, and thus
there is $\hat\varphi\in \Aut(\str{B}_0)$ such that $\hat\varphi(\pi(\str{D}))\subseteq A$.
Define $\varphi\colon \str{D} \to \str{A}'$ by $\varphi_D((x,\str{V})) =  \psi(\hat\varphi(x))$ and $\varphi_L = \hat\varphi_L$. This is a partial automorphism of $\str{B}$ with generic domain and range, whose projection extends to $\hat\varphi$. Lemma~\ref{lem:faithful-extension} then gives an automorphism of $\str B$ sending $D$ to $A'$, which is what we wanted.
\end{proof}

We are now ready to prove Theorem~\ref{thm:nreppa}.
\begin{proof}[Proof of Theorem~\ref{thm:nreppa}]
Section~\ref{sec:functions} gives a finite coherent EPPA-witness $\str B_0$ for $\str A$, Proposition~\ref{prop:faithful} ensures irreducible structure faithfulness and preserves coherence. The ``consequently'' part is immediate.
\end{proof}

\begin{remark}\label{rem:irreducible_symmetric}
Note that Theorem~\ref{thm:nreppa} can be used to prove EPPA for classes where the relations are, for example, symmetric (because a non-symmetric relation is witnessed on a tuple which is irreducible), and similarly it implies EPPA for classes with unary functions whose range has a given size (for example, size 1, which means that we can prove EPPA for the standard model-theoretic unary functions).
\end{remark}

\begin{remark}
Note that our partial automorphism extension actually has some functorial properties. Taking isomorphic copies, we can assume that $\str A\subseteq \str B_0$ and $\str A\subseteq \str B$ (then in particular $\pi\restriction_A = \id_A$). Given a partial automorphism $\varphi$ of $\str A$ let $\hat{\varphi}$ be its (coherent) extension to an automorphism of $\str B_0$ and let $\widetilde{\varphi}$ be the constructed extension to an automorphism of $\str B$ using $\hat{\varphi}$. Let $\sim$ be an equivalence relation on $B$ given by $x\sim y \iff \pi(x) = \pi(y)$. Then $\sim$ is a congruence with respect to $\widetilde{\varphi}$ and the natural actions of $\hat{\varphi}$ and $\widetilde{\varphi}$ on the equivalence classes of $\sim$ coincide.
\end{remark}

\begin{observation}\label{obs:faithful_bound}
The number of vertices of the EPPA-witness $\str B$ constructed in this section can be bounded from above by a function which depends only on the number of vertices of $\str B_0$ and the number of vertices of $\str A$.
\end{observation}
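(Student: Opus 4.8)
Set $b=|B_0|$. The plan is to bound the number of vertices of $\str{B}$, which by construction are exactly the pairs $(x,\str{V})$ with $x\in B_0$ and $\str{V}$ a valuation structure for $x$; since there are $b$ choices for $x$, it suffices to bound the number of valuation structures by a function of $b$. The first point I would make is that a valuation structure $\str{V}$ for $x$ is completely determined by its underlying vertex set: by definition the pair $(\id_L,\iota_V)$ with $\iota_V((y,\chi))=y$ must be an isomorphism $\str{V}\to\cl_{\str{B}_0}(x)$ whose language part is the identity, and since $\iota_V$ is a fixed bijection, this forces every relation and every function of $\str{V}$ to be the $\iota_V$-preimage of the corresponding relation or function of $\cl_{\str{B}_0}(x)$. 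Hence it is enough to bound the number of possible vertex sets of valuation structures.

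Next I would bound the number of valuation functions for a single vertex $y\in B_0$. A valuation function for $y$ assigns to each $\str{I}\in U(y)$ a value in $\{1,\dots,|I|-1\}$, where $U(y)$ is the set of bad irreducible substructures of $\str{B}_0$ containing $y$. Since $\str{B}_0$ has at most one substructure on any given subset of $B_0$, we get $|U(y)|\le 2^b$, and each assigned value lies in a set of size at most $b-1\le b$; hence there are at most $b^{2^b}$ valuation functions for $y$. A valuation structure for $x$ has a vertex set containing exactly one pair $(y,\chi_y)$ for each $y\in\cl_{\str{B}_0}(x)$, and $|\cl_{\str{B}_0}(x)|\le b$, so such a vertex set amounts to a choice of one valuation function for each of at most $b$ vertices; the genericity requirement can only reduce this count, which is harmless for an upper bound, so there are at most $\bigl(b^{2^b}\bigr)^b=b^{b\cdot 2^b}$ of them. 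Consequently $|B|\le b\cdot b^{b\cdot 2^b}$, a bound depending only on $b=|B_0|$, and in particular only on $|B_0|$ and $|A|$, as claimed.

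The argument presents essentially no obstacle; the two points that one must be careful about are that the $\GammaL$-structure on a valuation structure is forced by its vertex set (so that one may count vertex sets rather than structures), and that, $\str{B}_0$ being finite, there are only finitely many --- at most $2^{|B_0|}$ --- bad irreducible substructures of $\str{B}_0$ available to the valuation functions.
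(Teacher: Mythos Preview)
Your proof is correct and takes essentially the same approach as the paper: bound the number of bad irreducible substructures by $2^{|B_0|}$, hence the number of valuation functions, hence the number of valuation structures. Your argument is in fact slightly cleaner than the paper's, since you explicitly observe that a valuation structure is completely determined by its vertex set (the isomorphism $\iota_V$ being the fixed projection), whereas the paper redundantly multiplies by a factor counting possible embeddings $\iota_V$; both approaches yield a bound depending only on $|B_0|$, which is of course enough.
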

\begin{proof}
Put $m=\lvert B_0\rvert$ and $n=\lvert A\rvert$. There are at most $2^m$ bad substructures of $\str B_0$ and hence at most $(m-1)^{2^m}$ valuation functions for a given $x\in B_0$ (this is a very rough estimate). Let $P$ be the set of all pairs $(x,\chi)$, where $x\in B_0$ and $\chi$ is a valuation function for $x$. We get that $\lvert P\rvert \leq m(m-1)^{2^m}$.

The vertices of $\str B$ are pairs $(x,\str V)$, where $x\in B_0$ and $\str V$ is a valuation structure for $x$. The vertex set of every valuation structure is a subset of $P$ (and hence there are at most $2^{\lvert P\rvert}$ of them) and the structure on $\str V$ is determined by an embedding $(\id_L,\iota_V)\colon \str V\to \str B_0$. There are at most $m^{\lvert V\rvert} \leq m^m$ such embeddings. This finishes the proof.
\end{proof}

\begin{proof}[Proof of Claim~\ref{c:faithful:irreducible}]
For a contradiction, suppose that it is not the case, that is, there are $(u,\chi),(u',\chi')\in \bigcup_{(x,\str V)\in D} V$ which form a non-generic pair. This implies that there are
$(x,\str{X}),(y,\str{Y})\in D$ such that $(u,\chi)\in X$ and $(u',\chi')\in Y$ (they cannot both lie in the same valuation structure, because the vertex sets of valuation structures are generic), and hence the set $\{(x,\str{X}),(y,\str{Y})\}$ is non-generic. Put $\str{E}_x = \{(a,\str{U}) \in D : (x,\str{X}) \not\in \cl_{\str{D}}((a,\str{U}))\}$ and similarly define $\str E_y$. Since closures are unary, these are substructures of $\str{D}$. Note that as closures in $\str B$ are generic, we also know that $(y, \str Y)\in \str E_x$ and $(x,  \str X)\in \str E_y$. This means that $\str E_x, \str E_y$ are both non-empty and neither is a substructure of the other.

We first prove $\str{E}_x\cup \str{E}_y = \str{D}$. Suppose for a contradiction that there is $(z,\str{Z}) \in \str{D}$ with $\{(x,\str{X}), (y,\str{Y})\} \subseteq \cl_{\str{D}}((z,\str{Z}))$. Then (by the construction of $\str B$) we have $\str{X}, \str{Y} \subseteq \str{Z}$, which is a contradiction with $(x,\str{X}),(y,\str{Y})$ forming a non-generic pair.

Fix $(a,\str{U}) \in \str{E}_x\setminus \str{E}_y$ and $(b,\str{W}) \in \str{E}_y\setminus \str{E}_x$. Because we know that $\str{Y} \subseteq \str{U}$ and $\str{X} \subseteq \str{W}$, we get that $(a,\str{U})$, $(b,\str{W})$ is not a generic pair and therefore no relation of $\str D$ contains both $(a,\str{U})$ and $(b,\str{W})$. Thus $\str{D}$ is a free amalgam of $\str{E}_x$ and $\str{E}_y$ over their intersection, which is a contradiction with its irreducibility. Therefore $D$ is indeed generic.
\end{proof}

\begin{proof}[Proof of Claim~\ref{c:faithful:generic}]
We have already observed that $\pi$ is injective. The fact that $\pi$ preserves relations and non-relations follows directly from the construction of $\str B$. Let $\func{}{}\in L$ be a function and fix $(x,\str V)\in D$. We need to prove that $\nbfunc{\str B_0}{}(x)$ is equal to $\pi\left(\func{B}{}((x,\str{V}))\right)$.

By definition,
$$\pi\left(\func{B}{}((x,\str{V}))=\left\{y : (y,\chi)\in \nbfunc{\str{V}}{}((x,\chi(x,\str V)))\right\}\right).$$
Moreover, we know that the pair $\iota=(\id_L, \iota_V)$, where $\iota_V((y,\chi))=y$, is an isomorphism of $\str{V}$ and $\cl_{\str{B}_0}(x)$. Hence if $(y,\chi)\in \nbfunc{\str{V}}{}((x,\chi(x,\str V)))$ then $y\in\nbfunc{\str B_0}{}(x)$ and conversely, whenever $y\in\nbfunc{\str B_0}{}(x)$ then there is $\chi$ such that $(y,\chi)\in\nbfunc{\str{V}}{}((x,\chi(x,\str V)))$ (and since $\str V$ is generic, such $\chi$ is uniquely determined). So
$$\pi\left(\func{B}{}((x,\str{V}))\right)=\left\{y : y\in\nbfunc{\str B_0}{}(x)\right\} = \nbfunc{\str B_0}{}(x).$$
This concludes the proof.
\end{proof}

\begin{proof}[Proof of Claim~\ref{c:faithful:emb}]
First we will prove that $A'$ is a generic set. By definition this happens if $V = \bigcup_{(x,\str V_x)\in A'} V_x = \bigcup_{x\in A} V_x$ is a generic set. Note that $V = \{(x,\chi_x) : x\in A\}$. Let $x\neq y\in A$ be arbitrary and pick $\str I\in U(x)\cap U(y)$. By the construction we have $\chi_x(\str I) = u_\str I(x) \neq u_\str I(y) = \chi_y(\str I)$, hence $(x,\chi_x)$ and $(y,\chi_y)$ form a generic pair which implies that $V$ is indeed a generic set. From this it follows that in particular every $V_x$ is a generic set and hence $\psi$ is a function $A\to B$.

Next fix a relation $\rel{}{}\in L$ and a tuple $\bar{x} \in A^n$. We will prove that $\psi(\bar{x})\in\rel{B}{}$ if and only if $\bar{x}\in\rel{A}{}$. By the definition of $\str B$ the ``only if'' part is immediate, to prove the other implication, we need to prove that $\psi(\bar{x})$ (understood as a set) is generic, but clearly $\psi(\bar{x})$ is a subset of a generic set $V$, which concludes the proof.

Finally we prove that for every (unary) function $\func{}{}\in L$ and every vertex $x\in \str A$ we have $\psi(\func{A}{}(x)) = \func{B}{}(\psi(x))$ (remember that $\psi_L = \id_L$). Clearly
$$\psi(\func{A}{}(x)) = \{(y, \str V_y) : y\in \func{A}{}(x)\}.$$
Put $X = \func{B}{}(\psi(x))$. By the construction we have 
$$X = \func{B}{}((x,\str{V}_x))=\left\{(y,\cl_{\str{V}_x}((y,\chi))) : (y,\chi)\in \nbfunc{\str{V}_x}{}((x,\chi(x,\str V_x)))\right\}.$$
Note that $\chi(x,\str V_x)) = \chi_x$ and that if $(y,\chi)\in \nbfunc{\str{V}_x}{}((x,\chi(x,\str V_x)))$, then $\chi = \chi_y$. Hence, in particular, $\pi$ is injective on $\nbfunc{\str{V}_x}{}((x,\chi_x))$. So we can write
$$X = \left\{(y,\cl_{\str{V}_x}((y,\chi_y))) : (y,\chi_y)\in \nbfunc{\str{V}_x}{}((x,\chi_x))\right\}.$$
Since $(\id_L,(y,\chi_y)\mapsto y)$ is an isomorphism $\str V_x\to\cl_\str{A}(x)$, we get that $(y,\chi_y)\in \nbfunc{\str{V}_x}{}((x,\chi_x))$ if and only if $y \in \nbfunc{\cl_\str{A}(x)}{}(x) = \func{A}{}(x)$. For the same reason, $\cl_{\str{V}_x}((y,\allowbreak\chi_y))$ is isomorphic to $\cl_\str A(y)$ by projecting to the first coordinate and hence in fact $\cl_{\str{V}_x}((y,\chi_y)) = \str V_y$.
Putting this together, we get that
$$\func{B}{}(\psi(x)) = X = \left\{(y,\str V_y) : y\in \func{A}{}(x)\right\} = \psi(\func{A}{}(x)).$$
\end{proof}

\begin{proof}[Proof of Claim~\ref{c:faithful:auto}]
It is easy to see that $\widetilde\varphi$ is a bijection which maps generic sets to generic sets. Fix a relation $\rel{}{}\in L$ and a tuple $((x_1, \str V_1), \ldots, (x_n,\str V_n))\in B^n$. Note that $$(x_1,\ldots,x_n)\in \nbrel{\str B_0}{} \iff (\hat\varphi(x_1),\ldots,\hat\varphi(x_n)) \in \permnbrel{\widetilde\varphi}{\str B_0}{},$$ because $\hat\varphi$ is an automorphism of $\str B_0$. Together with the fact that $\widetilde\varphi$ maps generic sets to generic sets it follows that $((x_1, \str V_1), \ldots, (x_n,\str V_n))\in \rel{B}{}$ if and only if $(\widetilde\varphi((x_1, \str V_1)), \ldots, \widetilde\varphi((x_n,\str V_n)))\in \permrel{\widetilde\varphi}{B}{}$.

It remains to prove that for every function $\func{}{}\in L$ and every $(x,\str V)\in B$ we have $\widetilde\varphi(\func{B}{}((x,\str V))) = \permfunc{\varphi_L}{B}{}(\widetilde\varphi((x,\str V)))$. To simplify the notation we put $h(\str V) = (\varphi_L,\hat q)(\str V)$ for every valuation structure $\str V$. Fix an arbitrary $(x,\str V)\in \str B$ and put $\chi_0 = \chi(x,\str V)$.
By the definition of $\str B$ we know that
$$\widetilde\varphi(\func{B}{}((x,\str V))) = \left\{(\hat\varphi(y),h(\cl_{\str{V}}((y,\chi)))) : (y,\chi)\in \nbfunc{\str{V}}{}((x,\chi_0))\right\}.$$
Denote $X = \permfunc{\widetilde{\varphi}}{B}{}(\widetilde{\varphi}((x,\str V)))$. By the definition of $\str B$, we have
$$X = \{(y,\cl_{h(\str V)}((y,\chi))) : \allowbreak (y,\chi)\in \permnbfunc{\widetilde{\varphi}}{h(\str V)}{}(\hat q((x,\chi_0)))\}.$$
Since $\hat q$ is a bijection $\mathcal V\to \mathcal V$ which agrees with $\hat\varphi$ on the first coordinate, we can write
$$X = \{(\hat\varphi(y),\cl_{h(\str V)}(\hat q((y,\chi)))) : \allowbreak \hat q((y,\chi))\in \permnbfunc{\widetilde{\varphi}}{h(\str V)}{}(\hat q((x,\chi_0)))\}.$$
Note that $$\permnbfunc{\widetilde{\varphi}}{h(\str V)}{}(\hat q((x,\chi_0))) = \hat q(\nbfunc{\str V}{}((x,\chi_0))),$$ hence 
$\hat q((y,\chi))\in \permnbfunc{\widetilde{\varphi}}{h(\str V)}{}(\hat q((x,\chi_0)))$ if and only if $(y,\chi)\in \nbfunc{\str V}{}((x,\chi_0))$, and so we have
$$X = \{(\hat\varphi(y),\cl_{h(\str V)}(\hat q((y,\chi)))) : \allowbreak (y,\chi)\in \nbfunc{\str V}{}((x,\chi_0))\}.$$
Finally, $\cl_{h(\str V)}(\hat q((y,\chi))) = h(\cl_{\str{V}}((y,\chi)))$, hence indeed $$X = \widetilde{\varphi}(\func{B}{}((x,\str V))).$$
\end{proof}

\section{Unwinding induced cycles}
\label{sec:cycles}
In this section we give a key ingredient for proving Theorem~\ref{thm:maintree}:

\begin{lemma}
\label{lem:sparsen}
Let $L$ be a language consisting of relations and unary functions equipped with a permutation group $\GammaL$. Let $\str A$ be a finite irreducible $\GammaL$-structure and let $\str B_0$ be its (finite) irreducible structure faithful EPPA-witness. Assume that $L$ contains a binary relation $E$ which is fixed by every permutation in $\GammaL$ and assume that $E_\str A$ is a complete graph. (Note that by irreducible structure faithfulness $E_{\str B_0}$ is an undirected graph without loops.)

There is a finite $\GammaL$-structure $\str{B}$ which
is an irreducible structure faithful EPPA-witness for $\str{A}$ satisfying the following:
\begin{enumerate}
\item There is a homomorphism-embedding $f\colon \str{B}\to \str{B}_0$.
\item Let $C$ be a subset of $B$. Then at least one of the following holds:
\begin{enumerate}
	\item\label{lem:sparsen:1} $E_\str{B}\cap C^2$ contains no (induced) cycle of length $\geq 4$,
	\item\label{lem:sparsen:2} $|f(C)| < |C|$, or
	\item\label{lem:sparsen:3} $|E_{\str B_0}\cap f(C)^2| > |E_\str{B}\cap C^2|$.
\end{enumerate}
\end{enumerate}

Moreover, if $\str{B}_0$ is a coherent EPPA-witness for $\str{A}$, then $\str{B}$ is also coherent.
\end{lemma}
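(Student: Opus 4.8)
The plan is to follow the scheme of Sections~\ref{sec:graphs},~\ref{sec:functions} and~\ref{sec:faithful}: build $\str B$ from $\str B_0$ by attaching to its vertices a new kind of valuation whose sole purpose is to ``unwind'' the induced cycles of the graph $E_{\str B_0}$, check --- just as in those sections --- that $\str B$ is a (coherent) EPPA-witness, reuse the argument of Section~\ref{sec:faithful} for irreducible structure faithfulness, and finally read property~(2) off a one-line congruence. Concretely: for each induced cycle $Z$ of $E_{\str B_0}$ of length $\ell(Z)\ge 4$ fix once and for all, canonically from the linear order on $B_0$, one of its two cyclic orientations. A \emph{cycle-valuation} for a vertex $x\in B_0$ assigns to every such $Z$ passing through $x$ an element of $\mathbb Z_{\ell(Z)-1}$; as in Section~\ref{sec:functions} we wrap cycle-valuations into valuation structures, so that a vertex of $\str B$ is a pair $(x,\str V)$ where $\str V$ is a copy of $\cl_{\str B_0}(x)$ carrying a cycle-valuation on each of its vertices (finitely many such pairs, since $\str B_0$ is finite). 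The projection $\pi\colon(x,\str V)\mapsto x$ transports the functions and all relations other than $E$ from $\str B_0$, and we declare $(x,\str V)\mathrel{E_\str B}(y,\str U)$ precisely when $x\mathrel{E_{\str B_0}}y$ and, for every induced cycle $Z$ of length $\ge 4$ having $\{x,y\}$ as one of its edges, the cycle-valuation components at $x$ and $y$ differ by $+1$ in $\mathbb Z_{\ell(Z)-1}$ in the direction given by the fixed orientation of $Z$. The generic copy $\str A'=\psi(\str A)$ is defined by reading the component of $a\in A$ at a cycle $Z\ni a$ off the position of $a$ along $Z$; here one uses that, since $E_\str A$ is complete and the cycles in question have length $\ge 4$, one has $|A\cap V(Z)|\le 2$ and hence at most one edge of $Z$ lies inside $A$, so the handful of increment conditions $\psi$ must satisfy can be met while keeping the components of distinct vertices of $A$ on a common cycle distinct --- i.e.\ $\str A'$ is \emph{generic} in the sense of Section~\ref{sec:faithful}. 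That $\pi$ is a homomorphism-embedding $\str B\to\str B_0$ is then immediate, giving~(1).

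For the extension step we mimic Lemma~\ref{lem:faithful-extension}. Given a partial automorphism $\varphi$ of $\str A'$ (more generally, any partial automorphism of $\str B$ with generic domain and range), project it along $\pi$ and extend to $\hat\varphi\in\Aut(\str B_0)$ (coherently, using Proposition~\ref{prop:setcoherence} and coherence of $\str B_0$ when that is assumed). The automorphism $\hat\varphi$ carries each induced cycle $Z$ to $\hat\varphi(Z)$, matching the fixed orientation of $\hat\varphi^{-1}(Z)$ with that of $Z$ up to a sign $\epsilon_{\hat\varphi,Z}\in\{\pm1\}$. We lift $\hat\varphi$ to $\theta\in\Aut(\str B)$ by setting the component of the image of $(x,\str V)$ at a cycle $Z$ equal to $\epsilon_{\hat\varphi,Z}$ times the old component of $x$ at $\hat\varphi^{-1}(Z)$, plus a correcting constant $s_Z\in\mathbb Z_{\ell(Z)-1}$. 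A direct check shows this preserves all increment conditions for \emph{any} choice of the $s_Z$, because the conditions only constrain differences of components; the constant $s_Z$ is then pinned down (independently of the choice, by the usual ``roles exchanged'' computation) by requiring $\theta$ to extend $\varphi$ wherever $\varphi$ meets $Z$, and set to $0$ on the remaining cycles. Coherence of $\varphi\mapsto\theta$ follows because the signs $\epsilon$ multiply and the constants $s_Z$ compose additively (with the appropriate $\epsilon$-twist), in the same spirit as Lemma~\ref{lem:graphs:coherence} and the coherence argument of Section~\ref{sec:faithful}. Irreducible structure faithfulness of $\str B$ is then obtained verbatim as in the proof of Proposition~\ref{prop:faithful}: an irreducible substructure $\str D$ of $\str B$ is generic, its $\pi$-image is irreducible --- hence not bad, by irreducible structure faithfulness of $\str B_0$ --- so some $g\in\Aut(\str B_0)$ sends $\pi(D)$ into $A$, and the associated partial automorphism of $\str B$ extends, by the extension lemma, to an automorphism of $\str B$ taking $\str D$ into $\str A'$.

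For property~(2), fix $C\subseteq B$ and assume that \ref{lem:sparsen:2} and \ref{lem:sparsen:3} both fail, so $f=\pi$ is injective on $C$ and, being a homomorphism-embedding, maps $E_\str B\cap C^2$ bijectively onto $E_{\str B_0}\cap\pi(C)^2$. Suppose for contradiction that $E_\str B\cap C^2$ contained an induced cycle $c_1,\dots,c_\ell$ with $\ell\ge 4$. Then the vertices $x_i=\pi(c_i)$ are pairwise distinct, and since $\pi$ is bijective on edges and injective on $C$, the only edges of $E_{\str B_0}$ among $\{x_1,\dots,x_\ell\}$ are $x_1x_2,\dots,x_\ell x_1$; thus $Z=\{x_1,\dots,x_\ell\}$ is an induced cycle of $E_{\str B_0}$ of length $\ell\ge 4$, and $c_1,\dots,c_\ell$ is a lift of its cyclic walk satisfying the increment condition for $Z$ along each of its $\ell$ edges. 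Summing these increments around $Z$ yields $0\equiv\ell\pmod{\ell-1}$ in $\mathbb Z_{\ell(Z)-1}$, which is false for every $\ell\ge 3$. Hence no such cycle exists and \ref{lem:sparsen:1} holds, completing the proof of~(2). The coherence addendum was already handled in the extension step.

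The main obstacle is the extension step, specifically the claim that a single correcting constant per cycle is at once well defined from the (possibly several) places where $\varphi$ meets the cycle, sufficient to turn $\theta$ into an automorphism extending $\varphi$ even though $\hat\varphi$ shifts orientations and reference points of cycles, and compatible with composition. This is exactly the analogue of the bookkeeping around ``flipped pairs'' in Section~\ref{sec:graphs} and around the reordering permutations $\theta^\varphi_\str I$ in Section~\ref{sec:faithful}, but now carried out over the groups $\mathbb Z_{\ell(Z)-1}$ and interleaved with the valuation-structure layer handling the unary functions and the irreducible-structure-faithfulness layer; once this is set up, everything else parallels the earlier sections.
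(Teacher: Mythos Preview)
Your approach is in the same spirit as the paper's and would work once one detail is fixed, but the paper's implementation is noticeably simpler, and the comparison is instructive.

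\textbf{The gap.} You say that $\pi$ ``transports the functions and all relations other than $E$ from $\str B_0$'', i.e.\ a tuple lies in $\rel{B}{}$ iff its projection lies in $\nbrel{\str B_0}{}$. With that definition $\pi$ is \emph{not} a homomorphism-embedding and irreducible structure faithfulness fails. Concretely: take any $\bar x\in\nbrel{\str B_0}{}$ for some $R\neq E$; since $\str B_0$ is irreducible structure faithful and $E_\str A$ is complete, the entries of $\bar x$ are pairwise $E_{\str B_0}$-adjacent. Lift each entry to an arbitrary $(x_i,\str V_i)$; the resulting tuple is in $\rel{B}{}$, hence spans an irreducible substructure $\str D$ of $\str B$, yet two of its vertices may fail your $+1$-increment condition on some cycle through their projections, so they are \emph{not} $E_\str B$-adjacent. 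Then $\str D$ is not isomorphic to any substructure of $\str A$ (everything in $\str A$ is $E$-complete), and $\pi|_{\str D}$ is not an embedding. The fix is the same as in Section~\ref{sec:faithful}: require the genericity/increment condition for \emph{every} relation, not only for $E$ --- the paper does exactly this in its Section~\ref{sec:cycles} construction. Once you add that, your argument goes through.

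\textbf{Comparison with the paper.} The paper indexes valuations not by induced cycles but by \emph{bad cycle sequences} $(c_1,\ldots,c_k)$ (so the same underlying cycle appears $2k$ times, once per starting vertex and direction), and the valuation at each sequence is only $\{0,1\}$. The genericity condition is: for a pair $(x,\chi),(y,\chi')$ with $\{x,y\}$ an edge of the sequence, $\chi$ and $\chi'$ agree on $\vec c$ if $\{x,y\}\neq\{c_1,c_k\}$ and disagree if $\{x,y\}=\{c_1,c_k\}$. The contradiction for a lifted induced cycle is then immediate (equal all the way around yet unequal at the wrap edge), and the extension is the familiar ``flip a subset of bits'' map --- no orientations, no signs $\epsilon$, no affine shifts in $\mathbb Z_{\ell-1}$, and coherence is literally the Section~\ref{sec:graphs} argument. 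Your $\mathbb Z_{\ell-1}$-valued version is a legitimate alternative with a pleasant covering-space flavour, and your telescoping sum $\ell\equiv 0\pmod{\ell-1}$ is neat, but the orientation/sign bookkeeping you flag as ``the main obstacle'' is entirely an artefact of this choice; the paper sidesteps it.
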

Note that since $f$ is a homomorphism-embedding, we get that if  $|f(C)| = |C|$, then $|E_{\str B_0}\cap f(C)^2| \geq |E_\str{B}\cap C^2|$ and $f$ induces an injective mapping from $E_\str{B}\cap C^2$ to $E_{\str B_0}\cap f(C)^2$.

In the rest of the section, we will prove Lemma~\ref{lem:sparsen}. The construction is inspired by a similar construction for EPPA for metric spaces by the authors~\cite{Hubicka2018metricEPPA}.

For the rest of the section, fix $L$, $\GammaL$, $\str A$ and $\str B_0$ as in the statement of Lemma~\ref{lem:sparsen}. Assume without loss of generality that $\str A \subseteq \str B_0$. 

\addtocontents{toc}{\SkipTocEntry}
\subsection*{Valuations}
A sequence $(c_1,\ldots, c_k)$ of distinct vertices of $\str{B}_0$ is a \emph{bad cycle
sequence} if $k\geq 4$ and the structure induced by $E_{\str{B}_0}$ on
$\{c_1,\ldots, c_k\}$ is a graph cycle containing precisely the edges connecting $c_i$ and
$c_{i+1}$ for every $1\leq i\leq k$ (where we identify $c_{k+1}=c_1$).

Given a vertex $x\in B_0$, we denote by $U(x)$ the set of all bad cycle sequences
containing $x$. We call functions $U(x)\to \{0,1\}$ \emph{valuation functions for $x$}.
Given vertices $x,y\in B_0$ and their valuation functions $\chi$ and $\chi'$, we say
that the pairs $(x,\chi)$ and $(y,\chi')$ are {\em generic}, if either
$(x,\chi)=(y,\chi')$, or $x\neq y$ and for every bad cycle sequence
$\vec{c}=(c_1,\ldots, c_k)\in U(x)\cap U(y)$, one of the following holds:
\begin{enumerate}
 \item There is $1\leq i < k$ such that $\{c_i, c_{i+1}\}=\{x,y\}$ and $\chi(\vec{c})=\chi'(\vec{c})$, or
 \item $\{c_1,c_k\}=\{x,y\}$ and $\chi(\vec{c})\neq \chi'(\vec{c})$.
\end{enumerate}
A set $S$ of pairs $(x,\chi)$ is \emph{generic} if every pair $(x,\chi),(y,\chi')\in S$ is generic.

Let $x\in B_0$ be a vertex of $\str B_0$. A \emph{valuation structure for $x$} is a $\GammaL$-structure $\str{V}$
such that:
\begin{enumerate}
\item The vertex set $V$ is a generic set of pairs $(y,\chi)$ where $y\in \cl_{\str{B}_0}(x)$ and $\chi$ is a valuation function for $y$.
\item The pair $(\id_L, (y,\chi)\mapsto y)$ is an isomorphism of $\str V$ and $\cl_{\str{B}_0}(x)$.
\end{enumerate}
Let $\str V$ be a valuation structure for $x$. We denote by $\chi(x, \str{V})$ the valuation function for $x$ such that $(x,\chi(x, \str{V}))\in V$. Similarly as in Section~\ref{sec:faithful}, if $L$ contains no functions then every valuation structure $\str V$ for $x\in B_0$ contains exactly one vertex $(x,\chi(x,\str V))$ and conversely, for every valuation function $\chi$ for $x$ there is exactly one valuation structure $\str V$ for $x$ such that $\chi(x,\str V) = \chi$.

A set $S$ of pairs $(x,\str V)$, where $\str V$ is a valuation structure for $x$, is \emph{generic}, if the union $\bigcup_{(x,\str V)\in S} V$ is generic. 

\addtocontents{toc}{\SkipTocEntry}
\subsection*{Witness construction}
Now we construct a $\GammaL$-structure $\str{B}$: 
\begin{enumerate}
\item The vertices of $\str{B}$ are all pairs $(x, \str V)$ where $x\in B_0$ and $\str V$ is a valuation structure for $x$.
\item For every relation symbol $\rel{}{}\in L_\mathcal R$, we put
$$((x_1,\str{V}_{1}),\ldots, (x_{\arity{}{}},\str{V}_{\arity{}{}}))\in \rel{B}{},$$
if and only if $(x_1,\ldots, x_{\arity{}{}})\in \nbrel{\str{B}_0}{}$, and $\{(x_1,\str{V}_{1}),\ldots, (x_{\arity{}{}},\str{V}_{\arity{}{}})\}$ is generic.

\item for every (unary) function symbol $F\in L_\mathcal F$ and every vertex $(x,\str{V})\in B$, we put 
$$\func{B}{}((x,\str{V}))=\{(y,\cl_{\str{V}}((y,\chi))):(y,\chi)\in \nbfunc{\str V}{}((x, \chi(x,\str V)))\}.$$
\end{enumerate}
\begin{claim}\label{c:cycles:irreducible}
If $\str D$ is an irreducible substructure of $\str B$, then $D$ is generic.
\end{claim}
Define $\pi_B(x, \str{V})=x$ and $\pi_L = \id_L$. We then have the following:
\begin{claim}\label{c:cycles:b}
$\str B$ is a finite $\GammaL$-structure and $\pi$ is a homomorphism-embedding from $\str B$ to $\str B_0$ which is an embedding on every generic $D\subseteq B$.
\end{claim}

\medskip

Observe that since every pair of distinct vertices $x,y\in A$ is in $\rel{A}{E}$,
it follows that every bad cycle sequence contains at most two vertices of $\str{A}$, and if it contains precisely two, then they are adjacent in $E_\str{B_0}$.
For every bad cycle sequence $\vec{c}=(c_1,\ldots, c_k)$ containing at least one
vertex of $\str{A}$, we define a function $\chi_{\vec{c}}\colon A\cap \{c_1,\ldots, c_k\}\to \{0,1\}$ as follows.
$$\chi_{\vec{c}}(x)=\begin{cases}
1 &\text{ if } x=c_1 \text{ and } c_k\in A,\\
0 &\text{ otherwise}.
\end{cases}$$

Next we give an embedding $\psi\colon\str{A}\to\str{B}$.
Given a vertex $x\in A$, we define a valuation function $\chi_x$ for $x$, putting $\chi_x(\vec{c})=\chi_{\vec{c}}(x)$ for every $\vec{c}\in U(x)$, and we define a valuation structure $\str V_x$ for $x$ with $V_x = \{(y, \chi_y):y\in \cl_\str{A}(x)\}$ such that $\pi$ restricted to $V_x$ is an isomorphism $\str V_x\to\cl_\str{A}(x)$ (which is a substructure of $\str A$). We put $\psi_A(x) = (x, \str V_x)$ and $\psi_L=\id$.

\begin{claim}\label{c:cycles:emb}
$\psi$ is an embedding $\str A\to \str B$ and $\str A'=\psi(\str A)$ is generic.
\end{claim}

\addtocontents{toc}{\SkipTocEntry}
\subsection*{Constructing the extension}
Similarly as in the last section, we will need to prove irreducible structure faithfulness and the following slightly more general extension lemma will be useful in proving that.

\begin{lemma}\label{lem:cycles-extension}
Let $\varphi$ be a partial automorphism of $\str B$ satisfying the following conditions:
\begin{enumerate}
\item Both the domain and the range of $\varphi$ are generic, and
\item there is an automorphism $\hat\varphi$ of $\str B_0$ which extends the projection of $\varphi$ via $\pi$.
\end{enumerate}
Then there is an automorphism $\widetilde\varphi$ of $\str B$ extending $\varphi$.
\end{lemma}
Note that by Claim~\ref{c:cycles:b}, $\pi$ is an embedding on generic sets, hence the projection of $\varphi$ via $\pi$ is a partial automorphism of $\str B_0$.
\begin{proof}

Let $F$ be the set consisting of all bad cycle sequences $\vec{c}$ for which there is a vertex $(x,\str{V})\in \dom(\varphi)$ such that that $\chi(x,\str{V})(\vec{c})\neq \chi(\varphi((x,\str{V})))(\hat\varphi(\vec{c}))$.

We define a function $f$ such that if $\chi$ is a valuation function for $x$ then $f(\chi)$ is a valuation for $x$ satisfying 
$$
f(\chi)(\hat\varphi(\vec{c}))=
\begin{cases}
  \chi(\vec{c}) &  \text{if }\vec{c}\notin F \\
  1-\chi(\vec{c}) & \text{if }\vec{c}\in F.
\end{cases}
$$
Put $\mathcal V = \bigcup_{(x,\str V)\in \str B} V$. Next we define a function $\hat q\colon \mathcal V\to \mathcal V$ putting $$\hat q((x,\chi))=(\hat\varphi(x),f(\chi)),$$ and using it we construct the extension $\widetilde{\varphi}$ such that $\widetilde{\varphi}_L = \varphi_L$ and $\widetilde{\varphi}((x, \str V)) = (\hat\varphi(x), (\varphi_L,\hat q)(\str V))$.

The proof of the following claim, which will be given at the end of this section, is simply a mechanical verification that our constructions are well-defined.
\begin{claim}\label{c:cycles:auto}
$\widetilde\varphi$ is an automorphism of $\str B$ extending $\varphi$.
\end{claim}
\end{proof}

\addtocontents{toc}{\SkipTocEntry}
\subsection*{Proofs}

\begin{lemma}The following statements about $\str B$ are true:
\begin{enumerate}
	\item $\str B$ is irreducible structure faithful.
	\item If $\str B_0$ is a coherent EPPA-witness for $\str A$, then $\str B$ is a coherent EPPA-witness for $\str A'$.
\end{enumerate}
\end{lemma}
\begin{proof}
To prove irreducible structure faithfulness, let $\str I$ be an irreducible substructure of $\str B$. By Claim~\ref{c:cycles:irreducible}, $I$ is generic and hence $\pi$ is an embedding on $\str I$ (Claim~\ref{c:cycles:b}). This means that $\pi(\str I)$ is an irreducible substructure of $\str B_0$ and thus there is an automorphism $\hat\varphi$ of $\str B_0$ sending $\pi(\str I)$ to $A$. Put $\varphi$ to be the partial automorphism of $\str B$ sending $\str I$ to $\psi(\hat\varphi(\pi(\str I)))$ with $\varphi_L = \hat\varphi_L$. This is a partial automorphism of $\str B$ with generic domain and range (using Claim~\ref{c:cycles:emb}) and $\hat\varphi$ extends $\varphi$. By Lemma~\ref{lem:cycles-extension} we get an automorphism $\widetilde\varphi$ of $\str B$ extending $\varphi$, that is, $\widetilde\varphi(I)\subseteq A'$. Therefore $\str B$ is indeed irreducible structure faithful.

To prove coherence, let $\varphi_1$, $\varphi_2$ and $\varphi$ be a coherent triple of partial automorphisms of $\str A'$ and let $\hat\varphi_1$, $\hat\varphi_2$, $\hat\varphi$ be automorphisms of $\str B_0$ which are the coherent extensions of their projections by $\pi$.

Denote by $\hat q_1$, $\hat q_2$, $\hat q$ and  $F$, $F_1$ and $F_2$ the corresponding functions and sets from proof of Lemma~\ref{lem:faithful-extension}. Coherence on the first coordinate follows from coherence of $\hat\varphi_1$, $\hat\varphi_2$, $\hat\varphi$. To get coherence on the second coordinate, we need to prove that $F$ is the symmetric difference of $F_1$ and $F_2$. This follows by the same argument as in the proof of Lemma~\ref{lem:rel:coherence}: Since $\varphi = \varphi_2\circ\varphi_1$ and $\hat\varphi = \hat\varphi_2\circ\hat\varphi_1$, we have that
$$\chi(x,\str{V})(\vec{c})\neq \chi(\varphi((x,\str{V})))(\hat\varphi(\vec{c}))$$
if and only if exactly one of
$$\chi(x,\str{V})(\vec{c})\neq \chi(\varphi_1((x,\str{V})))(\hat\varphi_1(\vec{c}))$$
and
$$\chi(\varphi_1((x,\str{V})))(\hat\varphi_1(\vec{c}))\neq \chi(\varphi_2(\varphi_1((x,\str{V}))))(\hat\varphi_2(\hat\varphi_1(\vec{c})))$$
happens.
\end{proof}

To finish the proof of Lemma~\ref{lem:sparsen}, we now prove that for every $C\subseteq B$ such that $E_\str{B}\cap C^2$ is a cycle of length $\geq 4$, it holds that $\pi|_C$ is not an embedding (of the reducts to relation $E$). This would imply that whenever $C\subseteq B$ contains an induced graph cycle of length $\geq 4$, one of~(\ref{lem:sparsen:2}) and~(\ref{lem:sparsen:3}) holds.

Fix a set $C\subseteq B$ such that $E_\str{B}|_C$ is an induced graph cycle of length $\geq 4$ and, for a contradiction,
assume that its projection $\pi(C)$ is again an induced graph cycle of the same length in the relation $E_{\str{B}_0}$.
This means that we can enumerate $C$ as $(x_1,\str{V}_{1}),\ldots, (x_k,\str{V}_{k})$ such that $\vec{c}=(x_1, \ldots, x_k)$ is bad cycle sequence.
For every $1\leq i\leq k$, we have $\{(x_i,\str{V}_{i}),(x_{i+1},\str{V}_{i+1})\}\in E_\str{B}$ (identifying $(x_{k+1},\str V_{k+1})=(x_1,\str V_1)$), so in particular the set $\{(x_i,\str{V}_{i}),(x_{i+1},\str{V}_{i+1})\}$ is generic. By definition, this implies that for every $1\leq i < k$, we have
$$\chi(x_i,\str{V}_{i})(\vec{c})=\chi(x_{i+1},\str{V}_{i+1})(\vec{c}),$$
but
$$\chi(x_1,\str{V}_{1})(\vec{c})\neq\chi(x_{k},\str{V}_{k})(\vec{c}),$$
which is a contradiction.

\begin{remark}
Exactly as in the previous section, our partial automorphism extension has some functorial properties. Taking isomorphic copies, we can assume that $\str A\subseteq \str B_0$ and $\str A\subseteq \str B$ (then in particular $\pi\restriction_A = \id_A$). Given a partial automorphism $\varphi$ of $\str A$ let $\hat{\varphi}$ be its (coherent) extension to an automorphism of $\str B_0$ and let $\widetilde{\varphi}$ be the constructed extension to an automorphism of $\str B$ using $\hat{\varphi}$. Let $\sim$ be an equivalence relation on $B$ given by $x\sim y \iff \pi(x) = \pi(y)$. Then $\sim$ is a congruence with respect to $\widetilde{\varphi}$ and the natural actions of $\hat{\varphi}$ and $\widetilde{\varphi}$ on the equivalence classes of $\sim$ coincide.
\end{remark}

\begin{observation}\label{obs:unwind_bound}
The number of vertices of the EPPA-witness $\str B$ constructed in this section can be bounded from above by a function which depends only on the number of vertices of $\str B_0$.
\end{observation}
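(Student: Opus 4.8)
The plan is to give a crude counting argument, entirely parallel to the proof of Observation~\ref{obs:faithful_bound}. Write $m=|B_0|$. First I would bound the number of bad cycle sequences in $\str B_0$: each such sequence is a sequence of distinct vertices of $\str B_0$, hence there are at most $\sum_{k=4}^{m} m^k \le m^{m+1}$ of them; call any such bound $N=N(m)$. A valuation function for a vertex $x\in B_0$ is a map $U(x)\to\{0,1\}$, and $|U(x)|\le N$, so there are at most $2^N$ valuation functions for any fixed $x$.

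Next I would count valuation structures. For a fixed $x\in B_0$, the vertex set of a valuation structure $\str V$ for $x$ has the form $\{(y,\chi_y):y\in\cl_{\str B_0}(x)\}$ with each $\chi_y$ a valuation function for $y$; since $|\cl_{\str B_0}(x)|\le m$, there are at most $(2^N)^m=2^{Nm}$ possibilities for this vertex set (the genericity requirement can only decrease the count). Moreover, once the vertex set is fixed, the $\GammaL$-structure on $\str V$ is uniquely determined: by condition~(2) in the definition of a valuation structure, the pair $(\id_L,\iota)$ with $\iota((y,\chi))=y$ must be an isomorphism $\str V\to\cl_{\str B_0}(x)$, and since the language part $\id_L$ and the vertex map $\iota$ are both fixed, every relation and function of $\str V$ is forced to be the pullback of the corresponding relation or function of $\cl_{\str B_0}(x)$ along $\iota$. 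Hence there are at most $2^{Nm}$ valuation structures for any given vertex of $\str B_0$.

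Finally, since the vertices of $\str B$ are exactly the pairs $(x,\str V)$ with $x\in B_0$ and $\str V$ a valuation structure for $x$, I would conclude $|B|\le m\cdot 2^{Nm}$, a quantity depending only on $m=|B_0|$, as desired. The argument is routine; the only step that warrants a moment's care is the observation that a valuation structure is completely determined by its vertex set, which is immediate from condition~(2) of its definition, the language part of $\iota$ being the identity.
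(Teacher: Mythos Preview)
Your proof is correct and follows essentially the same counting argument as the paper: bound the number of bad cycle sequences, hence the number of valuation functions, then observe that a valuation structure is determined by its vertex set (via condition~(2)), and finally count pairs $(x,\str V)$. Your estimate is in fact slightly sharper than the paper's, since you exploit that the vertex set of $\str V$ is indexed by $\cl_{\str B_0}(x)$ rather than counting arbitrary generic subsets of all pairs $(y,\chi)$, but the structure of the argument is identical.
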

\begin{proof}
Let $m$ be the number of vertices of $\str B_0$. There are at most $(m+1)^m$ (rough estimate) bad cycle sequences and hence at most $2^{(m+1)^m}$ valuation functions for any given vertex $x\in B_0$. This means that there are at most $m2^{(m+1)^m}$ different pairs $(x,\chi)$, where $x\in B_0$ and $\chi$ is a valuation function for $x$, and thus at most $2^{m2^{(m+1)^m}}$ different generic sets. Given a generic set $V$ and a vertex $x\in B_0$, there is at most one valuation structure for $x$ with vertex set $V$. Since the vertex set of $\str B$ consists of all pairs $(x,\str V)$, where $x\in B_0$ and $\str V$ is a valuation structure for $x$, the claim then follows.
\end{proof}

\begin{proof}[Proof of Claim~\ref{c:cycles:irreducible}]
This is a word-to-word copy of the proof of Claim~\ref{c:faithful:irreducible}.

For a contradiction, suppose that it is not the case, that is, there are $(u,\chi),(u',\chi')\in \bigcup_{(x,\str V)\in D} V$ which form a non-generic pair. This implies that there are
$(x,\str{X}),(y,\str{Y})\in D$ such that $(u,\chi)\in X$ and $(u',\chi')\in Y$ (they cannot both lie in the same valuation structure, because the vertex sets of valuation structures are generic), and hence the set $\{(x,\str{X}),(y,\str{Y})\}$ is non-generic. Put $\str{E}_x = \{(a,\str{U}) \in D : (x,\str{X}) \not\in \cl_{\str{D}}((a,\str{U}))\}$ and similarly define $\str E_y$. Since closures are unary, these are substructures of $\str{D}$. Note that as closures in $\str B$ are generic, we also know that $(y, \str Y)\in \str E_x$ and $(x,  \str X)\in \str E_y$. This means that $\str E_x, \str E_y$ are both non-empty and neither is a substructure of the other.

We first prove $\str{E}_x\cup \str{E}_y = \str{D}$. Suppose for a contradiction that there is $(z,\str{Z}) \in \str{D}$ with $\{(x,\str{X}), (y,\str{Y})\} \subseteq \cl_{\str{D}}((z,\str{Z}))$. Then (by the construction of $\str B$) we have $\str{X}, \str{Y} \subseteq \str{Z}$, which is a contradiction with $(x,\str{X}),(y,\str{Y})$ forming a non-generic pair.

Fix $(a,\str{U}) \in \str{E}_x\setminus \str{E}_y$ and $(b,\str{W}) \in \str{E}_y\setminus \str{E}_x$. Because we know that $\str{Y} \subseteq \str{U}$ and $\str{X} \subseteq \str{W}$, we get that $(a,\str{U})$, $(b,\str{W})$ is not a generic pair and therefore no relation of $\str D$ contains both $(a,\str{U})$ and $(b,\str{W})$. Thus $\str{D}$ is a free amalgam of $\str{E}_x$ and $\str{E}_y$ over their intersection, which is a contradiction with its irreducibility. Therefore $D$ is indeed generic.


\end{proof}

\begin{proof}[Proof of Claim~\ref{c:cycles:b}]
Finiteness of $\str B$ follows from Observation~\ref{obs:unwind_bound}. Given $(x,\str V)\in B$, we have that $\pi(\str V) = \cl_{\str B_0}(x)$ (since $\str V$ is a valuation structure). By definition of $\str B$,
$$\pi(\func{B}{}((x,\str{V})))=\{y:(y,\chi)\in \nbfunc{\str V}{}((x, \chi(x,\str V)))\}.$$
As $V$ is generic, we have that for every $y$ there is at most one $\chi$ such that $(y,\chi) \in V$. Moreover, since $\pi$ is an isomorphism $\str V\to \cl_{\str B_0}(x)$, we can write
$$\pi(\func{B}{}((x,\str{V})))=\{y:y\in \nbfunc{\cl_{\str B_0}}{}(x)\},$$
and because $\nbfunc{\cl_{\str B_0}}{}(x) = \nbfunc{\str B_0}{}(x)$, we indeed have
$$\pi(\func{B}{}((x,\str{V}))) = \nbfunc{\str B_0}{}(x),$$
that is, $\pi$ preserves functions.

Let $\rel{}{}\in L$ be a relation and let $((x_1,\str V_1),\ldots,(x_n,\str V_n))$ be a tuple of vertices of $\str B$. Clearly, if $((x_1,\str V_1),\ldots,(x_n,\str V_n))\in \rel{B}{}$ then $\pi(((x_1,\str V_1),\ldots,(x_n,\str V_n))) = (x_1,\ldots,x_n)\in\nbrel{\str B_0}{}$, hence $\pi$ is a homomorphism. If $\{(x_1,\str V_1),\ldots,(x_n,\str V_n)\}$ is generic, the definition of $\str B$ gives us that $((x_1,\str V_1),\ldots,(x_n,\str V_n))\in \str B$ if and only if $\pi(((x_1,\str V_1),\ldots,(x_n,\str V_n))) = (x_1,\ldots,x_n)\in\nbrel{\str B_0}{}$, hence $\pi$ is an embedding on every generic set.

The fact that $\pi$ is a homomorphism-embedding now follows from Claim~\ref{c:cycles:irreducible}.
\end{proof}

\begin{proof}[Proof of Claim~\ref{c:cycles:emb}]
First we will prove that $A'$ is a generic set. By definition this happens if $V = \bigcup_{(x,\str V_x)\in A'} V_x = \bigcup_{x\in A} V_x$ is a generic set. Note that $V = \{(x,\chi_x) : x\in A\}$. Let $x\neq y\in A$ be arbitrary and pick $\vec{c}\in U(x)\cap U(y)$. Remember that since $E_\str A$ is a complete graph, there are at most two vertices of $\str A$ in every bad cycle sequence, and if there are two, then they are connected by an edge of the cycle. Hence we have $\chi_x(\vec{c}) = \chi_{\vec{c}}(x)$ and $\chi_{\vec{c}}(y) = \chi_y(\vec{c})$. By the choice of $\chi_{\vec{c}}$ we get that $(x,\chi_x)$ and $(y,\chi_y)$ form a generic pair which implies that $V$ is indeed a generic set. From this it follows that in particular every $V_x$ is a generic set and hence $\psi$ is a function $A\to B$. What follows is a word-to-word copy of the proof of Claim~\ref{c:faithful:emb}.

Next fix a relation $\rel{}{}\in L$ and a tuple $\bar{x} \in A^n$. We will prove that $\psi(\bar{x})\in\rel{B}{}$ if and only if $\bar{x}\in\rel{A}{}$. By the definition of $\str B$ the ``only if'' part is immediate, to prove the other implication, we need to prove that $\psi(\bar{x})$ (understood as a set) is generic, but clearly $\psi(\bar{x})$ is a subset of a generic set $V$, which concludes the proof.

Finally we prove that for every (unary) function $\func{}{}\in L$ and every vertex $x\in \str A$ we have $\psi(\func{A}{}(x)) = \func{B}{}(\psi(x))$ (remember that $\psi_L = \id_L$). Clearly
$$\psi(\func{A}{}(x)) = \{(y, \str V_y) : y\in \func{A}{}(x)\}.$$
Put $X = \func{B}{}(\psi(x))$. By the construction we have 
$$X = \func{B}{}((x,\str{V}_x))=\left\{(y,\cl_{\str{V}_x}((y,\chi))) : (y,\chi)\in \nbfunc{\str{V}_x}{}((x,\chi(x,\str V_x)))\right\}.$$
Note that $\chi(x,\str V_x)) = \chi_x$ and that if $(y,\chi)\in \nbfunc{\str{V}_x}{}((x,\chi(x,\str V_x)))$, then $\chi = \chi_y$. Hence, in particular, $\pi$ is injective on $\nbfunc{\str{V}_x}{}((x,\chi_x))$. So we can write
$$X = \left\{(y,\cl_{\str{V}_x}((y,\chi_y))) : (y,\chi_y)\in \nbfunc{\str{V}_x}{}((x,\chi_x))\right\}.$$
Since $(\id_L,(y,\chi_y)\mapsto y)$ is an isomorphism $\str V_x\to\cl_\str{A}(x)$, we get that $(y,\chi_y)\in \nbfunc{\str{V}_x}{}((x,\chi_x))$ if and only if $y \in \nbfunc{\cl_\str{A}(x)}{}(x) = \func{A}{}(x)$. For the same reason, $\cl_{\str{V}_x}((y,\allowbreak\chi_y))$ is isomorphic to $\cl_\str A(y)$ by projecting to the first coordinate and hence in fact $\cl_{\str{V}_x}((y,\chi_y)) = \str V_y$.
Putting this together, we get that
$$\func{B}{}(\psi(x)) = X = \left\{(y,\str V_y) : y\in \func{A}{}(x)\right\} = \psi(\func{A}{}(x)).$$
\end{proof}

\begin{proof}[Proof of Claim~\ref{c:cycles:auto}]
It is easy to see that $\widetilde\varphi$ is a bijection which maps generic sets to generic sets. Since the domain of $\varphi$ is generic, it follows that for every bad cycle sequence $\vec{c}$ there are at most two vertices from $\vec{c}$ in $\pi(\dom(\varphi))$, and if there are two of them, they are connected by an edge of the cycle. The same holds for $\range(\varphi)$.

Fix a bad cycle sequence $\vec{c}$ of length $k$ and suppose that there are distinct vertices $x,y\in\vec{c}$ and valuation structures $\str U$, $\str V$ such that $(x,\str U),(y,\str V)\in \dom(\varphi)$ and denote $\varphi((x,\str U)) = (\hat\varphi(x),\str U')$ and $\varphi((y,\str V)) = (\hat\varphi(y),\str V')$. We know that there is $i$ such that (without loss of generality) $x = \vec{c}_i$ and $y=\vec{c}_{i+1}$ (with $\vec{c}_{k+1} = \vec{c}_1$). By genericity of $\dom(\varphi)$ we know that $\chi(x,\str U)(\vec{c}) = \chi(y,\str V)(\vec{c})$ if and only if $i\neq k$. Because $\hat\varphi$ is a bijection $B_0\to B_0$ we have that $\hat\varphi(x) = \hat\varphi(\vec{c})_i$ and $\hat\varphi(x) = \hat\varphi(\vec{c})_{i+1}$. And as $\range(\varphi)$ is also generic, we get that $\chi(\hat\varphi(x),\str U')(\hat\varphi(\vec{c})) = \chi(\hat\varphi(y),\str V')(\hat\varphi(\vec{c}))$ if and only if $i\neq k$. Hence 
$$\chi(x,\str U)(\vec{c}) = \chi(y,\str V)(\vec{c}) \iff \chi(\hat\varphi(x),\str U')(\hat\varphi(\vec{c})) = \chi(\hat\varphi(y),\str V')(\hat\varphi(\vec{c})).$$
Moreover, this happens if and only if $\vec{c}\in F$, and thus $\widetilde\varphi$ extends $\varphi$. In the remaining paragraphs we prove that $\widetilde\varphi$ is an automorphism of $\str B$. The proof is in fact a word-to-word copy of the analogous argument from Claim~\ref{c:faithful:auto}.

Fix a relation $\rel{}{}\in L$ and a tuple $((x_1, \str V_1), \ldots, (x_n,\str V_n))\in B^n$. Note that $$(x_1,\ldots,x_n)\in \nbrel{\str B_0}{} \iff (\hat\varphi(x_1),\ldots,\hat\varphi(x_n)) \in \permnbrel{\widetilde\varphi}{\str B_0}{},$$ because $\hat\varphi$ is an automorphism of $\str B_0$. Together with the fact that $\widetilde\varphi$ maps generic sets to generic sets it follows that $((x_1, \str V_1), \ldots, (x_n,\str V_n))\in \rel{B}{}$ if and only if $(\widetilde\varphi((x_1, \str V_1)), \ldots, \widetilde\varphi((x_n,\str V_n)))\in \permrel{\widetilde\varphi}{B}{}$.

It remains to prove that for every function $\func{}{}\in L$ and every $(x,\str V)\in B$ we have $\widetilde\varphi(\func{B}{}((x,\str V))) = \permfunc{\varphi_L}{B}{}(\widetilde\varphi((x,\str V)))$. To simplify the notation we put $h(\str V) = (\varphi_L,\hat q)(\str V)$ for every valuation structure $\str V$. Fix an arbitrary $(x,\str V)\in \str B$ and put $\chi_0 = \chi(x,\str V)$.
By the definition of $\str B$ we know that
$$\widetilde\varphi(\func{B}{}((x,\str V))) = \left\{(\hat\varphi(y),h(\cl_{\str{V}}((y,\chi)))) : (y,\chi)\in \nbfunc{\str{V}}{}((x,\chi_0))\right\}.$$
Denote $X = \permfunc{\widetilde{\varphi}}{B}{}(\widetilde{\varphi}((x,\str V)))$. By the definition of $\str B$, we have
$$X = \{(y,\cl_{h(\str V)}((y,\chi))) : \allowbreak (y,\chi)\in \permnbfunc{\widetilde{\varphi}}{h(\str V)}{}(\hat q((x,\chi_0)))\}.$$
Since $\hat q$ is a bijection $\mathcal V\to \mathcal V$ which agrees with $\hat\varphi$ on the first coordinate, we can write
$$X = \{(\hat\varphi(y),\cl_{h(\str V)}(\hat q((y,\chi)))) : \allowbreak \hat q((y,\chi))\in \permnbfunc{\widetilde{\varphi}}{h(\str V)}{}(\hat q((x,\chi_0)))\}.$$
Note that $$\permnbfunc{\widetilde{\varphi}}{h(\str V)}{}(\hat q((x,\chi_0))) = \hat q(\nbfunc{\str V}{}((x,\chi_0))),$$ hence 
$\hat q((y,\chi))\in \permnbfunc{\widetilde{\varphi}}{h(\str V)}{}(\hat q((x,\chi_0)))$ if and only if $(y,\chi)\in \nbfunc{\str V}{}((x,\chi_0))$, and so we have
$$X = \{(\hat\varphi(y),\cl_{h(\str V)}(\hat q((y,\chi)))) : \allowbreak (y,\chi)\in \nbfunc{\str V}{}((x,\chi_0))\}.$$
Finally, $\cl_{h(\str V)}(\hat q((y,\chi))) = h(\cl_{\str{V}}((y,\chi)))$, hence indeed $$X = \widetilde{\varphi}(\func{B}{}((x,\str V))).$$
\end{proof}

\section{Locally tree-like EPPA-witnesses: Proof of Theorem~\ref{thm:maintree}}\label{sec:maintree}
The goal of this section is to prove Theorem~\ref{thm:maintree} using Lemma~\ref{lem:sparsen}.

\begin{definition}\label{defn:tree-amalgamation}
Let $L$ be a language equipped with a permutation group $\GammaL$ and let $\str A$ be a finite $\GammaL$-structure. We recursively define what a \emph{tree amalgamation of copies of $\str A$} is.
\begin{enumerate}
\item If $\str D$ is isomorphic to $\str A$ then $\str D$ is a tree amalgamation of copies of $\str A$.
\item\label{tree:2} If $\str B_1$ and $\str B_2$ are tree amalgamations of copies of $\str A$, $\str D$ is a $\GammaL$-structure and $\alpha_1\colon\str A\to \str B_1$, $\alpha_2\colon\str A\to \str B_2$ and $\delta_1,\delta_2\colon \str D\to \str A$ are embeddings then the free amalgamation of $\str B_1$ and $\str B_2$ over $\str D$ with respect to $\alpha_1\circ\delta_1$ and $\alpha_2\circ\delta_2$ is also a tree amalgamation of copies of $\str A$.
\end{enumerate}
\end{definition}

The following proposition gives an alternative way of viewing tree amalgamations.

\begin{prop}\label{prop:tree-amalgamation}
Let $L$ be a language equipped with a permutation group $\GammaL$, let $\str A$ be a finite $\GammaL$-structure and let $\str C$ be a finite $\GammaL$-structure. The following statements are equivalent:
\begin{enumerate}
\item\label{treeam1} $\str C$ is a tree amalgamation of copies of $\str A$.
\item\label{treeam2} There exists a sequence $\str A = \str C_1,\ldots, \str C_n=\str C$ of finite $\GammaL$-structures such that for every $1\leq i < n$ there is a $\GammaL$-structure $\str D$, embeddings $\delta_1,\delta_2\colon \str D\to\str A$ and an embedding $\alpha\colon \str A\to \str D_i$ such that $\str D_{i+1}$ is a free amalgamation of $\str A$ and $\str D_i$ with respect to $\delta_1$ and $\alpha\circ\delta_2$.
\end{enumerate}
\end{prop}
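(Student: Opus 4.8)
The plan is to prove the two implications separately. The direction $(\ref{treeam2})\Rightarrow(\ref{treeam1})$ is a routine induction on $n$: the structure $\str C_1$, being isomorphic to $\str A$, is a tree amalgamation of copies of $\str A$ by the first clause of Definition~\ref{defn:tree-amalgamation}, and if $\str C_i$ is a tree amalgamation of copies of $\str A$, then so is $\str C_{i+1}$, since it is the free amalgamation of $\str A$ (a tree amalgamation of copies of $\str A$) and $\str C_i$ (one by the inductive hypothesis) over $\str D$ with respect to $\mathrm{id}_{\str A}\circ\delta_1$ and $\alpha\circ\delta_2$, which is exactly clause~(\ref{tree:2}) of Definition~\ref{defn:tree-amalgamation}. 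Hence $\str C=\str C_n$ is a tree amalgamation of copies of $\str A$.

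The content is in $(\ref{treeam1})\Rightarrow(\ref{treeam2})$, which I would prove by induction on the number $k$ of copies of $\str A$ used in a fixed way of building $\str C$ as a tree amalgamation. For $k=1$ the structure $\str C$ is isomorphic to $\str A$ and the trivial one-term sequence works. For $k\ge 2$, write $\str C$ as the free amalgamation of tree amalgamations $\str B_1$ and $\str B_2$ over $\str D$ with respect to $\alpha_1\circ\delta_1$ and $\alpha_2\circ\delta_2$, using $k_1,k_2\ge 1$ copies respectively with $k_1+k_2=k$; identify $\str B_1$ and $\str B_2$ with their images in $\str C$, so that $\str B_1\cap\str B_2$ is a copy of $\str D$ contained in the copy $\alpha_2(\str A)$ of $\str A$. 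If $k_2=1$ then $\str B_2\cong\str A$, and $\str C$ is obtained from $\str B_1$ in a single step of the form required in~(\ref{treeam2}) (with gluing substructure $\str D$, the map $\str D\to\str A$ being $\delta_2$ read through the isomorphism $\alpha_2$, and the map $\str D\to\str B_1$ being $\alpha_1\circ\delta_1$); the sequence for $\str B_1$ furnished by the inductive hypothesis, followed by this step, is the desired sequence, and if $k_1=1$ we argue symmetrically. If $k_1,k_2\ge 2$, I would first establish the auxiliary fact that a tree amalgamation of copies of $\str A$ admits a sequence as in~(\ref{treeam2}) starting from any prescribed one of its copies of $\str A$ (proved by the same induction, amounting to re-rooting the binary construction tree at the prescribed copy). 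Applying this to $\str B_2$ with the prescribed copy $\alpha_2(\str A)$ yields a sequence $\alpha_2(\str A)=\str Q_1,\ldots,\str Q_{k_2}=\str B_2$; the desired sequence for $\str C$ is then the sequence for $\str B_1$, followed by $\str B_1\cup \str Q_1,\ \str B_1\cup \str Q_2,\ \ldots,\ \str B_1\cup \str Q_{k_2}=\str C$.

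The step I expect to be the main obstacle is verifying that each of these structures $\str B_1\cup \str Q_j$ really is the free amalgamation of a copy of $\str A$ and the previous structure over a substructure mapping into $\str A$ and, via a copy of $\str A$, into the previous structure. The key point is that the copy of $\str D$ gluing $\str B_1$ to $\str B_2$ lies inside $\str Q_1=\alpha_2(\str A)$, hence inside every $\str Q_j$, so that when the copy of $\str A$ added in passing from $\str Q_j$ to $\str Q_{j+1}$ is adjoined to $\str B_1\cup \str Q_j$ it meets $\str B_1\cup \str Q_j$ only in the substructure over which it was amalgamated inside $\str B_2$; freeness then follows from freeness of the amalgamation $\str C$ of $\str B_1$ and $\str B_2$ over $\str D$ together with freeness of the amalgamation $\str Q_{j+1}$. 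Carrying out this bookkeeping, along with the auxiliary re-rooting fact, is where care is needed; both become transparent when $\str A$ is irreducible --- which is the case relevant to Theorem~\ref{thm:maintree} --- since then every copy of $\str A$ inside a tree amalgamation of copies of $\str A$ is one of the copies used to build it (irreducible substructures do not straddle a free amalgamation), so $\alpha_2(\str A)$ is such a copy and the construction tree is literally a tree whose vertices are these copies.
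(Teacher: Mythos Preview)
Your proof is correct and follows essentially the same route as the paper's. The one organizational difference is that the paper takes your ``auxiliary fact'' (that the sequence in~(\ref{treeam2}) can be chosen to start at any prescribed copy of $\str A$ used in the construction) as the inductive hypothesis for $(\ref{treeam1})\Rightarrow(\ref{treeam2})$ from the outset, which removes the need for your separate $k_1=1$, $k_2=1$ case analysis; your verification that each step $\str B_1\cup\str Q_j\to\str B_1\cup\str Q_{j+1}$ is a valid free amalgamation over a substructure of a copy of $\str A$ is exactly the point the paper leaves as ``it is easy to see''.
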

Note that if $\str A$ is irreducible (which will always be the case in this paper), the process in point~\ref{treeam2} can be understood as having a graph tree $\str T$ whose vertices precisely correspond to copies of $\str A$ in $\str C$ and each edge determines, how the neighbouring copies of $\str A$ overlap.

\begin{proof}[Proof of Proposition~\ref{prop:tree-amalgamation}]
The direction (\ref{treeam2})$\Rightarrow$(\ref{treeam1}) is trivial, as (\ref{treeam2}) is just a special case of the recursive definition of tree amalgamation of copies of $\str A$.

To obtain the other direction, we will use induction on the recursive construction of $\str C$ to prove an even stronger statement, namely that for every copy of $\str A\in \str C$, we can pick $\str C_1$ to correspond to the given copy. Clearly, this holds if $\str C$ is isomorphic to $\str A$. Suppose now that $\str C$ is the free amalgamation of $\str B_1$ and $\str B_2$ with respect to $\alpha_1\circ\delta_1$ and $\alpha_2\circ\delta_2$ as in Definition~\ref{defn:tree-amalgamation} and without loss of generality assume that the chosen copy of $\str A$ lies in $\str B_1$.

By the induction hypothesis, we get $\str A=\str C_1,\ldots,\str C_n=\str B_1$ such that $\str C_1$ corresponds to the chosen copy. Also by the induction hypothesis, we get $\str A = \str C_1',\ldots,\str C_m'=\str B_2$ such that $\str C_1'$ corresponds to the copy of $\str A$ given by $\alpha_2$. It is easy to see that if, for $1\leq i\leq m$, we put $\str C_{n+1}$ to be the free amalgamation of $\str C_n$ and $\str C_i'$ with respect to $\alpha_1\circ\delta_1$ and $\alpha_2\circ\delta_2$, then $\str C_1,\ldots,\str C_{m+n}$ witnesses that $\str C$ satisfies point~\ref{treeam2}.
\end{proof}

Note that in both equivalent definitions, we require that we only amalgamate over copies of $\str D$ which lie in a copy of $\str A$. The reason for it is that when $\str A$ is irreducible, it allows us to prove the following two observations about tree amalgamations of copies of $\str A$.
\begin{observation}\label{obs:tree-amalgamation_irreducible}
Let $\str A$ be a finite irreducible $\GammaL$-structure, let $\str C$ be a tree amalgamation of copies of $\str A$ witnessed by the sequence $\str A = \str C_1,\ldots, \str C_n=\str C$ of $\GammaL$-structures and let $\str I\subseteq \str C$ be an irreducible structure. Then either $\str I\subseteq \str C_1$, or there is $1 < i \leq n$ such that $\str I\not\subseteq \str C_{i-1}$ and $\str I$ lies fully in the copy of $\str A$ which together with $\str C_{i-1}$ forms $\str C_i$. Consequently, every embedding of an irreducible structure to $\str C$ extends to an embedding of $\str A$ to $\str C$
\end{observation}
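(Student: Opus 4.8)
The plan is to look at the first moment in the construction at which all of $\str I$ has appeared. Using the witnessing sequence $\str A = \str C_1,\ldots, \str C_n=\str C$ from the statement, each $\str C_{j+1}$ is a free amalgamation containing $\str C_j$, so we may regard all the $\str C_j$ as substructures of $\str C$. Let $i$ be the least index with $\str I\subseteq \str C_i$; this is well defined since $\str I\subseteq \str C=\str C_n$. If $i=1$ we are in the first case of the statement, so assume $i>1$. By minimality, $\str I\not\subseteq \str C_{i-1}$, and by the definition of the witnessing sequence (cf.\ Proposition~\ref{prop:tree-amalgamation}(\ref{treeam2}) and Definition~\ref{defn:tree-amalgamation}(\ref{tree:2})) the structure $\str C_i$ is a free amalgamation of $\str C_{i-1}$ and a copy $\str A^\star$ of $\str A$ over the substructure $\str D = \str C_{i-1}\cap \str A^\star$, where the intersection is exactly $D$ because free amalgamations are strong. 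This $\str A^\star$ is precisely ``the copy of $\str A$ which together with $\str C_{i-1}$ forms $\str C_i$'', so it suffices to show $\str I\subseteq \str A^\star$.

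The key step is to derive $\str I\subseteq \str A^\star$ from irreducibility of $\str I$. Since $\str I\subseteq \str C_i = \str C_{i-1}\cup\str A^\star$ but $\str I\not\subseteq \str C_{i-1}$, there is a vertex $u\in I\cap(A^\star\setminus C_{i-1})$. Suppose for contradiction that $\str I\not\subseteq \str A^\star$, so there is also $v\in I\cap(C_{i-1}\setminus A^\star)$. I would then check that $\str I$ is the free amalgamation of its induced substructures on $I\cap C_{i-1}$ and on $I\cap A^\star$ over their common induced substructure on $I\cap D$: the two vertex sets cover $I$; their overlap inside $\str I$ is exactly $I\cap D$ because the ambient amalgamation $\str C_i$ is strong; and the freeness conditions (no relation on a tuple meeting both $I\cap(C_{i-1}\setminus D)$ and $I\cap(A^\star\setminus D)$, and empty function values on such tuples) are inherited verbatim from $\str C_i$ by passing to an induced substructure. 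Both pieces are proper substructures of $\str I$, the first omitting $u$ and the second omitting $v$, contradicting irreducibility of $\str I$. Hence $\str I\subseteq \str A^\star$, as desired.

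I expect the only delicate point to be the verification in the second paragraph that this induced decomposition of $\str I$ is genuinely a \emph{free} amalgamation in the sense of Section~\ref{sec:amalg}: one must note that unary function values in $\str C_i$ do not cross from one side to the other (because the maps $\beta_1,\beta_2$ witnessing the free amalgamation are embeddings, so function values stay within a side) and that the ``mixed tuple carries no relation and empty function value'' condition is closed under restriction to a subset of vertices. Everything else is bookkeeping on the index $i$ and the observation that $\str C_{i-1}$ and $\str A^\star$ are embedded in $\str C_i$ with intersection $\str D$.
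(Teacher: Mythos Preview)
Your argument is correct and follows the same idea as the paper: the paper's proof is a one-line appeal to the general fact that an irreducible substructure of a free amalgamation of $\str U_1$ and $\str U_2$ must lie entirely in $\str U_1$ or in $\str U_2$, and you prove exactly this fact (in the specific instance $\str U_1=\str C_{i-1}$, $\str U_2=\str A^\star$) by the contradiction in your second paragraph. Your choice of the minimal $i$ with $\str I\subseteq \str C_i$ is the natural bookkeeping to reduce to a single free amalgamation step, and your remarks about closedness under unary functions and inheritance of the freeness conditions are the right checks; the paper simply takes these for granted.
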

\begin{proof}
This follows from the fact that if $\str U$ is a free amalgamation of $\str U_1$ and $\str U_2$ (without loss of generality we can assume that all the embeddings are inclusions and $U=U_1\cup U_2$) and $\str V$ is an irreducible substructure of $\str U$, then $\str V\subseteq \str U_1$ or $\str V\subseteq \str U_2$. The ``consequently'' part is immediate.
\end{proof}
Note that this in particular implies that the only copies of $\str A$ in $\str C$ are those which we added in some step of the construction of $\str C$.

\begin{observation}\label{obs:tree-amalgamation_completion}
Let $\mathcal C$ be a hereditary amalgamation class of finite $\GammaL$-structures and let $\str A\in \mathcal C$ be an irreducible structure. Suppose that $\str C$ is a tree amalgamation of copies of $\str A$. Then there is $\str E\in \mathcal C$ and a homomorphism-embedding $e\colon\str C\to \str E$.
\end{observation}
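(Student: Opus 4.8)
The plan is to induct along a witnessing sequence as in Proposition~\ref{prop:tree-amalgamation}(\ref{treeam2}). So fix $\str A = \str C_1,\ldots,\str C_n = \str C$ such that for each $1\le i<n$ there are a $\GammaL$-structure $\str D$, embeddings $\delta_1,\delta_2\colon\str D\to\str A$ and an embedding $\alpha\colon\str A\to\str C_i$ for which $\str C_{i+1}$ is the free amalgamation of a fresh copy $\str A'$ of $\str A$ with $\str C_i$ with respect to $\delta_1$ and $\alpha\circ\delta_2$; up to renaming we may assume $\str C_i\subseteq\str C_{i+1}$, $\str A'\subseteq\str C_{i+1}$, $C_{i+1}=C_i\cup A'$, and that $\str A'\cap\str C_i$ is exactly the common image of $\str D$. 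I would build, by induction on $i$, structures $\str E_i\in\mathcal C$ together with homomorphism-embeddings $e_i\colon\str C_i\to\str E_i$. The base case is $\str E_1=\str A$ with $e_1$ the identity embedding.

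For the inductive step, note first that $\str D\in\mathcal C$ by heredity, since $\delta_1$ embeds $\str D$ into $\str A\in\mathcal C$. Next, because $\str A$ is irreducible so is the substructure $\alpha(\str A)\subseteq\str C_i$ (irreducibility is an isomorphism invariant), and since $e_i$ is a homomorphism-embedding its restriction to $\alpha(\str A)$ is an embedding; hence $e_i\circ\alpha\circ\delta_2\colon\str D\to\str E_i$ is a genuine embedding, being a composition of embeddings. Now I would apply the amalgamation property of $\mathcal C$ to $\str A$, $\str E_i$ and $\str D$ with respect to the embeddings $\delta_1\colon\str D\to\str A$ and $e_i\circ\alpha\circ\delta_2\colon\str D\to\str E_i$, obtaining $\str E_{i+1}\in\mathcal C$ and embeddings $\beta_1\colon\str A\to\str E_{i+1}$, $\beta_2\colon\str E_i\to\str E_{i+1}$ with $\beta_1\circ\delta_1=\beta_2\circ e_i\circ\alpha\circ\delta_2$.

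Define $e_{i+1}\colon\str C_{i+1}\to\str E_{i+1}$ by $e_{i+1}=\beta_2\circ e_i$ on $\str C_i$ and $e_{i+1}=\beta_1\circ\rho$ on $\str A'$, where $\rho\colon\str A'\to\str A$ is the canonical isomorphism; the identity $\beta_1\circ\delta_1=\beta_2\circ e_i\circ\alpha\circ\delta_2$ makes the two definitions agree on the overlap $\str A'\cap\str C_i$, so $e_{i+1}$ is well defined. It is a homomorphism: any tuple of $\str C_{i+1}$ lying entirely in $\str C_i$ or entirely in $\str A'$ has its relations and function values inherited from that piece (a free amalgamation contains both pieces as substructures) and is thus handled by the homomorphism $\beta_2\circ e_i$, respectively $\beta_1\circ\rho$; any tuple with vertices outside the overlap on both sides lies in no relation and has empty function values in $\str C_{i+1}$, so the homomorphism conditions are vacuous there. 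It is a homomorphism-embedding: by the basic fact recorded in the proof of Observation~\ref{obs:tree-amalgamation_irreducible}, every irreducible substructure $\str I\subseteq\str C_{i+1}$ is contained in $\str C_i$ or in $\str A'$; in the latter case $e_{i+1}|_{\str I}$ is a restriction of the embedding $\beta_1\circ\rho$, and in the former $\str I$ is an irreducible substructure of $\str C_i$, so $e_i|_{\str I}$ is an embedding (as $e_i$ is a homomorphism-embedding) and therefore so is $\beta_2\circ e_i|_{\str I}$. This completes the induction, and $\str E=\str E_n$, $e=e_n$ are as required.

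The only subtle point is the step asserting that $e_i\circ\alpha\circ\delta_2$ is an honest embedding rather than merely a homomorphism-embedding — this is precisely where irreducibility of $\str A$ enters, and it is what licenses the use of the amalgamation property; everything else is bookkeeping about which piece of the free amalgamation $\str C_{i+1}$ a given tuple or substructure belongs to.
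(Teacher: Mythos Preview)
Your proof is correct and follows essentially the same strategy as the paper's: both induct along the tree-amalgamation structure and use irreducibility of $\str A$ to turn the homomorphism-embedding into a genuine embedding on the relevant copy of $\str A$, thereby enabling the amalgamation property. The only cosmetic difference is that the paper inducts via the recursive Definition~\ref{defn:tree-amalgamation} (splitting into two tree amalgamations $\str B_1,\str B_2$ and amalgamating their completions $\str E_1,\str E_2$), whereas you use the equivalent linear characterisation from Proposition~\ref{prop:tree-amalgamation}(\ref{treeam2}) and attach one copy of $\str A$ at a time; your version is in fact more explicit about why the resulting map is a well-defined homomorphism-embedding.
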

\begin{proof}
We will proceed by induction on the recursive definition of $\str C$. If $\str C$ is isomorphic to $\str A$, then the statement clearly holds with $e$ being the identity. Otherwise we get $\str B_1$, $\str B_2$, $\str D$, $\delta_1$, $\delta_2$, $\alpha_1$ and $\alpha_2$ as in Definition~\ref{defn:tree-amalgamation}. By the induction hypothesis, we get $\str E_1,\str E_2\in \mathcal C$ and homomorphism-embeddings $e_1\colon\str B_1\to\str E_1$ and $e_2\colon\str B_2\to\str E_2$. Since $\str A$ is irreducible, we get that $e_i\circ\alpha_i$ is an embedding $\str A\to \str E_i$ for $i\in \{1,2\}$, hence in particular the structure induced by $\str E_i$ on $e_i(\alpha_i(\delta_i(D)))$ is isomorphic to $\str D$ for $i\in \{1,2\}$. Therefore, we can put $\str E$ to be the amalgamation of $\str E_1$ and $\str E_2$ with respect to $e_1\circ\alpha_1\circ\delta_1$ and $e_2\circ\alpha_2\circ\delta_2$.
\end{proof}

Note that the fact that we are only amalgamating over structures which lie in a copy of $\str A$ was crucial, because ``being irreducible'' is not a hereditary property (for example, if $L$ is a language containing one ternary relation $\rel{}{}$ and $\str X$ is an $L$-structure such that $X=\{a,b,c\}$ and $\rel{X}{} = \{(a,b,c)\}$, then $\str X$ is irreducible, but the substructure of $\str X$ induced on $\{a,b\}$ is not irreducible).

\medskip

We will make use of the following lemma which has a graph-theoretic proof:
\begin{lemma}
\label{lem:cuts}
Let $L$ be a language equipped with a permutation group $\GammaL$, assume that $L$ contains a binary symmetric relation $E$, and let $\str A$ be a finite irreducible $\GammaL$-structure such that $E_\str A$ is a complete graph.
Let $\str{B}$ be a $\GammaL$-structure satisfying the following:
\begin{enumerate}
\item\label{lem:cuts:1} Every irreducible substructure of $\str B$ is isomorphic to a substructure of $\str{A}$, and
\item $\str B$ contains no induced graph cycles (of length $\geq 4$) in the  relation $E_\str B$.
\end{enumerate}
Then $\str{B}$ is a substructure of a tree amalgamation of copies of $\str{A}$.
\end{lemma}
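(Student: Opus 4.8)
The plan is to prove the lemma by induction on $|B|$, at each step splitting off a single maximal $E_\str B$-clique. Everything hinges on the following consequence of hypothesis~(\ref{lem:cuts:1}): since $\str A$ is irreducible with $E_\str A$ a complete graph, and embeddings reflect non-edges, \emph{every irreducible substructure of $\str B$ induces a complete subgraph of $E_\str B$} and has at most $|A|$ vertices; in particular $E_\str B$ is loopless and symmetric.

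First I would record three structural facts about $\str B$.
(i) For any vertex $x$, and more generally for any $E_\str B$-edge $\{x,y\}$, the closure $\cl_\str B(\{x,y\})$ is irreducible: in any free amalgamation decomposition into proper parts, the relation tuple $(x,y)$ cannot straddle the base, so one part contains both $x$ and $y$ and hence, by minimality of the closure, equals the whole closure --- a contradiction. By the opening remark, $\cl_\str B(\{x,y\})$ is therefore $E_\str B$-complete.
(ii) Hence, whenever $S\subseteq B$ induces a complete subgraph of $E_\str B$, so does $\cl_\str B(S)=\bigcup_{x\in S}\cl_\str B(x)$: any two of its vertices lie in $\cl_\str B(\{x,x'\})$ for some $x,x'\in S$, which is $E_\str B$-complete by~(i). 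Applying this to a \emph{maximal} $E_\str B$-clique $K$ shows $\cl_\str B(K)=K$, i.e.\ $K$ is a closed substructure; it is moreover irreducible (a nonempty $E$-complete structure is not a free amalgamation of proper substructures) and, by~(\ref{lem:cuts:1}), embeds into $\str A$.
(iii) Every relation tuple of $\str B$, and every application $x\mapsto F_\str B(x)$ of a unary function with $F_\str B(x)\neq\emptyset$, has all its vertices inside a single maximal $E_\str B$-clique: the closure of the set of vertices involved is irreducible for the same straddling reason as in~(i) (in the function case one uses that a unary function value stays inside any substructure containing its argument), hence $E_\str B$-complete, hence contained in a maximal clique.

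Now the induction. If $\str B$ is irreducible --- in particular if $B=\emptyset$ or $|B|=1$ --- then by~(\ref{lem:cuts:1}) it is isomorphic to a substructure of $\str A$, and a single copy of $\str A$ is a tree amalgamation of copies of $\str A$, so we are done. Otherwise $|B|\ge 2$ and $E_\str B$ is chordal by the second hypothesis, so it has a simplicial vertex $v$, and $K:=\{v\}\cup N_{E_\str B}(v)$ is a maximal $E_\str B$-clique. Let $K^\circ$ be the set of vertices of $K$ lying in no other maximal clique (so $v\in K^\circ\neq\emptyset$), and put $D:=K\setminus K^\circ$. Using~(iii) and~(ii) one checks that $B\setminus K^\circ$ and $D$ are closed subsets and that $\str B$ is the \emph{free} amalgamation of $\str B\restriction(B\setminus K^\circ)$ and $\str B\restriction K$ over $\str B\restriction D$, with both factors proper (the first since $K^\circ\neq\emptyset$; the second since $\str B$ is not irreducible). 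As $D\subseteq K$, the structure $\str B\restriction D$ is $E$-complete, hence irreducible (or empty). Apply the induction hypothesis to $\str B\restriction(B\setminus K^\circ)$ to obtain a tree amalgamation $\str D'$ of copies of $\str A$ with $\str B\restriction(B\setminus K^\circ)\subseteq\str D'$; by Observation~\ref{obs:tree-amalgamation_irreducible}, the irreducible $\str B\restriction D$ lies inside some copy of $\str A$ in $\str D'$ (when $D=\emptyset$ take any copy of $\str A$ in $\str D'$; the degenerate case $\str D'=\emptyset$ forces $\str B=\str B\restriction K$, already covered). By~(iii), $\str B\restriction K$ embeds into $\str A$, so fix a copy $\str A^\ast\cong\str A$ with $\str B\restriction K\subseteq\str A^\ast$, and set $\str D:=\str D'\oplus_{\str B\restriction D}\str A^\ast$. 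The base $\str B\restriction D$ embeds into $\str A$ and lies inside a copy of $\str A$ on each side, so $\str D$ is a tree amalgamation of copies of $\str A$ by Definition~\ref{defn:tree-amalgamation}. Finally $\str B\subseteq\str D$: no relation or function of $\str D$ among vertices of $B$ straddles the amalgamation, so each lies inside $\str D'$ (where on $B\cap D'=B\setminus K^\circ$ it agrees with $\str B$ by induction) or inside $\str A^\ast$ (where on $B\cap A^\ast=K$ it agrees with $\str B$); conversely, by~(iii), every relation or function of $\str B$ is confined to $K$ or to some maximal clique $K'\neq K$, and any such $K'$ is contained in $B\setminus K^\circ$. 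This closes the induction.

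The crux, and the step I expect to require the most care, is the interaction between $E_\str B$-cliques and closures: because functions take values in the powerset, the substructure induced on an $E_\str B$-clique need not be closed, so one cannot naively ``peel off a simplicial vertex''. Facts~(i)--(iii) are exactly what repairs this --- showing that maximal $E_\str B$-cliques are automatically closed substructures and that all relations and unary function values are confined to single maximal cliques; granting these, the free-amalgamation bookkeeping and the invocations of Definition~\ref{defn:tree-amalgamation} and Observation~\ref{obs:tree-amalgamation_irreducible} in the inductive step are routine verifications. The only external input is the standard fact that a finite chordal graph has a simplicial vertex.
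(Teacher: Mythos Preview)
Your proof is correct and takes a genuinely different (though closely related) route from the paper's.

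Both arguments proceed by induction on $|B|$, and both rest on the same key observation that under hypothesis~(\ref{lem:cuts:1}) irreducible substructures are exactly the $E_\str B$-cliques (up to closure); the paper proves this inline, you package it as facts~(i)--(iii). The difference is in the decomposition. The paper finds an inclusion-minimal \emph{closed} vertex cut $\str C$, argues directly from the no-long-induced-cycle hypothesis that a minimal cut $C'\subseteq C$ is a clique, and then splits $\str B$ over $\str C=\cl_\str B(C')$ into two pieces, recursing on \emph{both}. You instead invoke the classical simplicial-vertex theorem for chordal graphs, peel off a single maximal clique $K$ (which embeds directly into $\str A$), and recurse only on $B\setminus K^\circ$; the invocation of Observation~\ref{obs:tree-amalgamation_irreducible} to locate $\str B\restriction D$ inside a copy of $\str A$ in $\str D'$ then lets you reattach $K$.

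Your approach makes the tree structure slightly more explicit (each inductive step adds exactly one leaf copy of $\str A$) and avoids recursing on two pieces, at the cost of importing the simplicial-vertex fact as a black box; the paper's approach is more self-contained, deriving the needed clique separator directly from the hypotheses. The delicate point in your argument---that maximal $E_\str B$-cliques are automatically closed, so that $B\setminus K^\circ$ and $D$ are closed---is exactly where hypothesis~(\ref{lem:cuts:1}) does real work, and your facts~(i)--(ii) handle it correctly.
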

\begin{proof}
We proceed by induction on $|B|$. If $\str{B}$ is irreducible then the statement follows trivially, hence we can assume that $\str B$ is reducible.

Note that condition~\ref{lem:cuts:1} implies that if $\str C$ is an irreducible substructure of $\str B$, then $E_\str{C}$ is a clique. And conversely, whenever $E_\str{B}$ induces a clique on $C\subseteq B$ then $\cl_\str{B}(C)$ is irreducible: Indeed, suppose for a contradiction that $\cl_\str{B}(C)$ is the free amalgamation of some $\str U_1$ and $\str U_2$ over $\str V$ such that $\str U_1,\str U_2\neq \cl_\str{B}(C)$. If $C\subseteq U_1$, then we would get that $\cl_\str{B}(C)\subseteq \str U_1$ (because $\str U_1$ is a substructure of $\cl_\str{B}(C)$) which is a contradiction, similarly for $U_2$. Hence there are $x_1,x_2\in C$ such that $x_1\in U_1\setminus V$ and $x_2\in U_2\setminus V$. But this implies that $(x_1,x_2)\notin E_\str{B}$, which is a contradiction.

For the following paragraphs, we will mainly consider the graph relation $E_\str{B}$ and we will treat subsets of $B$ as (induced) subgraphs of the graph $(B, E_\str{B})$. We will use the standard terminology of graph theory.

Let $\str C$ be an inclusion minimal substructure of $\str B$ such that $C$ forms a vertex cut of $B$ (i.e. $B\setminus C$ is not connected in $E_\str{B}$) and let $C'\subseteq C$ be an inclusion minimal vertex cut of $B$. Such a $\str C$ exists, because $\str B$ is reducible. Note that from the minimality of $\str C$ it follows that $\str C = \cl_\str{B}(C')$.

First observe that from the minimality of $C'$ it follows that for every pair of distinct vertices $x,y\in C'$ there are be two distinct nonempty connected components $B_1, B_2\subset B\setminus C'$ such that both $B_1, B_2$ contain a vertex adjacent to $x$ as well as a vertex adjacent to $y$. Now observe that $C'$ is a clique: If there was a pair of vertices $x\neq y\in C'$ such that $(x,y)\notin
E_\str{B}$, we could construct an induced cycle of length $\geq 4$ using $x$ and $y$ and vertices of $B_1$, $B_2$ from the previous paragraph. This implies that $\str C$ is irreducible, because it is the closure of a clique.

From the condition on $\str C$ we get that $B\setminus C$ is not connected, that is, it can be split into two non-empty disjoint parts $B_1\cup B_2 = B\setminus C$ such that there are no edges between $B_1$ and $B_2$ (and therefore no relations or functions at all thanks to condition~\ref{lem:cuts:1}). This means that $\str B$ is the free amalgamation of (its substructures induced on) $B_1\cup C$ and $B_2\cup C$ over $\str C$. 

Using the induction hypothesis, we get $\GammaL$-structures $\str D_1$ and $\str D_2$ which are tree amalgamations of copies of $\str A$ such that the substructures induced by $\str B$ on $B_i\cup C$ are substructures of $\str D_i$ for $i\in \{1,2\}$. Since $\str C$ is irreducible, it follows that there are embeddings $\alpha_1\colon \str A\to\str D_1$, $\alpha_2\colon \str A\to\str D_2$ and $\delta_1,\delta_2\colon \str C\to \str A$ (by Observation~\ref{obs:tree-amalgamation_irreducible}) and hence we can put $\str D$ to be the free amalgamation of $\str B_1$ and $\str B_2$ over $\str C$ with respect to $\alpha_1\circ\delta_1$ and $\alpha_2\circ\delta_2$. Clearly $\str B\subseteq \str D$ (up to an isomorphism) and $\str D$ is a tree amalgamation of copies of $\str A$.
\end{proof}
Now we are ready to prove Theorem~\ref{thm:maintree}.
\begin{proof}[Proof of  Theorem~\ref{thm:maintree}]
We intend to use Lemma~\ref{lem:sparsen} as the main ingredient to this proof. However, Lemma~\ref{lem:sparsen} expects that there is a graph edge relation $E$ in the language and that $E_\str A$ is a complete graph, which is not guaranteed by the assumptions of Theorem~\ref{thm:maintree}. For this reason, we extend the language $L$ to $L^+$, adding a binary symmetric relation $E$ fixed by every permutation of the language (assuming without loss of generality that $E\notin L$), put $\str A'$ to be the $\Gamma\!_{L^+}$-structure obtained from $\str A$ by putting $E_{\str A'}$ to be the complete graph on $A$, and put $\str B_{0}'$ to be the $\Gamma\!_{L^+}$-structure obtained from $\str B_0$ by putting $E_{\str B_0'}$ to be the complete graph on $\str B_0'$.

Observe that partial automorphisms of $\str A'$ are precisely the partial automorphisms of $\str A$ (barring the new relation $E$ fixed by every permutation of the language) and $\Aut(\str B_0) = \Aut(\str B_0')$. Therefore, $\str B_0'$ is an EPPA-witness for $\str A'$ and it is coherent if $\str B_0$ is.

Put $N=(n-1){n\choose 2}+1$. Use Proposition~\ref{prop:faithful} on $\str A'$ and $\str B_0'$ to get $\str B_1'$, an irreducible structure faithful (coherent) EPPA-witness for $\str A'$ with a homomorphism-embedding $f_1\colon\str B_1'\to\str B_0'$. Next, by applying Lemma~\ref{lem:sparsen} iteratively $N$ times, we construct a sequence of
$\Gamma\!_{L^+}$-structures $\str{B}_2',\ldots,\allowbreak \str{B}_{N+1}'$ and a sequence
of maps $f_2,\ldots, f_{N+1}$, such that for every $1\leq i\leq N+1$ it holds that
$\str{B}_i'$ is an irreducible structure faithful EPPA-witness for $\str A'$, $f_i$ is a homomorphism-embedding $\str{B}_i'\to \str{B}_{i-1}'$, and if $\str B_{i-1}'$ is coherent then so is $\str B_{i}'$.

Put $\str{B}'=\str{B}_{N+1}'$ and put $\str B$ to be the $\GammaL$-reduct of $\str B'$ forgetting the relation $E$. Since every automorphism of $\str B'$ is also an automorphism of $\str B$, we get that $\str B$ is an EPPA-witness for $\str A$, and if $\str B_0$ was coherent, then so is $\str B$.  To see that $\str B$ is irreducible structure faithful, note that an irreducible substructure of $\str B$ is also an irreducible substructure of $\str B'$.

Let $\str{C}_{N+1}$ be a substructure of $\str{B}$ on
at most $n$ vertices and let $\str C_{N+1}'$ be a substructure of $\str B'$ on the same vertices as $\str C_{N+1}$. Denote by $\str{C}_1',\ldots, \str{C}_{N}'$ the
structures such that for every $1\leq i\leq N$ it holds that
$\str{C}_{i}'=f_{i+1}(\str{C}_{i+1}')$.

Since we used Lemma~\ref{lem:sparsen} $N$ times, let us count how many times one of~(\ref{lem:sparsen:2}) and~(\ref{lem:sparsen:3}) from Lemma~\ref{lem:sparsen} has happened. Clearly, possibility~(\ref{lem:sparsen:2}) could have happened at most $n-1$ times, because $|C_{N+1}'|\leq n$ and $|C_1'|\geq 1$. And for every fixed $m=|C_i'|$, possibility~(\ref{lem:sparsen:3}) could have happened at most ${m \choose 2}\leq {n\choose 2}$ times. Therefore,~(\ref{lem:sparsen:2}) or~(\ref{lem:sparsen:3}) have together happened at most $N-1$ times, which means that there is $2\leq i\leq N+1$ such that possibility~(\ref{lem:sparsen:1}) happened in the $i$-th step. This then means that $\str C_i'$ contains no induced cycles of length $\geq 4$.

Because $\str B_i'$ is an irreducible structure faithful EPPA-witness for $\str A'$, we get that every irreducible substructure of $\str B_i'$ is isomorphic to a substructure of $\str A'$, so in particular, this holds for irreducible substructures of $\str C_i'$. Hence, we can apply Lemma~\ref{lem:cuts} on $\str{C}_i'$ to obtain a tree amalgamation $\str{D}'$ of copies of $\str A'$ and a homomorphism-embedding $f\colon \str C_{N+1}'\to \str D'$ (obtained by composing the output of Lemma~\ref{lem:cuts} with some of the $f_i$'s).

Let $\str D$ be the $\GammaL$-reduct obtained from $\str D'$ by forgetting the relation $E$. It is easy to check that $f$ is a homomorphism-embedding $\str C_{N+1}\to \str D$ and that $\str D$ is a tree amalgamation of copies of $\str A$, which concludes the proof.
\end{proof}

\begin{observation}\label{obs:main_bound}
The number of vertices of the EPPA-witness $\str B$ provided by Theorem~\ref{thm:maintree} can be bounded from above by a function which depends only on the number of vertices of $\str B_0$ and on $n$.
\end{observation}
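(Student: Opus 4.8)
The plan is to simply chain together the explicit bounds recorded in Observations~\ref{obs:faithful_bound} and~\ref{obs:unwind_bound} along the construction carried out in the proof of Theorem~\ref{thm:maintree}. First I would note that passing from $\str A$ to $\str A'$ and from $\str B_0$ to $\str B_0'$ (adding the binary relation $E$ fixed by every permutation) does not change any vertex set, so $\lvert A'\rvert = \lvert A\rvert = n$ and $\lvert B_0'\rvert = \lvert B_0\rvert$; likewise taking the $\GammaL$-reduct at the end does not change the vertex set, so $\lvert B\rvert = \lvert B_{N+1}'\rvert$ and it suffices to bound the latter.

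Next I would invoke Observation~\ref{obs:faithful_bound} applied to the construction of $\str B_1'$ from $\str A'$ and $\str B_0'$ via Proposition~\ref{prop:faithful}: this yields a function $g$ with $\lvert B_1'\rvert \leq g(\lvert B_0\rvert, n)$. Since for each $2\leq i\leq N+1$ the structure $\str B_i'$ is obtained from $\str B_{i-1}'$ by a single application of Lemma~\ref{lem:sparsen}, Observation~\ref{obs:unwind_bound} provides a single function $h$, independent of $i$, with $\lvert B_i'\rvert \leq h(\lvert B_{i-1}'\rvert)$. Iterating, $\lvert B_{N+1}'\rvert \leq h^{(N)}\big(\lvert B_1'\rvert\big) \leq h^{(N)}\big(g(\lvert B_0\rvert, n)\big)$, where $h^{(N)}$ denotes the $N$-fold composition of $h$.

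Finally I would observe that $N=(n-1){n\choose 2}+1$ depends only on $n$, so the map $(\lvert B_0\rvert, n)\mapsto h^{(N)}(g(\lvert B_0\rvert, n))$ is a function of $\lvert B_0\rvert$ and $n$ alone, which bounds $\lvert B\rvert$ from above. There is essentially no obstacle here; the only point requiring a moment's care is that the bound $h$ from Observation~\ref{obs:unwind_bound} depends only on the number of vertices of the \emph{input} structure (not on $\str A'$ nor on the step number), so that the $N$-fold iteration is well-defined with a single $h$, and that all the intermediate language expansions and reducts preserve vertex counts.
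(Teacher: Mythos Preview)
Your approach is essentially the same as the paper's---chain the per-step bounds from Observations~\ref{obs:faithful_bound} and~\ref{obs:unwind_bound} along the $N$ iterations, and observe that $N$ depends only on $n$. The paper's proof is in fact terser than yours and does not even separate out the first step via Proposition~\ref{prop:faithful}.

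There is, however, a small slip in your argument. In Theorem~\ref{thm:maintree} the integer $n$ is a free parameter (the bound on the size of substructures required to embed into a tree amalgamation); it is \emph{not} $\lvert A\rvert$. So your equation $\lvert A'\rvert = \lvert A\rvert = n$ is incorrect, and Observation~\ref{obs:faithful_bound} really gives $\lvert B_1'\rvert \leq g(\lvert B_0\rvert, \lvert A\rvert)$ rather than $g(\lvert B_0\rvert, n)$. The easy fix is to note that $\str A$ is a substructure of $\str B_0$, so $\lvert A\rvert \leq \lvert B_0\rvert$ and hence $\lvert B_1'\rvert$ is bounded by a function of $\lvert B_0\rvert$ alone. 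With this correction your chain $h^{(N)}(g'(\lvert B_0\rvert))$ depends only on $\lvert B_0\rvert$ and $n$, as required.
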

\begin{proof}
$\str B$ was obtained from $\str B_0$ by iteratively applying Lemma~\ref{lem:sparsen} $N=(n-1){n\choose 2}+1$ times. We know that in each step, the number of vertices of the constructed structure can be bounded by a function of the number of vertices of the original structure (by Observation~\ref{obs:unwind_bound}). The claim then follows.
\end{proof}

\section{A generalisation of the Herwig--Lascar theorem: Proof of Theorem~\ref{thm:main}}\label{sec:main}
Next, we show how Theorem~\ref{thm:maintree} implies Theorem~\ref{thm:main}. Note that unlike Theorem~\ref{thm:main}, Theorem~\ref{thm:maintree} assumes that $\str A$ is irreducible, because otherwise one can not define what tree amalgamation is. In order to deal with it, we extend the language $L$ to $L^+$, adding a binary symmetric relation $E$ fixed by every permutation of the language (assuming without loss of generality that $E\notin L$), and consider the class consisting of finite $\Gamma\!_{L^+}$-structures $\str A$ where $E_\str{A}$ is a complete graph. Moreover, for such $\str A$, we will denote by $\str A^-$ its $\GammaL$-reduct forgetting the relation $E$.

We will need the following technical lemma.
\begin{lemma}\label{lem:infinitecopies}
Let $L$ be a language consisting of relations and unary functions equipped with a permutation group $\GammaL$ and fix a finite $\Gamma\!_{L^+}$-structure $\str A$ such that $E_\str{A}$ is a complete graph. Assume that there is a (not necessary finite) $\GammaL$-structure $\str{M}$ containing $\str{A}^-$ as a substructure such that every partial automorphism of $\str{A}^-$ extends to an automorphism of $\str{M}$. Then, for every tree amalgamation $\str D$ of copies of $\str A$, there is a homomorphism-embedding $h\colon \str D^-\to \str M$. Moreover, for every embedding $\alpha\colon \str A\to \str D$ there is an automorphism $f$ of $\str M$ such that $f(h(\alpha(A))) = A$.
\end{lemma}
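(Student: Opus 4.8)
The plan is to prove the lemma by induction on the recursive construction of a tree amalgamation of copies of $\str{A}$ (Definition~\ref{defn:tree-amalgamation}), taking the inductive statement to be the full statement of the lemma, so that the ``moreover'' part is available throughout the induction. Recall we are given $\str{A}^-\subseteq \str{M}$ with every partial automorphism of $\str{A}^-$ extending to an automorphism of $\str{M}$. In the base case $\str{D}\cong\str{A}$ one lets $h$ be an isomorphism $\str{D}^-\to\str{A}^-$ followed by the inclusion $\str{A}^-\subseteq\str{M}$; this is an embedding (hence a homomorphism-embedding) whose image lies in $A$. For an embedding $\alpha\colon\str{A}\to\str{D}$, cardinalities force $\alpha$ to be an isomorphism, so $h\circ\alpha^-$ is an embedding $\str{A}^-\to\str{M}$ with image $h(\alpha(A))\subseteq A$; its inverse composed with the inclusion $\str{A}^-\subseteq\str{M}$ is a partial automorphism of $\str{A}^-$ taking $h(\alpha(A))$ to $A$, which extends to the desired $f\in\Aut(\str{M})$.

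For the inductive step, present $\str{D}$ as the free amalgamation of tree amalgamations $\str{B}_1,\str{B}_2$ over a structure $\str{D}_0$ with respect to $\alpha_1\circ\delta_1$ and $\alpha_2\circ\delta_2$, with amalgamation embeddings $\beta_1,\beta_2$; then $\str{D}^-$ is the free amalgamation of $\str{B}_1^-$ and $\str{B}_2^-$ over $\str{D}_0^-$. The induction gives homomorphism-embeddings $h_i\colon\str{B}_i^-\to\str{M}$, and applying the ``moreover'' part to the embedding $\alpha_i\colon\str{A}\to\str{B}_i$ gives automorphisms $f_i\in\Aut(\str{M})$ with $f_i(h_i(\alpha_i(A)))=A$; replace $h_i$ by $g_i:=f_i\circ h_i$, a homomorphism-embedding mapping $\alpha_i(A)$ bijectively onto $A\subseteq M$. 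The crucial point is that $g_i$ is in fact an \emph{embedding} on the copy $\alpha_i(\str{A})$: the composite $g_i\circ\alpha_i^-\colon\str{A}^-\to\str{M}$ is a vertex-bijection onto $A$ and a homomorphism, $\str{M}$ induces precisely $\str{A}^-$ on $A$, and a vertex-bijective homomorphism between isomorphic finite $\GammaL$-structures must be an isomorphism (compare the total number of relational tuples, and of function values, on the two sides; the homomorphism inclusions together with the equality force the reverse inclusions). Hence $e_i:=g_i\circ\alpha_i^-\circ\delta_i^-\colon\str{D}_0^-\to\str{M}$ is an embedding with image in $A$, so $p:=e_1\circ e_2^{-1}$ is an isomorphism between two substructures of the copy of $\str{A}^-$ induced by $\str{M}$ on $A$, i.e.\ a partial automorphism of $\str{A}^-$; extend $p$ to $f\in\Aut(\str{M})$ and replace $g_2$ by $f\circ g_2$. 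Now $g_1$ and $g_2$ agree on the overlap $\beta_1(\alpha_1(\delta_1(D_0)))=\beta_2(\alpha_2(\delta_2(D_0)))$, so $h$ given by $h(\beta_1(b))=g_1(b)$ and $h(\beta_2(b))=g_2(b)$ is well defined.

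To finish the step, $h$ is a homomorphism because in the free amalgamation $\str{D}^-$ a tuple meeting both sides lies in no relation and has empty function values; and it is a homomorphism-embedding because (by the fact used in the proof of Observation~\ref{obs:tree-amalgamation_irreducible}) every irreducible substructure of $\str{D}^-$ is contained in $\beta_1(\str{B}_1^-)$ or $\beta_2(\str{B}_2^-)$, on which $h$ restricts to one of the $g_i$. For an embedding $\alpha\colon\str{A}\to\str{D}$: since $E_\str{A}$ is a complete graph, $\str{A}$ has no nontrivial free decomposition, so $\alpha(\str{A})$ is an irreducible substructure of $\str{D}$ and therefore lies in $\beta_1(\str{B}_1)$ or $\beta_2(\str{B}_2)$, say the former. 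Then $\beta_1^{-1}\circ\alpha\colon\str{A}\to\str{B}_1$ is an embedding, and the ``moreover'' part for $\str{B}_1$ gives $\widetilde f\in\Aut(\str{M})$ with $\widetilde f(h_1(\beta_1^{-1}(\alpha(A))))=A$; as $h(\alpha(A))=\sigma(h_1(\beta_1^{-1}(\alpha(A))))$ for the automorphism $\sigma=f_1$, the automorphism $\widetilde f\circ\sigma^{-1}$ works (the case $\alpha(\str{A})\subseteq\beta_2(\str{B}_2)$ is the same with $\sigma=f\circ f_2$).

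The main obstacle I expect is bookkeeping rather than the overall plan. First, one must prove the sublemma that a vertex-bijective homomorphism between isomorphic finite structures is an isomorphism; this is unavoidable because $\str{A}^-$ and $\str{D}_0^-$ need not be irreducible, so the homomorphism-embedding property of $g_i$ does not by itself give embeddings on the copies of $\str{A}^-$ and $\str{D}_0^-$ we care about. Second, one must keep track of the language permutations attached to $\alpha_i,\delta_i,\beta_i,g_i$ and to the gluing automorphism $f$, and check that $h$ is genuinely a single morphism, i.e.\ that the language part dictated by $g_1$ on $\beta_1(\str{B}_1^-)$ equals the one dictated by $f\circ g_2$ on $\beta_2(\str{B}_2^-)$; this comes out because $f$ extends $p=e_1\circ e_2^{-1}$ and hence carries exactly the language part forced by the amalgamation identity $\beta_1\circ\alpha_1\circ\delta_1=\beta_2\circ\alpha_2\circ\delta_2$.
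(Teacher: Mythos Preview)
Your argument is correct and follows the same inductive route as the paper's proof: both induct on the recursive construction of $\str D$, apply the ``moreover'' clause to $\alpha_1,\alpha_2$ to bring the two copies of $\str A$ back onto $A\subseteq M$, glue via a partial automorphism of $\str A^-$, and deduce the ``moreover'' for $\str D$ from irreducibility of $\str A$.

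The only notable difference is that you are more careful about a step the paper skips. The paper simply asserts that the map $f_1(h_1(\alpha_1(\delta_1(\str E^-))))\to f_2(h_2(\alpha_2(\delta_2(\str E^-))))$ is a partial automorphism of $\str A^-$, whereas you explain why: the key point is that $g_i\circ\alpha_i^-$ is a vertex-bijective homomorphism $\str A^-\to\str A^-$ (with some language part), and a counting argument over tuples (summing over relation symbols of each arity, using that the language part permutes them) shows it is an isomorphism. This is genuinely needed, since $\str A^-$ need not be irreducible, so the homomorphism-embedding property of $g_i$ alone does not give an embedding on $\alpha_i(\str A^-)$. Your observation about the language parts matching via the amalgamation identity $\beta_1\alpha_1\delta_1=\beta_2\alpha_2\delta_2$ is also correct and again more explicit than the paper. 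So: same proof, with a gap in the paper filled in.
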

\begin{proof}
We proceed by induction on the tree construction of $\str D$ (cf. Definition~\ref{defn:tree-amalgamation}). The claim clearly holds if $\str D$ is isomorphic to $\str A$. Suppose now that $\str B_1$ and $\str B_2$ are tree amalgamations of copies of $\str A$ and $\str E$ is a substructure of $\str A$ with embeddings $\delta_1,\delta_2\colon\str E\to\str A$, $\alpha_1\colon \str A\to \str B_1$ and $\alpha_2\colon\str A\to\str B_2$ such that $\str D$ is the free amalgamation of $\str B_1$ and $\str B_2$ over $\str E$ with respect to $\alpha_1\circ\delta_1$ and $\alpha_2\circ\delta_2$.

By the induction hypothesis, we have homomorphism-embeddings $h_1\colon \str B_1^-\to \str M$ and $h_2\colon \str B_2^-\to\str M$ and automorphisms $f_1,f_2$ of $\str M$ such that $f_i(h_i(\alpha_i(A)))=A$ for $i\in\{1,2\}$. Let $\varphi$ be a partial automorphism of $\str A^-$ sending $f_1(h_1(\alpha_1(\delta_1(\str E^-))))\mapsto f_2(h_2(\alpha_2(\delta_2(\str E^-))))$ and let $\hat\varphi$ be its extension to a partial automorphism of $\str M$. It is easy to check that the function $h\colon D\to M$ defined by
$$h(x) = \begin{cases}
\hat\varphi(f_1(h_1(x))) & \text{ if $x\in B_1$}\\
f_2(h_2(x)) & \text{ otherwise}
\end{cases}$$
is a homomorphism embedding $\str D^-\to \str M$. The moreover part follows straightforwardly as $\str A$ is irreducible and therefore every copy of $\str A$ in $\str D$ is either in $\str B_1$ or in $\str B_2$.
\end{proof}

Now we are ready to prove Theorem~\ref{thm:main}.
\begin{proof}[Proof of Theorem~\ref{thm:main}]
Let $\str A^+$ be the $\Gamma\!_{L^+}$-expansion of $\str A$ adding a clique in the relation $E$. Clearly, $\str A^+$ is in a finite orbit of the action of $\Gamma\!_{L^+}$ by relabelling, hence we can use Theorem~\ref{thm:nreppa} to get a $\Gamma\!_{L^+}$-structure $\str B_0$ which is an irreducible structure faithful coherent EPPA-witness for $\str A^+$. Let $n$ be the number of vertices of the largest structure in $\mathcal F$ and let $\str B$ be given by Theorem~\ref{thm:maintree}. We will show that $\str B^-$ satisfies the statement. Clearly, it is a coherent EPPA-witness for $\str A$. Since every irreducible substructure of $\str B^-$ is all the more so an irreducible substructure of $\str B$, we get that $\str B^-$ is irreducible structure faithful. To finish the proof, it remains to show that $\str B^-\in \Forb(\mathcal F)$.

For a contradiction, suppose that there is $\str F\in \mathcal F$ with a homomorphism-embed\-ding $g\colon \str F\to \str B^-$. We have that $|g(F)|\leq |F|\leq n$. Let $\str C$ be the substructure of $\str B$ induced on $g(F)$. From Theorem~\ref{thm:maintree}, we get a tree amalgamation $\str D$ of copies of $\str A^+$ and a homomorphism-embedding $f\colon \str C\to \str D$. Composing $f\circ g$, we get that $\str F$ has a homomorphism-embedding to $\str D^-$. However, Lemma~\ref{lem:infinitecopies} gives a homomorphism-embedding $\str D^-\to \str M$, hence we get a homomorphism-embedding $\str F\to \str M$, which is a contradiction with $\str M\in \Forb(\mathcal F)$.
\end{proof}

\section{Connections to the structural Ramsey theory: Proof of Theorem~\ref{thm:mainstrong}}\label{sec:ramsey}
Most of the applications of the Herwig--Lascar theorem proceed similarly to
applications of a theorem developed independently in the context of the structural Ramsey
theory~\cite{Hubicka2016}. Both EPPA and the Ramsey property imply the amalgamation property (cf. Observation~\ref{obs:eppaamalgamation} and~\cite{Nevsetril2005}), however, the amalgamation property is not enough to imply either of them. This motivates the following strengthening of (strong) amalgamation introduced in~\cite{Hubicka2016}:

\begin{definition}
\label{defn:completion}
Let $\str{C}$ be a structure. An irreducible structure $\str{C}'$ is a \emph{completion}
of $\str{C}$ if there is a homomorphism-embedding $\str{C}\to\str{C}'$. It is a \emph{strong} completion if the homomorphism-embedding is injective. A completion is \emph{automorphism-preserving} if it is strong and for every $\alpha\in \Aut(\str C)$ there is $\alpha'\in \Aut(\str{C}')$ such that $\alpha\subseteq \alpha'$ and moreover the map $\alpha\mapsto\alpha'$ is a group homomorphism $\Aut(\str C)\to \Aut(\str C')$.
\end{definition}
To see that completion is a strengthening of amalgamation, let
$\K$ be a class of irreducible structures. The amalgamation property for $\K$
can be equivalently formulated as follows: For $\str{A}$, $\str{B}_1$,
$\str{B}_2 \in \K$ embeddings $\alpha_1\colon\str{A}\to\str{B}_1$ and
$\alpha_2\colon\str{A}\to\str{B}_2$, there is $\str{C}\in \K$
which is a  completion of the free amalgamation of $\str{B}_1$ and $\str{B}_2$ over $\str{A}$ with respect
to $\alpha_1$ and $\alpha_2$ (which itself need not be in $\K$). In the same way, strong completion strengthens strong amalgamation and automorphism-preserving completion strengthens the so-called amalgamation property with automorphisms.

\begin{definition}\label{defn:locallyfinite}
Let $L$ be a language equipped with a permutation group $\GammaL$.
Let $\mathcal E$ be a class of finite $\GammaL$-structures and let $\mathcal K$ be a subclass of $\mathcal E$ consisting of irreducible structures. We say
that $\mathcal K$ is a \emph{locally finite subclass of $\mathcal E$} if for every $\str A\in \mathcal K$ and every $\str{B}_0 \in \mathcal E$ there is a finite integer $n = n(\str A, \str {B}_0)$ such that 
every $\GammaL$-structure $\str B$ has a completion $\str B'\in \mathcal K$, provided that it satisfies the following:
\begin{enumerate}
\item\label{locallyfinite:1} Every irreducible substructure of $\str B$ has an embedding to $\str A$,
\item there is a homomorphism-embedding from $\str{B}$ to $\str{B}_0$, and
\item every substructure of $\str{B}$ on at most $n$ vertices has a comple\-tion in $\mathcal K$.
\end{enumerate}
We say that $\mathcal K$ is a \emph{locally finite automorphism-preserving subclass of $\mathcal E$} if $\str B'$ can always be chosen to be automorphism-preserving.
\end{definition}
Note that if $\mathcal K$ is hereditary, point~\ref{locallyfinite:1} implies that every irreducible substructure of $\str B$ is in $\mathcal K$. Note also that we are only promised that every substructure on at most $n$ vertices has a completion in $\mathcal K$, even though we are asking for an automorphism-preserving (hence, in particular, strong) completion.

Luckily, for languages where all functions are unary, one can prove that if a structure has a completion in a strong amalgamation class then it has in fact a strong completion, which makes verifying local finiteness much easier. This was first proved in~\cite{Hubicka2016} as Proposition~2.6, we include a proof for completeness.

\begin{prop}
\label{prop:strongcompletion}
Let $L$ be a language equipped with a permutation group $\GammaL$ such that all function symbols of $L$ are unary and let $\mathcal K$ be a hereditary class of finite irreducible $\GammaL$-structures with the strong amalgamation property.
For every finite $\GammaL$-structure $\str{A}$, it holds that it has a completion in $\mathcal K$ if and only if it has a strong completion in $\mathcal K$.
\end{prop}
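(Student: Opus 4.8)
The reverse implication is immediate: an injective homomorphism-embedding witnessing a strong completion also witnesses a completion. For the forward implication, fix a homomorphism-embedding $f\colon\str A\to\str{C}'$ with $\str{C}'\in\mathcal K$ (after relabelling $\str{C}'$ we may assume the language part of $f$ is the identity). The plan is to induct on the \emph{defect} $d=|A|-|f(A)|$. If $d=0$, then $f$ is injective, hence an embedding, and $\str{C}'$ is already a strong completion. If $d>0$, I will build a new homomorphism-embedding $g\colon\str A\to\str{C}''$ with $\str{C}''\in\mathcal K$ and $|A|-|g(A)|<d$, and then invoke the induction hypothesis on $g$.

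Pick $z\in C'$ whose fibre $F_z:=f^{-1}(z)$ has size $k\geq 2$, and let $\str P$ be the substructure of $\str{C}'$ induced on $P:=\{w\in C' : z\notin\cl_{\str{C}'}(w)\}$; this is genuinely a substructure precisely because all functions are unary, so $z\notin\cl_{\str{C}'}(w)$ forces $z\notin\cl_{\str{C}'}(w')$ for every $w'$ appearing in a function value at $w$. Since $\mathcal K$ is hereditary, $\str P\in\mathcal K$, so by iterating the strong amalgamation property I obtain $\str{C}''\in\mathcal K$ which is a strong amalgam of $k$ copies of $\str{C}'$ glued along $\str P$, via embeddings $\iota_1,\dots,\iota_k\colon\str{C}'\to\str{C}''$ that coincide on $\str P$ and whose images pairwise intersect exactly in the copy of $\str P$; in particular the vertices $z_i:=\iota_i(z)$ are pairwise distinct. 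Two facts make $g$ definable. First, for every $a\in A$ the closure $\cl_\str A(a)$ is irreducible (any closed substructure of $\str A$ containing $a$ already contains $\cl_\str A(a)$, so $\cl_\str A(a)$ is not a free amalgam of proper substructures), whence $f$ is injective on it with image the closed set $\cl_{\str{C}'}(f(a))$; the same holds for $\cl_\str A(\bar t)$ whenever $\bar t$ lies in a relation of $\str A$. Second, if $f(a)\notin P$, i.e. $z\in\cl_{\str{C}'}(f(a))=f(\cl_\str A(a))$, there is a unique $\rho(a)\in F_z\cap\cl_\str A(a)$, and $\rho$ restricts to the identity on $F_z$. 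Enumerating $F_z=\{b_1,\dots,b_k\}$, I set $g(a)=f(a)$ (viewed in the common copy of $\str P$) if $f(a)\in P$, and $g(a)=\iota_i(f(a))$ if $f(a)\notin P$ and $\rho(a)=b_i$.

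The crux — and the main obstacle — is to show that $g$ is a homomorphism-embedding, which reduces to the following colouring lemma: for every irreducible substructure $\str I\subseteq\str A$, all vertices $a\in I$ with $f(a)\notin P$ have the same value $\rho(a)$. Granting this, $g$ maps $\str I$ into a single copy $\iota_i(\str{C}')$ and there $g|_\str I=\iota_i\circ(f|_\str I)$ is an embedding; combined with the irreducibility of the closures $\cl_\str A(a)$ and $\cl_\str A(\bar t)$ this gives that $g$ is a homomorphism-embedding. To prove the colouring lemma, observe that $\rho$ is constant on connected components of the graph on $\{a\in A : f(a)\notin P\}$ whose edges join pairs lying in a common irreducible substructure: if $a,a'$ both lie in an irreducible $\str I$, then $f(\str I)$ is an irreducible substructure of $\str{C}'$ with closed vertex set containing $\cl_{\str{C}'}(f(a))\ni z$, so $\str I$ contains the unique vertex $v$ with $f(v)=z$, and $v$ equals both $\rho(a)$ and $\rho(a')$ because $\cl_\str A(a),\cl_\str A(a')\subseteq I$. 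Since $\rho$ is the identity on $F_z$, its $k$ elements lie in $k$ distinct components, so assigning $b_i$ to copy $i$ is consistent.

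It remains to see that the defect strictly drops. If $g(a)=g(a')$, then $f(a)=f(a')$, because distinct copies $\iota_i(\str{C}')$ meet only in $\str P$; and the fibre $F_z$ breaks into the $k\geq 2$ distinct values $g(b_i)=z_i$. Hence the fibres of $g$ refine those of $f$ and strictly refine $F_z$, so $|g(A)|>|f(A)|$, and the induction hypothesis applied to $g$ produces an injective homomorphism-embedding of $\str A$ into a member of $\mathcal K$. Thus the only real work is the colouring lemma, which rests on the elementary observation that the closure of a single vertex is always irreducible; everything else is the routine verification that $g$ is well defined, is a homomorphism-embedding, and has smaller defect.
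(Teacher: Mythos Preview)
Your argument is correct, and it takes a genuinely different route from the paper's. The paper argues by contradiction via a carefully chosen minimal counterexample: it fixes a pair $l\neq r$ with $f(l)=f(r)$, uses minimality to force all remaining identifications to occur between $\cl_\str A(l)$ and $\cl_\str A(r)$, and then decomposes $\str A$ itself as a free amalgam of two pieces $\str A_l,\str A_r$ (according to whether $\cl_\str A(l)$ or $\cl_\str A(r)$ sits inside each vertex closure), on each of which $f$ is already injective; strong amalgamation of $f(\str A_l)$ and $f(\str A_r)$ over $f(\str C)$ then yields the desired strong completion. You instead keep $\str A$ fixed and enlarge the target: you take $k$ copies of $\str C'$ glued over the closed set $P=\{w:z\notin\cl_{\str C'}(w)\}$, and route each vertex $a$ with $f(a)\notin P$ to the copy indexed by the unique preimage of $z$ in $\cl_\str A(a)$, iterating until the defect vanishes. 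Both proofs rest on the same elementary fact---closures of single vertices (and of tuples in a relation) are irreducible, so $f$ is injective on them---but exploit it in opposite directions: the paper splits the domain, you split the codomain. Your version avoids the somewhat delicate minimality hypothesis and makes the induction parameter explicit, at the cost of needing the small colouring lemma; the paper's version handles all identifications between two fixed closures in a single step rather than one fibre at a time. One minor point worth making explicit in your write-up: when $P=\emptyset$ you are amalgamating over the empty structure, which is in $\mathcal K$ by hereditariness, so the iterated strong amalgamation still goes through.
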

\begin{proof}
One implication is trivial. To prove the other, assume to the contrary that there is a $\GammaL$-structure $\str{A}$ with no strong
completion in $\mathcal K$, a $\GammaL$-structure $\str{B}\in \mathcal K$ and a
homomorphism-embedding $f\colon \str{A}\to\str{B}$ (that is, $\str B$ is a completion of $\str A$). Among
all such examples, choose one with $\lvert\{\{u,v\}\subseteq A : f(u) = f(v)\}\rvert$ minimal. Note that this implies that whenever there is a $\GammaL$-structure $\str A'$ and homomorphism-embeddings $g_1\colon \str A\to\str A'$ and $g_2\colon \str A'\to \str B$ such that $f = g_2\circ g_1$ and $g_1$ is surjective, we have that either $g_1$ is injective, or $g_2$ is injective (as otherwise $g_2\colon \str A'\to \str B$ contradicts the minimality).





\begin{figure}
\centering
\includegraphics{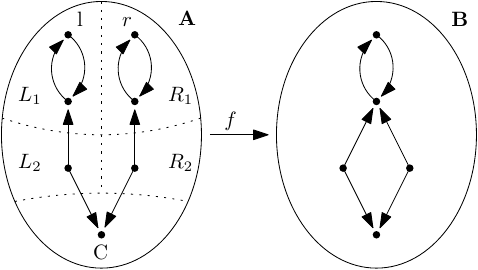}
\caption{An example of a decomposition of a structure $\str{A}$ containing one unary function constructed in the proof of Proposition~\ref{prop:strongcompletion}.}
\label{fig:Merge}
\end{figure}
We decompose the vertex set of $\str{A}$ into five parts denoted by $L_1$, $L_2$, $R_1$, $R_2$, and $C$ as depicted in Figure~\ref{fig:Merge} by the following procedure.

Because $f$ is not a strong completion in $\mathcal K$, we know that there is a pair
of vertices $l\neq r\in A$ such that $f(l)=f(r)$.  Now observe that, by
the non-existence of $\str A'$, for every other pair of vertices $v_1\neq
v_2\in A$ satisfying $f(v_1)=f(v_2)$ it holds that one vertex is in $\cl_\str{A}(l)$
and the other is in $\cl_\str{A}(r)$: Indeed, otherwise we could first identify only vertices from $\cl_\str{A}(l)$ with vertices from $\cl_\str{A}(r)$, yielding such a structure $\str A'$.

Because vertex closures are irreducible
substructures, we know that $f$ identifies two irreducible substructures
$\str{U}=\cl_\str{A}(l)$ and $\str{V}=\cl_\str{A}(r)$ of $\str{A}$ to one and is injective otherwise.

Put $L_1=U\setminus V$ and $R_1=V\setminus U$. Observe that because $l$ and $r$ can be chosen arbitrarily, if a substructure
of $\str{A}$ contains a vertex of $L_1$ then it contains all vertices of $L_1$ (otherwise we would again get a contradiction with the non-existence of $\str A'$). By symmetry, the same holds for
 $R_1$.  Denote by $L_2$ the set of all vertices $v\in A\setminus
L_1$ such that $L_1\subseteq \cl_\str{A}(v)$.  Analogously denote by $R_2$ the set of
all vertices $v\in A\setminus R_1$ such that $R_1\subseteq \cl_\str{A}(v)$.  $L_2$
and $R_2$ are disjoint, because $f$ is an embedding on irreducible substructures,
and thus no vertex closure (which is an irreducible substructure) can contain
both $L_1$ and $R_1$ (as $f(L_1)=f(R_1)$). By a similar irreducibility argument, we get that there is no tuple $\bar{t}\in \rel{A}{}$, $\rel{}{}\in L$, containing both a vertex from $L_1\cup L_2$ and a vertex from $R_1\cup R_2$.

Let $C$ be the set of all vertices
whose vertex closure does not contain $L_1$ nor $R_1$, that is, $C=A\setminus (L_1\cup L_2\cup R_1\cup R_2)$.  
Because all functions are unary, $\str{A}$ induces a substructure $\str{C}$ on $C$.
Similarly, denote by $\str{A}_l$ the substructure induced by $\str{A}$ on $C\cup L_1\cup L_2$, 
and by $\str{A}_r$ the substructure induced by $\str{A}$ on $C\cup R_1\cup R_2$.

Because $\mathcal K$ is hereditary and $f$ is injective on $A\setminus (L_1\cup R_1)$, we know that $\str B\in \mathcal K$ is a strong completion of all of $\str{A}_l$, $\str{A}_r$ and $\str{C}$.
Applying the strong amalgamation property of $\mathcal K$,
there is $\str{D}\in \mathcal K$ which is a strong amalgamation of $f(\str{A}_l)$ and $f(\str{A}_r)$
over $f(\str{C})$, hence a strong completion of $\str A$, which is a contradiction.
\end{proof}
Note that in~\cite{Hubicka2016} it is also observed that the unarity assumption of Proposition~\ref{prop:strongcompletion} cannot be omitted. We now prove Theorem~\ref{thm:mainstrong}.

\begin{proof}[Proof of Theorem~\ref{thm:mainstrong}]
Given $\str{A}\in \K$, use the fact that $\mathcal E$ has (coherent) EPPA to obtain a (coherent) EPPA-witness $\str{B}_0\in \mathcal E$. Let $n=n(\str A, \str B_0)$ be as in the definition of a locally finite subclass and let $\str B_1$ and a homomorphism-embedding $f\colon \str B_1\to \str B_0$ be given by Theorem~\ref{thm:maintree} for $\str A$, $\str B_0$ and $n$.

Because $\str B_1$ is irreducible structure faithful, it follows that every irreducible structure of $\str B_1$ can be sent by an automorphism to $\str A$. We also get that every substructure $\str D\subseteq\str B_1$ on at most $n$ vertices has a homomorphism-embedding to a tree amalgamation of copies of $\str A$. Using Observation~\ref{obs:tree-amalgamation_completion}, we obtain $\str E\in \mathcal K$ and a homomorphism-embedding $\str D\to \str E$, by composing these two homomorphism-embedding, we get that every substructure of $\str B_1$ on at most $n$ vertices has a (not necessarily strong) completion in $\K$, and Proposition~\ref{prop:strongcompletion} gives us that it has a strong completion in $\K$.

Now we can use the fact that $\K$ is a locally finite automorphism-preserving subclass of $\mathcal E$ to get an automorphism-preserving completion $\str B$ of $\str B_1$. Finally, if $\str B_0$ was coherent, then $\str B_1$ and consequently $\str B$ are coherent, too, thanks to the moreover part of Definition~\ref{defn:completion}.
\end{proof}

\section{Applications}\label{sec:applications}
In this section we present three applications of our general results.

\subsection{Free amalgamation classes}
We characterize free amalgamation classes of finite $\GammaL$-structures with relations and unary functions which have EPPA. We start with an easy observation.
\begin{observation}[\cite{hodkinson2003, Evans3, Siniora2}]
\label{obs:free}
Let $\mathcal K$ be a free amalgamation class, let $\str{A}\in \mathcal K$ be a finite structure and let $\str{B}$
be an irreducible structure faithful EPPA-witness for $\str{A}$. Then $\str{B}\in \mathcal K$.
\end{observation}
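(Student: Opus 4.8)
The plan is to combine irreducible structure faithfulness with the closure of $\mathcal K$ under free amalgamation, using induction on the number of vertices of $\str B$. No deep idea is needed; the statement is essentially a bookkeeping argument once the two relevant properties of $\mathcal K$ (heredity and closure under free amalgamation) are isolated.

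First I would record that every irreducible substructure of $\str B$ lies in $\mathcal K$. Indeed, if $\str C$ is an irreducible substructure of $\str B$, then by irreducible structure faithfulness there is $g\in\Aut(\str B)$ with $g(C)\subseteq A$. Since $\str A$ is a substructure of $\str B$, the structure induced by $\str B$ on $g(C)$ coincides with the structure induced by $\str A$ on $g(C)$, so $g\restriction_{\str C}$ is an isomorphism of $\str C$ onto a substructure of $\str A$. As $\str A\in\mathcal K$ and $\mathcal K$ is hereditary and closed under isomorphism, $\str C\in\mathcal K$.

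Then I would proceed by induction on $|B|$. If $\str B$ is irreducible, it is its own irreducible substructure and we are done by the previous paragraph. Otherwise, by the definition of irreducibility, $\str B$ is a free amalgamation of two proper substructures $\str B_1,\str B_2$ over $\str B_0:=\str B_1\cap \str B_2$. Every irreducible substructure of $\str B_i$ is an irreducible substructure of $\str B$, so $\str B_1$ and $\str B_2$ are themselves irreducible structure faithful-style objects to which the induction hypothesis applies (more precisely: each has fewer vertices and each of its irreducible substructures lies in $\mathcal K$ by the first paragraph, so the same induction gives $\str B_1,\str B_2\in\mathcal K$); also $\str B_0\in\mathcal K$ by heredity. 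Since $\mathcal K$ is a free amalgamation class, the free amalgamation of $\str B_1$ and $\str B_2$ over $\str B_0$ lies in $\mathcal K$, and as free amalgamations over a fixed base are unique up to isomorphism, this free amalgamation is isomorphic to $\str B$; hence $\str B\in\mathcal K$.

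The only point requiring a little care is the induction step itself: one must run the induction on a class of structures satisfying the hypothesis ``every irreducible substructure lies in $\mathcal K$'' rather than on EPPA-witnesses, since $\str B_1$ and $\str B_2$ need not be EPPA-witnesses of anything. Once the statement is phrased that way, the argument is routine, and this is the mild obstacle — everything else is immediate from heredity and closure under free amalgamation.
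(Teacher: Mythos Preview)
Your proof is correct and uses the same two ingredients as the paper: irreducible structure faithfulness forces every irreducible substructure of $\str B$ into $\mathcal K$, and closure under free amalgamation then propagates membership to all of $\str B$. The paper packages this as a minimal-counterexample argument (a minimal $\str B_0\subseteq\str B$ with $\str B_0\notin\mathcal K$ must be irreducible, contradiction), which is just the contrapositive of your forward induction; your remark about rephrasing the induction hypothesis to ``every irreducible substructure lies in $\mathcal K$'' is exactly what the minimal-counterexample version handles implicitly.
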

\begin{proof}
Assume for a contradiction that $\str{B}\notin \mathcal K$. Let $\str{B}_0$ be an
inclusion minimal substructure of $\str{B}$ such that $\str{B}_0\notin \mathcal K$.
Because $\mathcal K$ is a free amalgamation class it follows that $\str{B}_0$ is
irreducible. However, this is a contradiction with the existence of an automorphism $\varphi$ of $\str{B}$
such that $\varphi(\str B_0)\subseteq \str A$.
\end{proof}

Now we can prove Corollary~\ref{cor:free} which characterises free amalgamation classes with EPPA.
\begin{proof}
If there is $\str A\in \K$ which lies in an infinite orbit of the action of $\GammaL$ by relabelling then by Theorem~\ref{thm:negative} there is no finite EPPA-witness for $\str A$, hence $\K$ does not have EPPA.

If $\str A\in \K$ lies in a finite orbit of the action of $\GammaL$ by relabelling then by Theorem~\ref{thm:nreppa} there is a finite irreducible structure faithful coherent EPPA-witness $\str B$ for $\str A$. By Observation~\ref{obs:free} $\str B$ lies in $\mathcal K$.
\end{proof}

\subsection{Metric spaces without large cliques}
We continue with an example of an application of Theorem~\ref{thm:mainstrong}, which was first proved by Conant~\cite[Theorem~3.9]{Conant2015} (see also~\cite{Aranda2017}).
\begin{prop}\label{prop:metric}
Let $\str K_n$ denote the metric space on $n$ vertices where all distances are 1. The class $\mathcal M_n$ of all finite integer-valued metric spaces which do not contain a copy of $\str K_n$ has coherent EPPA for every $n\geq 2$. 
\end{prop}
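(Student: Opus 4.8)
The plan is to derive Proposition~\ref{prop:metric} from Theorem~\ref{thm:mainstrong}. To apply it, I would represent integer-valued metric spaces as $\GammaL$-structures in the standard way: let $L$ consist of binary relations $\{R_d : d\in \mathbb{N}^{>0}\}$ (with trivial $\GammaL$), where $R_d(x,y)$ means that the distance of $x$ and $y$ is $d$. Let $\mathcal E$ be the class of all finite $\GammaL$-structures; it has coherent EPPA by Proposition~\ref{prop:infinite_languages} (the structure $\str A$ lies in a finite orbit because $\GammaL$ is trivial, and only finitely many distance labels occur). Note that $\mathcal E$ consists of \emph{irreducible} structures only if we set things up so that every pair is in some relation---but the structures in $\mathcal M_n$ are \emph{complete} graphs with edge labels, so each $\str A\in \mathcal M_n$ is already irreducible. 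Thus I would take $\K = \mathcal M_n$, viewed as a subclass of $\mathcal E$ consisting of irreducible structures, and aim to verify the hypotheses of Theorem~\ref{thm:mainstrong}: $\K$ is hereditary (a subspace of a $\str K_n$-free metric space is a $\str K_n$-free metric space), isomorphism-closed, and has the amalgamation property. For metric spaces the amalgamation is the standard shortest-path/max metric amalgamation: given $\str B_1,\str B_2$ meeting in $\str A$, define on the disjoint union over $\str A$ the distance between $u\in B_1\setminus A$ and $v\in B_2\setminus A$ to be $\max\{1,\min_{a\in A}(d(u,a)+d(a,v))\}$; one checks this is a metric, is integer-valued, and---crucially---introduces no new $\str K_n$, since any clique of all-$1$ distances using the new pair would have to route through $\str A$ and hence already appear in $\str B_1$ or $\str B_2$ (here the $\max\{1,\cdot\}$ and the fact that $n\ge 2$ matter). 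Also $\K$ has the joint embedding property, so it is an amalgamation class.

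The substantive step is proving that $\K = \mathcal M_n$ is a \emph{locally finite automorphism-preserving} subclass of $\mathcal E$ in the sense of Definition~\ref{defn:locallyfinite}. First, since $\GammaL$ is trivial, every completion is automatically automorphism-preserving once it is strong, and by Proposition~\ref{prop:strongcompletion} (the unarity hypothesis is vacuous here, and $\K$ is a hereditary strong amalgamation class of irreducible structures---strong amalgamation being exactly the metric amalgamation above) a structure has a completion in $\K$ if and only if it has a strong one. So it remains to produce the bound $n = n(\str A,\str B_0)$. I would argue that a finite $\GammaL$-structure $\str B$ whose irreducible substructures all embed into $\str A$ is just (the relational encoding of) a \emph{partially labelled complete graph} on $B$ in which every label is a positive integer $\le \operatorname{diam}(\str A)$; such a $\str B$ has a (strong) completion in $\mathcal M_n$ if and only if it has no ``short cycle'' obstruction, i.e.\ no non-metric cycle and no edge forced to have length $\ge n$ by all-$1$ constraints. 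Concretely: define the completion by the shortest-path metric $\rho(x,y) = \min$ over paths of the sums of labels along $B$ (with $\rho(x,y)=\operatorname{diam}(\str A)$ if no path exists, which is always large enough); this is a metric, it extends all labels of $\str B$ provided $\str B$ contains no cycle along which the triangle inequality already fails (a ``non-metric cycle''), and it is $\str K_n$-free provided $\str B$ contains no subset $C'$ of size $n$ with $\rho(x,y)=1$ for all $x\ne y$ in $C'$, which in turn is forced by a subconfiguration of $\str B$ on at most $n$ vertices. Hence taking $n(\str A,\str B_0) := \max\{3, n\}$ works: any $\str B$ all of whose $\le n(\str A,\str B_0)$-vertex substructures have completions in $\mathcal M_n$ has no non-metric cycle (such a cycle would have a ``shortcut'' witnessed on $\le 3$ vertices---standard) and no forced $\str K_n$ (witnessed on $\le n$ vertices), so $\rho$ is a valid completion in $\mathcal M_n$. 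This $n$ depends only on $\str A$ (in fact only on $\operatorname{diam}(\str A)$ and $n$), not on $\str B_0$, which is more than enough.

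Having verified all hypotheses, Theorem~\ref{thm:mainstrong} yields that $\mathcal M_n$ has EPPA, and since $\mathcal E$ has \emph{coherent} EPPA (Proposition~\ref{prop:infinite_languages}), the ``moreover'' clause of Theorem~\ref{thm:mainstrong} gives that $\mathcal M_n$ has coherent EPPA. Finally I would translate back from $\GammaL$-structures to honest metric spaces: an EPPA-witness $\str B\in \mathcal M_n$ for (the encoding of) $\str A$ is a $\str K_n$-free integer metric space containing $\str A$ isometrically such that every partial isometry extends to an isometry, and coherence of the extension map is preserved under this identification, since partial automorphisms of the $\GammaL$-structure are exactly partial isometries of the metric space. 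The main obstacle is the local-finiteness verification---specifically, pinning down exactly which partially labelled complete graphs complete into $\mathcal M_n$ and checking that the two obstruction types (non-metric cycles; forced $n$-cliques of $1$'s) are each witnessed on boundedly many vertices. The amalgamation-property and translation steps are routine, and the coherence bookkeeping is handled entirely by Theorem~\ref{thm:mainstrong} and the coherence of the base class.
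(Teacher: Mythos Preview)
Your overall strategy---apply Theorem~\ref{thm:mainstrong} with an ambient class having coherent EPPA, then verify that $\mathcal M_n$ is a locally finite automorphism-preserving subclass via the shortest-path completion---is exactly the paper's approach. (The paper takes the ambient class to be $\mathcal E_n$, the free amalgamation class of symmetric irreflexive edge-labelled graphs omitting $\str K_n$, and invokes Corollary~\ref{cor:free} instead of Proposition~\ref{prop:infinite_languages}; but it explicitly remarks that starting from the class of all $L$-structures, as you do, works equally well.)

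There are, however, two genuine errors in your verification. The more serious one is the claim that a non-metric cycle ``would have a shortcut witnessed on $\le 3$ vertices.'' This is false for \emph{partial} edge-labelled graphs: the cycle $v_0,\ldots,v_k$ with $d(v_i,v_{i+1})=1$ and $d(v_0,v_k)=k+1$ is non-metric, yet every $3$-vertex substructure carries at most two labelled edges and completes trivially. The correct bound uses that irreducible substructures embed into $\str A$, so every label is at most $D:=\operatorname{diam}(\str A)$; then in a minimal non-metric cycle the long edge is at most $D$ and all other edges are at least $1$, forcing the cycle to have at most $D+1$ vertices. Your own parenthetical ``in fact only on $\operatorname{diam}(\str A)$ and $n$'' is right, and contradicts the explicit choice $n(\str A,\str B_0)=\max\{3,n\}$.

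The second error is the assertion that ``since $\GammaL$ is trivial, every completion is automatically automorphism-preserving once it is strong.'' Triviality of $\GammaL$ is irrelevant here: a strong completion need not respect $\Aut(\str B)$ (e.g.\ three vertices with a single edge $d(a,b)=1$; completing by $d(a,c)=1$, $d(b,c)=2$ kills the swap $a\leftrightarrow b$). What you need---and what the paper states---is that the shortest-path completion is canonical, hence $\Aut(\str B)=\Aut(\str M)$; this is a property of that particular construction, not of the language. With these two fixes your argument goes through and coincides with the paper's.
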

\begin{proof}
We will consider integer-valued metric spaces to be relational structures in the language $L=\{R^1, R^2, \ldots\}$ (with trivial $\GammaL$), where $(x,y)\in R^a$ if and only if $d(x,y)=a$. We do not explicitly represent $d(x,x)=0$. Let $\mathcal E_n$ be the class of all $L$-structures $\str A$ such that $\rel{A}{i}$ is symmetric and irreflexive for every $\rel{}{i}\in L$, for every pair of vertices $x,y\in A$ it holds that $\{x,y\}$ is in at most one of $\rel{A}{i}$ and $\str K_n\not\subseteq \str A$.

Clearly, $\mathcal E_n$ is a free amalgamation class, and since $\GammaL$ is trivial, we get that every orbit of the action of $\GammaL$ by relabelling has size 1. Therefore, by Corollary~\ref{cor:free}, $\mathcal E_n$ has irreducible structure faithful coherent EPPA. $\mathcal M_n$ is a hereditary subclass of $\mathcal E_n$ and consists of irreducible substructures. We need to verify that $\mathcal M_n$ is a locally finite automorphism-preserving subclass of $\mathcal E_n$ and that it has the strong amalgamation property in order to use Theorem~\ref{thm:mainstrong} and thus finish the proof.

Note that if we have $\str B_0\in \mathcal E_n$ and a finite $\GammaL$-structure $\str B$ with a homomorphism-embedding $f\colon \str B\to \str B_0$, the following holds for $\str B$:
\begin{enumerate}
\item\label{prop:metric:1} $\str K_n\not\subseteq \str B$,
\item the relation $\rel{B}{i}$ is symmetric and irreflexive for every $i\geq 1$,
\item every pair of vertices $x,y\in B$ is in at most one $\rel{B}{i}$ relation, and
\item\label{prop:metric:4} there is a finite set $S\subset \{1,2,\ldots\}$ such that for every $i\in \{1,2,\ldots\}\setminus S$ we have $\rel{B}{i} = \emptyset$ (i.e. $\str B$ uses only distances from $S$).
\end{enumerate}
Note also that whenever we have a structure $\str B$ satisfying conditions~\ref{prop:metric:1}--\ref{prop:metric:4}, we can equivalently view it as an \emph{$S$-edge-labelled graph}, that is, a triple $(B, E, d)$ such that $\{x,y\}\in E$ if and only if there is $i\in S$ such that $\{x,y\}\in \rel{B}{i}$ and $d\colon E\to S$ is such that $d(x,y) = i$ if and only if $\{x,y\}\in \rel{B}{i}$ (note that we write $d(x,y)$ instead of $d(\{x,y\})$).

Let $\str C=(C, E, d)$ be an $\mathbb N^+$-edge-labelled cycle (that is, $(C,E)$ is a graph cycle) and enumerate the vertices as $C=\{c_1, \ldots, c_n\}$ such that $c_i$ and $c_{i+1}$ are adjacent for every $1\leq i\leq n$ (we identify $c_{n+1}$ with $c_1$) and $d(c_1,c_n)$ is maximal. We say that $\str C$ is a \emph{non-metric cycle} if
$$d(c_1,c_n) > \sum_{i=1}^{n-1}d(c_i, c_{i+1}).$$

The following claim is standard and was used many times (e.g.~\cite{solecki2005, Nevsetvril2007, Conant2015, Hubicka2016}). For a proof, see for example Observation~2.1 of~\cite{Hubicka2018metricEPPA}.
\begin{claim}
Let $S\subset \mathbb N^+$ be a finite set of distances and let $\str B=(B, E, d)$ be a finite $S$-edge-labelled graph. There is a metric space $\str M$ on the same vertex set $B$ such that the identity is a homomorphism-embedding $\str B\to\str M$ if and only if there is no non-metric cycle $\str C$ with a homomorphism-embedding $\str C\to \str B$. Moreover, $\Aut(\str M) = \Aut(\str B)$, and if $\str K_n\not\subseteq \str B$, then $\str K_n\not\subseteq \str M$.
\end{claim}
In other words, we have a characterization of edge-labelled graphs with a completion to a metric space. Let's first see how this claim implies both strong amalgamation and local finiteness. For strong amalgamation, it is enough to observe that free amalgamations of metric spaces contain no non-metric cycles (indeed, if there was one, then we could find one in $\str B_1$ or $\str B_2$, which would be a contradiction). For local finiteness observe that there are only finitely many non-metric cycles with distances from a finite set $S$, hence there is an upper bound $n$ on the number of their vertices (which only depends on $S$) and we are done.

To conclude, we give a sketch of proof of the claim. Put $m=\max(2,\max S)$ and define function $d'\colon B^2\to \mathbb N$ as
$$d'(x,y) = \min(m,\min_{\str P\text{ a path $x\to y$ in $\str B$}} \|\str P\|),$$
where by $\|\str P\|$ we mean the sum of distances of $\str P$. It is easy to check that $(B, d')$ is a metric space, that it preserves automorphisms and that $d'|_E = d$ if and only if $\str B$ contains no (homomorphism-embedding of a) non-metric cycle. We remark that $(B, d')$ is called the \emph{shortest path completion} of $\str B$ in~\cite{Hubicka2016}.
\end{proof}

\begin{remark}
The fact that we used $\mathcal E_n$ as the base class in the proof of Proposition~\ref{prop:metric} was a matter of choice. We could also, for example, start with the class of all $L$-structures; the condition that every small enough substructure of $\str B$ has a completion in $\mathcal M_n$ would also ensure that $\rel{B}{i}$ are symmetric and irreflexive, that every pair of vertices is in at most one relation and that $\str B$ does not contain $\str K_n$.
\end{remark}

\subsection{Structures with constants}\label{sec:constants}
We show how languages equipped with a permutation group can help us reduce EPPA for languages with constants (nulary functions) to languages without constants. Since the goal of this section is to illustrate applications of our main theorems, we will only construct EPPA-witnesses for structures where the constants behave in a special way.

To simplify the notation, if $\str A$ is a $\GammaL$-structure and $c$ is a constant symbol of $\GammaL$, we will write $c_\str A$ instead of $c_\str A()$. Moreover, if the image of $c_\str A$ is a singleton $x$ (recall that, in general, functions go to the powerset of $\str A$), we will write $c_\str A = x$ instead of $c_\str A = \{x\}$.

We first give a definition.
\begin{definition}
Let $L$ be a language equipped with a permutation group $\GammaL$ and let $\str A$ be a $\GammaL$-structure. We define the \emph{constant trace} of $\str A$, denoted by $\tr(\str A)$, as
$$\tr(\str A) = \cl_\str{A}(\emptyset).$$
In particular, $\tr(\str A)$ is a (possibly empty) $\GammaL$-structure.
\end{definition}

For example, if $\GammaL$ contains no constants, then the constant traces of all $\GammaL$-structures are empty. If $\GammaL$ contains, say, two constants $a$ and $b$ and a binary relation $E$ and $\str A$ is a $\GammaL$ structure such that $a_\str A$ and $b_\str A$ are singletons, $a_\str A \neq b_\str A$, and moreover $(a_\str A, b_\str A) \in E_\str A$, then $\tr(\str A)$ is the two-vertex $\GammaL$-structure with the corresponding relation $E$.

If $\GammaL$ contains one constant symbol $c$ and one unary function symbol $\func{}{}$ and $\str A$ is a $\GammaL$-structure containing a vertex $x$ such that $c_\str A$ is a singleton, $c_\str A \neq x$ and $x\in \func{A}{}(c_\str A)$, then $\tr(\str A)$ also contains $x$.

\begin{theorem}\label{thm:constants}
Let $L$ be a language equipped with a permutation group $\GammaL$ where the arity of every function is at most 1 and let $\str A$ be a finite $\GammaL$-structure. Let $\Lconst$ be the set of all constant symbols of $L$. Assume the following:
\begin{enumerate}
\item For every $g\in \GammaL$ and every $c\in \Lconst$ it holds that $g(c) = c$.
\item $\Lconst$ is finite.
\item\label{constants:3} For every $c\in \Lconst$ it holds that $c_\str A$ is a singleton.
\item For every $c\neq c'\in \Lconst$ it holds that $c_\str A\neq c_\str A'$.
\item\label{constants:5} For every $c\in \Lconst$ and for every unary function $\func{}{}\in L$ it holds that $\func{A}{}(c_\str A) = \emptyset$.
\item $\str A$ lies in a finite orbit of the action of $\GammaL$ by relabelling.
\end{enumerate}
Then there is a finite $\GammaL$-structure $\str B$ which is an irreducible structure faithful coherent EPPA-witness for $\str A$.
\end{theorem}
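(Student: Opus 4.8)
The plan is to absorb the constants into new relation symbols, apply Theorem~\ref{thm:nreppa} to the resulting constant‑free structure, and then reinsert the constant vertices by hand. Write $\Lconst=\{c_1,\dots,c_k\}$ and let $a_i$ be the unique element of the value of $c_i$ in $\str A$, so that $\tr(\str A)=\{a_1,\dots,a_k\}$; by assumptions~\ref{constants:3}--\ref{constants:5}, $\tr(\str A)$ carries only some relations and no functions. Every substructure of $\str A$ contains $\tr(\str A)$, and every partial automorphism of $\str A$ fixes each $a_i$ pointwise; hence the language part of each partial automorphism of $\str A$ preserves the (finitely many) relations holding among vertices of $\tr(\str A)$ in $\str A$. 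Since it is enough to extend partial automorphisms with such language parts, and such extensions are still automorphisms for the full group, I may replace $\GammaL$ by the subgroup of those $g$ for which $R_\str A$ and $g(R)_\str A$ agree on all tuples from $\tr(\str A)$, for every $R\in L_\mathcal R$; so I assume $\GammaL$ already has this property.

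Next I would form a constant‑free language $L^-$: a relation symbol $R_\sigma$ of arity $|\sigma^{-1}(0)|$ for each $R\in L_\mathcal R$ of arity $m$ and each $\sigma\colon\{1,\dots,m\}\to\{0,\dots,k\}$ (so $R_\sigma=R$ when $\sigma\equiv 0$); all unary function symbols of $L$; and a unary relation symbol $Q_{F,i}$ for each unary $F$ and each $1\le i\le k$. Let $\Gamma\!_{L^-}\cong\GammaL$ act by $g(R_\sigma)=g(R)_\sigma$, $g(Q_{F,i})=Q_{g(F),i}$, and on the unary function symbols as $\GammaL$ does. Let $\str A^-$ be the $\Gamma\!_{L^-}$-structure on $A\setminus\tr(\str A)$ in which a tuple $\bar y$ is in $R_\sigma$ iff the tuple obtained from $\bar y$ by inserting $a_{\sigma(j)}$ in position $j$ whenever $\sigma(j)>0$ lies in $R_\str A$, in which $F_{\str A^-}(y)=F_\str A(y)\setminus\tr(\str A)$, and in which $y$ is in $Q_{F,i}$ iff $a_i\in F_\str A(y)$. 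Since relabelling commutes with the operation $\str A\mapsto\str A^-$, the structure $\str A^-$ lies in a finite orbit of $\Gamma\!_{L^-}$ by relabelling, so Theorem~\ref{thm:nreppa} provides a finite irreducible structure faithful coherent EPPA‑witness $\str C$ for $\str A^-$, which I take to contain $\str A^-$ as a substructure.

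I then reassemble $\str B$ over $L$: its vertex set is $C\sqcup\tr(\str A)$; the value of $c_i$ in $\str B$ is $\{a_i\}$; $F_\str B(x)=F_\str C(x)\cup\{a_i : x\in (Q_{F,i})_\str C\}$ for $x\in C$ and $F_\str B(a_i)=\emptyset$; and a tuple $\bar z$ is in $R_\str B$ iff, writing $\sigma$ for the function recording which entries of $\bar z$ equal which vertex of $\tr(\str A)$ and $\bar y$ for the subtuple of the remaining entries, either every entry of $\bar z$ is a trace vertex and $\bar z\in R_\str A$, or $\bar y\in (R_\sigma)_\str C$. One verifies that the inclusion $\str A\hookrightarrow\str B$ is an embedding, that $\tr(\str B)=\tr(\str A)$, and that any permutation of $\str B$ that fixes $\tr(\str A)$ pointwise and restricts to an automorphism of $\str C$ with compatible language part is an automorphism of $\str B$ — this last point is where the standing assumption on $\GammaL$ is used, to handle tuples lying inside $\tr(\str A)$. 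Given a partial automorphism $f$ of $\str A$, deleting the trace vertices and rereading the remaining data through the encoding yields a partial automorphism $f^-$ of $\str A^-$; if $\theta^-\in\Aut(\str C)$ is the coherent extension of $f^-$, then $\theta:=\theta^-\cup\id_{\tr(\str A)}$ (with the language part of $f$) is an automorphism of $\str B$ extending $f$, and coherence is inherited from $\str C$ because both $f\mapsto f^-$ and $\theta^-\mapsto\theta$ preserve composition, domains and ranges.

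The step I expect to be the main obstacle is irreducible structure faithfulness. Given an irreducible $\str I\subseteq\str B$, write $I=I_T\sqcup I_B$ with $I_T\subseteq\tr(\str A)$ and $I_B\subseteq C$; if $I_B=\emptyset$ then $\str I\subseteq\tr(\str A)\subseteq A$ and we are done. Otherwise the crucial claim is that the substructure of $\str C$ induced on $I_B$ — which is closed in $\str C$ because $I$ is closed in $\str B$ — is again irreducible: by construction any relation‑tuple or function relationship of $\str B$ joining two distinct vertices of $C$ comes from a relation $R_\sigma$, respectively a unary function, of $\str C$ on those vertices (the trace vertices enter only through the index $\sigma$, or a function only through the marks $Q_{F,i}$), so a nontrivial free decomposition $I_B=X\sqcup Y\sqcup Z$ inside $\str C$ would promote to the nontrivial free decomposition $I=X\sqcup Y\sqcup(Z\cup I_T)$ of $\str I$, contradicting its irreducibility. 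Irreducible structure faithfulness of $\str C$ then yields $g^-\in\Aut(\str C)$ with $g^-(I_B)\subseteq A\setminus\tr(\str A)$; its language part, via $\Gamma\!_{L^-}\cong\GammaL$ and the standing assumption, preserves the trace relations, so $g^-$ lifts to $g\in\Aut(\str B)$ fixing $\tr(\str A)$ pointwise, whence $g(I)=g(I_T)\cup g(I_B)\subseteq A$. Thus the hypotheses that actually get used are the rigidity of the constants (assumptions~\ref{constants:3}--\ref{constants:5}), which makes excising the trace compatible with irreducibility, and the reduction of $\GammaL$ above, which lets the automorphisms produced by Theorem~\ref{thm:nreppa} be lifted back to $\str B$; the rest is routine bookkeeping with the encoding.
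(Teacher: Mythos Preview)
Your proof is correct and follows essentially the same strategy as the paper: encode the constants into new relation symbols (your $R_\sigma$ and $Q_{F,i}$ are exactly the paper's $R^{R,f}$ and $R^{F,c}$), apply Theorem~\ref{thm:nreppa} to the resulting constant-free structure, and then glue the trace back on; the irreducibility argument via ``a free decomposition of $I_B$ in $\str C$ lifts to one of $\str I$ in $\str B$'' is likewise the same as the paper's contrapositive observation that $T$ preserves free amalgamations.

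The one genuine difference is your preliminary reduction of $\GammaL$ to the subgroup preserving the relations on $\tr(\str A)$. This is a careful point: when you invoke irreducible structure faithfulness of $\str C$, the automorphism $(\pi_g,\alpha)$ you obtain may have an arbitrary language part, and $(g,\alpha\cup\id_{\tr(\str A)})$ is an automorphism of $\str B$ only if $g$ respects the relations on the trace. The paper asserts the equivalence ``$(\pi_g,\alpha)$ is an embedding $\str E\to\str F$ iff $(g,\alpha\cup\id_K)$ is an embedding $T(\str E)\to T(\str F)$'' and uses it at this step without further comment, whereas your reduction makes the lift legitimate for every $g$ that can arise. Your justification for the reduction (every partial automorphism of $\str A$ fixes the trace pointwise, hence its language part already lies in the subgroup, and passing to a subgroup only shrinks relabelling orbits) is correct. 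One cosmetic point: your $L^-$ as stated includes nullary symbols $R_\sigma$ when $\sigma^{-1}(0)=\emptyset$; these are never used in your encoding or decoding, so simply omit them to stay within the paper's convention that relation arities are at least~$1$.
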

We again remark that our goal here was to keep the proof as simple as possible, a similar theorem can be proved with much weaker assumptions. In fact, one can obtain a category theory-like theorem which then makes it possible to lift the main theorems of this paper to work for languages with constants. These results will appear elsewhere.

The structure of the proof will be similar to that of Proposition~\ref{prop:infinite_languages}. That is, we will define a new language without constants and we will reduce the question to the question of EPPA in that language.
\begin{proof}[Proof of Theorem~\ref{thm:constants}]
Without loss of generality we will assume that $L$ does not contain the symbol $\star$. Given a function $f\colon \{1,\ldots,n\}\to \Lconst\cup \{\star\}$, we put $\lvert f \rvert = \lvert\{i\in n : f(i)=\star\}\rvert$. Observe that from assumptions~\ref{constants:3} and~\ref{constants:5} it follows that the vertex set of $\tr(\str A)$ is precisely $\{c_\str A : c\in \Lconst\}$.

Now, we define a language $M$ without constant symbols. Let $R\in L$ be an $n$-ary relation symbol. For every function $f\colon \{1,\ldots,n\}\to \Lconst\cup \{\star\}$ such that $\lvert f \rvert>0$, we put an $\lvert f\rvert$-ary relation symbol $\rel{}{R,f}$ in $M$. Let $F\in L$ be a unary function symbol. For every $c\in S$, we put a unary relation symbol $\rel{}{F,c}$ in $M$. We also put all unary function symbols of $L$ into $M$.

Given $g\in \GammaL$, we define $\pi_g\colon M\to M$ as
$$\pi_g(T) = \begin{cases}
g(T) &\text{ if $T$ is a unary function symbol},\\
\rel{}{g(F),c} &\text{ if $T = \rel{}{F,c}$, where $F\in L$ is a unary function symbol},\\
\rel{}{g(R),f} &\text{ if $T = \rel{}{R,f}$, where $R\in L$ is a relation symbol}.
\end{cases}$$
We put $\GammaM = \{\pi_g : g\in \GammaL\}$. Observe that $\GammaM$ is a permutation group on $M$ ($\pi_{gh} = \pi_g\pi_h$). We claim that $g\mapsto \pi_g$ is a group isomorphism: Clearly it is a surjective homomorphism, injectivity follows from the fact that $M$ contains all unary function symbols of $L$, for every relation symbol $R\in L$ we have $\rel{}{R,\star}\in M$ (where by $\star$ we mean the constant $\star$ function), and every $g\in \GammaL$ fixes $\Lconst$ pointwise.

\medskip

Given an $m$-tuple $(x_1,\ldots,x_m) = \bar x\in A^m$ and a function $f\colon \{1,\ldots,n\}\to \Lconst\cup \{\star\}$ such that $\lvert f\rvert = m$, we define $\bar x^f$ to be the $n$-tuple $(y_1,\ldots, y_n)$, where
$$y_i = \begin{cases}
f(i)_\str A &\text{ if } f(i)\in \Lconst,\\
x_j &\text{ if } f(i)=\star\text{ and }\lvert\{k < i : f(k)=\star\}\rvert = j-1.
\end{cases}$$

Put $D = A\setminus \tr(\str A)$ (that is, the members of $D$ are precisely the non-constant vertices of $\str A$). We claim that for every $n$-tuple $(y_1,\ldots,y_n) = \bar y \in A^n$, there is precisely one triple $(m,\bar x,f)$, where $m\in \mathbb N$, $\bar x\in D^m$ and $f$ is a function $\{1,\ldots,n\}\to \Lconst\cup \{\star\}$ with $\lvert f\rvert = m$, such that $\bar y = \bar x^f$. Indeed, put
$$f(i) = \begin{cases}
c &\text{ if } y_i = c_\str A\text{ for some }c\in\Lconst,\\
\star &\text{ otherwise},
\end{cases}$$
$m = \lvert f\rvert$ and $x_i = y_j$, where $j$ is chosen such that $f(j)=\star$ and $\lvert\{k < j : f(k)=\star\}\rvert = i-1.$

\medskip

Let $\str C$ be a $\GammaM$-structure such that $C$ is disjoint from $K = \tr(\str A)$. We define a $\GammaL$-structure $T(\str C)$ as follows:
\begin{enumerate}
\item The vertex set of $T(\str C)$ is $C\cup K$.
\item The identity on $K$ is an isomorphism between $\tr(\str A)$ and the structure induced by $T(\str C)$ on $K$ (in particular, the constants are defined on $K$ in $T(\str C)$ in the same way as in $\str A$).
\item For every unary function $\func{}{}\in L$ and every $x\in C$, we put
$$\nbfunc{T(\str C)}{}(x) = \func{C}{}(x) \cup \{c_{T(\str C)} : c\in \Lconst\text{ and }x\in\rel{C}{\func{}{},c}\}.$$
\item For every relation $\rel{}{\rel{}{},f}\in M$ and every $\bar x\in \rel{C}{\rel{}{},f}$, we put $\bar x^f\in\nbrel{T(\str C)}{}$.
\end{enumerate}

Note that $(\pi_g,\alpha)$ is an embedding of $\GammaM$-structures $\str E\to\str F$, if and only if $(g,\alpha\cup\id_K)$ is an embedding $T(\str E)\to T(\str F)$. This follows directly from the construction. It also implies that $\str E$ lies in a finite orbit of the action of $\GammaM$ by relabelling if and only if $T(\str E)$ lies in a finite orbit of the action of $\GammaL$ by relabelling.

\medskip

Next, we define a $\GammaM$-structure $\str D$ such that $T(\str D) = \str A$. We put the vertex set of $\str D$ to be $D$, the relations and functions are defined as follows:
\begin{enumerate}
\item For every unary function $\func{}{}\in L$ and every vertex $x\in D$, we put $\func{D}{}(x) = \func{A}{}(x)\setminus \tr(\str A)$.
\item For every unary function $\func{}{}\in L$, every vertex $x\in D$ and every constant $c\in \Lconst$, we put $x\in\rel{D}{\func{}{},c}$ if and only if $c_\str A\in \func{A}{}(x)$.
\item For every $n$-ary relation $\rel{}{}\in L$ and every $\bar y\in \rel{A}{}$ such that $\bar y = \bar x^f$, where $\bar x\in D^m$, $f\colon \{1,\ldots,n\}\to \Lconst\cup \{\star\}$ and $m\geq 1$, we put $\bar x\in \rel{D}{\rel{}{},f}$.
\end{enumerate}
It is straightforward to verify that indeed $T(\str D) = \str A$.

\medskip

Since $\GammaM$ is a language where all functions are unary, by Theorem~\ref{thm:nreppa} we get an irreducible structure faithful coherent EPPA-witness $\str C$ for $\str D$. Without loss of generality we can assume that $C$ is disjoint from $K$. We claim that $\str B = T(\str C)$ is an irreducible structure faithful coherent EPPA-witness for $\str A$.

Let $(g,\alpha)$ be a partial automorphism of $\str A$. This implies that $(\pi_g,\alpha\restriction_D)$ is a partial automorphism of $\str D$, which by the assumption extends to an automorphism $(\pi_g,\theta)$ of $\str C$. This implies that $(g,\theta\cup\id_K)$ is an automorphism of $\str B$ extending $(g,\alpha)$. Since the extensions in $\str C$ can be chosen to be coherent, by the construction we get coherence also for $\str B$.

To get irreducible structure faithfulness of $\str B$, observe that if $\str P\subseteq \str C$ is the free amalgamation of $\str P_1$ and $\str P_2$ over $\str Q$, then $T(\str P)$ is the free amalgamation of $T(\str P_1)$ and $T(\str P_2)$ over $T(\str Q)$. This follows from the fact that functions in a $\GammaM$-structure $\str X$ are subsets of the corresponding functions in $T(\str X)$ and if, for $n\geq 2$, an $n$-tuple is in a relation in $\str X$, then is is a sub-tuple of a tuple in a relation of $T(\str X)$.

Taking the contrapositive, this means that if $\str I$ is an irreducible substructure of $\str B$, then $\str C$ induces an irreducible substructure on $I\setminus K$. Hence, there is an automorphism $(\pi_g,\alpha)\colon \str C\to \str C$ sending $I\setminus K$ to $A$ and thus $(g,\alpha\cup\id_K)$ is an automorphism of $\str B$ such that $(g,\alpha\cup\id_K)(I) \subseteq A$.
\end{proof}

\subsection{EPPA for special non-unary functions}\label{sec:nonunary}
One of our motivations for introducing languages equipped with a permutation group was that it gives a nice formalism to stack several EPPA constructions on top of each other, thereby allowing to prove coherent EPPA for certain classes with non-unary functions. We conclude this paper with two examples of this. This section can be seen as an introduction to Section~\ref{sec:orientations}.

The following theorem is a variant of Ivanov's observation that permomorphisms of Herwig~\cite[Lemma~1]{herwig1998} can be used to prove EPPA of equivalence relations on $n$-tuples~\cite{Ivanov2015}:
\begin{theorem}\label{thm:nonunary}
Let $L$ be a finite language consisting of two unary relations $U$, $V$ and functions $F^1,\ldots, F^n$, each of arity at least 1. Let $\mathcal C$ be the class of all finite $L$-structures $\str A$ satisfying the following:
\begin{enumerate}
\item $U_\str A \cap V_\str A = \emptyset$ and $U_\str A\cup V_\str A = A$,
\item for every $1\leq i \leq n$ it holds that $\dom(F^i_\str A) \subseteq (U_\str A)^{\arityf{i}}$ and $\range(F^i_\str A) \subseteq V_\str A$.
\end{enumerate}
(Equivalently, structures in $\mathcal C$ can be viewed as 2-sorted structures where all the functions go from the first sort to the other.) Then $\mathcal C$ has irreducible structure faithful coherent EPPA.
\end{theorem}
\begin{proof}
Fix $\str A\in \mathcal C$. We will construct $\str B\in \mathcal C$ such that $\str B$ is the desired EPPA-witness. Towards that, we define a language $L^*$ consisting of an $\arityf{i}$-ary relation $\rel{}{i,v}$ for every $1\leq i\leq n$ and every $v\in V_\str A$. Let $\GammaLstar$ be the permutation group obtained by the natural action of $\Sym(V_\str A)$ on $L^\star$. Next we define an $\GammaLstar$-structure $\str A_0$ such that the vertex set of $\str A_0$ is precisely $U_\str A$ and for every tuple $\bar{x}$ of vertices of $\str A_0$ and every relation $\rel{}{i,v}\in L^*$ we put $\bar{x}\in \nbrel{\str A_0}{i,v}$ if and only if $v\in \func{A}{i}(\bar{x})$. Let $\str B_0$ be an irreducible structure faithful coherent EPPA-witness for $\str A_0$ (obtained for example using Theorem~\ref{thm:nreppa}). Without loss of generality we can assume that $\str A_0\subseteq \str B_0$.

Next we reconstruct an $L$-structure $\str B$ using $\str B_0$ as a template as follows:
\begin{enumerate}
\item The vertex set of $\str B$ is the disjoint union $B_0 \cup V_\str A$.
\item $U_\str B = B_0$ and $V_\str B = V_\str A$.
\item For every $1\leq i\leq n$, every $v\in V_\str A$ and every tuple $\bar{x}$ from $B_0$ we put $v\in \func{B}{i}(\bar{x})$ if and only if $\bar{x}\in\nbrel{\str B_0}{i,v}$.
\end{enumerate}
Clearly, $\str B\in \mathcal C$. Since $\str A_0\subseteq \str B_0$, we get that $A\subseteq B$. To see that $\str A$ is in fact a substructure of $\str B$, observe that $U_\str A = U_\str B \cap A$, $V_\str A = V_\str B$ and whenever $\bar{x}$ is a tuple of vertices from $U_\str A$, $v\in V_\str A$ and $1\leq i\leq n$, then $v\in \func{A}{i}(\bar{x})$ if and only if $\bar{x} \in \nbrel{\str A_0}{i,v}$ (by the construction of $\str A$), which happens if and only if $\bar{x} \in \nbrel{\str B_0}{i,v}$ (since $\str A_0\subseteq \str B_0$) and this is true if and only if $v\in \func{B}{i}(\bar{x})$ (by the construction of $\str B$). Hence indeed $\str A\subseteq \str B$.

Now we show how to construct an automorphism of $\str B$ from an automorphism of $\str B_0$ and a permutation of $V_\str A$. Let $f'$ be a permutation of $V_\str A$ and let $f=(f_{L^*}, f_{B_0})$ be an automorphism of $\str B$ such that $f_{L^*}$ is induced by $f'$. Put $\theta = f_{B_0} \cup f'$. We claim that $\theta$ is an automorphism of $\str B$. Clearly, $\theta$ is a bijection $B\to B$ which preserves the unary relations. Given an arbitrary $1\leq i\leq n$, an arbitrary tuple $\bar{x}$ of vertices from $B_0$ and an arbitrary $v\in V_\str A$, we know that $v\in \func{B}{i}(\bar{x})$ if and only if $\bar{x}\in\nbrel{\str B_0}{i,v}$ (by the construction of $\str B$), which happens if and only if $f_{B_0}(\bar{x}) \in \permnbrel{f_{L^*}}{\str B_0}{i,v} = \nbrel{\str B_0}{i,f'(v)}$ (as $f$ is an automorphism and $f_{L^*}$ is induced by $f'$), and by the construction of $\str B$ it is equivalent to $f'(v) \in \func{B}{i}(f'_{B_0}(\bar{x}))$. Hence $\theta$ is an automorphism of $\str B$.

To see that $\str B$ is irreducible structure faithful, it is enough to observe that if $\str C\subseteq \str B$ is irreducible, then either $C$ consists of a single vertex of $V_\str A$, or $\str C = \cl_\str{B}(C\cap U_\str B)$ and for every pair $x\neq y\in C\cap U_\str B$ there is a tuple $\bar{x}$ of vertices of $\str C$ containing both $x$ and $y$, and $1\leq i\leq n$ such that $\func{C}{i}(\bar{x})\neq \emptyset$. Consequently, $\str B_0$ induces an irreducible substructure on $C\cap U_\str B$. By irreducible structure faithfulness there is an automorphism $f=(f_{L^*},f_{B_0})$ of $\str B_0$ such that $f(C\cap U_\str B) \subseteq A_0$. Let $f'$ be an arbitrary permutation of $V_\str A$ inducing $f_{L^*}$ and let $\theta$ be the automorphism of $\str B$ constructed from $f$ and $f'$ in the previous paragraph. Clearly, $\theta(C\cap U_\str B) \subseteq A_0$, and therefore $\theta(\str C) = \theta(\cl_\str B(C\cap U_\str B)) \subseteq \str A$. This finishes the proof of irreducible structure faithfulness.

Next we prove that $\str B$ is an EPPA-witness for $\str A$. Let $\varphi$ be a partial automorphism of $\str A$. Remember that $\varphi$ preserves the unary relations. Let $f'$ be the coherent extension of $\varphi\restriction_{V_\str A}$ to a permutation of $V_\str A$ obtained using Proposition~\ref{prop:setcoherence}. Let $f_{L^*}\in\GammaLstar$ be induced by $f'$ and put $\varphi_0 = (f_{L^*}, \varphi\restriction_{A_0})$. Observe that $\varphi_0$ is a partial automorphism of $\str A_0$ and extend it to an automorphism $f=(f_{L^*},f_{B_0})$ of $\str B_0$ (in a coherent way). Put $\widetilde{\varphi} = f_{B_0} \cup f'$. By the previous paragraphs, $\widetilde{\varphi}$ is an automorphism of $\str B_0$. Moreover, since  $\varphi\restriction_{V_\str A} \subseteq f'$ and $\varphi\restriction_{A_0}\subseteq f_{B_0}$, we get that $\widetilde{\varphi}$ extends $\varphi$.

To finish the proof, note that since both $f$ and $f'$ were chosen to be coherent, $\widetilde{\varphi}$ is coherent as well.
\end{proof}

\begin{remark}
This construction can be carried out more generally for infinitely many functions, more than 2 unary marks (as long as all functions go in one direction) and more complicated structures living on each unary mark (as long as the whole multi-sorted structure still lies in a finite orbit of the relabelling action). This will appear elsewhere. In the next section, we adapt this construction for a class which does not a priori look multi-sorted.
\end{remark}

\subsection{EPPA for $k$-orientations with $d$-closures}\label{sec:orientations}
In this section we extend the construction from Section~\ref{sec:nonunary} and prove EPPA for the class of all $k$-orientations with $d$-closures, thereby confirming a conjecture from~\cite{Evans2}. We only define the relevant classes and prove EPPA for them here, to get more context (for example the connection with Hrushovski's predimension constructions and the importance for the structural Ramsey theory), see~\cite{Evans2}.

Let $\str G$ be an oriented graph (that is, if there is an edge from vertex $u$ to vertex $v$ then there is no edge from $v$ to $u$). We say that it is a \emph{$k$-orientation} if the out-degree of every vertex is at most $k$. We say that a vertex $x\in G$ is a \emph{root} if its out-degree is strictly smaller than $k$. Let $\mathcal D^k$ be the class of all finite $k$-orientations. While $\mathcal D^k$ is not an amalgamation class, there are two natural expansions which do have the free amalgamation property:

\begin{definition}
Let $L$ be the graph language with a single binary relation $E$ and let $L_s$ be its expansion by a unary function symbol $\func{}{}$.

Let $\str G$ be a $k$-orientation. By $s(\str G)$ we denote the $L_s$-expansion of $\str G$ putting
$$\nbfunc{s(\str G)}{}(x) = \{y\in G : y\hbox{ is reachable from $x$ by an oriented path}\}.$$
Here, an oriented path from $x$ to $y$ is a sequence $x=v_1, v_2, \ldots, v_m = y$ with $m\geq 1$ such that for every $1\leq i < m$ it holds that $(v_i,v_{i+1})\in E_\str G$. Put $\mathcal D_s^k = \{s(\str G) : \str G\in \mathcal D^k\}$.
\end{definition}

Recall that $\cl_\str{A}(x)$ denotes the smallest substructure of $\str A$ containing $x$ and is called the closure of $x$ in $\str A$. For $\str G\in \mathcal D_s^k$ and $y\in G$, we denote by $\roots{}{y}$ the set of all roots of $\str G$ which are in $\cl_\str{G}(y)$. Define $\mathcal D_{s^+}^k$ to be the subclass of $\mathcal D_s^k$ such that $\str G \in \mathcal D_{s^+}^k$ if and only if for every $y\in G$ it holds that $\roots{}{y} \neq \emptyset$.

\begin{definition}
Let $L_d$ be an expansion of $L_s$ adding an $n$-ary function symbol $\func{}{n}$ for every $n\geq 1$.

Given $\str G\in\mathcal D_s^k$, we denote by $d(\str G)$ the $L_d$-expansion of $\str G$ putting $\nbfunc{d(\str G)}{n}(x_1,\ldots,\allowbreak x_n) = \emptyset$ if $(x_1,\ldots,x_n)$ is not a tuple of distinct roots and
$$\nbfunc{d(\str G)}{n}(x_1,\ldots, x_n) = \{y\in G : \roots{}{y} = \{x_1,\ldots,x_n\}\}$$
if $(x_1,\ldots,x_n)$ is a tuple of distinct roots.
Put $\mathcal D_{d}^k = \{d(\str G) : \str G\in \mathcal D_{s^+}^k\}$.
\end{definition}
Note that in the definition of $\mathcal D_{d}^k$ we are only considering members of $\mathcal D_{s^+}^k$. The reason is that if there was a vertex with $\roots{}{y} = \emptyset$, it would be in the closure of the empty set, i.e. we would need to add constants. It is possible to do so, but it would make the construction a bit complicated and for the applications we have in mind it does not make any difference.

\medskip

It is easy to see that $\mathcal D_s^k$ is a free amalgamation class. Combining with Corollary~\ref{cor:free}, we get the following theorem proved by Evans, Hubi\v cka and Ne\v set\v ril~\cite{Evans2,Evans3}.
\begin{theorem}
\label{thm:D0EPPA}
$\mathcal D^k_s$ has irreducible structure faithful coherent EPPA for every $k\geq 1$.
\end{theorem}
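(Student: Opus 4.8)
The plan is to show that $\mathcal D_s^k$ is a \emph{free} amalgamation class of finite $L_s$-structures (with trivial permutation group) and then to invoke Corollary~\ref{cor:free}. Since $L_s$ is a finite language, the observation following Definition~\ref{defn:relabelling} guarantees that every finite $L_s$-structure lies in a (singleton) orbit of the trivial relabelling action, so Corollary~\ref{cor:free} will automatically upgrade plain EPPA to irreducible structure faithful coherent EPPA. Hence the entire argument reduces to verifying heredity, joint embedding, and free amalgamation for $\mathcal D_s^k$.

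The single fact driving all three verifications is the following: \emph{if $(\id_{L_s},\alpha)\colon s(\str A)\to s(\str G)$ is an embedding of $L_s$-structures, then $\alpha(A)$ is reachability-closed in $\str G$}, meaning that every vertex of $\str G$ reachable from a vertex of $\alpha(A)$ already lies in $\alpha(A)$. Indeed, an embedding must satisfy $F_{s(\str G)}(\alpha(a)) = \alpha(F_{s(\str A)}(a))$, and the right-hand side is contained in $\alpha(A)$ by the definition of $s$; since $F_{s(\str G)}(\alpha(a))$ is precisely the set of vertices reachable from $\alpha(a)$ in $\str G$, the claim follows. In particular, the out-neighbourhood of any vertex of $\alpha(A)$ in $\str G$ equals the image of its out-neighbourhood in $\str A$.

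I would then proceed as follows. For heredity, a substructure $\str H$ of $s(\str G)$ is closed under $F$, so its vertex set is reachability-closed in $\str G$; thus its digraph reduct $\str H^-$ is an induced subdigraph of $\str G$, hence a $k$-orientation, and the $F$-values inherited from $s(\str G)$ are exactly reachability within $\str H$, so $\str H = s(\str H^-)\in\mathcal D_s^k$. Joint embedding follows from free amalgamation over the empty structure (note $s(\emptyset)=\emptyset\in\mathcal D_s^k$), so it remains to handle free amalgamation. Given inclusion embeddings $s(\str A)\hookrightarrow s(\str B_1)$ and $s(\str A)\hookrightarrow s(\str B_2)$ with $B_1\cap B_2 = A$, let $\str C$ be the digraph on $B_1\cup B_2$ with edge set $E_{\str B_1}\cup E_{\str B_2}$, so that there are no edges between $B_1\setminus A$ and $B_2\setminus A$. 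By the reachability-closedness fact, every vertex of $A$ has the same out-neighbourhood in $\str C$ as in $\str A$, and every vertex of $B_i\setminus A$ has the same out-neighbourhood as in $\str B_i$; hence all out-degrees in $\str C$ are at most $k$, so $\str C\in\mathcal D^k$. The same observation shows that no walk of $\str C$ starting in $B_i$ ever leaves $B_i$, so for $v\in B_i$ the set of vertices reachable from $v$ in $\str C$ equals the set reachable from $v$ in $\str B_i$; consequently $s(\str C)$ is exactly the $L_s$-structure whose underlying digraph is $\str C$ and whose $F$-values agree with those of $s(\str B_1)$ and $s(\str B_2)$, i.e.\ it is the free amalgamation of $s(\str B_1)$ and $s(\str B_2)$ over $s(\str A)$ (the function part of freeness being vacuous since $F$ is unary), and $s(\str C)\in\mathcal D_s^k$.

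This establishes that $\mathcal D_s^k$ is a free amalgamation class, and Corollary~\ref{cor:free} then yields irreducible structure faithful coherent EPPA. I expect the only point requiring genuine care to be isolating the reachability-closedness lemma and its corollary that free amalgamation of digraphs creates neither new out-edges nor new reachable vertices for the amalgamated copies; everything else is routine verification.
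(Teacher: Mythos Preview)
Your proposal is correct and follows exactly the approach the paper takes: the paper simply asserts that $\mathcal D_s^k$ is a free amalgamation class and invokes Corollary~\ref{cor:free}, while you supply the routine verifications (heredity, joint embedding, free amalgamation) via the reachability-closedness observation. Your check that the finite-orbit hypothesis of Corollary~\ref{cor:free} is satisfied (since $L_s$ is finite, or equivalently since $\GammaL$ is trivial) is also in line with how the paper uses the corollary.
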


It is again straightforward to verify (and it was done in~\cite{Evans2}) that $\mathcal D_d^k$ is a free amalgamation class. Since it contains non-unary functions, the results of this paper cannot be applied directly to prove that $\mathcal D_d^k$ has irreducible structure faithful coherent EPPA. However, we can use the fact that the non-unary functions go from root vertices to non-root vertices and show the following theorem, which was conjectured to hold in~\cite[Conjecture~7.5]{Evans2}.
\begin{theorem}
\label{thm:DF}
$\mathcal D^k_d$ has irreducible structure faithful coherent EPPA for every $k\geq 1$.
\end{theorem}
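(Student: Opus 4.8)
The plan is to reduce EPPA for $\mathcal{D}^k_d$ to EPPA for $\mathcal{D}^k_s$ (which is already known by Theorem~\ref{thm:D0EPPA}) by treating the $n$-ary functions $F_n$ with the $\Gamma$-language formalism and "stacking" a construction analogous to the valuation-structure arguments of Sections~\ref{sec:functions}--\ref{sec:cycles}. The crucial structural observation, already noted in the statement, is that every $F_n$ sends tuples of distinct \emph{root} vertices to \emph{non-root} vertices, and moreover in a structure $d(\str G)$ the value $F_n(x_1,\dots,x_n)$ for a non-root $y$ is entirely determined by $\mathrm{roots}(y)$, which is itself recoverable from $\cl_\str{G}(y)$ computed already in the $\mathcal{D}^k_s$-reduct. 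So the non-unary functions carry \emph{no extra data} beyond what the $L_s$-reduct already records: $d$ is a functor that is a left inverse to taking the $L_s$-reduct on the relevant classes.

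First I would fix $\str A\in \mathcal{D}^k_d$, let $\str A^- = $ its $L_s$-reduct, and apply Theorem~\ref{thm:D0EPPA} to obtain a finite irreducible structure faithful coherent EPPA-witness $\str B_0\in \mathcal{D}^k_s$ for $\str A^-$; without loss of generality $\str A^-\subseteq \str B_0$. The key claim is then that $\str B := d(\str B_0)$ is an irreducible structure faithful coherent EPPA-witness for $\str A$ in $\mathcal{D}^k_d$. The embedding $\str A\hookrightarrow \str B$ is the inclusion: one checks that $d$ applied to a substructure inclusion in $\mathcal{D}^k_s$ gives a substructure inclusion in $\mathcal{D}^k_d$, using that $\cl$ and $\mathrm{roots}$ are computed the same way in a substructure as in the ambient structure (this is where one needs that $\mathcal{D}^k_s$ is closed under substructures and that reachability/rootness are ``local'' in the right sense). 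Next, given a partial automorphism $\varphi$ of $\str A$, its $L_s$-reduct $\varphi^-$ is a partial automorphism of $\str A^-$, so it extends to $\hat\varphi^- \in \Aut(\str B_0)$; I claim $\hat\varphi^-$ is automatically an automorphism of $\str B = d(\str B_0)$ as well, because any automorphism of $\str B_0$ permutes roots among roots, preserves reachability, preserves $\cl$, hence preserves $\mathrm{roots}(\cdot)$ and thus all the $F_n$. Finally $\hat\varphi^-$ extends $\varphi$ since it extends $\varphi^-$ and the extra coordinates ($F_n$ and the language permutation part, which here is trivial) are determined. Coherence is inherited verbatim from coherence in $\mathcal{D}^k_s$: the map $\varphi\mapsto\hat\varphi^-$ factors through the coherent map $\varphi^-\mapsto \hat\varphi^-$ for $\str B_0$, and distinct $\varphi$ restricting to the same $\varphi^-$ have the same extension, which is consistent with coherence. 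Irreducible structure faithfulness transfers because an irreducible substructure of $\str B$ has an irreducible $L_s$-reduct (forgetting functions cannot destroy irreducibility: free amalgamation in the reduct lifts, since $F_n$ between the two sides would have to be empty by the root/non-root separation — this is the analogue of the final paragraph in the proof of Theorem~\ref{thm:constants}), so the $\mathcal{D}^k_s$-faithfulness of $\str B_0$ supplies an automorphism sending it into $\str A^-$, which as above is also an automorphism of $\str B$ sending it into $\str A$.

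Concretely I would organize the argument as: (1) prove the lemma ``$d$ is functorial: $\alpha\colon\str G\to\str G'$ an embedding in $\mathcal{D}^k_s$ implies $\alpha\colon d(\str G)\to d(\str G')$ an embedding in $\mathcal{D}^k_d$, and every automorphism of $\str G$ is an automorphism of $d(\str G)$,'' reducing everything to the identities $\alpha(\cl_\str{G}(x)) = \cl_{\str G'}(\alpha(x))$ and $\alpha(\mathrm{roots}(x)) = \mathrm{roots}(\alpha(x))$; (2) prove ``$d$ reflects free amalgamation / irreducibility,'' i.e. if $d(\str G)$ is irreducible then $\str G$ is, using the direction of data-flow of $F_n$; (3) assemble the three properties (EPPA, coherent EPPA, irreducible structure faithfulness) from (1), (2), and Theorem~\ref{thm:D0EPPA}; (4) note $\str B = d(\str B_0)\in \mathcal{D}^k_d$ by definition and is finite.

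The main obstacle I anticipate is step~(1), specifically verifying that $\cl$ and $\mathrm{roots}$ genuinely commute with embeddings and substructure inclusions in $\mathcal{D}^k_s$. One must be careful: $\cl_\str{G}(x)$ in $\str G = s(\str G_0)$ is the down-closure under reachability, so it coincides with $\{x\}\cup F(x)$; for a substructure inclusion this is unproblematic since $F$ is part of the signature and restricts correctly, but for a general embedding $\alpha$ into a larger $\str G'$ one needs that $\str G'$ does not introduce new reachability into $\alpha(\str G)$ — this holds because $\mathcal{D}^k_s$ consists of genuine $s(\cdot)$-structures where $F$ exactly encodes reachability, but it should be stated and used carefully, perhaps as a separate preliminary observation about the class $\mathcal{D}^k_s$. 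A secondary (but routine) subtlety is bookkeeping the language-permutation part: here $\Gamma_{L_d}$ can be taken trivial, so the ``permomorphism'' machinery is not actually needed and all morphisms are ordinary; I would simply remark this at the outset so that ``automorphism of $\str B_0$ is an automorphism of $\str B$'' does not need any permutation-of-symbols commentary. Everything else — coherence and faithfulness — is then a short formal consequence, mirroring the corresponding parts of the proof of Theorem~\ref{thm:constants}.
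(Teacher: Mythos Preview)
Your approach has a genuine gap at the very first step: the claim that $d$ applied to a substructure inclusion in $\mathcal{D}^k_s$ yields a substructure inclusion in $\mathcal{D}^k_d$ is false. You correctly observe that for $\str A^-\subseteq\str B_0$ in $\mathcal{D}^k_s$, the quantities $\cl$ and $\mathrm{roots}$ are computed the same way in $\str A^-$ as in $\str B_0$. But this is not enough. By definition, $F_n^{d(\str B_0)}(x_1,\dots,x_n)=\{y\in B_0:\mathrm{roots}_{\str B_0}(y)=\{x_1,\dots,x_n\}\}$, and for the inclusion to be an embedding one needs this set to \emph{equal} $F_n^{\str A}(x_1,\dots,x_n)$, not merely contain it. Any non-root vertex $y\in B_0\setminus A$ whose roots happen to all lie in $A$ destroys this equality. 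Concretely, take $k=2$, roots $a,b$ of out-degree~$0$, and a single non-root $c$ with $c\to a$, $c\to b$; then the free amalgamation in $\mathcal{D}^k_s$ of two copies of $\str A^-$ over $\{a,b\}$ has two non-roots $c,c'$ with the same root set, so $F_2(a,b)=\{c,c'\}$ there while $F_2(a,b)=\{c\}$ in $\str A$. An EPPA-witness $\str B_0$ produced by the general machinery will typically contain such extra non-root vertices over root tuples of $\str A$, and you have given no argument (nor can one be given for an arbitrary $\str B_0$) excluding this.

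This is precisely why the paper's proof is so much more elaborate. Instead of applying $d$ to $\str B_0$, it \emph{removes} the non-root vertices of $\str A$, encodes each of them as a new relation $R^{(x,(x_1,\dots,x_n))}$ on the tuple of its roots, equips this expanded language with the permutation group induced by $\Aut(\str B_0)$, applies Theorem~\ref{thm:nreppa} to get an EPPA-witness $\str B_1$ on root vertices only, and finally \emph{reconstructs} non-root vertices from the occurrences of these new relations in $\str B_1$. The point is that the non-root vertices of the final witness $\str B$ are in bijection with (equivalence classes of) tuples in the $R^P$-relations of $\str B_1$, so for any tuple of roots from $\str A$ one gets exactly the right value of $F_n$: the extra non-root vertices that an uncontrolled $d(\str B_0)$ would introduce simply never appear. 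Your observation that automorphisms of $\str B_0$ automatically respect $F_n$ is correct, but it is the embedding $\str A\hookrightarrow \str B$, not the extension step, that fails.
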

In the rest of this section, we will prove this theorem. The proof is based on the following observation: Let $S$ be a set consisting of root vertices only, let $S_1$ be the $L_s$-closure of $S$ (i.e. we ignore the $\func{}{n}$ functions) and let $S_2$ be the $L_d$-closure of $S$ (i.e. we also consider the $\func{}{n}$ functions). Then the root vertices in $S_1$ are precisely the root vertices in $S_2$. Consequently, if one is interested in root vertices only, all closures are unary, even in the presence of higher-arity functions. Thus, we can view structures from $\mathcal D^k_d$ as two-sorted structures (one sort being the roots and the other being the non-roots) in which all non-unary functions go from one sort to the other, which allows us to use a similar structure of arguments as in Section~\ref{sec:nonunary}.

\medskip

Fix $\str A\in \mathcal D^k_d$ and denote by $\str A_0$ its $L_s$-reduct (so $\str A_0\in \mathcal D^k_s$). Let $\str B_0\in \mathcal D^k_s$ be an irreducible structure faithful coherent EPPA-witness for $\str A_0$ given by Theorem~\ref{thm:D0EPPA}.

Let $\mathfrak P$ be the set of all pairs $(x,(x_1,\ldots, x_n))$ such
that $x$ is a non-root vertex of $\str{B}_0$, $(x_1,\ldots,x_n)$ is a tuple of distinct root vertices of $\str B_0$ and $\mathrm{roots}_{\str B_0}(x) = \{x_1,\ldots, x_n\}$. Note that we have such a pair for each possible permutation of $\{x_1,\ldots, x_n\}$. Given $P = (x,(x_1,\ldots, x_n)) \in \mathfrak P$, we define $\pi(P)=x$ to be the \emph{projection} and put $|P| = n$.

Denote by $L^+$ the expansion of $L_s$ adding a $|P|$-ary relation symbol
$\rel{}{P}$ for every $P\in \mathfrak P$ and a $(|P|+1)$-ary relation symbol $\relE{}{P}$ for every $P\in \mathfrak P$.
Let $\Gamma\!_{L^+}$ be the permutation group on $L^+$ consisting of all permutations of the $\rel{}{P}$ and $\relE{}{P}$ symbols induced by the natural action of $\Aut(\str{B}_0)$ on $\mathfrak P$. In particular, $E$ and $F$ are fixed by $\Gamma\!_{L^+}$.

Denote by $\str{A}_1$ the $\Gamma\!_{L^+}$-structure created from $\str A_0$ by removing all non-root vertices, keeping the edges between root vertices, putting $\nbfunc{\str{A}_1}{}(v)=\nbfunc{\str A_0}{}(v)\cap A_1$, adding $(x_1,\ldots, x_n)\in \nbrel{\str{A}_1}{(x,(x_1,\ldots, x_n))}$ if and only if 
$x$ is a non-root vertex of $\str A_0$ and $\mathrm{roots}_{\str A_0}(x) = \{x_1,\ldots, x_n\}$, and adding $(a, x_1,\ldots, x_n)\in \nbrelE{\str{A}_1}{(x,(x_1,\ldots, x_n))}$ if and only if $(x_1,\ldots, x_n)\in \nbrel{\str{A}_1}{(x,(x_1,\ldots, x_n))}$, $a$ is a root vertex of $\str A_0$ and $(a,x)\in E_{\str A_0}$. Let $\str{B}_1$ be an irreducible structure faithful coherent EPPA-witness for $\str{A}_1$ given by Theorem~\ref{thm:nreppa}.

We will now reconstruct an $L_d$-structure $\str B\in \mathcal D_d^k$ from $\str B_1$ such that $\str B$ will be an irreducible structure faithful coherent EPPA-witness for $\str A$. The general idea is to put back the non-root vertices according to the $\rel{}{P}$ and $\relE{}{P}$ relations using $\str B_0$ as a template.

Let $\mathcal T_0$ be the set consisting of all pairs $(P, \bar{x})$ such that $P\in \mathfrak P$, $\bar{x}$ is a tuple of vertices of $\str B_1$ and $\bar{x}\in\nbrel{\str B_1}{P}$. We say that $(P, \bar{x})\sim (P', \bar{x}')$ if $\pi(P) = \pi(P')$ and $\bar{x}$ and $\bar{x}'$ are different permutations of the same set. Let $\mathcal T$ consist of exactly one (arbitrary) member of each equivalence class of $\sim$ on $\mathcal T_0$.

Put $B = B_1 \cup \mathcal T$. For $u,v\in B$, we put $(u,v)\in E_\str{B}$ if and only if one of the following holds:
\begin{enumerate}[label=C\arabic*]
\item\label{C1} $u,v\in B_1$ and $(u,v)\in E_{\str{B}_1}$,
\item\label{C2} $u\in B_1$, $v=((x,(x_1,\ldots, x_n)), (w_1,\ldots,w_n))\in \mathcal T$ and $(u, w_1,\ldots,w_n)\in \nbrelE{\str B_1}{(x,(x_1,\ldots, x_n))}$,
\item\label{C3} $u = ((x,(x_1,\ldots, x_n)), (w_1,\ldots,w_n))\in \mathcal T$, $v\in B_1$, there is $1\leq i\leq n$ such that $v=w_i$ and $(x,x_i)\in E_{\str B_0}$, or
\item\label{C4} $u = ((x,(x_1,\ldots, x_n)), (w_1,\ldots,w_n))\in \mathcal T$, $v = ((y,(y_1,\ldots, y_m)),\allowbreak(t_1,\ldots,\allowbreak t_m))\in \mathcal T$, $\{t_1,\ldots,t_m\}\subseteq \{w_1,\ldots,w_n\}$ and $(x,y)\in E_{\str B_0}$.
\end{enumerate}
For every $x\in B$ we put $$\func{B}{}(x) = \{y\in B : y\hbox{ is reachable from $x$ in $B$ by an oriented path}\}.$$
Finally, we put $\func{B}{n}(x_1,\ldots, x_n) = \emptyset$ if $(x_1,\ldots,x_n)$ is not a tuple of distinct vertices of $\str{B}_1$ and $\func{B}{n}(x_1,\ldots, x_n) = \{y\in B : \mathrm{support}(y) = \{x_1,\ldots,x_n\}\}$ if $(x_1,\ldots,x_n)$ is a tuple of distinct vertices of $\str{B}_1$.
Here, $\mathrm{support}(v)$ is defined as follows:
\begin{enumerate}
\item If $v\in B_1$, we put $\mathrm{support}(v)=\cl_{\str{B}_1}(v)$.
\item Otherwise $v\in \mathcal T$ and thus $v=(P,\bar{x})$ for some choice of $P$ and $\bar{x}$.
In this case we put $\mathrm{support}(v)=\cl_{\str{B}_1}(\bar{x})$ (where by $\cl_{\str{B}_1}(\bar{x})$ we mean the smallest substructure of $\str B_1$ containing all vertices from $\bar x$).
\end{enumerate}
Depending on the context, we may consider $\mathrm{support}(v)$ to be a substructure of $\str B_1$ or just a subset of $B_1$.

\begin{lemma}\label{lem:ori_aux}
Let $(w_1,\ldots,w_n) \in \nbrel{\str B_1}{(x,(x_1,\ldots, x_n))}$. There is automorphism $f$ of $\str B_1$ such that $f(\{w_1,\ldots,w_n\}) \subseteq A_1$. If there is also $u\in B_1$ such that $(u, w_1,\ldots,w_n)\in \nbrelE{\str B_1}{(x,(x_1,\ldots, x_n))}$ then $f$ can be chosen so that also $f(u)\subseteq A_1$.

Moreover, whenever $f$ is an automorphism of $\str B_1$ such that $f(\{w_1,\ldots,w_n\}) \subseteq A_1$ and $f'$ is an automorphism of $\str B_0$ such that $f_L$ is induced by $f'$ then the following hold:
\begin{enumerate}
\item $(f(w_1),\ldots,f(w_n)) \in \nbrel{\str A_1}{(f'(x), (f'(x_1),\ldots,f'(x_n)))}$,
\item for every $1\leq i\leq n$ it holds that $f(w_i) = f'(x_i)$, and
\item $f'(\{x,x_1,\ldots,x_n\})\subseteq A_0$,
\item $\roots{\str B_0}{f'(x)} = f'(\{x_1,\ldots,x_n\})$.
\end{enumerate}
If there is also $u\in B_1$ such that $f(u)\in A_1$, then $(u, w_1,\ldots,w_n)\in \nbrelE{\str B_1}{(x,(x_1,\ldots, x_n))}$ if and only if $(f(u),f'(x)) \in E_{\str{A}_0}$.
\end{lemma}
\begin{proof}
The first part is straightforward: Since $(w_1,\ldots,w_n)$ (or $(u, w_1,\ldots,w_n)$ respectively) is in a relation of $\str B_1$, we get that $\cl_{\str{B}_1}(\{w_1,\ldots,w_n\})$ (or $\cl_{\str{B}_1}(\{u, \allowbreak w_1,\ldots,w_n\})$ respectively) is an irreducible substructure of $\str B_1$, and so there is an automorphism $f$ of $\str B_1$ with the desired properties by irreducible structure faithfulness of $\str B_1$.

Suppose now that we have such automorphisms $f$ and $f'$. The first statement is just rephrasing that $f$ is an automorphism with $f_L$ induced by $f'$. From the construction of $\str A_1$ it follows that whenever $(t_1,\ldots,t_n)\in \nbrel{\str A_1}{(y,(y_1,\ldots,y_n))}$, then $t_i=y_i$ for every $1\leq i\leq n$, which implies the second point. The third point is a direct consequence of the second point and the construction of $\str A_1$. To see the fourth point, note that $\str A_0$ is a substructure of $\str B_0$, hence $\roots{\str B_0}{f'(x)} = \roots{\str A_0}{f'(x)} = f'(\{x_1,\ldots,x_n\})$.

If there is also $u\in B_1$ such that $f(u)\subseteq A_1$, then directly from the definition of the relations on $\str A_1$ it follows that $(u, w_1,\ldots,w_n)\in \nbrelE{\str B_1}{(x,(x_1,\ldots, x_n))}$ if and only if $(f(u),f'(x)) \in E_{\str{A}_0}$.
\end{proof}

\begin{observation}
If $v=((x,(x_1,\ldots,x_n)),(w_1,\ldots,w_n))\in \mathcal T$ then $\mathrm{support}(v) = \{w_1,\ldots,w_n\}$.
\end{observation}
\begin{proof}
From the definition of $\mathcal T$, we know that $(w_1,\ldots,w_n)\in \nbrel{\str B_1}{(x,(x_1,\ldots,x_n))}$. So, by Lemma~\ref{lem:ori_aux}, we get automorphisms $f$ and $f'$ such that $f(w_i) = f'(x_i) \in A_1$ for every $i$ and $f'(\{x_1,\ldots,x_n\})$ are the only roots reachable from $f'(x)$ in $\str B_0$. Consequently, they are all the more so the only roots reachable from $f'(\{x_1,\ldots,x_n\}) = f(\{w_1,\ldots,w_n\})$ in $\str A_0$ and hence $\cl_{\str{B}_1}(f(\{w_1,\ldots,w_n\})) = f(\{w_1,\ldots,w_n\})$. Sending it back by $f^{-1}$ then gives $$\mathrm{support}(v)=\cl_{\str{B}_1}(\{w_1,\ldots,w_n\}) = \{w_1,\ldots,w_n\}.$$
\end{proof}

The following observation follows directly from the construction of $\str B$.
\begin{observation}\label{obs:supports}
Whenever $(u,v)\in E_\str{B}$, we have that $\mathrm{support}(v)\subseteq \mathrm{support}(u)$. 
\end{observation}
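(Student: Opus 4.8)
The plan is to prove the inclusion by a case analysis on which of the clauses \ref{C1}--\ref{C4} produced the edge $(v,v')$. Throughout I keep in mind that for $w\in B_1$ the support $\mathrm{support}(w)=\cl_{\str B_1}(w)$ is a closed substructure of $\str B_1$, and that for $w=(P,\bar x)\in\mathcal T$ the support $\mathrm{support}(w)=\cl_{\str B_1}(\bar x)$ equals, by the identity $\mathrm{support}((P,\bar x))=\bar x$ established just before the statement, the substructure carried by the vertex set of $\bar x$; so ``$\supseteq$'' between two supports is just containment of these vertex sets. Clauses \ref{C3} and \ref{C4} are then immediate: in \ref{C3} we have $v=w_i$ for some $i$, where $u=(P,\bar w)\in\mathcal T$, hence $\mathrm{support}(v)=\cl_{\str B_1}(w_i)\subseteq\cl_{\str B_1}(\bar w)=\mathrm{support}(u)$; and in \ref{C4} the defining condition of the edge already asserts $\{t_1,\dots,t_m\}\subseteq\{w_1,\dots,w_n\}$, which is exactly $\mathrm{support}(v)\subseteq\mathrm{support}(u)$.

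For clause \ref{C1} I would first establish the auxiliary fact that $\str B_1$ satisfies ``$(v,v')\in E_{\str B_1}\Rightarrow v'\in\cl_{\str B_1}(v)$'' (equivalently $\cl_{\str B_1}(v')\subseteq\cl_{\str B_1}(v)$, since $\cl_{\str B_1}(v)$ is closed). This property holds in $\str A_1$, because there $E_{\str A_1}$ joins only root vertices and $F_{\str A_1}$ records reachability, and it is inherited by closed substructures. If it failed in $\str B_1$, the witnessing substructure $\cl_{\str B_1}(\{v,v'\})$ could be decomposed repeatedly as a free amalgamation of proper substructures; the edge $(v,v')$ cannot cross the two sides of a free amalgamation, so it, together with $\cl_{\str B_1}(v)$, stays inside one side, and one reaches a minimal irreducible substructure still witnessing the failure --- which embeds into $\str A_1$ by irreducible structure faithfulness of $\str B_1$, a contradiction. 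With this fact in hand, \ref{C1} is immediate since there both supports are closures in $\str B_1$.

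The real work is clause \ref{C2}: $u=w_i\in B_1$, $v=(P,\bar w)\in\mathcal T$ with $P=(x,(x_1,\dots,x_n))$ and $(x_i,x)\in E_{\str B_0}$, and I must show $\{w_1,\dots,w_n\}=\mathrm{support}(v)\subseteq\mathrm{support}(u)=\cl_{\str B_1}(w_i)$. Since $\bar w$ lies in the relation $\rel{}{P}$ of $\str B_1$, its closure $\cl_{\str B_1}(\bar w)$ is irreducible, so there is an automorphism $f$ of $\str B_1$ --- whose language part corresponds to some $g\in\Aut(\str B_0)$ acting on $\mathfrak P$ --- with $f(\cl_{\str B_1}(\bar w))\subseteq A_1$. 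The point I would exploit is that each relation symbol $\rel{}{Q}$ is \emph{rigid}: in $\str A_1$ it carries at most the single tuple explicitly recorded in the index $Q$. Hence $f(\bar w)$, which lies in the relation $\rel{}{g\cdot P}$ of $\str A_1$, must equal the tuple $(g(x_1),\dots,g(x_n))$ \emph{coordinatewise}, which simultaneously forces $f(w_j)=g(x_j)$ for every $j$ and tells us that $g(x)$ is a non-root vertex of $\str A_0$ with $\mathrm{roots}_{\str A_0}(g(x))=\{g(x_1),\dots,g(x_n)\}$.

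Then I would finish as follows. Applying $g$ to $(x_i,x)\in E_{\str B_0}$ gives $(g(x_i),g(x))\in E_{\str B_0}$; since $g(x)\in A_0$ and $g(x_i)\in\mathrm{roots}_{\str A_0}(g(x))\subseteq A_0$, this edge lies already in $\str A_0$, whence $g(x)\in\cl_{\str A_0}(g(x_i))$ and so $\mathrm{roots}_{\str A_0}(g(x))\subseteq\mathrm{roots}_{\str A_0}(g(x_i))$. For a root vertex of $\str A_0$, the closure taken in $\str A_1$ is exactly its set of reachable roots, so $\cl_{\str A_1}(g(x_i))=\mathrm{roots}_{\str A_0}(g(x_i))\supseteq\{g(x_1),\dots,g(x_n)\}$; since $A_1$ is closed in $\str B_1$ this equals $\cl_{\str B_1}(g(x_i))$, and pulling back by $f^{-1}$ (using $f(w_i)=g(x_i)$, $f(w_j)=g(x_j)$) yields $\cl_{\str B_1}(w_i)\supseteq\{w_1,\dots,w_n\}$, as desired. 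I expect the main obstacle to be exactly this clause \ref{C2}: making the rigidity of the relations $\rel{}{Q}$ force $f(\bar w)=(g(x_1),\dots,g(x_n))$ coordinate by coordinate, which is what lets the index $i$ of $x_i$ in the hypothesis $(x_i,x)\in E_{\str B_0}$ line up with the index $i$ of $w_i=u$ after transporting along $f$.
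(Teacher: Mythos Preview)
Your proof is correct and is exactly the case analysis on \ref{C1}--\ref{C4} that the paper's ``follows directly from the construction of $\str B$'' would expand into; the paper gives no further detail. You correctly identified that \ref{C3} and \ref{C4} are immediate from the identity $\mathrm{support}((P,\bar x))=\bar x$ (established just before the observation), while \ref{C1} and \ref{C2} genuinely require the irreducible structure faithfulness of $\str{B}_1$---a point the paper glosses over but which you handle properly, including the rigidity argument that pins down $f(w_j)=g(x_j)$ coordinatewise. One small simplification for \ref{C1}: rather than ``decompose repeatedly'', you can argue directly that $\cl_{\str B_1}(\{v,v'\})$ is irreducible (any free decomposition must keep the edge $(v,v')$ on one side, and that side is then a closed substructure containing $\{v,v'\}$, hence equals the whole closure), and then apply irreducible structure faithfulness once.
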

\begin{proof}
We have to distinguish four cases:
\begin{enumerate}
\item If $u,v\in B_1$, by~\ref{C1} we know that $(u,v)\in E_{\str{B}_1}$. This implies that $v\in \cl_{\str{B}_1}(u)$, so $\cl_{\str{B}_1}(v)\subseteq \cl_{\str{B}_1}(u)$ and hence $\mathrm{support}(v)\subseteq \mathrm{support}(u)$.
\item If $u\in B_1$, $v=((x,(x_1,\ldots, x_n)), (w_1,\ldots,w_n))\in \mathcal T$, by~\ref{C2} we know that $(u, w_1,\ldots,w_n)\in \nbrelE{\str B_1}{(x,(x_1,\ldots, x_n))}$. By definition, $\mathrm{support}(u) = \cl_{\str{B}_1}(u)$ and $\mathrm{support}(v) = \{w_1,\ldots,w_n\}$. Using Lemma~\ref{lem:ori_aux} we get automorphisms $f$ and $f'$ such that $\mathrm{roots}_{\str A_0}(f'(x)) = f'(\{x_1,\ldots, x_n\})$ and $(f(u),f'(x)) \in E_{\str A_0}$. So
$$f(\{w_1,\ldots,w_n\}) = f'(\{x_1,\ldots, x_n\}) \subseteq F_{\str A_0}(f'(x)) \subseteq F_{\str A_0}(f(u)).$$ Consequently, $\{w_1,\ldots, w_n\} \subseteq F_{\str A_1}(u)$, and hence $$\mathrm{support}(v) = \{w_1,\ldots, w_n\} \subseteq \cl_{\str{B}_1}(u) = \mathrm{support}(u).$$

\item If $u = ((x,(x_1,\ldots, x_n)), (w_1,\ldots,w_n))\in \mathcal T$ and $v\in B_1$, by~\ref{C3} we have $1\leq i\leq n$ such that $v=w_i$. Then $$\support{v} = \cl_{\str B_1}(w_i) \subseteq \cl_{\str B_1}(\{w_1,\allowbreak \ldots,w_n\}) = \support{u}.$$

\item If $u = ((x,(x_1,\ldots, x_n)), (w_1,\ldots,w_n))\in \mathcal T$ and $v = ((y,(y_1,\ldots, y_m)), (t_1,\allowbreak \ldots,t_m))\in \mathcal T$, by~\ref{C4} we get immediately that $\mathrm{support}(v) = \{t_1,\ldots,t_m\}\subseteq \{w_1,\ldots,w_n\} = \mathrm{support}(u)$.
\end{enumerate}
\end{proof}

Our next goal is to show that $\str B\in \mathcal D^k_d$. Towards that direction we define the following procedure to map portions of $\str{B}$ to substructures of $\str{A}$.
Given a vertex $v\in B$ and an automorphism $f=(f_L,f_{B_1})$ of $\str{B}_1$ such that $f(\mathrm{support}(v))$ is a substructure of $\str{A}_1$ we define \emph{$f$-correspondence} $c_f(v)\in A$ as follows:
\begin{enumerate}
 \item If $v\in B_1$, we put $c_f(v)=f_{B_1}(v)$.
\item Otherwise $v\in \mathcal T$. Then $v=(P,\bar{x})$ (for some choice of $P$ and $\bar{x}$) and we put $c_f(v)=\pi(f_L(P))$. (Here, by $f_L(P)$ we mean the $P'$ such that $f_L(\rel{}{P}) = \rel{}{P'}$.)
\end{enumerate}

\begin{claim}[on correspondence]\label{cl:correspondence}
Let $f=(f_L,f_{B_1})$ be an automorphism of $\str{B}_1$ and $v\neq v'\in B$ such that both $f(\mathrm{support}(v))$ and $f(\mathrm{support}(v'))$ are substructures of $\str{A}_1$. Then
\begin{enumerate}[label=P\arabic*]
 \item\label{P1} $c_f(v)\neq c_f(v')$.
 \item\label{P2} $(v,v')\in E_\str{B}$ if and only if $(c_f(v), c_f(v'))\in E_\str{A}$.
\end{enumerate}
\end{claim}
\begin{proof}
Let $f'$ be an automorphism of $\str B_0$ inducing $f_L$. If $v,v'\in B_1$ then we know that $c_f(v)=f(v)\neq
f(v')=c_f(v')$ and~\ref{P1} follows.  If precisely one of $v$, $v'$ is
in $v\in B_1$ then it follows that precisely one of $c_f(v),c_f(v')$ is a root of
$\str{A}$ and~\ref{P1} follows as well. 

So $v = (P,\bar{x})\in \mathcal T$ and $v'=(P',\bar{x}')\in \mathcal T$. We will show that $\pi(P)\neq \pi(P')$, which would imply that $c_f(v)=f'(\pi(P))\neq f'(\pi(P'))=c_f(v')$, hence~\ref{P1} holds. For a contradiction, suppose that $\pi(P) = \pi(P')$. By the construction we have that $\bar{x} \in\nbrel{\str B_1}{P}$ and $\bar{x}' \in\nbrel{\str B_1}{P'}$ and Lemma~\ref{lem:ori_aux} then implies that $f(\bar{x}) = \roots{\str B_0}{f'(\pi(P))} = \roots{\str B_0}{f'(\pi(P'))} = f(\bar{x}')$ (where we consider $\bar{x}$ and $\bar{x}'$ as sets), hence $\bar{x}$ and $\bar{x}'$ are different permutations of the same set. This is, however, in a contradiction with the definition of $\mathcal T$ and the fact that $v\neq v'$, which finishes the proof of~\ref{P1}.

\medskip

If $v,v'\in B_1$,~\ref{P2} immediately follows from~\ref{C1}. If $v\in B_1$ and $v'=((x,(x_1,\allowbreak \ldots, x_n)), (w_1,\ldots,w_n))\in \mathcal T$, we know by~\ref{C2} that $(v,v')\in E_\str{B}$ if and only if $(v, w_1,\ldots,w_n)\in \nbrelE{\str B_1}{(x,(x_1,\ldots, x_n))}$. By Lemma~\ref{lem:ori_aux} we know that this happens if and only if $(c_f(v), c_f(v')) = (f(v),f'(x)) \in E_{\str A_0} = E_\str A$.

Now suppose that $v=((x,(x_1,\ldots, x_n)), (w_1,\ldots,w_n))\in \mathcal T$ and $v'\in B_1$. If there is $1\leq i\leq n$ such that $v' = w_i$, we know that $(v,v')\in E_\str{B}$ if and only if $(x,x_i)\in E_{\str B_0}$ by~\ref{C3}. Lemma~\ref{lem:ori_aux} tells us that $f'(x_i) = f(w_i) = f(v')$, and since $f'$ is an automorphism of $\str B_0$, we get that $(x,x_i)\in E_{\str B_0}$ if and only if $(c_f(v),c_f(v')) = (f'(x),f'(x_i)) \in E_{\str A_0} = E_\str A$. So $v' \neq w_i$ for any $i$. In that case $(v,v')\notin E_\str B$ by~\ref{C3}. But, again using Lemma~\ref{lem:ori_aux}, we know that $\roots{\str B_0}{f'(x)} = f(\{w_1,\ldots,w_n\})$, and as $f(v')$ is a root of $\str B_0$, we know that $f(v')\notin \roots{\str B_0}{f'(x)}$, so in particular $(c_f(v),c_f(v')) = (f'(x),f(v'))\notin E_{\str A}$.


Finally, suppose that $v = ((x,\bar{x}), \bar{w})\in \mathcal T$ and $v' = ((y,\allowbreak \bar{y}), \bar{t})\in \mathcal T$. If $(c_f(v),\allowbreak c_f(v')) = (f'(x),f'(y)) \in E_\str A$, then we know that (as sets) 
$$f(\bar{t}) = f'(\bar{y}) = \roots{\str A}{f'(y)} \subseteq \roots{\str A}{f'(x)} = \allowbreak f'(\bar{x}) = f(\bar{w}),$$
so $\bar{t}\subseteq \bar{w}$ (as sets) and hence $(v,v')\in E_\str B$ by~\ref{C4}. So $(c_f(v),c_f(v')) = (f'(x),f'(y)) \notin E_\str A$. But then, as $f'$ is an automorphism, we get that $(x,y)\notin E_\str A$ and thus $(v,v')\notin E_\str B$ by~\ref{C4}.
\end{proof}

\begin{corollary}\label{cor:preserves_paths}
Let $v_1,\ldots,v_m$ be a sequence of vertices of $\str B$ such that $(v_i,v_{i+1}) \in E_\str B$ for every $1\leq i < m$ and let $f$ be an automorphism of $\str B_1$ such that $\support{v_1}\subseteq \str A_1$. Then $c_f(v_i)\in A$ for every $1\leq i\leq m$ and $(c_f(v_i),c_f(v_{i+1}))\in E_\str A$ for every $1\leq i < m$. Moreover, such an automorphism always exists.
\end{corollary}
\begin{proof}
Put $\str S = \support{v_1}$. First note that $\str S$ is an irreducible substructure of $\str B_1$ (it is either the closure of a vertex, or a tuple in a relation), so irreducible structure faithfulness of $\str B_1$ gives the moreover part. By Observation~\ref{obs:supports} we know that $\support{v_i} \subseteq \str S$ for every $1\leq i\leq m$. Hence $c_f(v_i)$ is defined for every $1\leq i \leq m$ and an application of Claim~\ref{cl:correspondence} finishes the proof.
\end{proof}

\begin{claim}\label{cl:neighbours}
Let $v\in B$ and let $f$ be an automorphism of $\str B_1$ with $f(\mathrm{support}(v)) \subseteq \str A_1$. Then $c_f$ restricts to a bijection between the out-neighbours of $v$ in $\str B$ and the out-neighbours of $c_f(v)$ in $\str A$.
\end{claim}
\begin{proof}
Pick an arbitrary $v'\in B$ such that $(v,v')\in E_\str B$. By Corollary~\ref{cor:preserves_paths} we get that $(c_f(v),c_f(v'))\in E_\str A$ and moreover if $v''\in B$ is a different out-neighbour of $v$ then by Claim~\ref{cl:correspondence} we know that $c_f(v') \neq c_f(v'')$. So $c_f$ restricts to an injective function from the out-neighbours of $v$ in $\str B$ to the out-neighbours of $c_f(v)$ in $\str A$.



To prove that it is surjective, pick an arbitrary $y\in A$ such that $(c_f(v),y)\in E_\str A$. We will find $y'\in \str B$ such that $(v,y')\in E_\str B$ and $c_f(y')=y$. If $v\in B_1$, we know that $c_f(v) = f(v)$ and hence $c_f(v)$ is a root of $\str A$. If $y$ is also a root of $\str A$, it follows that $(v,f^{-1}(y))\in E_{\str B_1}$ and so $(v,f^{-1}(y)) \in E_\str B$ by~\ref{C1}, hence we can put $y'=f^{-1}(y)$. If $y$ is a non-root of $\str A$ then, by the construction of $\str A_1$, there is a tuple $\bar{y}=(y_1,\ldots,y_n)$ of vertices of $\str A_1$ such that $\bar{y} \in \nbrel{\str A_1}{(y,\bar{y})}$ and $(f(v),y_1,\ldots,y_n) \in \nbrelE{\str A_1}{(y,\bar{y})}$ (without loss of generality the enumeration of $\bar{y}$ is chosen so that $((y,\bar{y}),\bar{y})\in \mathcal T$). Let $f'$ be an automorphism of $\str B_0$ which induces $f_L$. Putting $y'' = (((f')^{-1}(y),f^{-1}(\bar{y})),f^{-1}(\bar{y})) \in \mathcal T_0$ and picking $y'\in \mathcal T$ such that $y'\sim y''$, we can see that indeed $c_f(y') = y$ and $(v,y')\in E_\str B$ (by~\ref{C2}).

So suppose $v = ((x,(x_1,\ldots, x_n)), (w_1,\ldots,w_n))\in \mathcal T$. Let $f'$ be an automorphism of $\str B_0$ which induces $f_L$. We know that $c_f(v) = f'(x)$ and that $f'(x)$ is a non-root of $\str A$.  If $y$ is a root of $\str A$, we get that $y\in \roots{\str A}{f'(x)}$, and hence, by the construction of $\str A_1$, there is $1\leq i\leq n$ such that $f'(x_i) = y$. By Lemma~\ref{lem:ori_aux} we know that $f(w_i) = f'(x_i) = y$, hence $w_i = f^{-1}(y) \in B_1$. If we put $y' = f^{-1}(y)$, we get that $c_f(y') = y$ and, by~\ref{C3}, $(v,y')\in E_\str B$.

The last case is that $y$ is a non-root of $\str A$. Since $(f'(x),y)\in E_\str A$, we get that $$\roots{\str A}{y} \subseteq \roots{\str A}{f'(x)} = f(\{w_1,\ldots,w_n\}) = f'(\{x_1,\ldots,x_n\}).$$ Let $\bar{y}$ be an enumeration of $\roots{\str A}{y}$. By the construction of $\str A_1$ we get that $\bar{y}\in\nbrel{\str A_1}{(y,\bar{y})}$ and so $f^{-1}(\bar{y})\in\nbrel{\str B_1}{(f'^{-1}(y),f'^{-1}(\bar{y}))}$. Put $y' = ((f'^{-1}(y),f'^{-1}(\bar{y})),f^{-1}(\bar{y}))$ and assume that enumeration of $\bar{y}$ was chosen so that $y'\in\mathcal T$. Then $c_f(y') = y$ and, by~\ref{C4}, $(v,y')\in E_\str B$, which concludes the proof.
\end{proof}

\begin{corollary}\label{cor:roots}
$\str B$ is a $k$-orientation and the roots of $\str B$ are precisely members of $B_1$.
\end{corollary}
\begin{proof}
Pick an arbitrary $v\in \str B$. Since $\support{v}$ is an irreducible substructure of $\str B_1$, there is an automorphism $f$ of $\str B_1$ sending $\support{v}$ to $A_1$. Hence, by Claim~\ref{cl:neighbours}, the out-degree of $v$ in $\str B$ is the same as the out-degree of $c_f(v)$ in $\str A$. Consequently, the out-degree of $v$ in $\str B$ is at most $k$ (hence $\str B$ is a $k$-orientation) and it is less than $k$ if and only if $c_f(v)$ is a root of $\str A$ which happens if and only if $c_f(v) = f(v)$, i.e. if $v\in B_1$.
\end{proof}

\begin{corollary}\label{cor:path_witnesses}
Given $u\in \str B$, an automorphism $f\colon \str B_1\to\str B_1$ sending $\support{u}$ into $A_1$ and a sequence $c_f(u)=v_1,\ldots,v_m$ of vertices of $\str A$ such that $(v_i,v_{i+1}) \in E_\str A$ for every $1\leq i < m$, there is a sequence $u=v_1',\ldots,v_m'$ of vertices of $\str B$ such that $(v_i',v_{i+1}') \in E_\str B$ for every $1\leq i < m$ and $c_f(v_i') = v_i$ for every $1\leq i\leq m$.
\end{corollary}
\begin{proof}
We will prove this by induction on $m$. For $m=1$ the statement is trivial. For the induction step, suppose that the statement is true for $m-1$. By the induction hypothesis we have a sequence $v_1',\ldots,v_{m-1}'$ of vertices of $\str B$ such that $(v_i',v_{i+1}') \in E_\str B$ for every $1\leq i < m-1$ and $c_f(v_i') = v_i$ for every $1\leq i\leq m-1$. By Observation~\ref{obs:supports} we know that $\support{v_i'}\subseteq \support{u}$ for every $1\leq i\leq m-1$, hence Claim~\ref{cl:neighbours} for $v_{m-1}'$ tells us that $c_f$ is a bijection between the out-neighbours of $v_{m-1}'$ and $c_f(v_{m-1}') = v_{m-1}$. Therefore in particular, there is some $v_m'\in B$ such that $c_f(v_m') = v_m$ and $(v_{m-1}',v_m')\in E_\str B$ which concludes the proof.
\end{proof}

\begin{claim}\label{cl:supports_roots}
For every $u\in \str B$ it holds that $\mathrm{roots}_{\str B}(u) = \mathrm{support}(u)$.
\end{claim}
\begin{proof}
First we prove that $\mathrm{roots}_{\str B}(u) \subseteq \mathrm{support}(u)$. Pick an arbitrary $v\in \mathrm{roots}_{\str B}(u)$. This means that $v\in B_1$ (by Corollary~\ref{cor:roots}) and that there is a sequence $u=v_1,\ldots,v_m=v$ of vertices of $\str B$ such that $(v_i,v_{i+1}) \in E_\str B$ for every $1\leq i < m$. By Corollary~\ref{cor:preserves_paths} we get an automorphism $f$ of $\str B_1$ such that $c_f(v_i)\in A$ for every $1\leq i\leq m$ and $(c_f(v_i),c_f(v_{i+1}))\in E_\str A$ for every $1\leq i < m$. Consequently, $c_f(v) = f(v) \in \roots{\str A}{c_f(u)}$ and so, by the construction of $\str B_1$ and $\str B$, $v\in \support{u}$.

To see that $\mathrm{roots}_{\str B}(u) \supseteq \mathrm{support}(u)$, pick an arbitrary $v\in \support{u}$ and let $f$ be an automorphism of $\str B$ sending $\support{u}$ to $A_1$. By the definition of $\str B_1$ and $\support{u}$ this means that that $f(v)\in\roots{\str A}{c_f(u)}$, so in particular $c_f(v)=f(v)\in B_1$ and there is a sequence $c_f(u)=v_1,\ldots,v_m=c_f(v)$ of vertices of $\str A$ such that $(v_i,v_{i+1}) \in E_\str A$ for every $1\leq i < m$. Using Corollary~\ref{cor:path_witnesses} we get a sequence $v_1',\ldots,v_m'$ of vertices of $\str B$ such that $(v_i',v_{i+1}') \in E_\str B$ for every $1\leq i < m$ and $c_f(v_i') = v_i$ for every $1\leq i\leq m$. In particular, $v_1' = u$ and $v_m' = v\in B_1$, hence $v\in\mathrm{roots}_{\str B}(u)$ (by Corollary~\ref{cor:roots}).
\end{proof}

\begin{claim}\label{cl:substructures}
Let $\str{D}_1$ be a substructure of $\str{B}_1$ and $f$ an automorphism
of $\str{B}_1$ such that $f(\str{D}_1)$ is a substructure of $\str{A}_1$.  Put
$$D = \{v \in B : \mathrm{support}(v)\subseteq D_1\}.$$
Then $\str B$ induces a substructure $\str D$ on $D$ and $c_f$ is an isomorphism from $\str{D}$ to a substructure induced by $\str{A}$ on $\cl_{\str{A}}(f(D_1))$.
\end{claim}
\begin{proof}
Since $\str D_1$ is a substructure of $\str B_1$, it follows that $D_1\subseteq D$. Let $u\in D$ and $v\in B$ be vertices such that $(u,v)\in E_{\str B}$. From Observation~\ref{obs:supports} if follows that $\mathrm{support}(v)\subseteq \mathrm{support}(u) \subseteq D_1$ and so $v\in D$. This means that there are no outgoing edges from $D$ in $\str B$ and thus $D$ is closed on the function $\func{B}{}$. By Claim~\ref{cl:supports_roots}, $D$ is also closed on all functions $\func{B}{n}$, hence indeed $\str B$ induces a substructure $\str D$ on $D$.

Next we will prove that $c_f$ is a bijection $D\to \cl_{\str{A}}(f(D_1))$. Clearly, $\dom(c_f) \supseteq D$. Fix $v\in D$. If $v\in D_1$ then $c_f(v) = f(v)\in f(D_1)$. Conversely, all roots in $\cl_{\str{A}}(f(D_1))$ are from $f(D_1)$, because $f(\str D_1) \subseteq \str A_1$. 

So suppose $v = ((x,(x_1,\ldots, x_n)), (w_1,\ldots,w_n))\in \mathcal T$ with $\{w_1,\ldots,w_n\} \subseteq D_1$. Let $f'$ be an automorphism of $\str B_0$ which induces $f_L$. By Lemma~\ref{lem:ori_aux} we get that $f(w_i) = f'(x_i)$ for every $1\leq i\leq n$ and $\roots{\str A}{f'(x)} = f'(\{x_1,\ldots,x_n\})$. Since $c_f(v) = f'(x)$, it follows that $\roots{\str A}{c_f(v)} = f(\{w_1,\ldots,w_n\}) \subseteq f(D_1)$, hence indeed $c_f(v)\in \cl_{\str{A}}(f(D_1))$. Conversely, let $v$ be a non-root vertex of $\cl_{\str{A}}(f(D_1))$ and let $\bar{y} = \roots{\str A}{v}$ be an arbitrary enumeration. We know that $\bar{y} \subseteq f(D_1)$ and by the construction we also get that $\bar{y} \in \nbrel{\str A_1}{(v,\bar y)}$. Hence we can reconstruct $v'' = (((f')^{-1}(v),f^{-1}(\bar{y})),f^{-1}(\bar{y})) \in \mathcal T_0$ and $v'\in \mathcal T$ with $v'\sim v''$ such that $c_f(v') = v$. So indeed $\range(c_f) = \cl_{\str{A}}(f(D_1))$. From~\ref{P1} of Claim~\ref{cl:correspondence} we get that $c_f$ is a bijection $D\to \cl_{\str{A}}(f(D_1))$.

Finally, from~\ref{P2} of Claim~\ref{cl:correspondence} it follows that for every $u,v\in D$ we have $(u,v)\in E_\str D$ if and only if $(c_f(u),c_f(v))\in E_\str A$. Since all functions in $\str B$ and $\str A$ are defined from the graph structure, it follows that $c_f$ indeed is an isomorphism $\str{D} \to \cl_\str{A}(f(D_1))$.
\end{proof}

\begin{lemma}\label{lem:orientations:irreducibles}
Let $\str D\subseteq \str B$ be irreducible. Then $\str B_1$ induces an irreducible substructure on $D\cap B_1$.
\end{lemma}
\begin{proof}
First note that if $\str D$ is a substructure of $\str B$, then $\str B_1$ induces a substructure on $D\cap B_1$. Indeed, by Corollary~\ref{cor:roots} we know that $D\cap B_1$ are precisely the root vertices of $\str D$. If there is $v\in \cl_{\str B_1}(D\cap B_1)\setminus D$ then by Corollary~\ref{cor:roots} it is a root vertex of $\str B$. This means that there is $u\in D\cap B_1$ such that $v\in \cl_{\str B_1}(u) =  \mathrm{support}(u) = \mathrm{roots}_{\str B}(u)$ (by Claim~\ref{cl:supports_roots}). But this implies that $v\in \cl_{\str B}(u)$, hence $v\in \str D$, a contradiction.

Put $D_1 = D\cap B_1$ and let $\str D_1$ be the substructure induced by $\str B_1$ on $D_1$. We know that $D_1$ are precisely the root vertices of $\str D$. From the definition of $\str B$ and Claim~\ref{cl:supports_roots} it follows that
$$D = \{v \in B : \mathrm{support}(v)\subseteq D_1\} = \{v \in B : \mathrm{roots}_{\str B}(v)\subseteq D_1\}.$$






We will now prove that if $\str D_1$ is reducible then $\str D$ is also reducible. Taking the contrapositive then proves the statement of this claim. Suppose that there are substructures $\str D_1^b, \str D_1^l, \str D_1^r \subseteq \str D_1$ such that $\str D_1$ is the free amalgamation of $\str D_1^l$ and $\str D_1^r$ over $\str D_1^b$ (in particular, $\str D_1^b \subseteq \str D_1^l$ and $\str D_1^b \subseteq \str D_1^r$). Put
\begin{align*}
D^l &= \{v \in B : \mathrm{support}(v)\subseteq D_1^l\},\\
D^r &= \{v \in B : \mathrm{support}(v)\subseteq D_1^r\},\\
D^b &= \{v \in B : \mathrm{support}(v)\subseteq D_1^b\}
\end{align*}
and let $\str D^l$, $\str D^r$ and $\str D^b$ be the substructures of $\str B$ induced on $D^l$, $D^r$ and $D^b$ respectively. Clearly, $\str D^l, \str D^r, \str D^b \subseteq \str D$, $\str D^b\subseteq \str D^l$ and $\str D^b\subseteq \str D^r$. We will prove that $\str D$ is the free amalgamation of $\str D^l$ and $\str D^r$ over $\str D^b$.

Since both $\str D^l$ and $\str D^r$ are substructures of $\str B$, they are in particular closed on functions $F$ and $\func{}{n}$. If there were vertices $u\in D^l$, $v\in D^r$ such that $(u,v)\in E_\str D$, then $v\in F_\str D(u)$, which is a contradiction. Hence there are no edges spanning vertices of both $\str D^l$ and $\str D^r$ at the same time.

If $v\in D^b$ then, as $\str D^b$ is a substructure, it follows that $\mathrm{roots}_{\str B}(v) \subseteq D^b$, similarly for $\str D^l$ and $\str D^r$. It follows that whenever $\bar x$ contains vertices from both $D^l\setminus D^b$ and $D^r\setminus D^b$ then $\func{D}{\lvert \bar x\rvert}(\bar x) = \emptyset$. Consequently, $\str D$ is the free amalgamation of $\str D^l$ and $\str D^r$ over $\str D^b$.
\end{proof}

\begin{corollary}\label{cor:binddk}
$\str{B}\in \mathcal D_d^k$.
\end{corollary}
\begin{proof}
Let $\str D$ be an irreducible substructure of $\str B$. Since $\mathcal D_d^k$ is a free amalgamation class, it suffices to prove that $\str D\in\mathcal D_d^k$. By Lemma~\ref{lem:orientations:irreducibles} we know that $\str B_1$ induces an irreducible substructure on $D\cap B_1$ and by Claim~\ref{cl:supports_roots} this substructure is non-empty. Now we can use irreducible structure faithfulness of $\str B_1$ and Claim~\ref{cl:substructures} to get an embedding $c_f\colon \str D\to\str A$. As $\str A\in\mathcal D_d^k$, we get that $\str D\in\mathcal D_d^k$, hence indeed $\str{B}\in \mathcal D_d^k$.
\end{proof}

\begin{lemma}\label{lem:orientation_automorphism}
Let $f = (f_L,f_{B_1})$ be an automorphism of $\str B_1$ and let $f'$ be an automorphism of $\str B_0$ which induces $f_L$. Let $\iota_0$ be the map on $\mathcal T_0$ given by $\iota_0((x, \bar{x}), \bar{w}) = ((f'(x), f'(\bar{x})), f_{B_1}(\bar{w}))$ and let $\iota$ bet the map induced by $\iota_0$ on $\mathcal T$ (it is easy to see that $\sim$ is a congruence with respect to $\iota_0$). Define $\theta = f_{B_1} \cup \iota$. Then $\theta$ is an automorphism of $\str B$.
\end{lemma}
\begin{proof}
Since $f'$ is an automorphism of $\str B_0$, it follows that $\iota_0$ is a bijection $\mathcal T_0 \to \mathcal T_0$. Consequently, $\iota$ is a bijection $\mathcal T \to \mathcal T$. As $f_{B_1}$ is a bijection $B_1\to B_1$ and $B$ is the disjoint union of $B_1$ and $\mathcal T$, it follows that $\theta$ is a bijection $B\to B$. By Corollary~\ref{cor:binddk} we know that the functions $\func{B}{}$ and $\func{B}{n}$ are defined in $\str B$ from the graph structure. To see that $\theta$ is an automorphism of $\str B$, it remains to prove that for every $u,v\in B$ we have $(u,v)\in E_\str B$ if and only if $(\theta(u),\theta(v))\in E_\str B$. We will distinguish four cases.

\begin{enumerate}
\item First suppose that $u,v\in B_1$. By~\ref{C1} we know that $(u,v)\in E_\str{B}$ if and only if $(u,v)\in E_{\str{B}_1}$. Since $f$ is an automorphism of $\str B_1$, we know that $(u,v)\in E_{\str{B}_1}$ if and only if $(\theta(u),\theta(v)) = (f_{B_1}(u),f_{B_1}(v))\in E_{\str{B}_1}$, so indeed $(u,v)\in E_\str B$ if and only if $(\theta(u),\theta(v))\in E_\str B$.
\item If $u\in B_1$ and $v=((x,\bar{x}), (w_1,\ldots,w_n))\in \mathcal T$, we know by~\ref{C2} that $(u,v)\in E_\str B$ if and only if $(u, w_1,\ldots,w_n)\in \nbrelE{\str B_1}{(x,\bar{x})}$. Again, since $f$ is an automorphism of $\str B_1$, we get that 
$$(u, w_1,\ldots,w_n)\in \nbrelE{\str B_1}{(x,\bar{x})}$$
if and only if
$$f_{B_1}((u, w_1,\ldots,w_n)) \in \nbrelE{\str B_1}{(f'(x),f'(\bar{x}))}.$$
As $\theta(u) = f_{B_1}(u)$ and $\theta(v) = ((f'(x),f'(\bar{x})), f_{B_1}((w_1,\ldots,w_n)))$, we get from~\ref{C2} that this happens if and only if $(\theta(u),\theta(v)) \in E_\str{B}$.
\item If $u = ((x,(x_1,\ldots, x_n)), (w_1,\ldots,w_n))\in \mathcal T$ and $v\in B_1$, we know by~\ref{C3} that $(u,v)\in E_\str B$ if and only if there is $1\leq i\leq n$ such that $v=w_i$ and $(x,x_i)\in E_{\str B_0}$. This is equivalent to $f_{B_1}(v) = f_{B_1}(w_i)$ and $(f'(x),f'(x_i)) \in E_{\str B_0}$ which is in turn  once again equivalent to $(\theta(u),\theta(v)) \in E_\str{B}$.
\item Finally, if $u = ((x,\bar{x}), \bar{w})\in \mathcal T$, $v = ((y,\bar{y}), \bar{t})\in \mathcal T$ then (by~\ref{C4}) $(u,v)\in E_\str B$ if and only if $\bar{t}\subseteq \bar{w}$ (as sets) and $(x,y)\in E_{\str B_0}$. This is equivalent to $f_{B_1}(\bar{t}) \subseteq f_{B_1}(\bar{w})$ (as sets) and $(f'(x),f'(y))\in E_{\str B_0}$, or in other words, $(\theta(u),\theta(v)) \in E_\str{B}$.
\end{enumerate}
\end{proof}

Note that $\cl_\str A(A_1) = A$. Therefore, Claim~\ref{cl:substructures} for $\str D_1 = \str A_1$ and the identity automorphism gives us an isomorphism $c_\id$. Put $\psi = c_\id^{-1}\colon\str A\to\str B$ and denote $\str A' = \psi(\str A)$. This will be the copy of $\str A$ in $\str B$ whose automorphisms we are going to extend. Note that for every $v\in A_1$ we have that $\psi(v) = v$ and for every $v\in A\setminus A_1$ we have that $\pi(\psi(v)) = v$. It follows that $A_1\subseteq A'$ and $$A' = \{v\in B : \support{v} \subseteq A_1\}.$$

\begin{prop}\label{prop:biswitness}
$\str B$ is an irreducible structure faithful coherent EPPA-witness for $\str A'$.
\end{prop}
\begin{proof}
First we refine the proof of Corollary~\ref{cor:binddk} to prove that $\str B$ is irreducible structure faithful. Let $\str D$ be an irreducible substructure of $\str B$. By Lemma~\ref{lem:orientations:irreducibles} we know that $\str B_1$ induces an irreducible substructure $\str D_1$ on $D\cap B_1$. Now we can use irreducible structure faithfulness of $\str B_1$ to get an automorphism $f=(f_L, f_{B_1})$ of $\str B_1$ such that $f(D_1)\subseteq A_1$. Let $f'$ be an automorphism of $\str B_0$ inducing $f_L$.

Use Lemma~\ref{lem:orientation_automorphism} to construct an automorphism $\theta$ of $\str B$. Clearly, $\theta(D_1)\subseteq A_1\subseteq A'$. Hence it remains to prove that $\theta(D\setminus D_1) \subseteq A'$ (where $D\setminus D_1$ are precisely the non-root vertices of $\str D$). Pick an arbitrary $v\in D\setminus D_1$. We know that $\support{v}\subseteq D_1$. By definition of $\theta$ we also know that $\support{\theta(v)} = f_{B_1}(\support{v}) \subseteq A_1$, and so $\theta(v)\in A'$ and thus indeed $\theta(D)\subseteq A'$.

To see that $\str{B}$ is an EPPA-witness for $\str{A}'$, let $\varphi$ be a partial automorphism
of $\str{A}'$. Consider $\varphi_0 = \psi^{-1}\circ \varphi \circ \psi$ as a partial automorphism of $\str{A}_0$ (note that $\varphi_0(v) = \varphi(v)$ for every $v\in A_1$) and extend
it to an automorphism $\widetilde{\varphi_0}$ of $\str{B}_0$. Let $\phi_L \in \Gamma\!_{L^+}$ be the permutation of $L^+$ given by $\widetilde{\varphi_0}$ and put $\phi_{A_1} = \varphi_0\restriction_{A_1} = \varphi\restriction_{A_1}$. Note that $\phi_{A_1}$ is a bijection $A_1\to A_1$, because $\varphi_0$ preserves whether a vertex is a root or not. Put $\phi = (\phi_L, \phi_{A_1})$. It is easy to verify that $\phi$ is a partial automorphism of $\str A_1$: It preserves the $L_s$ relations and functions, because $\varphi_0$ does. Suppose that $\bar{w} \subseteq \dom(\phi)$ and $\bar{w} \in \nbrel{\str A_1}{(x,\bar{w})}$, then this means that, in $\str A$, there is a vertex $x$ such that $\roots{\str A}{x} = \bar{w}$. This implies that $x \in \func{A}{n}(\bar{w})$ and hence $x\in \dom(\varphi)$. Consequently, $\phi_L(\rel{}{(x,\bar{w})}) = \rel{}{(\varphi(x),\varphi(\bar{w}))}$ and so indeed $\phi_{A_1}(\bar{w}) \in \permnbrel{\phi_L}{\str A_1}{(x,\bar{w})}$.

Let $\widetilde{\phi}\colon \str B_1\to\str B_1$ be the extension of $\phi$ and use Lemma~\ref{lem:orientation_automorphism} for $\widetilde{\varphi_0}$ (as $f'$) and $\widetilde{\phi}$ (as $f$) to get an automorphism $\widetilde{\varphi}$ of $\str B$ (called $\theta$ in the statement of Lemma~\ref{lem:orientation_automorphism}). By the construction, we know that $\varphi\restriction_{A_1} \subseteq \widetilde{\varphi}$. In order to prove that $\widetilde{\varphi}$ extends $\varphi$ it thus remains to argue that $\widetilde{\varphi}(v) = \varphi(v)$ for every $v\in \dom(\varphi)\cap \mathcal T$.

Pick an arbitrary $v = ((x,\bar{x}), \bar{w})\in \dom(\varphi)\cap \mathcal T$. Since $v\in \dom(\varphi)$, we know that $v\in A'$, hence $\bar{w} = \support{v} \subseteq A_1$, and consequently $\bar{x} = \bar{w}$ (as tuples). By the construction we know that (up to applying the same permutation on $\bar{x}$ and $\bar{w}$ to pick the correct member of the $\sim$-equivalence class), 
$$\widetilde{\varphi}(v) = ((\widetilde{\varphi_0}(x),\widetilde{\varphi_0}(\bar{x})), \widetilde{\phi}(\bar{w})),$$
which is equal to
$$((\varphi_0(x),\varphi_0(\bar{x})), \varphi(\bar{w})) = ((\varphi_0(x),\varphi(\bar{w})), \varphi(\bar{w})).$$

In particular, $\pi(\widetilde{\varphi}(v)) = \varphi_0(x)$. By the construction we know that $v=\psi(x)$ and consequently $\pi(v) = x$. By the definition of $\varphi_0$ we know that $\psi\circ\varphi_0 = \varphi\circ\psi$, so in particular $\varphi_0(x) = \pi(\psi(\varphi_0(x))) = \pi(\varphi(\psi(x)))$. So indeed, $\pi(\widetilde{\varphi}(v)) = \pi(\varphi(v))$.

We know that $\roots{\str B}{v} = \bar{w}$ (as sets), hence $\bar{w}\subseteq \dom(\varphi)$. Since $\varphi$ is a partial automorphism, $\varphi(\roots{\str B}{v}) = \roots{\str B}{\varphi(v)}$. But $\roots{\str B}{\varphi(v)} = \support{\varphi(v)}$ (by Claim~\ref{cl:supports_roots}) and we know that $\support{\varphi(v)} = \varphi(\bar{w})$. It follows that $\widetilde{\varphi}(v) = \varphi(v)$ and hence $\widetilde{\varphi}$ indeed extends $\varphi$, which concludes the argument.

Since both $\widetilde{\varphi_0}$ and $\widetilde{\phi}$ can be chosen coherently, it follows that $\widetilde{\varphi}$ is also coherent and hence $\str{B}$ is a coherent EPPA-witness for $\str A'$.
\end{proof}

\begin{proof}[Proof of Theorem~\ref{thm:DF}]
In this section we constructed, for an arbitrary $\str A\in\mathcal D_d^k$ a structure $\str B$. By Corollary~\ref{cor:binddk}, $\str{B}\in \mathcal D_d^k$ and by Proposition~\ref{prop:biswitness} $\str B$ is an irreducible structure faithful coherent EPPA-witness for an isomorphic copy of $\str A$. Hence indeed $\mathcal D_d^k$ has irreducible structure faithful coherent EPPA.
\end{proof}

\section{Conclusion}
\label{sec:conclusion}
Comparing known EPPA classes and known Ramsey classes one can easily identify two main weaknesses of the state-of-the-art EPPA constructions.
\begin{enumerate}
 \item The need for automorphism-preserving completion procedure is not necessary in the Ramsey context.  The example of two-graphs~\cite{eppatwographs} shows that there are classes with EPPA which do not admit automorphism-preserving completions (see~\cite{Konecny2019a} for a more systematic treatment of certain classes of this kind). Understanding the situation better might lead to solutions of some of the long standing open problems in this area including the question whether the class of all finite tournaments has EPPA (see~\cite{Sabok}, \cite{HubickaSemigenericAMUC} and~\cite{HubickaSemigeneric} for recent progress on this problem).
 \item There is a lack of general EPPA constructions for classes with non-unary function symbols.  Again, there are known classes with non-unary function symbols that have EPPA (e.g. finite groups or classes from Section~\ref{sec:nonunary}).
 It is however not known whether, for example, the class of all finite partial Steiner systems or the class of all finite equivalences on unordered pairs with two equivalence classes have EPPA.
\end{enumerate}
On the other hand, in this paper we consider $\GammaL$-structures which, in the finite language case, reduce to the usual model-theoretic
structures in the Ramsey context, because the action of $\GammaL$ must be trivial in order for the class to be rigid. This has some additional applications including:
\begin{enumerate}
 \item Elimination of imaginaries for classes having definable equivalence classes (see~\cite{Ivanov2015,Hubicka2016}),
 \item representation of special non-unary functions which map vertices of one type to vertices of different type (see Section~\ref{sec:nonunary} or Theorem~\ref{thm:DF}), or
 \item representation of antipodal structures and switching classes (\cite{eppatwographs,Konecny2019a}).
\end{enumerate}

We refer the reader to~\cite{Hubicka2016,Aranda2017,Konecny2018bc,Konecny2018b,Hubicka2017sauer} for various examples of (automorphism-preserving) locally finite subclasses. 

\medskip

One of the main weaknesses of Theorems~\ref{thm:nreppa}, \ref{thm:main} and \ref{thm:mainstrong} is that they only allow unary functions. It would be interesting to know whether they hold without this restriction.
\begin{question}~\label{q:nonunary}
Do Theorems~\ref{thm:nreppa}, \ref{thm:main} and \ref{thm:mainstrong} hold for languages with non-unary functions?
\end{question}
A positive answer to Question~\ref{q:nonunary} would have some applications which are interesting on their own and have been asked before. We present two of them as separate questions.
\begin{question}
Let $L$ be the language consisting of a single binary function and let $\mathcal C$ be the class of all finite $L$-structures (say, such that the image of every pair of vertices has cardinality at most one). Does $\mathcal C$ have EPPA?
\end{question}

\begin{question}
Does the class of all finite partial Steiner triple systems have EPPA, where one only wants to extend partial automorphism between closed substructures? (A sub-hypergraph $H$ of a Steiner triple system $S$ is \emph{closed} if whenever $\{x,y,z\}$ is a triple of $S$ and $x,y\in H$, then $z\in H$.)
\end{question}



\section*{Acknowledgment}
We would like to thank Andy Zucker, the anonymous referee, and David M. Evans for their helpful comments which significantly improved the quality of this paper.

\bibliography{ramsey.bib}
\end{document}